\newtheorem{theorem}{Theorem}[section]
\newtheorem{lemma}[theorem]{Lemma}
\newtheorem{corollary}[theorem]{Corollary}
\newtheorem{proposition}[theorem]{Proposition}
\newtheorem*{claim*}{Claim}
\theoremstyle{definition}
\newtheorem{definition}[theorem]{Definition}
\newtheorem{example}[theorem]{Example}
\newtheorem{remark}[theorem]{Remark}
\newtheorem*{ack}{Acknowledgment}
\DeclareMathOperator{\id}{id}
\DeclareMathOperator{\spn}{span}
\DeclareMathOperator{\T}{\mathsf{T}}
\DeclareMathOperator{\Hom}{Hom}
\DeclareMathOperator{\im}{im}
\DeclareMathOperator{\coker}{coker}
\DeclareMathOperator{\Int}{Int}
\DeclareMathOperator{\Frac}{Frac}
\DeclareMathOperator{\colim}{colim}
\DeclareMathOperator{\Tors}{Tors}
\DeclareMathOperator{\cont}{cts}
\DeclareMathOperator{\supp}{supp}
\newcommand{\mc}[1]{\mathcal{#1}}
\DeclareMathSymbol{\Gamma}{\mathalpha}{operators}{0}
\newcommand{\N}{\mathbb{N}}
\newcommand{\Z}{\mathbb{Z}}
\newcommand{\Q}{\mathbb{Q}}
\newcommand{\R}{\mathbb{R}}
\newcommand{\C}{\mathbb{C}}
\newcommand{\F}{\mathbb{F}}
\newcommand{\TT}{\mathcal{T}}
\newcommand{\fm}{\mathfrak{m}}
\newcommand{\z}{\zeta}
\newcommand{\bwedge}{\mbox{\normalsize $\bigwedge$}}
\newcommand{\mcm}{\mc{M}}
\newcommand{\la}{\langle}
\newcommand{\ra}{\rangle}
\newcommand{\PL}{\scriptscriptstyle{\rm PL}}
\newcommand{\bX}{\boldsymbol{X}}
\newcommand{\bY}{\boldsymbol{Y}}
\newcommand{\ba}{\mathbf{a}}
\newcommand{\bb}{\mathbf{b}}
\newcommand{\bx}{\mathbf{x}}
\newcommand{\by}{\mathbf{y}}
\newcommand{\bz}{\mathbf{0}}
\newcommand{\ot}{\otimes}
\DeclareMathAlphabet{\pazocal}{OMS}{zplm}{m}{n}
\newcommand{\surj}{\twoheadrightarrow}
\newcommand{\inj}{\hookrightarrow}
\newcommand{\isom}{\xrightarrow{
   \,\smash{\raisebox{-0.6ex}{\ensuremath{\scriptstyle\simeq}}}\,}}
\newcommand{\cupd}{\cup_1\text{--\,}d}
\newcommand{\astt}{\!\ast}
\newsavebox{\@brx}
\newcommand{\llangle}[1][]{\savebox{\@brx}{\(\m@th{#1\langle}\)}%
  \mathopen{\copy\@brx\kern-0.5\wd\@brx\usebox{\@brx}}}
\newcommand{\rrangle}[1][]{\savebox{\@brx}{\(\m@th{#1\rangle}\)}%
  \mathclose{\copy\@brx\kern-0.5\wd\@brx\usebox{\@brx}}}
\newcommand{\arxiv}[1]
{\texttt{\href{http://arxiv.org/abs/#1}{arXiv:#1}}}
\newcommand{\abs}[1]{\left| #1 \right|}
\numberwithin{table}{section}
\numberwithin{equation}{section}
\def\norms#1#2{\left\| #1 \right\|_{\lower 1ex\hbox{$\scriptstyle #2 $}}}
\def\namedlabel#1#2{\begingroup
    #2%
    \def\@currentlabel{#2}%
    \phantomsection\label{#1}.\endgroup
}
\def\@tocline#1#2#3#4#5#6#7{\relax
  \ifnum #1>\c@tocdepth 
  \else
    \par \addpenalty\@secpenalty\addvspace{#2}%
    \begingroup \hyphenpenalty\@M
    \@ifempty{#4}{%
      \@tempdima\csname r@tocindent\number#1\endcsname\relax
    }{%
      \@tempdima#4\relax
    }%
    \parindent\z@ \leftskip#3\relax \advance\leftskip\@tempdima\relax
    \rightskip\@pnumwidth plus4em \parfillskip-\@pnumwidth
    #5\leavevmode\hskip-\@tempdima
      \ifcase #1
       \or\or \hskip 1em \or \hskip 2em \else \hskip 3em \fi%
      #6\nobreak\relax
    \dotfill\hbox to\@pnumwidth{\@tocpagenum{#7}}\par
    \nobreak
    \endgroup
  \fi}
\tikzset{
  knot diagram/every strand/.append style={
    ultra thick,
    red
  },
  show curve controls/.style={
    postaction=decorate,
    decoration={show path construction,
      curveto code={
        \draw [blue, dashed]
        (\tikzinputsegmentfirst) -- (\tikzinputsegmentsupporta)
        node [at end, draw, solid, red, inner sep=2pt]{};
        \draw [blue, dashed]
        (\tikzinputsegmentsupportb) -- (\tikzinputsegmentlast)
        node [at start, draw, solid, red, inner sep=2pt]{}
        node [at end, fill, blue, ellipse, inner sep=2pt]{}
        ;
      }
    }
  },
  show curve endpoints/.style={
    postaction=decorate,
    decoration={show path construction,
      curveto code={
        \node [fill, blue, ellipse, inner sep=2pt] at (\tikzinputsegmentlast) {}
        ;
      }
    }
  }
}
\definecolor{dkgreen}{RGB}{0,100,0}
\definecolor{dkbrown}{RGB}{139,69,19}
\begin{document}

\date{January 26, 2025}
    
\title[Cup-one algebras and $1$-minimal models]%
{Cup-one algebras and $1$-minimal models}

\author{Richard~D.~Porter$^1$}
\author{Alexander~I.~Suciu$^{1,2}$}
\address{$^1$Department of Mathematics,
Northeastern University,
Boston, MA 02115, USA}
\email{\href{mailto:r.porter@northeastern.edu}{r.porter@northeastern.edu}}

\email{\href{mailto:a.suciu@northeastern.edu}{a.suciu@northeastern.edu}}
\thanks{$^2$Supported in part by the Simons Foundation Collaboration Grant 
for Mathematicians \#693825}

\begin{abstract}
In previous work we introduced the notion of binomial cup-one algebras, 
which are differential graded algebras endowed with Steenrod $\cup_1$-products 
and compatible binomial operations. In this paper we show that binomial cup-one
algebras capture homotopy $1$-type. In particular, given such an $R$-dga, $(A,d_A)$, 
defined over the ring $R=\Z$ or $\F_p$ (for $p$ a prime), with $H^0(A)=R$ and with 
$H^1(A)$ a finitely generated, free $R$-module, we show that $A$ admits a functorially 
defined $1$-minimal model, $\rho\colon (\mcm(A),d)\to (A,d_A)$, which is unique up to 
isomorphism. Furthermore, we associate to this model a pronilpotent group, whose 
continuous cohomology is isomorphic to that of $\mcm(A)$. These constructions, 
which refine classical notions from rational homotopy theory, allow us to distinguish 
spaces with isomorphic torsion-free integral cohomology rings. Moreover, we show 
that there is an equivalence of categories between isomorphism classes of 
finitely-generated, torsion-free-nilpotent groups and isomorphism classes 
of finitely generated $1$-minimal models over the integers.
\end{abstract}

\subjclass[2020]{Primary
16E45. 
Secondary 
13F20, 
20F18, 
20J05, 
55N45, 
55P62, 
55S05, 
55U10. 
}

\keywords{Differential graded algebras, cochain algebras, 
Steenrod cup-$i$ products, binomial rings, Hirsch extensions, 
minimal models, Massey products, nilmanifolds}

\maketitle

\setcounter{tocdepth}{1}
\tableofcontents
\section{Introduction}
\label{sect:intro}

\subsection{Overview}
\label{intro:overview}

In previous work we combined properties of the Steenrod cup-one 
products of cochains and binomial rings in the cochain complex of 
a space to define the algebraic categories of binomial cup-one  
differential graded algebras over the integers and over $\F_p$ 
for $p$ a prime. We construct here the $1$-minimal model $\mcm$
for such a binomial, cup-one dga $(A,d)$, and prove its key properties; 
namely, that it is a free binomial cup-one dga unique 
up to isomorphism, and that it determines a pronilpotent group, 
also unique up to isomorphism, whose continuous cohomology 
is isomorphic to that of $\mcm$.

Since the cochains of a space $X$ with coefficients 
in the ring $R=\Z$ or $\F_p$ are binomial $\cup_1$-dgas, 
it follows that invariants of the $1$-minimal model for $A = C^\ast(X;R)$  
give homotopy type invariants of $X$.
As an application, we define in Section \ref{sect:dcs-1min} 
one such invariant, called {\em $n$-step equivalence}, and 
exhibit a family of spaces that can be distinguished 
using the integral $1$-minimal model, where the corresponding 
approach in rational homotopy theory fails to distinguish those spaces. 
Moreover, we show in Section \ref{sect:nilpotent} that there is an 
equivalence of categories
between isomorphism classes of finitely-generated, 
torsion-free-nilpotent groups and isomorphism classes
of finitely generated $1$-minimal models over the integers.

The reader familiar with rational homotopy theory 
(\cite{FHT, FHT2, Griffiths-Morgan}), will note that the 
$1$-minimal models defined here over $\Z$ and over $\F_p$ can 
be viewed as analogues of Sullivan's $1$-minimal model over the rationals. 
Integral homotopy theory, as initiated in work of Ekedahl \cite{Ekedahl-2002} 
and Kadeishvili \cite{Kadeishvili-2003}, deals with $E_\infty$-algebras 
rather than commutative differential graded algebras that are used in rational homotopy theory.
In  \cite{Mandell}, Mandell showed that two nilpotent, finite type spaces are weakly equivalent 
if and only if their singular cochain complexes are quasi-isomorphic as $E_\infty$-algebras.
From this perspective, the $1$-minimal model constructed in this paper 
expresses precisely the $E_\infty$-data that matter for the fundamental group. 

In other recent work of interest, Petersen \cite{Petersen}
develops a framework for studying the compactly supported 
cohomology of configuration spaces using a forgetful functor from 
$E_\infty$-algebras to twisted cdgas;
Richter and Sagave \cite{Richter-Sagave} use diagrams of 
chain complexes to model $E_\infty$-dgas by strictly commutative objects;
Horel \cite{Horel} constructs a fully faithful functor from finite type nilpotent 
spaces to cosimplicial binomial rings;
Flynn-Connolly \cite{Flynn} explores the relationship 
between commutative algebras and $E_\infty$-algebras in 
characteristic $p$; and Gadish \cite{Gadish} connects the first 
layer of the  $E_\infty$-algebra structure on the cochain 
complex of a space to ``letter-braiding" invariants of the fundamental group.
 
\subsection{Steenrod $\cup_i$-products and binomial cup-one dgas}
\label{intro:cochain-steenrod}
Let $A=(C^*(X;R),d)$ be the cellular cochain complex of a $\Delta$-set $X$, 
with coefficients in a commutative ring $R$.  Then $A$ is, in fact, a 
differential graded $R$-algebra, with multiplication given by 
the cup-product. In \cite{Steenrod}, Steenrod introduced a 
sequence of operations, $\cup_i\colon A^p\otimes_{R}  A^q\to A^{p+q-i}$, 
starting with $\cup_0=\cup$, the usual cup-product. We are mainly interested here 
in the additional structure on the cochain algebra provided by the 
$\cup_1$-product, which is tied to the differential and the cup 
product via the Steenrod  \cite{Steenrod} and Hirsch \cite{Hirsch} identities 
 (see Section \ref{subsec:cup-i}).

We defined in \cite{Porter-Suciu-2021}  several categories of 
differential graded algebras (over $R=\Z$ or $\F_p$) with  
extra structure, coming from either the cup-one products, 
or from a binomial ring structure, or both, bound together 
by suitable compatibility conditions. 
In Sections \ref{subsec:graded-cup1} and \ref{subsec:cupd-formula} 
we recall these notions,  with some mild modifications 
to better fit the present context. 
A \emph{cup-one differential graded algebra}\/ is   
an $R$-dga $(A,d)$ with a cup-one product map 
$\cup_1\colon A^1 \otimes_R A^1 \to A^1$ 
that gives $R\oplus A^{1}$ the structure 
of a commutative ring and satisfies the 
Hirsch identity, as well as the following ``$\cupd$ formula,"
\begin{equation}
\label{eq:cupd-intro}
d(a \cup_1 b) = - a \cup b - b \cup a
+ da \cup_1 b + db \cup_1 a - da \circ db,
\end{equation}
for all $a,b\in A^1$ with $da,db$ equal to sums of cup products, 
where the map $\circ$ is bilinear and satisfies 
$(a_1 \cup a_2)\circ (b_1 \cup b_2)
= (a_1 \cup_1 b_1) \cup (a_2 \cup_1 b_2)$. 
The significance of formula \eqref{eq:cupd-intro}
is that it expresses the 
differentials of cup-one products of elements in $A^1$ in terms of the 
differentials of the factors and $\cup_1$-products
of elements in $A^1$, as opposed to Steenrod's formula, 
which involves $\cup_1$-products of elements in $A^2$
with elements in $A^1$.  Moreover, as shown in \cite{Porter-Suciu-2021}, 
if $A=C^\ast(X;R)$ is a cochain algebra, Steenrod's formula 
restricts to formula \eqref{eq:cupd-intro} for elements $a, b\in A^1$ with
$da$ and $db$ equal to sums of cup products.

In Section \ref{sect:bin-cup1}, we add extra structure to these algebras. 
A commutative ring $A$  is called a {\em binomial ring}\/  if $A$ is torsion-free 
as a $\Z$-module, and has the property that the elements 
$\binom{a}{n} \coloneqq a(a-1)\cdots (a-n+1)/n!$ 
lie in $A$ for every $a\in A$ and every $n>0$.  
An analogous notion holds for $\F_p$-algebras. 
These objects come equipped with maps 
$\zeta_n\colon A\to A$, $a\mapsto \binom{a}{n}$, 
defined for all $n>0$ over $\Z$, and only for $n<p$ over $\F_p$.
Now consider a cup-one dga $(A,d)$ over $R=\Z$ or $\F_p$. 
Such an object is called a 
{\em binomial cup-one algebra}\/ if $A^0$, with multiplication 
$A^0\otimes_{R} A^0\to A^0$ given by the cup-product, is a binomial 
$R$-algebra, and the $R$-submodule $R \oplus A^1 \subset A^{\le 1}$, 
with multiplication $A^1\otimes_{R} A^1\to A^1$ given by the cup-one 
product, is an $R$-binomial algebra.

Our main motivating example is the cochain algebra of a space. 
In Theorem \ref{thm:cochain-bincup1d} we show that, for any 
$\Delta$-set $X$, the cellular cochain algebra $C=(C^{*} (X;R),d)$
is a binomial cup-one dga, with maps $\cup_1\colon C^1\otimes_{R} C^1\to C^1$ 
given by $(a \cup_1\! b)(e) =  a(e)\cdot b(e)$ for all $1$-simplices $e$  
and with the $\circ$ map equal to Steenrod's $\cup_2$-product, with
$\circ=\cup_2\colon C^2 \otimes_R C^2 \to C^2$ given by 
$(v \circ w)(s) = v(s)\cdot w(s)$,  for all $2$-simplices $s$, 
and with binomial maps given by $\zeta_n(a)(e)=\binom{a(e)}{n}$ 
when $R=\Z$ and analogously for $R=\F_p$.

\subsection{Free binomial cup-one dgas}
\label{intro:free-bin-cup1}

These structures allow us to define in Section \ref{sect:free-bin-dga} the 
{\em free binomial cup-one graded algebra}, $\T=\T_R^*(\bX)$, 
on a set $\bX$. When $R=\Z$, the starting point is the ring 
$\Int(\Z^{\bX})=\{q\in \Q[\bX] \mid q(\Z^{\bX})\subseteq \Z\}$ 
of integrally-valued polynomials with variables in $\bX$.
This is a binomial ring, generated by the polynomials 
$\binom{\bX}{\mathbf{n}}=\prod_{x\in \bX}\binom{x}{\mathbf{n}_x}$ 
with $\mathbf{n}_x \in \Z_{\ge 0}$. 
We define $\T$ to be the tensor algebra on $\fm_{\bX}$, 
the maximal ideal at $0$ in $\Int(\Z^{\bX})$. 
When $R=\F_p$, an analogous definition applies, 
with suitable modifications. In either case, we have 
$R$-linear maps $\cup_1\colon \T^1\otimes \T^1\to \T^1$, 
given by $a\cup_1 b=ab$ and $\circ \colon \T^2 \otimes \T^2 \to \T^2$ 
given by $(a_1 \otimes a_2) \circ (b_1 \otimes b_2)=
(a_1 b_1) \otimes (a_2 b_2)$.  Using the binomial structure 
on $\T^1$, we show that the map sending each $x\in \bX$ to $0$ 
extends to a linear map $d_{\bz}\colon \T^1\to \T^2$.  
In turn, $d_{\bz}$ extends to a differential on the whole 
tensor algebra by the graded Leibniz rule, and 
$(\T_R(\bX),d_{\bz})$ is then a binomial cup-one dga.
The key result that allows us to construct the differential 
$d_{\bz}$ is Theorem \ref{thm:dsquared}, which reads as follows.

\begin{theorem}
\label{thm:intro-dsquared}
Let $d \colon \T_R(\bX) \to \T_R(\bX)$ be a degree-one map satisfying the
$\cupd$ formula and the Leibniz rule.
Then $d^2(x) = 0$ for all $x \in \bX$ if and only if 
$d^2(u)=0$ for all $u \in \T_R(\bX)$, in which case $(\T_R(\bX), d)$
is a binomial cup-one dga.
\end{theorem}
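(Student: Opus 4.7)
The ``only if'' direction is trivial. For the converse, the plan is to reduce in two stages: from the tensor algebra $\T_R(\bX)$ to $\T^1 = \fm_{\bX}$ using the Leibniz rule, and then from $\T^1$ to the generators $\bX$ using the $\cupd$ formula together with the binomial $R$-basis of $\fm_{\bX}$. For the first reduction, a direct application of the Leibniz rule $d(u\cup v) = du\cup v + (-1)^{|u|}u\cup dv$ yields
\[
d^2(u\cup v) = d^2(u)\cup v + u\cup d^2(v)
\]
for homogeneous $u,v\in\T_R(\bX)$, with the cross terms $(-1)^{|u|+1}(du\cup dv) + (-1)^{|u|}(du\cup dv)$ cancelling by sign. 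Hence $d^2$ is a graded derivation of $(\T_R(\bX),\cup)$, and since the tensor algebra is generated under $\cup=\otimes$ by its degree-one part, vanishing of $d^2$ on $\T^1$ propagates to all of $\T_R(\bX)$.

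For the second reduction, recall that $\fm_{\bX}$ is a free $R$-module on the binomial monomials $\binom{\bX}{\mathbf{n}} = \prod_{x\in\bX}\binom{x}{\mathbf{n}_x}$ for nonzero multi-indices $\mathbf{n}$ (with an analogous basis over $\Z_p$ restricted to $\mathbf{n}_x < p$). By $R$-linearity of $d^2$, it suffices to verify $d^2(\binom{\bX}{\mathbf{n}})=0$ for each such monomial, which I would do by induction on $|\mathbf{n}|=\sum_x \mathbf{n}_x$. The base case $|\mathbf{n}|=1$ is the hypothesis. For the inductive step, fix $x$ with $\mathbf{n}_x\ge 1$ and use the classical binomial identity
\[
\mathbf{n}_x\cdot\binom{\bX}{\mathbf{n}} = x\cdot\binom{\bX}{\mathbf{n}-\mathbf{e}_x} - (\mathbf{n}_x-1)\binom{\bX}{\mathbf{n}-\mathbf{e}_x},
\]
where $\mathbf{e}_x$ is the unit multi-index at $x$. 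Since multiplication in $\fm_{\bX}$ coincides with $\cup_1$ on $\T^1$, and since the inductive hypothesis kills $d^2(\binom{\bX}{\mathbf{n}-\mathbf{e}_x})$, the problem reduces to the key lemma: whenever $a,b\in\T^1$ satisfy $d^2(a) = d^2(b)=0$, one has $d^2(a\cup_1 b)=0$. Granting this, torsion-freeness of $\T^3$ together with the fact that $\mathbf{n}_x$ is a non-zero-divisor in $R$ (a unit in $\Z_p$ since $\mathbf{n}_x<p$) yields $d^2(\binom{\bX}{\mathbf{n}})=0$.

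The main obstacle is thus proving this lemma, which amounts to differentiating the $\cupd$ formula
\[
d(a\cup_1 b) = -a\cup b - b\cup a + da\cup_1 b + db\cup_1 a - da\circ db
\]
and checking that all resulting terms cancel. The two $\cup$-terms are handled by the Leibniz rule. For the $\cup_1$-terms, write $da = \sum_i p_i\cup q_i$ and $db=\sum_j r_j\cup s_j$ with $p_i,q_i,r_j,s_j\in\T^1$, rewrite each $(p\cup q)\cup_1 b$ via the Hirsch identity $(p\cup q)\cup_1 b = p\cup(q\cup_1 b) + (p\cup_1 b)\cup q$, and apply the $\cupd$ formula iteratively to the resulting degree-one $\cup_1$-products. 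For the $\circ$-term, use the defining formula $(p_1\cup p_2)\circ(q_1\cup q_2) = (p_1\cup_1 q_1)\cup(p_2\cup_1 q_2)$ together with the Leibniz rule and a further application of $\cupd$. This produces a large but finite collection of terms; the technical heart of the argument is a careful combinatorial bookkeeping showing that, after systematic simplification, they organize into expressions carrying $d^2 a$ or $d^2 b$ as factors, and hence vanish by assumption. Once the lemma is established, the two reductions yield $d^2 = 0$ on all of $\T_R(\bX)$, and together with the hypotheses on $d$ and the binomial structure already built into $\T_R(\bX)$ by construction, all the axioms of a binomial cup-one dga are verified.
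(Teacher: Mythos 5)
Your proposal is correct and follows essentially the same route as the paper: the reduction to degree one via the Leibniz rule, the induction over the binomial basis of $\fm_{\bX}$ using the recursion $(n+1)\zeta_{n+1}(x)=\zeta_n(x)\cup_1 x-n\zeta_n(x)$ (with the torsion-freeness/unit argument to divide), and the key lemma that $d^2(a\cup_1 b)=0$ whenever $d^2a=d^2b=0$ are exactly the paper's Lemma on $d^2(a\cup_1 b)$ and the proof of the theorem. The ``combinatorial bookkeeping'' you defer is precisely the content of the paper's identities $d(da\cup_1 b)=da\cup b-b\cup da+da\cup_1 db+d^2a\cup_1 b$ and $d(da\circ db)=da\cup_1 db+db\cup_1 da+d^2a\circ db+da\circ d^2b$, which the paper likewise states as a computationally intensive but direct verification.
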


Taking $d(x)=0$ for all $x\in \bX$ yields the differential 
$d_{\bz}$ from above. As shown in \cite{Porter-Suciu-2021}, 
this differential is compatible with the binomial structure on $\T_R(\bX)$; 
more precisely, 
\begin{equation}
\label{eq:dzeta-intro}
d_{\bz} (\z_n (x)) = - \sum_{\ell = 1}^{n-1}\z_\ell(x) \otimes \z_{n-\ell}(x) , 
\end{equation}
for all $x \in \bX$, and for all $n \ge 1$ when $R=\Z$ and 
for $1 \le n \le p-1$ when $R=\F_p$. As an application of the methods 
developed here, we give in Theorem \ref{thm:cup-zeta} 
a quicker, more conceptual proof of this result. 

\subsection{Differentials defined by admissible maps}
\label{intro:admissible}
A key thread in our paper involves the correspondence (described 
in Section \ref{sect:cochain}) between a magma, that is, a set $M$ 
with a binary operation $\mu\colon M \times M \to M$, and a certain 
$2$-dimensional $\Delta$-set, $\Delta^{(2)}(M)$. In the
case when $\mu$ is associative, that is, $(M,\mu)$ is a 
semigroup, this $2$-complex extends to an
infinite-dimensional cell complex $\Delta(M)$, 
whose $n$-simplices are given
by ordered $n$-tuples of elements in $M$. 
Moreover, if $(M,\mu)$ is a group, then $\Delta(M)$ is the cell complex 
of the bar construction applied to $M$; that is, an Eilenberg--MacLane 
classifying space for $M$.

Properties of the cellular cochain algebras $(C^*(\Delta(M);R), d_{\Delta})$ 
are used in Section \ref{sect:admissible} to derive properties of 
differential graded algebras in our category of binomial $\cup_1$-dgas, 
as follows. Given a set $\bX$ and a set map $\tau\colon \bX\to \T_R^2(\bX)$, 
we start by defining a binary operation, $\mu_{\tau}\colon M\times M\to M$, 
on the $R$-module $M=M(\bX,R)$ of all functions from $\bX$ to 
the ring $R=\Z$ or $\F_p$.  Letting $\Delta^{(2)}(M_\tau)$ be the 
$2$-dimensional $\Delta$-set associated to the magma $M_{\tau}=(M, \mu_\tau)$, 
we define a degree-preserving, $R$-linear map, 
$\psi=\psi_{\bX,\mu}\colon \T_R^{\le 2}(\bX) \to C^*(\Delta^{(2)}(M_\tau);R)$. 
This map sends $1\in \T_R^0(\bX)=R$ to the unit $0$-cochain;   
a polynomial $q\in \T_R^1(\bX)=\fm_{\bX}$ to the $1$-cochain 
whose value on a $1$-simplex $\ba\colon \bX\to R$ is $q(\ba)$; 
and a tensor $q\otimes q'  \in \T_R^2(\bX)$ to the $2$-cochain 
whose value on a $2$-simplex $(\ba, \ba^\prime)$ is $q(\ba)\cdot q'(\ba^\prime)$. 
We then show in Lemma \ref{lem:one} that the map $\psi$ is 
monomorphism which commutes with cup products, cup-one products, 
and the $\circ$ maps. Using the embedding $\psi$, 
we show in Theorem \ref{thm:extend-diff} 
that there is a unique extension of the map 
$\tau\colon \bX\to \T_R^2(\bX)$ to an $R$-linear 
map  $d_{\tau} \colon \T_R(\bX) \to \T_R(\bX)$ that 
satisfies the Leibniz rule and the $\cupd$ formula.

To make further headway, we focus on the case when 
$M_{\tau}$ is a semigroup (we then say 
$\tau$ is {\em admissible}), and consider the 
associated cell complex, $\Delta(M_\tau)$. 
We then show in Theorem \ref{thm:assoc-2} that the map 
$\psi$ extends uniquely to an inclusion 
$\psi\colon (\T_R(\bX), d_{\tau}) \inj C^\ast(\Delta(M_\tau);R)$ 
that satisfies $\psi\circ d_\tau = d_\Delta\circ\psi$, from which it 
follows that $d_\tau^2$ is the zero map. These results
can be summarized as follows.
\begin{theorem}
\label{intro:tau-psi}
If the map $\tau\colon \bX \to \T^2_R(\bX)$ is admissible, then
$d^2_\tau \equiv 0$ and the map $\T^1(\bX) \to C^1(\Delta(M_\tau);R)$ 
given by $q\mapsto (\ba\mapsto q(\ba))$ 
extends uniquely to a monomorphism 
$\psi_{\bX} \colon (\T_R(\bX),d_{\tau}) \inj  (C^\ast(\Delta(M_\tau);R),d_{\Delta})$ 
of binomial cup-one dgas.
\end{theorem}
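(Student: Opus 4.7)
The plan is to obtain Theorem \ref{intro:tau-psi} as a direct synthesis of the two results cited in its statement, namely Theorem \ref{thm:extend-diff} (construction of $d_\tau$) and Theorem \ref{thm:assoc-2} (construction of the embedding $\psi$), together with a short injectivity argument that upgrades the intertwining of differentials to $d_\tau^2\equiv 0$. Uniqueness of both objects will fall out from uniqueness on the $\le 2$ truncation.

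First I would invoke Theorem \ref{thm:extend-diff}, which needs only that $\tau$ is a set map and produces a unique $R$-linear extension $d_\tau\colon \T(\bX)\to\T(\bX)$ satisfying the Leibniz rule and the $\cupd$ formula. At this stage nothing about associativity has been used; in particular $d_\tau^2$ need not vanish. Next I would extend the map $\psi$ of Lemma \ref{lem:one} from $\T^{\le 2}(\bX)$ to all of $\T(\bX)$ degree by degree by declaring that a pure tensor $q_1\otimes\cdots\otimes q_n\in\T^n(\bX)$ is sent to the $n$-cochain whose value on an $n$-simplex $(\ba_1,\dots,\ba_n)$ of $\Delta(M_\tau)$ is $q_1(\ba_1)\cdots q_n(\ba_n)$. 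Admissibility is essential here: associativity of $\mu_\tau$ is what lets $\Delta(M_\tau)$ extend beyond its $2$-skeleton to the full simplicial object whose $n$-simplices are ordered $n$-tuples with the correct face identities. Degreewise injectivity is then inherited from Lemma \ref{lem:one} by a separation-of-variables argument on product simplices built out of the $1$-simplices used there, and preservation of cup products, cup-one products, and the $\zeta_n$ operations reduces multiplicatively to the corresponding statements of Lemma \ref{lem:one} on $\T^{\le 2}$.

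The main work, and the step I expect to be the principal obstacle, is verifying the identity $\psi_{\bX}\circ d_\tau = d_\Delta \circ \psi_{\bX}$. On a generator $x\in \bX$ this unpacks directly from the definition of $\mu_\tau$ in terms of $\tau$ together with the explicit formula for $d_\Delta$ on a $1$-cochain; the $\cupd$ formula then propagates the identity from $\bX$ across all of $\T^1(\bX)$, because both sides obey the same formula and agree on a binomial generating set. In higher degrees the Leibniz rule on the left and the simplicial coboundary $\sum(-1)^i d_i^{\ast}$ on the right are matched by repeated use of associativity of $\mu_\tau$ to rewrite face maps on $(\ba_0,\dots,\ba_n)$; the calculation is bookkeeping-heavy but formal once the $1$-dimensional case is in hand.

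Once the intertwining is established, $d_\tau^2\equiv 0$ is automatic: for every $u\in\T(\bX)$ one has $\psi_{\bX}(d_\tau^2 u)=d_\Delta^2\psi_{\bX}(u)=0$, and injectivity of $\psi_{\bX}$ forces $d_\tau^2 u=0$. Since $(C^{\ast}(\Delta(M_\tau);R),d_\Delta)$ is a binomial $\cup_1$-dga by Theorem \ref{thm:cochain-bincup1d} and $\psi_{\bX}$ preserves all of the structure by the construction above, it is then a monomorphism of binomial cup-one dgas. Uniqueness of $\psi_{\bX}$ follows from uniqueness on $\T^{\le 2}$ (Lemma \ref{lem:one}) together with the fact that $\T(\bX)$ is generated as an algebra by $\T^{\le 1}(\bX)$, so any morphism extending the prescribed map on $\T^1$ and commuting with multiplication is uniquely determined.
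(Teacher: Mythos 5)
Your proposal is correct and follows essentially the same route as the paper: the paper's proof of Theorem \ref{thm:assoc-2} likewise extends $\psi_{\le 2}$ to the unique algebra map on all of $\T_R(\bX)$, proves injectivity by the same separation-of-variables device (replacing $q_1\otimes\cdots\otimes q_n$ by a polynomial in $n$ disjoint copies of $\bX$), propagates the intertwining $d_\Delta\circ\psi=\psi\circ d_\tau$ from $\T^1$ to all degrees via generation by degree-one elements, and deduces $d_\tau^2\equiv 0$ from $d_\Delta^2\equiv 0$ and injectivity. No gaps.
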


\subsection{Hirsch extensions}
\label{intro:Hirsch}

In Section \ref{sect:HirschExt} we continue laying out the groundwork 
for the construction of $1$-minimal models for $\cup_1$-dgas. 
The first step in the construction 
is a free binomial $\cup_1$-dga of the form $(\T_R(\bX), d_{\bz})$ 
with $\bX$ a finite set of $n$ elements. In this case, the map 
$\tau\colon \bX\to \T^2_R(\bX)$ is the zero map, and the 
corresponding $R$-module, $M=M_{\tau}$, is isomorphic to $R^n$.  
Using a spectral sequence argument, we prove in 
Theorem \ref{thm:coho-iso-R} that the map 
$\psi_{\bX}\colon (\T_R(\bX), d_\bz) \to C^\ast(B(R^n);R)$ 
induces an isomorphism on cohomology.

As in rational homotopy theory, Hirsch extensions of free binomial 
$\cup_1$-dgas are the basic building blocks for constructing $1$-minimal 
models. An inclusion $i \colon (\T_R(\bX),d) \to (\T_R(\bX\cup \bY), \bar{d})$
is called a {\em Hirsch extension}\/ if $\bar{d}(y)$ is a cocycle
in $\T_R^2(\bX)$ for all $y \in \bY$. 
As shown in Theorem \ref{thm:ext-bijection}, there is a bijection
between maps of sets from $Y$ to cocycles in $\T_R^2(\bX)$ 
and Hirsch extensions of this sort.

Assume now that $\bX = \bigcup_{i \ge 1} \bX_i$, with each $\bX_i$ a finite
set and $\bX_1 \ne \emptyset$. An $R$-dga $\T=(\T_R(\bX), d)$ is called 
a {\em colimit of Hirsch extensions}\/ if the differential $d$ 
restricts to differentials $d_n$ on $\T_R(\bX^n)$ such that 
$d_1(x) =0$ for all $x \in \bX_1$ and 
each dga $(\T_R(\bX^{n+1}),d_{n+1})$ is a Hirsch extension
of $(\T_R(\bX^{n}),d_{n})$. 
To such a colimit of Hirsch extensions, $\T$, we associate 
in Lemma \ref{lem:h-group} a pronilpotent group, $G_{\T}$, 
together with a $\cup_1$-dga map, 
$\psi_{\T} \colon \T \to C^*(B(G_{\T});R)$, inducing an 
isomorphism on $H^1$. As shown in 
Theorem \ref{lem:group-gives-hirsch}, if $\psi_{\T}$ is a quasi-isomorphism 
and $0 \to F \to \overline{G} \xrightarrow{\pi} G \to 1$ is a central extension 
of groups with $F$ a finitely generated, free $R$-module, then 
there is a Hirsch extension $i\colon \T \inj \overline{\T}$
such that $\overline{G}=G_{\overline{T}}$, the map $\psi_{\overline{\T} }$ 
is also a quasi-isomorphism, and  
$B(\pi)^*\circ \psi_{\T}= \psi_{\overline{\T} }\circ i$.

In work in progress \cite{Porter-Suciu-group-1min}, 
we build on this correspondence between colimits of Hirsch 
extensions and sequences of central extensions of groups. 
For the cochain algebra $A= C^\ast(Y;R)$ of a path-connected 
space $Y$, we will describe a concrete relationship between the 
group $G_{\T}$ and the fundamental group $\pi_1(Y)$,
where $\T$ is a colimit of Hirsch extensions together 
with a $1$-quasi-isomorphism
$\rho\colon \T \to C^\ast(Y; R)$.

\subsection{$1$-minimal models}
\label{intro:one-min}
In Sections \ref{sect:1-min} and \ref{subsec:1-min-unique} 
(which form the core of this work), 
we develop these ideas into a theory of $1$-minimal 
models over a ring $R$ equal to $\Z$ or $\F_p$.  
A key technical tool is provided by the following lifting criterion 
(Theorem \ref{thm:1-qi-lift}): Let $f\colon A\to A'$ be a surjective 
$1$-quasi-iso\-morphism between binomial cup-one $R$-dgas 
and let $\varphi\colon \T\to A'$ be a morphism from 
a colimit of Hirsch extensions to $A'$; there is then a morphism 
$\widehat{\varphi} \colon \T \to A$ such that $f\circ \widehat{\varphi}=\varphi$. 

Now let $(A,d)$ be a binomial cup-one $R$-dga. 
A {\em $1$-minimal model}\/ 
for $A$ is a colimit of Hirsch extensions 
$\mcm_n=(\T_R(\bX^n),d_n)$, together with 
morphisms $\rho_n\colon  \mcm_n\to A$ 
compatible with the Hirsch extensions
of $\mcm_n$ into $\mcm_{n+1}$. 
Additionally, the map 
$H^1(\rho_1)\colon H^1(\mcm_1)\to H^1(A)$ is required 
to be an isomorphism; in particular, $\bX_1$ corresponds to
a basis for $H^1(A)$. For $n \ge 1$, the set $\bX_{n+1}$ 
is a basis for the free submodule 
$\ker( H^2(\rho_n)) \subset H^2(\mcm_n)$ given by the cohomology classes of the 
$2$-cocycles $d_{n+1}(x)$ with $x\in  \bX_{n+1}$.

In Theorems \ref{thm:cupd-minmodel} and \ref{thm:1-min-lift} 
we show that every binomial cup-one dga admits (under some mild 
finiteness assumptions) a $1$-minimal model, unique up to isomorphism.
These results may be summarized, as follows.

\begin{theorem}
\label{thm:intro-minmodel}
Let $(A,d_A)$ be a binomial cup-one dga over $R=\Z$ or $\F_p$, with $p$ a 
prime. Assume $H^0(A)=R$ and $H^1(A)$ is a finitely generated, free 
$R$-module. Then,
\begin{enumerate}[itemsep=1pt]
\item There is a $1$-minimal model, $\mcm=(\T_R(\bX),d)$, for $A$, and a 
structural morphism, $\rho\colon \mcm\to A$, that is a $1$-quasi-isomorphism.
\item Given $1$-minimal models, $\rho \colon \mcm \to A$ and  
$\rho' \colon \mcm' \to A$, 
there is an isomorphism $f \colon \mcm \to \mcm'$ 
and a dga homotopy $\Phi \colon \mcm \to A \otimes_R C^{*}([0,1];R)$
from $\rho$ to $\rho' \circ f$.
\end{enumerate}
\end{theorem}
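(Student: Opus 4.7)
The plan is to handle both parts via the Hirsch-extension machinery of Section \ref{sect:HirschExt} together with the lifting criterion of Theorem \ref{thm:1-qi-lift}. Existence is an inductive colimit of Hirsch extensions built from Theorem \ref{thm:ext-bijection}; uniqueness follows from a single application of the lifting criterion to a pullback involving the path object $P \coloneqq A \otimes_R C^*([0,1];R)$.

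For part~(1), choose cocycles $a_1,\dots,a_r \in A^1$ whose classes form an $R$-basis of $H^1(A)$ and set $\mcm_1 \coloneqq (\T_R(\bX_1), 0)$ with $\bX_1 = \{x_1,\dots,x_r\}$ and $\rho_1(x_i) = a_i$; this extends uniquely to a binomial $\cup_1$-dga map and makes $H^1(\rho_1)$ an isomorphism. Inductively, given $\rho_n\colon \mcm_n \to A$, let $\bX_{n+1}$ be an $R$-basis for the free submodule $\ker H^2(\rho_n) \subseteq H^2(\mcm_n)$, and for each $x \in \bX_{n+1}$ pick a $2$-cocycle representative $c_x \in \T_R^2(\bX^n)$. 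Theorem \ref{thm:ext-bijection} produces the Hirsch extension $\mcm_{n+1}$ with $d_{n+1}(x) = c_x$; since $\rho_n(c_x)$ is a coboundary, choose $b_x \in A^1$ with $d_A b_x = \rho_n(c_x)$ and set $\rho_{n+1}(x) = b_x$. The colimit $\rho \colon \mcm \to A$ is the $1$-minimal model: $H^1(\rho)$ is an isomorphism at every stage, and the stage-by-stage killing of $\ker H^2(\rho_n)$ ensures $H^2(\rho)$ becomes injective in the limit.

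For part~(2), form the pullback $E \coloneqq \mcm' \times_A P$ of $\rho'\colon \mcm' \to A$ along $e_1 \colon P \to A$ in binomial $\cup_1$-dgas, with projections $\pi_1\colon E \to \mcm'$ and $\pi_2\colon E \to P$, and set $q \coloneqq e_0 \circ \pi_2 \colon E \to A$. The evaluation $e_1$ is a surjective quasi-isomorphism with acyclic kernel $A \otimes \ker(C^*([0,1];R) \to R)$, so the pulled-back map $\pi_1$ is also a surjective quasi-isomorphism; consequently $\rho'\pi_1 = e_1\pi_2$ is a $1$-quasi-isomorphism. Since $\pi_2$ itself serves as a dga homotopy from $q = e_0\pi_2$ to $e_1\pi_2$, the map $q$ is also a $1$-quasi-isomorphism, and surjectivity of $q$ is immediate from the section sending $a \in A$ to $(0, a\otimes v_0^*) \in E$ (where $v_0^*$ denotes the dual of the initial vertex of $[0,1]$). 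Applying Theorem \ref{thm:1-qi-lift} to $\rho\colon \mcm \to A$ and the surjective $1$-quasi-isomorphism $q$ yields $\widetilde\rho \colon \mcm \to E$ with $q\widetilde\rho = \rho$. Setting $f \coloneqq \pi_1\widetilde\rho$ and $\Phi \coloneqq \pi_2\widetilde\rho$ gives $e_0\Phi = \rho$ and $e_1\Phi = \rho' f$, as required. That $f$ is an isomorphism rather than merely a $1$-quasi-isomorphism follows by a stage-by-stage induction on the Hirsch filtrations: the restriction of $f$ to $\mcm_n \to \mcm'_n$ identifies the canonically free modules $\ker H^2(\rho_n)$ and $\ker H^2(\rho'_n)$, forcing $f$ to be a bijection on each generating set $\bX_{n+1} \leftrightarrow \bX'_{n+1}$.

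The main obstacle is verifying that the pullback $E$ exists in the category of binomial $\cup_1$-dgas and that $q$ is a surjective $1$-quasi-isomorphism. The pullback is the submodule $\{(m',p) \in \mcm' \oplus P : \rho'(m') = e_1(p)\}$, and its closure under all operations (cup product, $\cup_1$-product, $\circ$-map, binomial $\z_n$) follows from the fact that $\rho'$ and $e_1$ are morphisms of binomial $\cup_1$-dgas, so these operations are componentwise on $\mcm' \oplus P$ and preserve the defining equation. The $1$-quasi-isomorphism property of $q$ then reduces, via the pullback-and-homotopy argument above, to the acyclicity of $\ker(C^*([0,1];R) \to R)$, which is immediate from the contractibility of the interval. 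Once these structural points are in hand, both parts of the theorem follow cleanly from the Hirsch-extension theory and the lifting criterion.
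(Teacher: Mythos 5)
Your part~(1) follows the paper's own construction (Theorem \ref{thm:cupd-minmodel}), and your part~(2) takes a genuinely different route: instead of the paper's explicit stage-by-stage Homotopy Lifting Lemma (Lemma \ref{lem:homotopy-lift}), you form the mapping path object $E=\mcm'\times_A (A\otimes_R C^*(I;R))$ and apply the lifting criterion of Theorem \ref{thm:1-qi-lift} once. The pullback construction itself is sound (fiber products exist in this category, $e_1$ is a surjective quasi-isomorphism with contractible kernel $A\otimes\ker\eta_1$, and your section $a\mapsto(0,a\otimes t_0)$ gives surjectivity of $q$), and it does produce a morphism $f$ together with a homotopy $\Phi$ from $\rho$ to $\rho'\circ f$ more cleanly than the paper does. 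However, there are two genuine gaps.

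First, in part~(1) you assert without proof that $\ker(H^2(\rho_n))$ is a free $R$-module admitting a (finite) basis $\bX_{n+1}$. Over $\Z$ this is not automatic: $H^2(\mcm_n)$ is neither finitely generated as stated nor torsion-free in general (cf.\ the Heisenberg example, where $H^2(\mcm(k))=\Z_k\oplus\Z\oplus\Z$), so a submodule need not be free or finitely generated a priori. The paper devotes Lemma \ref{lem:fgtf} (via the spectral sequence of Lemma \ref{lem:ss}) to showing that the kernel stays finitely generated and free at each stage; without this the induction cannot proceed. Second, and more seriously, your argument that $f=\pi_1\circ\widetilde\rho$ is an \emph{isomorphism} presupposes that $f$ restricts to maps $\mcm_n\to\mcm'_n$ of the Hirsch filtrations. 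The lift $\widetilde\rho$ supplied by Theorem \ref{thm:1-qi-lift} is built on the filtration of the \emph{source} $\mcm$, but nothing in the pullback construction forces $\pi_1\circ\widetilde\rho$ to land in the $n$-th stage of the \emph{target} $\mcm'$; an element $f(x)$ for $x\in\bX_{n+1}$ is merely some element of $(\mcm')^1$ whose differential lies in $f(\mcm_n)$. Without filtration compatibility, the comparison of $\ker(H^2(\rho_n))$ with $\ker(H^2(\rho'_n))$ and the Five-Lemma argument (the paper's diagram \eqref{eq:cd-min} in Lemma \ref{lem:ext-bij}) never get started. This is precisely why the paper constructs $f_n$ and the homotopies $\Phi_n$ simultaneously and inductively, choosing $f_{n+1}(x)$ for $x\in\bX_{n+1}$ so that diagram \eqref{eq:square-f} commutes. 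Your approach can be repaired, but only by reinstating a stage-by-stage argument for the target filtration, at which point it largely reduces to the paper's proof.
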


In the case when $(A,d_A)$ admits an augmentation, that is, a dga morphism 
$\varepsilon \colon A\to R$, the isomorphism $f$ from above is unique (in the 
category of augmented dgas). More precisely, we prove in Theorems \ref{thm:aug-minmodel} 
and \ref{thm:1-min-cup-aug}  that $A$ has an augmented $1$-minimal model, $\mcm$, such 
that the structural morphism is an augmentation-preserving $1$-quasi-isomorphism. Moreover, 
given augmented $1$-minimal models, $\rho \colon \mcm \to A$ and  
$\rho' \colon \mcm' \to A$, there is a {\em unique}\/ augmentation-preserving
isomorphism $f\colon \mcm \to \mcm'$ such that $\rho$  is 
augmen\-tation-preserving homotopic to $\rho' \circ f$.

\subsection{Compatibility of integer and rational $1$-minimal models}
\label{intro:compatibility}
In Section \ref{sect:Z-vs-Q} we show that
the integer $1$-minimal model of a space $Y$
tensored with the rationals is weakly equivalent 
as a dga to the $1$-minimal model for $Y$ in rational
homotopy theory. 

The algebra of polynomial forms with rational coefficients
on a standard simplex was used by Sullivan in \cite{Sullivan} 
to define the algebra $A_{\PL}(Y)$ of compatible polynomial 
forms on the singular simplices of a space $Y$; this  
algebra is a commutative dga over the rationals.
The properties of the $1$-minimal model of a cdga over $\Q$  
are analogous to---and in fact are the motivation for---the 
properties we use to define the $1$-minimal model for a 
binomial cup-one dga over $\Z$ or $\F_p$.

In addition to the Sullivan algebra $A_{\PL}(Y)$ and
the singular cochain algebra $C^\ast(Y;\Q)$, there
is a dga over the rationals $CA(Y)$ with the property
proved in \cite{FHT} that for topological spaces $Y$, there 
are natural quasi-isomorphisms
$C^\ast(Y;\Q) \to CA(Y) \leftarrow A_{\PL}(Y)$. 
Consequently, $A_{\PL}(Y)$ is weakly equivalent (as a dga) to $C^\ast(Y;\Q)$. 
The following result, Theorem \ref{thm:1-min-mod-zq},
shows that weak equivalence extends to $1$-minimal models.
\begin{theorem}
\label{thm:1-min-mod-zq-2}
Let $Y$ be a connected topological space with
$H^1(Y;\Z)$ finitely generated.
Then the $1$-minimal model for $C^\ast(Y;\Z)$ tensored 
with the rationals is weakly equivalent as a dga 
to the $1$-minimal model in rational
homotopy theory for $A_{\PL}(Y)$.
\end{theorem} 

As we shall see next, although the $1$-minimal model for 
$C^\ast(Y;\Z)$ is weakly equivalent over $\Q$ to the (rational) 
$1$-minimal model for $A_{\PL}(Y)$, the integral version does 
contain more refined information than its rational version.

\subsection{$n$-step equivalence and triple Massey products}
\label{intro:nstep}
Given a positive integer $n$, we define in Section \ref{sect:dcs-1min} 
the relation of {\em $n$-step equivalence}\/ on the set of augmented 
binomial cup-one dgas $(A,d)$ over $\Z$ for which $H^0(A)=\Z$, $H^1(A)$ is 
finitely generated and torsion-free, and $H^2(A)$ is finitely generated.
We then construct an infinite family of spaces that can be distinguished 
using the $1$-minimal model over $\Z$, though the same approach in 
rational homotopy theory fails to distinguish among the spaces in the family.

The definition of $n$-step equivalence is motivated as follows. 
If a morphism $\varphi \colon A \to A^\prime$ induces isomorphisms 
of cohomology groups in degrees up to $2$, then for each $n \ge 1$ there is 
an isomorphism of the $n$th-step in the respective $1$-minimal models, 
$f_n \colon \mcm_n \to \mcm_n^\prime$, such that the
following diagram commutes 
\begin{equation}
\label{eq:ladder-n-step-intro}
\begin{tikzcd}[column sep=32pt]
H^2(\mcm_n) \ar[r, "H^2(f_n)"] \arrow["H^2(\rho_n)"  ']{d}
& H^2(\mcm_n^\prime)  \phantom{\, .} \arrow["H^2(\rho'_n)"]{d}
\\
H^2(A) \ar[r, "H^2(\varphi)"] 
& H^2(A^\prime) \, .
\end{tikzcd}
\end{equation}
Note that the horizontal arrows in 
\eqref{eq:ladder-n-step-intro} are isomorphisms.
We say that $A$ and $A^\prime$ are 
$n$-step equivalent if there are isomorphisms
$f_n\colon \mcm_n \to \mcm_n^\prime$ and 
$e_n \colon H^2(A) \to H^2(A^\prime)$ such that
the diagram \eqref{eq:ladder-n-step-intro} commutes
with $H^2(\varphi)$ replaced by $e_n$.

If $A$ and $A^\prime$ are $n$-step equivalent, then
the cokernels of the homomorphisms $H^2(\rho_n)$ and 
$H^2(\rho_n^\prime)$ are isomorphic, and hence have isomorphic 
torsion subgroups. Given a space $X$ with $n$th-step in the
$1$-minimal model given by $(\mcm_n, \rho_n)$, we define 
$\kappa_n(X)=\Tors (\coker H^2(\rho_n))$. The following result 
(proved in Theorem \ref{thm:iso-pi1}) relates the invariant 
$\kappa_n(X)$ of the $n$-step equivalence class of $C^\ast(X;\Z)$ 
to the fundamental group of $X$.
\begin{theorem}
\label{thm:iso-pi1-intro}
Let $X$ and $X'$ be two connected $\Delta$-complexes
with first and second integral cohomology groups finitely generated. 
Then,
\begin{enumerate}[itemsep=2pt]
\item \label{kn1-intro}
If $\pi_1(X)\cong \pi_1(X')$, then $\kappa_n(X)\cong \kappa'_n(X)$ 
for all $n\ge 1$.
\item \label{kn2-intro}
If $\kappa_n(X)\not\cong \kappa'_n(X)$ 
for some $n\ge 1$, then the cochain algebras $C^*(X;\Z)$ and 
$C^*(X';\Z)$  are not $n$-step equivalent.
\end{enumerate}
\end{theorem}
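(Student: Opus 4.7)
Part~\ref{kn2-intro} will fall out almost immediately from the definition: if $C^\ast(X;\Z)$ and $C^\ast(X';\Z)$ are $n$-step equivalent, then the isomorphisms $f_n$ and $e_n$ in diagram~\eqref{eq:ladder-n-step-intro} (with $H^2(\varphi)$ replaced by $e_n$) induce an isomorphism $\coker H^2(\rho_n)\cong \coker H^2(\rho'_n)$, and hence on their torsion subgroups; the contrapositive is exactly the claim. The real content lies in part~\ref{kn1-intro}, and my plan there is to reduce the computation of $\kappa_n(X)$ to one involving $B\pi_1(X)$ alone, so that the invariant visibly depends on nothing but the fundamental group.

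The first step is to bring in the classifying map $c_X\colon X \to B\pi_1(X)$ and the induced cochain map $g_X = c_X^\ast \colon C^\ast(B\pi_1(X);\Z)\to C^\ast(X;\Z)$. The five-term exact sequence of the Serre spectral sequence for the fibration $\widetilde{X} \to X \to B\pi_1(X)$, together with the universal coefficient theorem applied to the simply connected cover (so that $H^2(\widetilde{X};\Z) \cong \Hom(H_2(\widetilde{X}),\Z)$ is torsion-free), shows that $H^0(g_X)$ and $H^1(g_X)$ are isomorphisms, that $H^2(g_X)$ is injective, and that $\coker H^2(g_X)$ embeds into the torsion-free group $H^2(\widetilde{X};\Z)$.

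Next I would pick a $1$-minimal model $\widetilde{\rho}\colon \mcm \to C^\ast(B\pi_1(X);\Z)$ via Theorem~\ref{thm:cupd-minmodel}. The composite $g_X \circ \widetilde{\rho}$ is then a $1$-quasi-isomorphism, and by the uniqueness statement in Theorem~\ref{thm:1-min-lift} it may be taken as the $1$-minimal model $\rho\colon \mcm \to C^\ast(X;\Z)$. Injectivity of $H^2(g_X)$ forces $\ker H^2(\rho_n) = \ker H^2(\widetilde{\rho}_n)$ at every stage, so the inductive generating sets $\bX_{n+1}$ of the two Hirsch towers coincide. Applying the snake lemma to the factorisation $H^2(\rho_n) = H^2(g_X) \circ H^2(\widetilde{\rho}_n)$ then produces a short exact sequence
\begin{equation*}
0 \longrightarrow \coker H^2(\widetilde{\rho}_n) \longrightarrow \coker H^2(\rho_n) \longrightarrow \coker H^2(g_X) \longrightarrow 0
\end{equation*}
whose rightmost term is torsion-free. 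Taking torsion subgroups yields $\kappa_n(X) \cong \Tors\coker H^2(\widetilde{\rho}_n) = \kappa_n(B\pi_1(X))$. An isomorphism $\pi_1(X) \cong \pi_1(X')$ induces a homotopy equivalence of classifying spaces, hence a quasi-isomorphism of their cochain algebras; by uniqueness of $1$-minimal models this identifies the right-hand sides for $X$ and $X'$, which finishes part~\ref{kn1-intro}.

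The point I expect to demand the most care is verifying that $g_X \circ \widetilde{\rho}$ really furnishes a $1$-minimal model for $C^\ast(X;\Z)$, with the inductively generated sets $\bX_n$ matching those built intrinsically on the $X$ side at every stage. This hinges entirely on injectivity of $H^2(g_X)$; without it, the kernels $\ker H^2(\rho_n)$ and $\ker H^2(\widetilde{\rho}_n)$ would eventually diverge and the two Hirsch towers would part ways. After that, the argument is formal, relying only on the snake lemma and the elementary observation that torsion is preserved under extensions by torsion-free quotients.
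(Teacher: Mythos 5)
Your part~\eqref{kn2-intro} matches the paper's (both are immediate from the definition of $n$-step equivalence, via Proposition~\ref{cor:1iso-kappa}). For part~\eqref{kn1-intro} you take a genuinely different route. The paper avoids classifying spaces altogether: it invokes Whitehead's theorem that complexes with isomorphic fundamental groups have homotopy-equivalent $2$-skeleta after wedging on $2$-spheres, lifts the zig-zag $q_X^{\sharp}, f^{\sharp}, q_Y^{\sharp}$ of \eqref{eq:wh} to the $1$-minimal models, and observes that the collapse maps only change $\coker H^2(\rho_n)$ by a free direct summand, so the torsion is preserved. You instead pull everything back along the classifying map: the Cartan--Leray five-term sequence of the universal cover gives that $H^2(g_X)$ is injective with cokernel embedding in the torsion-free group $H^2(\widetilde{X};\Z)\cong\Hom(H_2(\widetilde X),\Z)$, so that $(\mcm, g_X\circ\widetilde\rho)$ is itself a $1$-minimal model for $C^\ast(X;\Z)$ and the kernel--cokernel sequence of the composite identifies $\kappa_n(X)$ with $\kappa_n(C^\ast(B\pi_1(X);\Z))$. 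Your version is arguably cleaner conceptually (it exhibits $\kappa_n$ directly as an invariant of $\pi_1$, rather than of the $2$-type), and the homological steps --- injectivity of $H^2(g_X)$, torsion-freeness of its cokernel, the short exact sequence of cokernels, and the preservation of torsion under extension by a torsion-free group --- are all correct.

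The one point you gloss over, and which is essential in this paper's framework, is that $g_X=c_X^{\sharp}$ must be a morphism of \emph{binomial cup-one} dgas, not merely a dga map: otherwise $g_X\circ\widetilde\rho$ is not a structural morphism in the sense of Definition~\ref{def:1-min-model}, and the uniqueness results (Theorem~\ref{thm:1-min-lift}, Proposition~\ref{prop:nstep-test}) that guarantee independence of $\kappa_n$ from the choice of model do not apply. Compatibility with $\cup_1$ and the $\zeta_k$ requires $c_X$ to be realized as an order-preserving map of $\Delta$-sets from $X$ to the bar construction $\Delta(\pi_1(X))$ (choose a maximal tree, send each edge to the $1$-simplex of its loop class, each higher simplex to the tuple of its spine edges); this is standard but needs to be said, since the whole argument funnels through it. A second, minor, point: $X\to B\pi_1(X)$ is not literally a fibration with fiber $\widetilde X$, so you should cite the Cartan--Leray spectral sequence of the universal cover (or replace the map by a fibration) rather than ``the Serre spectral sequence for the fibration.'' With these repairs the argument goes through.
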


We apply this result to an infinite family of links in the 
three-sphere, $\{L(n)\}_{n \ge 1}$, the first term of which is 
the well-known Borromean rings.
Set $X(n)$ equal to the complement of $L(n)$ in $S^3$. 
In Proposition \ref{prop:borro}, we show that 
$\kappa_2(X(n)) = \Z_n \oplus \Z_n$, so 
by part \eqref{kn2-intro} of Theorem \ref{thm:iso-pi1-intro},
$X(n)$ and $X(m)$ are not $2$-step equivalent for 
$n \ne m$. We also show that the Sullivan algebras $A_{\PL}(X(n))$ 
and $A_{\PL}(X(m))$ are $2$-step equivalent for all $n, m\ge 1$.

In the proof of Proposition \ref{prop:borro}, the cokernel of 
$H^2(\rho_2)$ is given by triple Massey products of cohomology 
classes in $H^1(X(n);\Z)$. 
This framework provides the context for defining restricted
Massey products, which is a particular case of a more general 
construction that will be developed in \cite{Porter-Suciu-massey}. 
The theory of generalized Massey products continues the program 
initiated in \cite{Porter-Suciu-2020} and is being developed more 
fully in \cite{Porter-Suciu-group-1min, Porter-Suciu-massey}. 

\subsection{$1$-minimal models for nilmanifolds}
\label{intro:nilpotent}
Finally, in Section \ref{sect:nilpotent} we establish  
a correspondence between finitely generated, torsion-free 
nilpotent groups (for short, $\TT$-groups) and finite colimits 
of Hirsch extensions over $\Z$. The main result (Theorem \ref{thm:equivalence}) 
may be summarized as follows.

 \begin{theorem}
\label{thm:intro:equivalence}
There is a bijection between $\TT$-groups and finite colimits of Hirsch extensions 
which preserves cohomology algebras and induces an equivalence of categories 
between isomorphism classes of $\TT$-groups and aug\-mentation-preserving, 
isomorphism classes of finitely-generated, $1$-minimal models.
\end{theorem}

Every $\TT$-group $G$ can be realized as the fundamental group of a 
nilmanifold $M$, which is a classifying space for $G$. 
In \cite{Lambe-Priddy-1982}, Lambe and Priddy sharpened 
a classical result of Nomizu \cite{Nomizu}, which identifies 
the cohomology algebra of $M$ over $\R$ with the Lie algebra 
cohomology of the corresponding nilpotent real Lie algebra. 
They associated to $G$ a Lie algebra $L(G)$ 
defined over the subring of $\Q$ generated by the coefficients 
of the Hall polynomials of $G$, and showed that 
$H^*(G;S)\cong H^*(L(G);S)$, for a certain subring 
$S\subset \Q$. 

One can ask whether this isomorphism holds over the 
(possibly smaller) ring $R$. For instance, Kuzmin 
and Semenov showed in \cite{Kuzmin-Semenov} 
that $H^*(G;\Z)\cong H^*(L(G);\Z)$ 
when $G$ is a free nilpotent group of class $2$. 
It still appears to be an open question when this 
is the case in general.
The approach we take is illustrated 
by two examples in Section \ref{subsec:nil-min}, 
where we apply our theory 
of integral $1$-minimal models to the general
question of whether the group and Lie algebra
cohomologies of a $\TT$-group are isomorphic with
integer coefficients.  We will further 
develop this approach in forthcoming work \cite{Porter-Suciu-group-1min}.

\newpage
\section{Delta-sets, magmas, and cochain algebras}
\label{sect:cochain}

\subsection{$\Delta$-sets and $\Delta$-complexes}
\label{subsec:delta}
We start the section by reviewing the notion of a $\Delta$-complex, in 
the sense of Rourke and Sanderson \cite{Rourke-Sanderson}; 
see also Hatcher \cite{Hatcher} and Friedman \cite{Friedman}. 
We will view such a complex as the geometric realization of the 
corresponding $\Delta$-set, cf.~\cite{Friedman}.

An (abstract) $n$-simplex $\Delta^n$ is simply a finite ordered set, $(0,1, \dots, n)$. 
The face maps $d_i\colon \Delta^n\to \Delta^{n-1}$, given by omitting the $i$-th 
element in the set, satisfy $d_i d_j=d_{j-1}d_i$ whenever $0\le i<j\le n$.  
The geometric realization of the simplex, $\abs{\Delta^n}$, is the convex hull of $n+1$ 
affinely independent vectors in $\R^{n+1}$, endowed with the subspace topology; the 
face maps induce continuous maps, $d_i\colon \abs{\Delta^n}\to \abs{\Delta}^{n-1}$.

More generally, a {\em $\Delta$-set}\/ consists of a sequence of sets $X=\{X_n\}_{n\ge 0}$ 
and maps $d_i\colon X_{n}\to X_{n-1}$  for each $0\le i\le n$ such that 
$d_i d_j=d_{j-1}d_i$ whenever $i<j$.  This is the generalization of the 
notion of ordered (abstract) simplicial complex, where the sets $X_n$ 
are the sets of $n$-simplices and the maps $d_i$ are the face maps. 
We refer to $X^{(n)}=\{X_i\}_{i=0}^{n}$ as the {\em $n$-skeleton}\/ of 
the $\Delta$-set, and say that $X^{(n)}$ has dimension (at most) $n$.

The geometric realization of a $\Delta$-set  $X$ is the topological space
$\abs{X} = \coprod_{n\ge 0} X_n \times \abs{\Delta^n}/\!\sim$, 
where $\sim$ is the equivalence relation generated by $(x,d^i(p))\sim (d_i(x), p)$ 
for $x\in X_{n+1}$,  $p\in \abs{\Delta^n}$, and 
$0\le i\le n$, where 
$d^i\colon \abs{\Delta^n} \to \abs{\Delta^{n+1}}$
is the inclusion of the $i$-th face.
Such a space is called 
a $\Delta$-complex, and can be viewed either as a special kind of 
CW-complex, or a generalized simplicial complex.

The assignment $X\leadsto \abs{X}$ is functorial: 
if $f\colon X\to Y$ is a map of $\Delta$-sets (i.e., $f$ is a family of maps 
$f_n\colon X_n \to Y_n$ commuting with the face maps), there is an 
obvious realization, $\abs{f}\colon \abs{X} \to \abs{Y}$, and this is a  
(continuous) map of $\Delta$-complexes.

The chain complex of a $\Delta$-set $X$, denoted $C_*(X;\Z)$, coincides with the 
simplicial chain complex of its geometric realization: for each $n\ge 0$, the chain 
group $C_n(X)$ is the free abelian group on $X_n$, while the boundary 
maps  $\partial_n\colon C_n(X)\to C_{n-1}(X)$ are the linear maps given 
by $\partial_n=\sum_i (-1)^i d_i$.  If $B$ is an abelian group, the chain complex 
of $X$ with coefficients in $B$ is defined as $C_*(X;B)=C_*(X;\Z)\otimes B$. 
The cochain complex $C^*(X;B)$ is defined by setting 
$C^n(X;B)=\Hom(C_n(X),B)$ and dualizing the differentials. 
We denote by $H_*(X;B)$ and $H^*(X;B)$, respectively, the homology
groups of these complexes.

\subsection{From binary operations to $\Delta$-sets}
\label{subsec:construction}

Let $M=(M,\mu)$ be a {\em magma}, that is, a set $M$ equipped 
with a binary operation, $\mu\colon M\times M \to M$, commonly 
written as $(a_1,a_2) \mapsto a_1a_2$. 
These data determine a $2$-dimensional $\Delta$-set, 
denoted $\Delta^{(2)}(M)$, whose geometric realization 
can be described as follows.  There is a single vertex; each 
element $a\in M$ gives a $1$-simplex, and to each 
ordered pair of 
$1$-simplices, $a_1$ and $a_2$, we assign a $2$-simplex, 
$(a_1,a_2)$, with 
front face $a_1$, back face $a_2$, and third face 
equal to $a_1a_2$. 

\begin{figure}[h!]
\begin{tikzpicture}[scale = 0.6]
\draw (-2,0) -- (0, 3.464101616) -- (2,0) -- (-2,0);
\draw (-2,0) -- ( -4, 3.464101616) -- (0, 3.464101616);
\draw (2,0) -- ( 4, 3.464101616) -- (0, 3.464101616);
\draw (-2,0) -- (0, -3.464101616) -- (2,0) -- (-2,0);
\node at (0,3.8) {\footnotesize$0$};
\node at (-2.3,0) {\footnotesize$1$};
\node at (2.3,0) {\footnotesize$2$};
\node at  (-4.3, 3.464101616) {\footnotesize$3$};
\node at  (4.3, 3.464101616) {\footnotesize$3$};
\node at  (0, -3.7) {\footnotesize$3$};
\node at (-1.3, 1.732051) {\footnotesize$a_1$};
\node at (0, -0.3) {\footnotesize$a_2$};
\node at (3.3, 1.732051) {\footnotesize$a_3$};
\node at (1.5, 1.732051) {\footnotesize$a_1a_2$};
\node at (2,3.7) {\footnotesize$(a_1a_2)a_3$};
\node at (1.4, -1.732051) {\footnotesize$a_3$};
\node at (-1.5, -1.732051) {\footnotesize$a_2a_3$};
\node at (-3.5, 1.732051) {\footnotesize$a_2a_3$};
\node at (-2,3.7) {\footnotesize$a_1(a_2a_3)$};
\end{tikzpicture}
\caption{}
\label{fig:3-simplex}
\end{figure}

This construction is functorial, in the following sense. 
Suppose $h\colon (M,\mu)\to (M',\mu')$ is a morphism of magmas, 
that is, $\mu'(h(a_1),h(a_2))=h(\mu(a_1,a_2))$ for all 
$a_1,a_2\in M$. Then $h$ determines in a straightforward manner 
a simplicial map between the respective $\Delta$-complexes, 
$\Delta(h)\colon \Delta^{(2)}(M',\mu') \to\Delta^{(2)}(M,\mu)$, so 
that $\Delta(h\circ g)=\Delta(h)\circ \Delta(g)$.

Now suppose $M=(M,\mu)$ is a {\em semigroup}, that is, the operation $\mu$ 
on the magma $M$ is associative. Then, as indicated in Figure \ref{fig:3-simplex}, 
the $2$-dimensional $\Delta$-complex corresponding to $\Delta^{(2)}(M)$ extends to a
$3$-dimensional $\Delta$-set, whose $3$-simplices are ordered triples, $(a_1,a_2,a_3)$, 
of elements in $A$.  A routine computation shows that this construction can be pushed 
through in all dimensions; the upshot is summarized in the next lemma.

\begin{lemma}
\label{lem:delta-mu}
Let $M$ be a semigroup, and let $\Delta^{(2)}(M)$ be the  
$2$-dimensional $\Delta$-set determined by the underlying magma. 
Then $\Delta^{(2)}(M)$ is the $2$-skeleton of a $\Delta$-set, $\Delta(M)$,  
whose $n$-simplices are given by ordered $n$-tuples of elements in $M$.
\end{lemma}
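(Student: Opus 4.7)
The plan is to define $\Delta(M)$ explicitly as the standard bar-style (nerve) complex of the semigroup $M$, and then verify that (i) the proposed face maps satisfy the $\Delta$-set identities $d_i d_j = d_{j-1} d_i$ for $i < j$, and (ii) its $2$-skeleton recovers the combinatorial picture of $\Delta^{(2)}(M)$ described in Section~\ref{subsec:construction}.

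For each $n \ge 0$, I set $\Delta(M)_n = M^n$, with $\Delta(M)_0$ a single point (the empty tuple). For $0 \le i \le n$, define
\[
d_i(a_1, \dots, a_n) =
\begin{cases}
(a_2,\dots,a_n), & i = 0, \\
(a_1,\dots, a_i a_{i+1},\dots, a_n), & 0 < i < n, \\
(a_1,\dots,a_{n-1}), & i = n.
\end{cases}
\]
On $2$-simplices this gives $d_0(a_1,a_2) = a_2$, $d_1(a_1,a_2) = a_1 a_2$, and $d_2(a_1,a_2) = a_1$, which matches the description of $\Delta^{(2)}(M)$ in Section~\ref{subsec:construction} and is consistent with Figure~\ref{fig:3-simplex}: the front face of $(a_1,a_2)$ is $a_1$, the back face is $a_2$, and the third face is $a_1 a_2$.

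Next I would verify the simplicial identities $d_i d_j = d_{j-1} d_i$ for $0 \le i < j \le n$ by case analysis on whether $i,j$ hit an endpoint ($0$ or $n$) or the interior. When the two face operations act on disjoint entries of the tuple (i.e.\ $j > i+1$ in the interior, or either operation is an endpoint deletion sufficiently far from the other), both sides coincide tautologically after relabeling indices. The only case that uses the semigroup structure in a nontrivial way is $j = i+1$ with $0 < i$ and $i+1 < n$: here the left side
\[
d_i\bigl(d_{i+1}(a_1,\dots,a_n)\bigr) = (a_1,\dots,a_{i-1}, a_i(a_{i+1}a_{i+2}), a_{i+3},\dots,a_n)
\]
equals the right side
\[
d_i\bigl(d_i(a_1,\dots,a_n)\bigr) = (a_1,\dots,a_{i-1}, (a_i a_{i+1})a_{i+2}, a_{i+3},\dots,a_n)
\]
\emph{precisely} because $\mu$ is associative. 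The analogous boundary cases with $i=0$, $j=1$ or $j=n$, $i=n-1$ reduce to the same identity $a_1(a_2 a_3) = (a_1 a_2)a_3$ after one or both tuples is truncated at the appropriate end.

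Once the simplicial identities are in hand, restricting to $n \le 2$ reproduces the $\Delta^{(2)}(M)$ of Section~\ref{subsec:construction} by the face-map computation above, giving the inclusion $\Delta^{(2)}(M) = \Delta(M)^{(2)}$ as $\Delta$-sets. The proposal contains no genuine obstacle beyond the bookkeeping of cases: the one substantive input is associativity, which is used exactly where two adjacent multiplications collide, and this is also precisely the reason the $2$-dimensional construction of Section~\ref{subsec:construction} extends to all higher dimensions only in the semigroup case rather than for a general magma.
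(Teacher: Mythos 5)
Your proof is correct and follows essentially the same route as the paper: both define $\Delta_n(M)=M^n$ with the standard bar-construction face maps and reduce the $\Delta$-set identities to a routine case check in which associativity enters exactly at the adjacent-multiplication case $j=i+1$. Your write-up simply makes explicit the verification that the paper labels ``a routine computation.''
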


Given a semigroup $M$, we define a $\Delta$-set $S(M)$, as follows. 
We let $S_n(M)$ equal to the set of all functions, $f$, from the 
$1$-simplices of the abstract $n$-simplex $\Delta^n$ to 
$M$ with the property that $f(i,\ell) = f(i,j)\cdot f(j,\ell)$ 
for all $ 0 \le i < j < \ell \le k$. The face maps 
$d_i\colon S_{n}(M)\to S_{n-1}(M)$  are given by the restriction 
of $f$ to the faces of $\Delta^{n}$. It is readily seen that 
the $\Delta$-set $S(M)$ coincides with $\Delta(M)$.

\begin{remark}
\label{rem:bar-monoid}
Of particular importance is the case when $M$ is a {\em monoid}, that is, 
a semigroup with multiplication $\mu\colon M\times M\to M$ and two-sided 
identity $e$. Then $\Delta(M)$ is the bar construction on $M$:  the corresponding 
$\Delta$-complex, $B(M)=\abs{\Delta(M)}$, has a single $0$-cell, 
and an $n$-cell $[g_1| \dots |g_n]$ for each $n$-tuple $(g_1, \dots ,g_n)\in M^n$. 
The chain complex $C_\ast(B(M);\Z)$ yields a resolution by free 
$\Z[M]$-modules of the group $\Z$, viewed as a trivial module over the monoid-ring 
$\Z[M]$. Finally, if $M=G$ is a group, then $B(G)$ is an Eilenberg--MacLane 
classifying space $K(G,1)$; see \cite[Ch.~10]{MacLane-1963} 
and also \cite{Brown,Cartan-Eilenberg, Eilenberg-MacLane}.
\end{remark}

\subsection{Cocycles and $\Delta$-complexes}
\label{subsec:delta-ext}
Let $M=(M,\mu)$ be a magma, with multiplication $\mu\colon M\times M\to M$ 
written as $\mu(a_1,a_2)=a_1a_2$, and let $B$ be an abelian group, 
together with a map of sets $\nu \colon M\times M \to B$, written 
$(a_1,a_2)\mapsto a_1\astt a_2$. Defining a binary operation 
on $M \times B$ by
\begin{equation}
\label{eq:assoc}
(a_1,b_1) \cdot (a_2,b_2)  =
(a_1a_2, b_1 + b_2 + a_1\astt a_2)
\end{equation}
for all $a_i \in M$ and $b_i \in B$
turns the set $M\times B$ into a magma which we call
the {\em extension of $(M, \mu)$ by $\nu$}.
It is readily seen that this binary operation  
is associative if and only if 
\begin{equation}
\label{eq:h-assoc}
a_2\ast a_3 + a_1\astt (a_2a_3) =  a_1 \astt a_2 + (a_1a_2)\astt a_3
\end{equation}
for all $a_i\in M$. 
If $(M, \mu)$ has a two-sided identity, $e$, and if 
$ (a,b_1)\cdot (e,b_2) = (e,b_2)\cdot (a,b_1)$ and 
$(e,b_1)\cdot (e,b_2)= (e, b_1+b_2)$ for all
$a \in M$ and $b_1,b_2 \in B$, then
the extension is called a \textit{central extension}.

From the correspondence between $2$-simplices in $\abs{\Delta(M)}$ 
and ordered pairs of elements in $M$, it follows that
$\nu$ may be viewed as an element in $C^2(\Delta(M);B)$. 
If the magma $(M,\mu)$ has a two-sided identity $e$, we say that 
$\nu$ is {\em normalized}\/ if $\nu (a,e) = \nu (e,a) = 0$ 
for all $a \in M$. In the case when $(M,\mu)$ is a monoid (that is,
the operation $\mu$ is associative and has a two-sided identity $e$), 
a cochain $\xi \in C^k(\Delta(M);B)$ is called {\em normalized}\/
if $\xi(a_1, \ldots , a_k) = 0$ whenever $a_i = e$ for some $i$.

The following lemma gives conditions on $(M,\mu)$
and $\nu$ for the extension of a semigroup to be a semigroup, 
the extension of a monoid to be a monoid, and
for the extension of a group to be a group. Note that a
monoid is a semigroup with identity.
 
\begin{lemma} 
\label{lem:assoc}
Given a magma $(M,\mu)$ and a map $\nu \colon M\times M \to B$, 
the extension $E = (M \times B, \cdot)$ of $M$ by the abelian group $B$ as
defined above has the following properties.
\begin{enumerate}
\item
\label{ext:p1}
Suppose $(M, \mu)$ has a two-sided identity $e$ and 
$\nu$ is a normalized cochain. Then
$E$ is a central extension.
\item 
\label{ext:p2}
Suppose $(M,\mu)$ is a semigroup. Then $(E,\cdot)$ is a semigroup
if and only if $\nu$ is a cocycle in $C^2(\Delta(M);B)$.
\item
\label{ext:p3}
Suppose $(M, \mu)$ is a monoid and $\nu$ is a normalized 
cocycle in $Z^2(\Delta(M);B)$. Then $(E,\cdot)$ is a monoid.
\item
\label{ext:p4}
Suppose $(M, \mu)$ is a group and $\nu$ is a normalized 
cocycle in $Z^2(\Delta(M);B)$. 
Then $(E,\cdot)$ is a group.
\end{enumerate}
\end{lemma}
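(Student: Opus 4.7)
The plan is to verify the four parts in the order stated, with each resting on unwinding the defining formula $\eta((a_1,b_1),(a_2,b_2)) = (a_1a_2,\, b_1+b_2+a_1\astt a_2)$ and comparing it to the simplicial structure of $\Delta(M)$ introduced in Lemma \ref{lem:delta-mu}.

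For part \eqref{ext:p1}, I would directly substitute the two-sided identity $e$ into the formula for $\eta$. The normalization conditions $\nu(a,e)=\nu(e,a)=0$ immediately give $\eta((a,b_1),(e,b_2)) = (a,\,b_1+b_2) = \eta((e,b_2),(a,b_1))$ and $\eta((e,b_1),(e,b_2)) = (e,\,b_1+b_2)$, which is exactly centrality.

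For part \eqref{ext:p2}, which is the conceptual heart of the lemma, I would expand both iterated products:
\begin{align*}
\eta(\eta((a_1,b_1),(a_2,b_2)),(a_3,b_3))
&= \bigl((a_1a_2)a_3,\; b_1+b_2+b_3 + a_1\astt a_2 + (a_1a_2)\astt a_3\bigr),\\
\eta((a_1,b_1),\eta((a_2,b_2),(a_3,b_3)))
&= \bigl(a_1(a_2a_3),\; b_1+b_2+b_3 + a_2\astt a_3 + a_1\astt(a_2a_3)\bigr).
\end{align*}
Associativity of $\mu$ makes the first coordinates agree; the second coordinates agree for all choices of $b_i$ and $a_i$ exactly when
\[
a_1\astt a_2 + (a_1a_2)\astt a_3 \;=\; a_2\astt a_3 + a_1\astt(a_2a_3).
\]
Reading off the face maps of $\Delta(M)$ from Lemma \ref{lem:delta-mu}, the coboundary of $\nu \in C^2(\Delta(M);B)$ on the $3$-simplex $(a_1,a_2,a_3)$ is
\[
(\delta\nu)(a_1,a_2,a_3) = \nu(a_2,a_3) - \nu(a_1a_2,a_3) + \nu(a_1,a_2a_3) - \nu(a_1,a_2),
\]
and the displayed identity above is exactly $\delta\nu(a_1,a_2,a_3)=0$. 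This gives the asserted equivalence.

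For part \eqref{ext:p3}, associativity is immediate from part \eqref{ext:p2}, so it remains only to verify that $(e,0)$ is a two-sided identity; this follows at once from normalization as in part \eqref{ext:p1}. For part \eqref{ext:p4}, I need to produce inverses. Setting $(a,b)^{-1} \coloneqq (a^{-1},\, -b - a\astt a^{-1})$ gives $\eta((a,b),(a,b)^{-1}) = (e,0)$ directly from the definition. The one nontrivial check is the left inverse, which requires $a\astt a^{-1} = a^{-1}\astt a$; I would obtain this by specializing the cocycle identity to the triple $(a,a^{-1},a)$ and using normalization to kill $\nu(e,a)$ and $\nu(a,e)$. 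The only mildly subtle point in the whole proof is precisely this matching between the cocycle condition on $\nu$ and the associativity of $\eta$, together with the symmetry $\nu(a,a^{-1})=\nu(a^{-1},a)$ needed for inverses; everything else is bookkeeping.
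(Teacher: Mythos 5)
Your proof is correct, and parts (1)--(3) coincide with the paper's argument: the same substitution of $e$ for centrality, the same expansion of the two iterated products, and the same identification of the associativity defect with $\delta\nu$ evaluated on the $3$-simplex $(a_1,a_2,a_3)$. The only genuine divergence is in part (4). The paper verifies just the right inverse $(a,b)(a^{-1},-b-a\astt a^{-1})=(e,0)$ and then invokes the standard monoid fact that if every element has a right inverse, that right inverse is automatically two-sided, so no symmetry of $\nu$ is ever needed. You instead check the left inverse directly, which forces you to establish $\nu(a,a^{-1})=\nu(a^{-1},a)$; your derivation of this from the cocycle identity on the triple $(a,a^{-1},a)$ together with normalization is valid (the middle terms $\nu(aa^{-1},a)=\nu(e,a)$ and $\nu(a,a^{-1}a)=\nu(a,e)$ vanish, leaving exactly the desired symmetry). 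Your route is a little more computational but makes the role of the normalized cocycle condition fully explicit; the paper's route is shorter at the cost of quoting an external algebraic fact. Both are complete proofs.
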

\begin{proof}
Part \eqref{ext:p1} follows at once from the definitions. 

Part \eqref{ext:p2}:
let us view the map $\nu\colon M\times M\to B$ as 
a $B$-valued $2$-cochain on $\Delta(M)$. 
Using  Figure \ref{fig:3-simplex} to identify a triple
$(a_1, a_2, a_3)\in M^3$ with the standard
$3$-simplex on vertices $0, \dots, 3$ 
and then to identify ordered pairs of elements in $M$ 
with $2$-simplices, we get 
\begin{align*}
\delta \nu(a_1,a_2,a_3)
			& = \nu([1,2,3]) - \nu([0,2,3]) + \nu([0,1,3]) - \nu([0,1,2])
			\\
			& = \nu(a_2,a_3) - \nu(a_1a_2,a_3) 
						+ \nu(a_1, a_2a_3) - \nu(a_1,a_2)\\
			& = a_2\astt a_3 - (a_1a_2) \ast a_3 
						+ a_1 \astt (a_2a_3) - a_1 \astt a_2.
\end{align*}
Hence, the identity \eqref{eq:h-assoc} is satisfied precisely when 
$\nu$ is a cocycle.

Part \eqref{ext:p3}: we know from part \eqref{ext:p2} that 
$E=(M\times B,\cdot)$ is a semigroup. Moreover, if $e$ is a 
$2$-sided identity for $M$, a routine computation shows that 
$(e,0)$ is a $2$-sided identity for $E$.

Part \eqref{ext:p4}: it follows from parts 
\eqref{ext:p2} and \eqref{ext:p3} that $E$ is a monoid. 
A routine computation shows $(a,b) \in M \times B$ has 
$(a^{-1},-b - a* a^{-1})$ as a $2$-sided inverse, 
and we are done.
\end{proof}

\section{Differential graded algebras and homotopies}
\label{sect:dga}

\subsection{Differential graded algebras}
\label{subsec:dga}
Throughout this section, we work over a fixed coefficient ring $R$, 
assumed to be commutative and with unit $1$.  We start with 
some basic definitions.

\begin{definition}
\label{def:ga}
A {\em graded algebra}\/ over $R$ is an $R$-algebra $A$ 
such that the underlying $R$-module is a direct sum of 
$R$-modules, $A=\bigoplus_{i\ge 0} A^i$, and such that the
product $A\otimes_{R} A\to A$ sends $A^i \otimes_R A^j$ to $A^{i+j}$.
\end{definition}

We refer to the multiplication maps $\cup\colon A^i\otimes_{R} A^j\to A^{i+j}$, 
given by $\cup(a\otimes b)=a \cup b$ as the 
{\em cup-product maps};  
we also refer to the elements of $A^{i}$ as $i$-cochains. A morphism 
of graded algebras is a map of $R$-algebras preserving degrees. 

\begin{definition}
\label{def:dga}
A {\em differential graded algebra}\/ over $R$  (for short, a dga) 
is a graded $R$-algebra $A=\bigoplus_{i\ge 0} A^i$  
endowed with a degree $1$ map, $d\colon A\to A$, 
satisfying $d^2=0$ and the graded Leibniz rule,
$d(a \cup b) = da\cup b + (-1)^{\abs{a}} a\cup db$, 
for all homogenous elements $a, b\in A$, where $\abs{a}$ is the 
degree of $a$.  
\end{definition}

We denote by $[a]\in H^i(A)$ the cohomology class of a cocycle 
$a\in Z^i(A)$. As usual, the graded $R$-module $H^*(A)$ inherits 
an algebra structure from $A$. 

Observe that $A^0$ is a subring of $A$ and the structure map $R \to A$ 
sends the unit $1\in R$ to the unit of $A$, which we will also denote 
by $1$, and which necessarily has degree $0$. 
Consequently, $R$ may be viewed as a subring of $A^0$, and the 
graded pieces $A^i$ may be viewed as $A^0$-modules. 
We say that $A$ is {\em connected}\/ if the structure map 
$R\to A$ maps $R$ isomorphically to $A^0$, and we say that $A$ 
is {\em graded commutative} (for short, $A$ is a cdga) if 
$ab=(-1)^{\abs{a}\abs{b}} ba$ for all homogeneous 
elements $a, b\in A$.

A morphism of differential graded $R$-algebras is an $R$-linear 
map $\varphi\colon A \to B$ between two dgas which preserves 
the grading and commutes with the respective differentials
and products. The induced map in cohomology, 
$\varphi^*\colon H^*(A)\to H^*(B)$, $[a] \mapsto [\varphi(a)]$, 
is a morphism of graded $R$-algebras. The map $\varphi$ 
is called a \emph{quasi-isomor\-phism}\/ if $\varphi^*$ is an isomorphism. 
Two dgas are called \emph{weakly equivalent}\/ if there is a zig-zag 
of quasi-isomorphisms connecting one to the other; plainly, this is an 
equivalence relation among dgas. A dga $(A,d)$ is said to be {\em formal}\/ 
if it is weakly equivalent to its cohomology algebra, $H^*(A)$, endowed 
with the zero differential.

All these notions have partial analogues. Fix an integer $q\ge 1$. 
A dga map $\varphi\colon A \to B$ is a \emph{$q$-quasi-isomor\-phism}\/ 
if the induced homomorphism, $\varphi^*\colon H^i (A) \to H^i(B)$,  
is an isomorphism for $i \le q$ and a monomorphism for $i=q+1$. 
Two dgas are called \emph{$q$-equivalent}\/ if they may be connected 
by a zig-zag of $q$-quasi-isomorphisms. Finally, a dga $(A,d)$ is 
{\em $q$-formal}\/ if it is $q$-equivalent to $(H^*(A), d=0)$.

\begin{remark}
\label{rem:coho-vanish}
Suppose $\varphi\colon A \to B$ is a surjective $q$-quasi-isomorphism. 
Then the long exact sequence in cohomology induced by the 
exact sequence of cochain complexes
$0\to \ker(\varphi) \to A \xrightarrow{\varphi} B \to  0$ implies 
 that $H^i(\ker(\varphi)) = 0$ for $i \le q+1$.
\end{remark}

If $(A,d_A)$ and $(B,d_B)$ are two dgas, then the tensor product 
of the underlying graded $R$-modules, $A\otimes_{R} B$, acquires a 
dga structure, with multiplication and differential given on 
homogeneous elements by $ (a\otimes b)\cdot  (a'\otimes b')=
(-1)^{\abs{a'}\abs{b}}  aa' \otimes bb'$ and 
$d_{A\otimes_{R} B} (a\otimes b)=d_{A}(a) \otimes b+ 
(-1)^{\abs{a}} a\otimes  d_{B}(b)$. The direct product 
$A\times B$ also has a natural structure of a dga, 
with $ (a, b)\cdot  (a',b')= aa' \otimes bb'$ and 
$d_{A\times B} (a, b)=(d_{A}(a) , d_{B}(b))$. 
If $\varphi\colon A\to B$ and $\varphi'\colon A'\to B$ are two dga maps, 
then their {\em fiber product}, denoted $A\times_B A'$, is the sub-dga 
of $A\times B$ consisting of all pairs $(a,b)$ with $\varphi(a)=\varphi'(b)$.

\subsection{Cochain algebras}
\label{subsec:cochains}
The motivating example for us is the singular cochain algebra 
$C^*(X;R)$ on a space $X$, with coefficients in a commutative 
ring $R$. This is an $R$-dga, with differentials given by the usual 
coboundary maps, and with multiplication given by the cup product.
We will be mostly interested 
in the case when $X$ is a simplicial complex, or, more generally, a 
$\Delta$-complex (see \cite{Rourke-Sanderson, Hatcher, Friedman}). 
We will view such a complex as the geometric realization of the corresponding 
abstract simplicial complex or $\Delta$-set, respectively, and we will use 
the simplicial cochain algebra of $X$, still to be denoted by $C^*(X;R)$.  
Let us note that the structure map 
$R \to C^0(X;R)$ sends an element $r \in R$ to the 
cochain whose value on every vertex is $r$. 

\begin{example}
\label{ex:interval}
Let $I$ be the closed interval $[0, 1]$, viewed as a simplicial complex 
in the usual way, and let $C=C^\ast(I;R)$ be its cochain algebra over $R$.  
Then $C^0 = R \oplus R$ with generators $t_0, t_1$ corresponding
to the endpoints $0$ and $1$, and $C^1 = R$ with generator $u$.
The differential $d\colon C^0 \to C^1$ is given by
$d t_0 = -u$ and $d t_1 = u$, while 
the multiplication is given on generators by $t_i  t_j = \delta_{ij} t_i$, 
$t_0  u = u  t_1 =u$, and $u t_0 = t_1  u = 0$. Note that 
the cocycle $t_0+t_1$ is the unit of $C$.  Furthermore, $H^*(C)=R$, 
concentrated in degree $0$.
\end{example}

\begin{example}
\label{ex:chain-bar}
Now let $G$ be a group. Recall from Section \ref{subsec:construction} 
that the $\Delta$-complex $B(G)$ for the bar construction on 
$G$ has one $0$-cell, one $1$-cell $[g]$ for each $g \in G$,
and one $2$-cell for each ordered pair $[g_1| g_2]$ of elements
in $G$. Thus, the $1$-cochains are functions $f\colon G \to R$ and the
$2$-cochains are functions from $G\times G$ to $R$.
The cup product and differential  are given by 
$(f \cup h) ([g_1| g_2]) = f(g_1)\cdot h(g_2)$ and 
$(df)([g_1|g_2]) = f([g_1]) + f([g_2]) - f([g_1\cdot g_2])$, 
where $\,\cdot\,$ denotes the product in $R$ or $G$, depending on the 
context.
\end{example}

Following R.H.~Fox \cite{Fox} and J.H.C.~Whitehead \cite{Wh48, Wh}, 
we say that two maps of spaces, 
$f, g\colon X\to Y$, are {\em $n$-homotopic}\/ if $f\circ h\simeq g\circ h$, 
for every map $h\colon K\to X$ from a CW-complex $K$ of dimension 
at most $n$. A map $f\colon X\to Y$ is an $n$-homotopy equivalence 
(for some $n\ge 1$) if it admits an $n$-homotopy inverse. If such a map 
$f$ exists, we say that $X$ and $Y$ have the same {\em $n$-homotopy type}.  
Two CW-complexes, $X$ and $Y$, are said to be 
of the same {\em $n$-type}\/ if their $n$-skeleta have the same $(n-1)$-homotopy 
type. Any two connected CW-complexes have the same $1$-type, and 
they have the same $2$-type if and only if their fundamental groups are 
isomorphic.

A (cellular) map $f\colon X\to Y$ between two CW-complexes 
induces a morphism of dgas, $f^{\sharp} \colon C^\ast(Y;R) \to C^\ast(X;R)$, 
between the respective cochain algebras, and thus a morphism,  
$f^{\ast}\colon H^*(Y;R)\to H^*(X;R)$, between their 
cohomology algebras. If $f$ is a homotopy equivalence, 
then $f^{\sharp}$ is a quasi-isomorphism of $R$-dgas. 
The next result, which develops ideas from \cite{Wh}, was proved 
in \cite{Porter-Suciu-2021}.

\begin{theorem}[\cite{Porter-Suciu-2021}]
\label{thm:quasi-iso}
If $X$ and $Y$ are CW-complexes of the same $n$-type, then the 
cochain algebras $C^\ast(X;R)$ and $C^\ast(Y;R)$ are $(n-1)$-equivalent.
In particular, if $\pi_1(X)\cong \pi_1(Y)$, then $C^\ast(X;R)$ and $C^\ast(Y;R)$ 
are $1$-equivalent.
\end{theorem}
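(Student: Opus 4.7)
The strategy is to build an explicit zig-zag of $(n-1)$-quasi-isomorphisms between $C^*(X;R)$ and $C^*(Y;R)$, routed through a common Postnikov model obtained by cell-attachment. The key technical observation is: if $\iota\colon W\hookrightarrow W'$ is an inclusion of $\Delta$-complexes in which $W'$ is obtained from $W$ by attaching cells of dimension $\ge n+1$ only, then the pullback $\iota^\sharp\colon C^*(W';R)\to C^*(W;R)$ is a surjective dga morphism and an $(n-1)$-quasi-isomorphism. Indeed, $C^i(W')$ and $C^i(W)$ coincide for $i\le n$, the coboundaries $d^i$ agree for $i\le n-1$, and in degree $n$ the induced map on cohomology is the inclusion $\ker d^n_{W'}/\mathrm{im}\,d^{n-1} \hookrightarrow \ker d^n_W/\mathrm{im}\,d^{n-1}$ (the new $(n+1)$-cells impose further vanishing conditions on cocycles), which is a monomorphism.

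Now let $Z_X$ be built from $X$ by attaching cells of dimension $\ge n+1$ to kill $\pi_i$ for all $i\ge n$, thereby realizing the $(n-1)$st Postnikov stage $P_{n-1}(X)$; define $Z_Y$ analogously. The observation above yields $(n-1)$-quasi-isomorphisms $\iota_X^\sharp\colon C^*(Z_X;R)\to C^*(X;R)$ and $\iota_Y^\sharp\colon C^*(Z_Y;R)\to C^*(Y;R)$. Since $X^{(n)}$ and $Y^{(n)}$ share the same $(n-1)$-homotopy type by hypothesis, they have identical $(n-1)$-Postnikov truncations, and attaching cells of dimension $\ge n+1$ to form $X$ and $Y$ does not alter $P_{n-1}$. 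Hence $Z_X\simeq Z_Y$, and any such homotopy equivalence $h\colon Z_X\to Z_Y$ induces a quasi-isomorphism $h^\sharp\colon C^*(Z_Y;R)\to C^*(Z_X;R)$. Concatenation yields the desired zig-zag
\[
C^*(X;R)\xleftarrow{\iota_X^\sharp} C^*(Z_X;R)\xleftarrow{h^\sharp} C^*(Z_Y;R)\xrightarrow{\iota_Y^\sharp} C^*(Y;R),
\]
all three arrows being $(n-1)$-quasi-isomorphisms, which establishes the $(n-1)$-equivalence of $C^*(X;R)$ and $C^*(Y;R)$.

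The main obstacle is the homotopy equivalence $Z_X\simeq Z_Y$: one must know that the $(n-1)$-homotopy type of an $n$-skeleton in Whitehead's sense actually determines its full $(n-1)$-Postnikov truncation up to homotopy, equivalently that all $k$-invariants through degree $n-1$ are preserved. This is classical obstruction theory but is delicate for $n\ge 3$. For the final sentence of the theorem ($n=2$), the obstacle evaporates: the $1$-Postnikov truncation $P_1(-)=K(\pi_1,1)$ depends only on the fundamental group, so the hypothesis $\pi_1(X)\cong\pi_1(Y)=G$ immediately gives $Z_X\simeq Z_Y\simeq BG$, and the zig-zag above is manifest.
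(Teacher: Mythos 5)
Your route is genuinely different from the paper's. The theorem is imported from \cite{Porter-Suciu-2021}, and the argument recalled immediately after the statement runs \emph{downward} to the $n$-skeleta: restriction $C^\ast(X;R)\to C^\ast(X^{(n)};R)$ is an $(n-1)$-quasi-isomorphism, and then Whitehead's Theorem~6 of \cite{Wh} supplies an honest homotopy equivalence $X^{(n)}\vee\bigvee_{i}S^n_i\simeq Y^{(n)}\vee\bigvee_{j}S^n_j$, so the zig-zag \eqref{eq:wh} through the collapse maps $q_X^{\sharp},q_Y^{\sharp}$ (each an $(n-1)$-quasi-isomorphism, since wedging on $n$-spheres only adds a free summand to $H^n$) finishes the proof. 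You instead go \emph{upward} to the $(n-1)$-st Postnikov stages by attaching cells of dimension $\ge n+1$. Your cochain-level observation --- that such an attachment induces a surjective dga map which is an isomorphism on $H^{i}$ for $i\le n-1$ and a monomorphism on $H^{n}$ --- is correct and is the exact mirror of the skeleton-restriction step. The issue is the pivot $Z_X\simeq Z_Y$: you need that equality of Whitehead $n$-types (i.e., $(n-1)$-homotopy equivalence of $n$-skeleta) forces homotopy equivalence of the $(n-1)$-Postnikov truncations. That statement is true and classical, but it is of essentially the same depth as Whitehead's Theorem~6 --- it is the other half of the same circle of ideas --- and you assert it rather than prove it, explicitly flagging it as ``delicate for $n\ge 3$.'' So your proposal relocates the main topological input rather than supplying it; as written it is complete only for the $n=2$ clause, where $P_1(-)=K(\pi_1,1)$ makes the pivot immediate (and there your argument is fine). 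A further practical difference: the paper deliberately records the skeleton/wedge form of the zig-zag because that explicit shape (with $H^2(q)$ identified with the inclusion $H^2(X;\Z)\inj H^2(X;\Z)\oplus\Z^{I}$) is reused in the proof of Theorem~\ref{thm:iso-pi1}; your Postnikov zig-zag, passing through typically infinite-dimensional spaces, would not serve that later purpose as directly.
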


The $(n-1)$-equivalence between the $n$-skeleta of $X$ and $Y$ takes a special form,  
which we now recall, for it will be needed in the proof of Theorem \ref{thm:iso-pi1}.
By \cite[Theorem 6]{Wh}, there is a homotopy equivalence, $f$, from 
$\overline{X}^{(n)}  = X^{(n)} \vee \bigvee_{i\in I} S^n_i$ 
to $\overline{Y}^{(n)} =Y^{(n)}\vee \bigvee_{j\in J} S^n_j$, for some 
indexing sets $I$ and $J$. Let $q_X\colon \overline{X}^{(n)} \to X^{(n)}$ and 
$q_Y\colon \overline{Y}^{(n)} \to Y^{(n)}$ be the obvious collapse maps,
and consider the induced morphisms on cochain algebras, 
\begin{equation}
\label{eq:wh}
\begin{tikzcd}
C^\ast(X^{(n)};R) \ar[r, "q_X^{\sharp}"] 
&C^\ast(\overline{X}^{(n)};R)
& C^\ast(\overline{Y}^{(n)};R)
\arrow{l}[swap, pos=0.4]{f^{\sharp}} 
&  C^\ast(Y^{(n)};R)\, .\ar[l, pos=0.4, "q_Y^{\sharp}"']
\end{tikzcd}
\end{equation}
The map $f^{\sharp}$ is a quasi-isomorphism, while 
$q_X^{\sharp}$ and $q_Y^{\sharp}$ are $(n-1)$-quasi-isomorphisms; 
thus, \eqref{eq:wh} is the desired $(n-1)$-equivalence between 
$C^*(X;R)$ and $C^*(Y;R)$.

\subsection{Homotopy invariance}
\label{subsec:homotopy}
Let $C^\ast(I;R)$ be the cochain algebra of the interval $I$, as 
described in Example \ref{ex:interval}, and 
let $\eta_0, \eta_1\colon C^\ast(I;R) \to R$ denote the 
$R$-linear maps induced by restriction to the endpoints of $I$; 
that is to say, $\eta_i(t_j)=\delta_{ij}$, and $\eta_i(u)=0$. 

\begin{definition}
\label{def:homotopy}
Two dga maps, $\varphi_0, \varphi_1\colon A \to B$, are said 
to be \emph{homotopic}\/  (denoted $\varphi_0\simeq \varphi_1$) if 
there is a dga map $\Phi \colon A \to B \otimes_R C^\ast(I;R)$ such that 
the following diagram commutes for $i=0,1$:
\begin{equation}
\label{eq:homotopy}
\begin{gathered}
\begin{tikzcd}[column sep=3pc]
B\otimes_R C^{\ast}(I;R) \ar[r, "\id_B \otimes \eta_i"] & B\otimes_R  R
\phantom{.} \\
A\ar[u, "\Phi"]  \ar[r, "\varphi_i"]& B . \ar[equal]{u} 
\end{tikzcd}
\end{gathered}
\end{equation}
\end{definition}
From the commutativity of the diagram \eqref{eq:homotopy}
it follows that a homotopy $\Phi$ is given on elements $a\in A^i$ by
$\Phi(a)=\varphi_0(a)t_0 + \varphi_1(a)t_1 - c(a) u$, 
for some $c(a) \in B^{i-1}$. In particular, if $a\in A^0$, then $c(a)=0$, and so 
$\Phi(a)=\varphi_0(a)t_0 + \varphi_1(a)t_1$. The next theorem can be 
proven by a standard argument, in a manner similar to the proof of 
\cite[Proposition~12.8(i)]{FHT} (see also \cite[Remark 5.10.3]{Halperin}).

\begin{theorem}
\label{thm:cup-coho}
Homotopic dga maps induce the same map on cohomology; that is, 
if $\varphi_0\simeq \varphi_1$, then $\varphi_0^*= \varphi_1^*$.
\end{theorem}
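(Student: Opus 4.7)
The plan is to exploit the explicit coordinate description of a dga homotopy afforded by equation \eqref{eq:homotopy-bis} and show directly that for any cocycle $a\in Z^i(A)$, the difference $\varphi_1(a)-\varphi_0(a)$ is a coboundary in $B$. Concretely, for $a \in A^i$ one may write
\[
\Phi(a) = \varphi_0(a)\otimes t_0 + \varphi_1(a)\otimes t_1 - c(a)\otimes u,
\]
where $c\colon A^i\to B^{i-1}$ is some $R$-linear map (determined by $\Phi$).

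Since $\Phi$ is a dga map and $a$ is a cocycle, we have $d_{B\otimes C^\ast(I;R)}\Phi(a) = \Phi(d_A a) = 0$. The next step is to compute the left-hand side using the graded Leibniz rule in $B\otimes_R C^\ast(I;R)$ together with the explicit differentials from Example \ref{ex:interval}, namely $dt_0=-u$, $dt_1=u$, and $du=0$. Using also that $d_B\varphi_j(a)=\varphi_j(d_A a)=0$, the three summands contribute
\begin{align*}
d(\varphi_0(a)\otimes t_0) &= (-1)^i\varphi_0(a)\otimes(-u),\\
d(\varphi_1(a)\otimes t_1) &= (-1)^i\varphi_1(a)\otimes u,\\
d(c(a)\otimes u) &= d_B c(a)\otimes u.
\end{align*}
Adding these and equating to zero yields the identity
\[
\bigl[(-1)^i\bigl(\varphi_1(a)-\varphi_0(a)\bigr) - d_B c(a)\bigr]\otimes u = 0,
\]
so $\varphi_1(a)-\varphi_0(a) = d_B\bigl((-1)^i c(a)\bigr)$. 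In particular, $[\varphi_0(a)] = [\varphi_1(a)]$ in $H^i(B)$, which is the desired conclusion.

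The argument is essentially a sign-bookkeeping exercise, and there is no serious obstacle: the only delicate point is verifying that the term $c(a)\otimes u$ in $\Phi(a)$ really produces, upon differentiation, a well-defined degree-$i$ coboundary in $B$ after one equates coefficients of $u$ (so $(-1)^i c(a)$ plays the role of a chain homotopy from $\varphi_0$ to $\varphi_1$). Once this is observed, the claim $\varphi_0^\ast=\varphi_1^\ast$ on $H^\ast(A)$ is immediate by passing to cohomology classes.
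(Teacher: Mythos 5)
Your proof is correct and takes essentially the same approach as the paper's: both expand $d\Phi(a)$ using the explicit differentials $dt_0=-u$, $dt_1=u$ on $C^\ast(I;R)$, use that $\Phi$ commutes with differentials, and equate the coefficient of $u$. The only difference is that the paper performs the computation for arbitrary homogeneous $a$ to extract the full chain-homotopy identity $\varphi_1-\varphi_0 = dh+hd$, whereas you specialize to cocycles ($d_Aa=0$), which already suffices for the conclusion.
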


\subsection{Augmented dgas and the wedge sum}
\label{subsec:augmented}

Let $(A,d_A)$ be a differential graded algebra over a unital commutative 
ring $R$. Let us view the ground ring $R$ as a dga concentrated in 
degree $0$ and with differential $d=0$. An {\em augmentation}\/ 
for $A$, then, is a dga-map, $\varepsilon_A \colon A \to R$. 
We call the triple $(A,d_A,\varepsilon_A)$ an 
{\em augmented dga}. A morphism in this category 
is a dga map, $\varphi \colon (A,d_A)\to (B,d_B)$, 
such that $\varepsilon_B\circ \varphi= \varepsilon_A$.

Recall that $A$ is connected if the structure map 
$\sigma_A \colon R \to A^0$ is an isomorphism
of rings; in this case we assume the augmentation
$\varepsilon_A \colon A \to R$ then restricts to an
isomorphism from $A^0$ to $R$. The
composition $\varepsilon_A \circ \sigma_A\colon R\to R$
then is an isomorphism of $R$-algebras, and hence, is the identity map. 
Thus, if $A$ is connected, it has a unique augmentation map.
Moreover, if $\varphi \colon A \to B$ is an augmentation-preserving 
morphism between connected $R$-dgas, the map $\varphi^0\colon A^0\to B^0$ 
may be identified with $\id_R$.  In general, though, a dga may admit 
many augmentations.

If $A$ and $B$ are two augmented dgas, we denote by 
$A\vee B = A\times_R B$ the fiber product of the augmentation maps 
$\varepsilon_A \colon A \to R$ and $\varepsilon_B \colon B \to R$.
Note that $(A \vee B)^0$ is the kernel of the map 
$(\varepsilon_A, -\varepsilon_B)\colon A^0\oplus B^0 \to R$, 
while  $(A\vee B )^i=A^i\oplus B^i$ for $i>0$.

The motivation for these definitions comes from topology. 
Let $X$ be a topological space, and let $C^*(X;R)$ 
be its singular cochain algebra. Choosing a basepoint $x_0\in X$ 
yields an augmentation, $\varepsilon_0\colon C^*(X;R)\to R$, which 
sends a $0$-cochain $\xi$ to its evaluation $\xi(x_0)\in R$ and 
any cochain of higher degree to $0$.  

\begin{example}
\label{ex:aug-cochain}
Let $C^*(I;R)$ be the cochain algebra of the unit interval $I=[0,1]$ 
as in Example \ref{ex:interval}, and let $x_0=0$. Then $\varepsilon_0(t_0)=1$, 
while $\varepsilon_0(t_1)=\varepsilon_0(u)=0$.
\end{example}

If $f\colon (X,x_0)\to (Y,y_0)$ 
is a pointed map, then the induced morphism of cochain algebras, 
$f^*\colon C^*(Y;R) \to C^*(X;R)$, preserves the respective augmentations.
Finally, if $X\vee Y$ is the wedge sum of two pointed spaces, then 
$C^*(X\vee Y;R)$ is the subalgebra of $C=C^*(X;R)\times C^*(Y;R)$ 
equal to $C^n$ in degrees $n>0$ and  $\{(a,b)\in C^0 : a(x_0)=b(y_0)\}$ 
in degree $0$. Therefore, $C^*(X\vee Y;R)$ isomorphic to $C^*(X;R) \vee C^*(Y;R)$.%

\subsection{Augmentation-preserving homotopies}
\label{subsec:aug-homotopy}

Two augmented dga maps, $\varphi_0, \varphi_1\colon A \to B$, are said 
to be \emph{augmentation-preserving homotopic}\/ if there is a 
homotopy $\Phi \colon A \to B \otimes_R C^\ast(I;R)$ between them 
such that the following diagram commutes,
\begin{equation}
\label{eq:aug-homotopy}
\begin{gathered}
\begin{tikzcd}[row sep=20pt, column sep=30pt]
A \ar[r, "\Phi"] \ar[d, "\varepsilon_A"] & B\otimes_R C^{\ast}(I;R)  
\ar[d, "\varepsilon_B \otimes \id"] \\
R  \ar[r] \ar[dr] & R\otimes_R C^{\ast}(I;R) & \\
& C^{\ast}(I;R) \ar[equal]{u}  ,
\end{tikzcd}
\end{gathered}
\end{equation}
where the diagonal map is the structure map for the $R$-algebra 
$C^{\ast}(I;R)$, which sends $1$ to $t_0+t_1$. As noted in 
Section \ref{subsec:homotopy}, the homotopy $\Phi$ is given on elements $a\in A^i$ by 
$\Phi(a)=\varphi_0(a)t_0 + \varphi_1(a)t_1 - c(a) u$, for some $c(a)\in B^{i-1}$. 
The commutativity of \eqref{eq:aug-homotopy} implies that $\varepsilon_B(c(a))=0$. 
When both $A$ and $B$ are connected and $A$ is generated in degree $1$, 
augmentation-preserving homotopies take a very special form, which we 
describe in the proof of the next lemma.

\begin{lemma}
\label{lem:miraculous}
Let $A$ and $B$ be augmented $R$-dgas such that $A$ and $B$ are connected
and $A$ is generated as a graded $R$-algebra by $A^1$. Let 
$\varphi_0, \varphi_1\colon A \to B$ be augmentation-preserving morphisms. 
If there is an augmentation-preserving homotopy between 
$\varphi_0$ and $\varphi_1$, then $\varphi_0 = \varphi_1$.
\end{lemma}

\begin{proof}
Let $\Phi \colon A \to B \otimes_R C^\ast(I;R)$ be an 
augmentation-preserving homotopy between $\varphi_0$ and $\varphi_1$. 
We claim that 
\begin{equation}
\label{eq:miracle1}
\Phi(a) = \varphi_0(a)t_0 + \varphi_1(a)t_1,\\
\end{equation}
for all $a\in A$. 

To prove this claim, let $a\in A^i$ and write as before 
$\Phi(a)=\varphi_0(a)t_0 + \varphi_1(a)t_1 - c(a) u$, for some $c(a)\in B^{i-1}$. 
When $a\in A^0$, we necessarily have $c(a)=0$. 
When $a \in A^1$, we have that $c(a) \in B^0$ and $\varepsilon_B(c(a))=0$; 
since $B$ is connected, it follows that $c(a)= 0$. 
Therefore, \eqref{eq:miracle1} holds for all $a \in A^0\oplus A^1$.
Now recall that $t_i  t_j = \delta_{ij} t_i$. 
Since $\varphi_0$, $\varphi_1$, and $\Phi$ 
are all maps of graded algebras, $A$ is generated in degree $1$,  
and \eqref{eq:miracle1} holds for all $a \in A^1$, it follows that 
equation \eqref{eq:miracle1} holds for all $a\in A$.

Using \eqref{eq:miracle1}, we have that
$(\Phi \circ d_A)(a)  = \varphi_0(d_A a)t_0 + \varphi_1(d_A a)t_1$ and 
\begin{align*}
(d_{B \otimes_R C^\ast(I;R)}\circ \Phi)(a)
		& = d_{B \otimes_R C^\ast(I;R)}
					(\varphi_0 (a)t_0 + \varphi_1(a)t_1)\\
		& = \varphi_0	(d_A a)t_0
		     +\varphi_1 (d_A a)t_1
		     - (\varphi_0(a) - \varphi_1(a)) u	
\end{align*}
for every $a\in A$.  Since $\Phi$ is a map of dgas, it follows that 
$\varphi_0(a) - \varphi_1(a)=0$ for all $a\in A$, and the proof is complete. 
\end{proof}

\section{The Steenrod $\cup_i$-products}
\label{sect:steenrod}

\subsection{The $\cup_i$ operations}
\label{subsec:cup-i}
We now enrich the notion of a differential graded algebra with extra structure, 
motivated by properties of the cochain algebra of a space, as laid out in the 
foundational paper of Steenrod \cite{Steenrod}, and further developed 
by Hirsch in \cite{Hirsch}.

Let $X$ be a $\Delta$-complex, and let $A=(C^*(X;R),d)$ be its cochain algebra 
with coefficients in a commutative ring $R$, with multiplication given by the 
cup product $\cup\colon A^p\otimes_{R}  A^q\to A^{p+q}$. This  
$R$-dga comes endowed with $R$-linear maps, 
$\cup_i\colon A^p\otimes_{R}  A^q\to A^{p+q-i}$, 
which coincide with the usual cup product when $i=0$, 
vanish if either $p<i$ or $q<i$, and satisfy
\begin{align}
\label{eq:cup-i-steenrod}
&d(a \cup_i b)  = (-1)^{\abs{a}+\abs{b}-i} a \cup_{i-1} b +
(-1)^{\abs{a}\abs{b}+\abs{a}+\abs{b}} b\cup_{i-1} a 
+ da \cup_i b + (-1)^{\abs{a}} a \cup_i db
\\
\label{eq:cup-i-hirsch}
&(a \cup b) \cup_1 c = a \cup (b\cup_1 c) + 
(-1)^{\abs{b}(\abs{c}-1)} (a \cup_1 c)\cup b 
\end{align}
for all homogeneous elements $a,b,c\in A$.  
We shall refer to \eqref{eq:cup-i-steenrod} as the ``Steenrod identities" and to 
\eqref{eq:cup-i-hirsch}, with the cup product to the left of the
cup-one product, as the ``Hirsch identity". 

Steenrod's $\cup_i$ operations enjoy the following naturality property. 
Suppose $f\colon X\to Y$ is a map of $\Delta$-complexes which 
preserves the ordering of the vertices of simplices.
Then, by \cite[Theorem 3.1]{Steenrod}, the induced map on cochains, 
$f^*\colon C^*(Y;R)\to C^*(X;R)$, is a morphism of differential graded
algebras that commutes with the $\cup_i$ products.

Steenrod's $\cup_i$ products also occur in the theory of 
non-commutative differential forms, as developed by 
M.~Karoubi, N.~Battikh, and A.~Abbassi. We refer to 
our prior work \cite{Porter-Suciu-2021} for an overview 
of this subject and detailed references. 

\subsection{Cup and cup-one operations on $1$-cochains}
\label{subsec:cup-one-cochains}
Henceforth, we will focus on the aforementioned operations on cochains in low degrees. 
Let  $(A,d)$ be an $R$-dga and assume we have an $R$-linear map 
$\cup_1\colon A^1\otimes_{R}  A^1\to A^{1}$ that satisfies the Steenrod 
identity,
\begin{equation}
\label{eq:steenrod-1c}
d(a \cup_1 b)  = -a\cup b- b\cup a  + da \cup_1 b - a\cup_1 db,
\end{equation} 
for all $a,b\in A^1$. In particular, if $a,b\in Z^1(A)$ are $1$-cocyles, 
we then have
\begin{equation}
\label{eq:cup-1-1}
d(a \cup_1 b)  = -( a\cup b + b\cup a).
\end{equation}
Under these assumptions, the operation 
$\cup_1\colon Z^1(A)\otimes_{R}  Z^1(A)\to A^2$ 
provides an explicit witness for the non-commutativity 
of the multiplication map 
$\cup\colon Z^1(A)\otimes_{R}  Z^1(A)\to Z^2(A)$ and shows
that $uv = -vu$ for elements $u,v \in H^1(A)$.  

Now let $X$ be a $\Delta$-complex and let $C^*(X;R)$ be its cochain algebra 
with coefficients in a commutative ring $R$.
By  \cite[Theorem 2.1]{Steenrod}, $u\cup_1 v=0$ 
if either $u$ or $v$ is a $0$-cochain. Formulas for computing the 
cup products and cup-one products for $1$-cochains $u, v\in C^1(X;R)$ 
are as follows:
\begin{equation}
\label{eq:cup-simplicial}
(u \cup v)(s) = u(e_1)\cdot v(e_2), \quad
(u \cup_1\! v)(e) =  u(e)\cdot v(e) ,
\end{equation} 
where in the first formula $s$ is a $2$-simplex with front face $e_1$ 
and back face $e_2$, while in the second formula
$e$ is a $1$-simplex, and $\,\cdot\,$ denotes the product in $R$. 
In particular, the $\cup_1$-product on $C^1(X;R)$ is both associative 
and commutative, and thus defines an $R$-algebra structure on $C^{\le 1}(X;R)$. 

\begin{example}
\label{ex:cochain-I}
For the cochain algebra $C=C^{\ast}(I;R)$ from Example \ref{ex:interval}, 
the cup-one product $C^1 \otimes_R C^1 \to C^1$ is given by
$u \cup_1 u = u$.
\end{example}

\begin{example}
\label{ex:cochains-bg}
Let $G$ be a group, and let $C^*(B(G);R)$ be 
the cochain algebra of 
the bar construction on $G$, as described in Example \ref{ex:chain-bar}. 
The $\cup_1$-product on $C^1(B(G);R)$ 
is given by $(f \cup_1 h) ([g]) =  f(g) \cdot h(g)$.
\end{example}

\subsection{Graded algebras with cup-one products}
\label{subsec:graded-cup1}
Recall from Section \ref{subsec:dga} that the structure map $R \to A$ 
sends the unit $1\in R$ to the unit of $A$ (which belongs to $A^0$);  
thus, $R$ may be viewed as a subring of $A^0$. Furthermore, the 
restriction of the map $\cup\colon A^0\otimes A^1 \to A^1$ 
to $R\otimes_{R} A^1 \to A^1$ may be identified with the identity of $A^1$, 
while the restriction of the map $\cup\colon A^0\otimes_{R} A^0 \to A^0$ 
to $R\otimes_{R} R \to R$ may be identified with the identity of $R$.

Given a graded $R$-algebra $A$, we let $D^2(A)$ denote 
the {\em decomposables}\/ in $A^2$, that is, the 
image of the cup-product map $A^1\otimes_{R} A^1\to A^2$, 
or, the $R$-submodule of $A^2$ spanned by all elements 
of the form $a\cup b$ with $a,b\in A^1$.  

\begin{definition}
\label{def:gr-cup1}
A  {\em graded $R$-algebra with cup-one products}\/ is a graded $R$-algebra 
$A$ equipped with cup-one product maps, $\cup_1\colon A^1 \otimes_R A^1 \to A^1$ and 
$\cup_1 \colon D^2(A) \otimes_R A^1 \to A^2$, such that 
\begin{enumerate}[label=(\roman*), itemsep=2pt]
\item \label{grc-1}
The $R$-module $R \oplus A^1$, with multiplication defined by the maps 
$R\otimes_{R} R\to R$ and $R\otimes_{R} A^1\to A^1$ mentioned above, 
together with the map $\cup_1\colon A^1 \otimes_R A^1 \to A^1$ is a 
commutative, graded $R$-algebra, with identity $1\in R$.
\item \label{grc-2}
The $\cup$- and $\cup_1$-maps satisfy the Hirsch identity 
\begin{equation}
\label{eq:hirsch-1c}
(a \cup b) \cup_1 c = a \cup (b\cup_1 c) + (a \cup_1 c)\cup b
\end{equation} 
for all $a,b,c\in A^1$.
\end{enumerate}
\end{definition}

A morphism of graded algebras with cup-one products is a  
map $\varphi \colon A \to B$ between two such objects which  
is a map of graded algebras such that 
$\varphi(a_1\cup_1 a_2) = \varphi(a_1) \cup_1 \varphi(a_2)$  
for all $a_1, a_2\in A^1$.

\begin{lemma}
\label{lem:cup1bin-tensor}
Let $A$ and $B$ be two graded $R$-algebras with cup-one products. 
Then the tensor product $A \otimes_R B$ is again a graded algebra 
with cup-one products.
\end{lemma}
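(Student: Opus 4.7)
The plan is to equip $T = A \otimes_R B$ with its standard graded tensor product structure, $T^n = \bigoplus_{i+j=n} A^i \otimes_R B^j$ with Koszul-signed multiplication, and then to transfer the cup-one structures from $A$ and $B$ componentwise. Since $T^1 = (A^0 \otimes_R B^1) \oplus (A^1 \otimes_R B^0)$, the natural candidate for the cup-one product on $T^1$ is
\begin{equation*}
(a\otimes b)\cup_1 (a'\otimes b')\, :=\, (a\cup_1 a')\otimes(bb')\,+\,(aa')\otimes(b\cup_1 b'),
\end{equation*}
where by convention the $A$-cup-one (resp.\ $B$-cup-one) vanishes whenever either argument lies in degree $0$. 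Under this convention the first summand contributes only on $(A^1\otimes B^0)^{\otimes 2}$ and the second only on $(A^0\otimes B^1)^{\otimes 2}$, while the two mixed subsectors pair to zero; the output thus lies in $T^1$.

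Next, I would verify that $R \oplus T^1$ becomes a commutative ring with multiplication $(r+u)(r'+u')=rr'+ru'+r'u+u\cup_1 u'$. Commutativity of $\cup_1$ reduces, on each of the two nonvanishing subsectors, to commutativity of $\cup_1$ in $A$ (or in $B$) together with commutativity of the underlying product on $A^0$ (or $B^0$), which is automatic in the paper's working setting where $A^0$ and $B^0$ carry binomial $R$-algebra structures. Associativity is handled in the same fashion.

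To extend $\cup_1$ to a map $D^2(T)\otimes_R T^1 \to T^2$ I would take the Hirsch identity~\eqref{eq:hirsch-1c} as a defining prescription: on a product $u\cup v$ of two elements $u,v \in T^1$ set
\begin{equation*}
(u \cup v) \cup_1 w\, :=\, u\cup(v\cup_1 w)\,+\,(u\cup_1 w)\cup v \qquad (w \in T^1),
\end{equation*}
and extend $R$-linearly. Well-definedness (independence of the decomposition of an element of $D^2(T)$ as a sum of cup products) then follows from an expansion on pure tensors $u=a\otimes b$, $v=a'\otimes b'$, $w=a''\otimes b''$ which reduces the required identities to the Hirsch identities already available in $A$ and $B$; the same expansion then gives the Hirsch identity for the tensor-product map.

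The main obstacle is not conceptual but bookkeeping: the cup-one formula on $T^1\otimes T^1$ and its extension to $D^2(T)\otimes T^1$ each split into several subsectors indexed by the degree distributions of the tensor factors, and the Koszul signs from the graded tensor product multiplication must be tracked consistently through the verification of commutativity and associativity of $R\oplus T^1$, the well-definedness of the extended $\cup_1$, and the Hirsch identity on $T$. Once the case analysis is organized by the degree data $(|a|,|b|)$ of each tensor factor, each individual subcase collapses to an identity already known in $A$ or $B$.
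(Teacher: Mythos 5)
Your proposal is correct and takes essentially the same route as the paper's proof: your single formula with the degree-zero vanishing convention unpacks to exactly the componentwise definition in \eqref{eq:cup1-tensor}, and your verification of commutativity of $R\oplus (A\otimes_R B)^1$ and of the Hirsch identity reduces, just as in the paper, to the corresponding properties already present in $A$ and $B$.
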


\begin{proof}
First, we extend the $\cup_1$-products on $A^1$ and $B^1$ to a $\cup_1$-product 
on $(A\otimes_R B)^1=(A^1\otimes_R B^0)\oplus (A^0\otimes_R B^1)$ by setting 
\begin{gather}
\begin{aligned}
\label{eq:cup1-tensor}
(a_1 \otimes b_0)\cup_1(a'_1 \otimes b'_0)&=(a_1\cup_1a_1')\otimes b_0b'_0\\
(a_0 \otimes b_1)\cup_1(a'_0 \otimes b'_1)&=a_0a_0'\otimes (b_1\cup_1 b'_1)\\
(a_1 \otimes b_0)\cup_1(a'_0 \otimes b'_1)&=(a_0 \otimes b_1)\cup_1(a'_1 \otimes b'_0)=0
\end{aligned}
\end{gather}
for all $a_i,a_i'\in A^i$ and $b_i,b_i'\in B^i$ ($i=0,1$) and extending linearly to 
$(A\otimes_R B)^1$. Since the $\cup_1$-product on $A^1$ and $B^1$ and the multiplication 
on $A^0$ and $B^0$ are all commutative, it follows that the  $\cup_1$-product on 
$(A\otimes_R B)^1$ is also commutative. 

Next, note that $ (A\otimes_R B)^2 = (A^2\otimes_R B^0)\oplus (A^1\otimes_R B^1) 
\oplus (A^0\otimes_R B^2)$ and the $R$-submodule $D^2(A\otimes_R B)$ is generated 
by all possible cup-products of elements from $A^1\otimes_R B^0$ and $A^0\otimes_R B^1$. 
We now extend the cup-one product maps 
$\cup_1 \colon D^2(A) \otimes_R A^1 \to A^2$ and 
$\cup_1 \colon D^2(B) \otimes_R B^1 \to B^2$ to a map 
$\cup_1 \colon D^2(A\otimes_R B) \otimes_R (A\otimes_R B)^1 \to (A\otimes_R B)^2$ 
by setting
\begin{gather}
\begin{aligned}
\label{eq:cup1-tensor-deg2}
((a_1 \otimes b_0)\cup (a_1' \otimes b_0'))\cup_1 (a_1'' \otimes b_0'')
&=((a_1\cup a_1')\cup_1 a_1'')\otimes b_0b_0'b_0''
\\
((a_0 \otimes b_1)\cup (a_0' \otimes b_1'))\cup_1 (a_0'' \otimes b_1'')
&=a_0a_0'a_0''\otimes ((b_1\cup b_1')\cup_1 b_1'')
\\
((a_1 \otimes b_0)\cup (a_0 \otimes b_1))\cup_1 (a_0'\otimes b_1')
&= a_1a_0a_0' \otimes b_0 (b_1\cup_1 b_1')
\\
((a_0 \otimes b_1)\cup (a_1 \otimes b_0))\cup_1 (a_1' \otimes b_0')
&= a_0(a_1\cup_1 a_1') \otimes b_1 b_0b_0'
\end{aligned}
\end{gather}
for $a_i,a_i',a_i''\in A^i$ and $b_i,b_i',b_i''\in B^i$ ($i=0,1$), with the other 
$4$ types of products equal to $0$, and then extending linearly to 
$D^2(A\otimes_R B) \otimes_R (A\otimes_R B)^1$. 
To verify that this operation is well defined, suppose 
$(a_1\otimes b_0)\cup (a'_1\otimes b'_0)=
(\bar a_1\otimes \bar b_0)\cup (\bar a'_1\otimes \bar b'_0)$. 
Then $a_1 \cup a_1^\prime = \bar a_1 \cup \bar a_1^\prime$
and $b_0b_0^\prime = \bar b_0 \bar b_0^\prime$, 
and it follows that the operation is well defined on
products such as those in the first line of equation
\eqref{eq:cup1-tensor-deg2}. The other cases follow similarly.

Since the Hirsch identity \eqref{eq:hirsch-1c}
holds for both $A$ and $B$, it also holds for $A\otimes_R B$; for instance, 
\begin{align*}
\begin{split}
(a_1 \otimes b_0 \cup a_1'\otimes b_0')\cup_1 (a_1''\otimes b_0'') 
&= ((a_1 \cup a_1')\cup_1 a_1'') \otimes b_0b_0'b_0'' \\
&=((a_1 \cup_1 a_1'') \cup a_1' + a_1\cup (a_1' \cup_1 a_1'')) \otimes b_0b_0'b_0'' \\
&=(a_1\otimes b_0 \cup_1 a_1''\otimes b_0'') \cup a_1' \otimes b_1' \\
&\qquad + a_1\otimes b_0\cup (a_1' \otimes b_0'\cup_1 a_1''\otimes b_0'') ,
\end{split}
\end{align*}
and similarly for the other types of $\cup$- and $\cup_1$-products. 
This completes the proof.%
\end{proof}

\subsection{Cup-one differential graded algebras}
\label{subsec:cupd-formula}
In this section we make a definition that will play an important role in our investigation. 
We begin with some motivation. Note that if a dga is generated by a set of elements
$\{x_i\}_{i\in J}$ in degree $1$, then the Leibniz rule gives a formula for the differential 
of any product of the $x_i$ as a sum of cup products of the $x_i$ and $dx_i$. 
Hence, the differential on the algebra is completely determined by the differentials
of the generators $x_i$.
This raises the question of whether there is a formula for the differential of cup-one 
products of the generators $x_i$ that allows one to write $d(x_i\cup_1 x_j)$
as a sum of cup products of $1$-cochains. If so, then it follows that if a dga is
generated by elements $x_i$ in degree one and by iterated cup-one products 
of the $x_i$, then the differential on the algebra is completely determined by
the differentials of the $x_i$.   

The next definition answers this question by giving as part of hypothesis \ref{cup1-3} 
a formula for the differential of a cup-one product of $1$-cochains 
that, along with the Hirsch identity, allows one to write the differential
of cup-one products of $1$-cochains as a sum of cup products.
This definition is a slight modification of a notion introduced 
in \cite{Porter-Suciu-2021}, better adapted to the current context 
by including the additional hypothesis  \ref{cup1-2.5}.

\begin{definition}
\label{def:cup-one-d-algebra}
A differential graded $R$-algebra $(A,d)$ is called a 
{\em cup-one differential graded algebra} if 
the following conditions hold.
\begin{enumerate}[label=(\roman*), itemsep=2pt]
\item \label{cup1-1}
$A$ is a graded $R$-algebra with cup-one products.
\item \label{cup1-2}
There is an $R$-linear map $\circ \colon D^2(A) \otimes_R D^2(A) \to D^2(A)$ 
such that 
\begin{equation}
\label{eq:circ-op}
(u\cup v) \circ (w\cup z) = (u \cup_1 w) \cup (v \cup_1 z)
\end{equation}
for all  $u,v,w,z\in A^1$. 
\item \label{cup1-2.5}
The differential $d$  and the $\cup$ and $\cup_1$ products satisfy the identity 
\begin{equation}
\label{eq:c0d}
a \cup_1 dc = a \cup c-c\cup a 
\end{equation}
for all $a\in A^1$ and $c\in A^0$.

\item \label{cup1-3}
The differential $d$ satisfies the ``$\cupd$ formula,"
\begin{equation}
\label{eq:c1d}
d(a \cup_1 b) = - a \cup b - b \cup a
+ da \cup_1 b + db \cup_1 a - da \circ db,
\end{equation}
for all $a,b\in A^1$ with $da,db \in D^2(A)$.
\end{enumerate}
\end{definition}

\begin{remark}
\label{rem:cupd-steenrod}
Formula \eqref{eq:c1d} comes from Steenrod's definition of the $\cup_i$
products in a cochain algebra $A=C^*(X;R)$, as follows. By 
equation \eqref{eq:steenrod-1c}, we have that
$d(a \cup_1 b) = - a \cup b - b \cup a + da \cup_1 b- a \cup_1 db$ 
for all $a,b\in A^1$. 
If $da$ is decomposable, then $da \cup_1 b$ can
be written as a sum of cup products using the Hirsch
formula \eqref{eq:hirsch-1c}. This leaves the problem 
of writing $a \cup_1 db$ as a sum of cup products.
By a direct computation using Steenrod's definition of
the cup-one product $\cup_1\colon A^1 \otimes_R A^2\to A^2$,  
it follows that
\begin{equation}
\label{eq:cup1-again}
a \cup_1 (b_1 \cup b_2) = da \circ (b_1 \cup b_2) 
  - (b_1\cup b_2) \cup_1 a,
\end{equation}
where we assume $da$ is decomposable and $\circ$ is given 
by \eqref{eq:circ-op}. This then gives the $\cupd$ formula, 
equation \eqref{eq:c1d}. 
\end{remark}

Our motivation for Definition \ref{def:cup-one-d-algebra} arises from the 
cochain algebras of $\Delta$-complexes. As shown in 
\cite[Theorem~4.4]{Porter-Suciu-2021}, such algebras 
are indeed $\cup_1$-algebras. We briefly review this result, 
with the necessary modifications for our context here.

\begin{theorem}[\cite{Porter-Suciu-2021}]
\label{thm:cochain-cup1}
Let $X$ be a non-empty $\Delta$-complex, and let $R$ be a unital 
commutative ring. The the cochain algebra $(C^\ast(X;R),\delta)$ 
is a cup-one dga.
\end{theorem}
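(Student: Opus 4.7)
The plan is to supply the cup-one and $\circ$ structures on $A = C^\ast(X;R)$ by specializing Steenrod's $\cup_i$ operations from \cite{Steenrod}, and then verify in turn each clause of Definition \ref{def:cup-one-d-algebra}. Concretely, I would take the $\cup_1$-product $A^1\otimes_R A^1\to A^1$ from the simplicial formula $(u\cup_1 v)(e)=u(e)\cdot v(e)$ in \eqref{eq:cup-simplicial}, extend $\cup_1$ to $D^2(A)\otimes_R A^1\to A^2$ using Steenrod's cochain formula, and set $\circ$ equal to Steenrod's $\cup_2$ restricted to $D^2(A)\otimes_R D^2(A)$ with values in $D^2(A)$.

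For clause \ref{cup1-1}, the multiplication on $R\oplus A^1$ induced by $\cup_1$ is immediately commutative and associative from $(u\cup_1 v)(e)=u(e)v(e)$ together with the commutativity and associativity of multiplication in $R$; the Hirsch identity \eqref{eq:hirsch-1c} on $D^2(A)\otimes_R A^1$ is a special case of Steenrod's identity \eqref{eq:cup-i-hirsch}, so \ref{cup1-1} holds. For clause \ref{cup1-2}, one evaluates Steenrod's explicit formula for $\cup_2\colon C^2(X;R)\otimes C^2(X;R)\to C^2(X;R)$ on a $2$-simplex $s$ and checks by direct computation that $((u\cup v)\cup_2 (w\cup z))(s) = (u\cup_1 w)(e)\cdot (v\cup_1 z)(e')$ where $e,e'$ are appropriate faces of $s$; this is exactly $(u\cup_1 w)\cup(v\cup_1 z)$, so the image lies in $D^2(A)$ and \eqref{eq:circ-op} holds. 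For clause \ref{cup1-2.5}, given $a\in A^1$ and $c\in A^0$, a one-line computation using $(dc)(e)=c(d_0 e)-c(d_1 e)$ and $(u\cup_1 v)(e)=u(e)v(e)$ yields $(a\cup_1 dc)(e)=a(e)\cdot(c(d_0 e)-c(d_1 e))=(a\cup c)(e)-(c\cup a)(e)$.

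The heart of the proof is clause \ref{cup1-3}, the $\cupd$ formula. Here I would proceed as outlined in Remark \ref{rem:cupd-steenrod}. Start from Steenrod's identity \eqref{eq:steenrod-1c}, which gives
\begin{equation*}
d(a\cup_1 b)=-a\cup b - b\cup a + da\cup_1 b - a\cup_1 db
\end{equation*}
for $a,b\in A^1$. Under the hypothesis that $da,db\in D^2(A)$, write $db=\sum b_{1,i}\cup b_{2,i}$ and apply the auxiliary identity \eqref{eq:cup1-again}, namely
\begin{equation*}
a\cup_1(b_1\cup b_2) = da\circ(b_1\cup b_2) - (b_1\cup b_2)\cup_1 a ,
\end{equation*}
to eliminate the $a\cup_1 db$ term in favor of $da\circ db$ and $db\cup_1 a$, yielding exactly \eqref{eq:c1d}. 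Verifying \eqref{eq:cup1-again} itself is the main obstacle; it requires expanding Steenrod's definition of $\cup_1\colon A^1\otimes_R A^2\to A^2$ on a general $3$-simplex, pairing up the indexing terms, and recognizing the result as $\cup_2(da,\cdot) - (\cdot)\cup_1 a$. This is the only place where a genuine combinatorial cochain-level calculation is needed; all the remaining clauses reduce to specializations of the Steenrod and Hirsch identities \eqref{eq:cup-i-steenrod}--\eqref{eq:cup-i-hirsch}, which are available as inputs.
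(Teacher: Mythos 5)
Your proposal is correct and follows essentially the same route as the paper: the paper likewise takes the pointwise $\cup_1$ on $1$-cochains, identifies $\circ$ with Steenrod's $\cup_2$ (equivalently, the pointwise product $(c_1\circ c_2)(s)=c_1(s)\cdot c_2(s)$ on $2$-simplices, which makes \eqref{eq:circ-op} a one-line check via \eqref{eq:circ-simp}), verifies \eqref{eq:c0d} by the same direct computation on a $1$-simplex, and obtains the $\cupd$ formula from Steenrod's identity together with the auxiliary identity \eqref{eq:cup1-again}, exactly as in Remark \ref{rem:cupd-steenrod}. The only cosmetic difference is that defining $\circ$ by the pointwise formula from the outset, rather than unwinding Steenrod's general $\cup_2$ formula, makes the verification of clause \ref{cup1-2} immediate.
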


\begin{proof}
As we saw in Sections \ref{subsec:cochains} and \ref{subsec:cup-one-cochains}, 
the cellular cochain algebra $C=(C^\ast(X;R),\delta)$ is a graded algebra 
with cup-one products. Moreover, it is a differential graded algebra, 
and the Steenrod identities \eqref{eq:cup-i-steenrod} hold in full generality.  
Setting $(c_1 \circ c_2)(s) = c_1(s)\cdot c_2(s)$ for any $2$-cochains 
$c_1,c_2$ and any $2$-simplex $s$ defines an $R$-linear map 
$\circ\colon C^2 \otimes_R C^2 \to C^2$. It follows straight 
from the definitions of the $\cup$- and $\cup_1$-products that 
\begin{equation}
\label{eq:circ-simp}
(u \cup v)(s) \cdot (w \cup z)(s)= 
((u \cup_1 w) \cup (v \cup_1 z))(s),
\end{equation}
for all $1$-cochains $u,v,w,z$. 
Thus, the restriction of the $\circ$-map to decomposable elements 
yields a map, $\circ\colon D^2(C) \otimes_R D^2(C) \to D^2(C)$, 
which clearly obeys formula \eqref{eq:circ-op}. It is now 
straightforward to verify that the simplicial differential $d$ satisfies 
formula \eqref{eq:c1d}.

It remains to check that formula \eqref{eq:c0d} holds. Given a $1$-cochain $u$, 
a $0$-cochain $c$, and a $1$-simplex $e$ with endpoints $v_0$ and $v_1$, 
we have
\begin{gather}
\begin{aligned}
\label{eq:c0d-cochains}
(u \cup_1 \delta c)(e) 
&= u(e)\cdot (\delta c)(e)
= u(e)\cdot c(v_1-v_0)\\
&= u(e)\cdot c(v_1)-c(v_0) \cdot u(e)
=(u \cup c)(e) - (c\cup u) (e),
\end{aligned}
\end{gather}
and this completes the proof.
\end{proof}

\begin{remark}
\label{rem:cup2}
Comparing the definition of the map $\circ\colon C^2 \otimes_R C^2 \to C^2$ 
given in the above proof to that of Steenrod's map 
$\cup_2\colon C^2 \otimes_R C^2 \to C^2$, we readily see that 
these two maps coincide. Moreover, as shown in Remark \ref{rem:cupd-steenrod}, 
in this case the $\cupd$ formula \eqref{eq:c1d} is a consequence 
of Steenrod's formula \eqref{eq:steenrod-1c}.
\end{remark}

\subsection{Tensor products of $\cup_1$-dgas}
\label{subsec:tensor-dga}
We conclude this section with a result showing that the category of 
$\cup_1$-dgas is closed under taking tensor products. 

\begin{proposition}
\label{prop:cup1alg-tensor}
If $(A,d_A)$ and $(B,d_B)$ are cup-one differential graded
algebras, then the tensor product $(A \otimes_R B,d_{A\otimes B})$ is 
again a cup-one differential graded algebra.
\end{proposition}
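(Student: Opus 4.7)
The plan is to upgrade the structure from Lemma \ref{lem:cup1bin-tensor} to a full cup-one dga. Condition \ref{cup1-1} is already provided by that lemma, while the standard tensor product differential $d_{A\otimes B}(a\otimes b) = d_A a \otimes b + (-1)^{\abs{a}} a \otimes d_B b$ makes $A\otimes_R B$ into a dga. The key structural observation is that the tensor-product bigrading restricts on degree-$2$ elements to a direct sum decomposition
\[
D^2(A\otimes_R B) = (D^2(A)\otimes B^0) \oplus (A^1\otimes B^1) \oplus (A^0\otimes D^2(B)),
\]
which I would verify by expanding a general product of two elements of $(A\otimes_R B)^1 = (A^1\otimes B^0) \oplus (A^0\otimes B^1)$ using the Koszul sign rule and observing that each bigraded component is automatically decomposable in the relevant factor.

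With this in hand, I would define $\circ$ on $D^2(A\otimes_R B) \otimes D^2(A\otimes_R B)$ by setting it to zero on cross-bigraded pairs and, on the three diagonal pieces, by
\begin{align*}
(\alpha \otimes b_0) \circ (\alpha'\otimes b_0') &= (\alpha\circ_A \alpha')\otimes b_0 b_0', \\
(a_0 \otimes \beta) \circ (a_0'\otimes \beta') &= a_0 a_0'\otimes (\beta\circ_B \beta'), \\
(a_1\otimes b_1)\circ (a_1'\otimes b_1') &= (a_1\cup_1 a_1')\otimes (b_1\cup_1 b_1'),
\end{align*}
for $\alpha,\alpha'\in D^2(A)$, $\beta,\beta'\in D^2(B)$, $a_0,a_0'\in A^0$, $b_0,b_0'\in B^0$, $a_1,a_1'\in A^1$, and $b_1,b_1'\in B^1$. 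The defining identity \eqref{eq:circ-op} is then checked by bilinearity: expanding $u,v,w,z\in (A\otimes_R B)^1$ into their $A^1\otimes B^0$ and $A^0\otimes B^1$ pieces produces sixteen cases, of which the diagonal ones reduce to \eqref{eq:circ-op} in $A$ or $B$, and the off-diagonal ones give $0$ on both sides via the vanishing clauses in \eqref{eq:cup1-tensor}. Condition \ref{cup1-2.5} follows by a similar reduction: writing $c = c_0 \otimes d_0$ with $c_0 \in A^0$, $d_0 \in B^0$ and splitting $a\in(A\otimes_R B)^1$ into its two pieces, each pure case reduces to \ref{cup1-2.5} in $A$ or $B$, with the commutativity of $A^0$ and $B^0$ absorbing the Koszul signs.

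The main work is the verification of the $\cupd$ formula \ref{cup1-3}. For $a, b \in (A\otimes_R B)^1$ with $da, db \in D^2(A\otimes_R B)$, I would split $a$ and $b$ into their $A^1\otimes B^0$ and $A^0\otimes B^1$ components and compute both sides of \eqref{eq:c1d} term by term, tracking in which bigraded piece of $(A\otimes_R B)^2$ each contribution lies. The pure-$A$ contributions match by \ref{cup1-3} in $A$ and the pure-$B$ contributions by \ref{cup1-3} in $B$. The mixed pieces of $-a\cup b - b \cup a$ lying in $A^1 \otimes B^1$ annihilate each other by the Koszul sign picked up when swapping a degree-$1$ $A$-factor past a degree-$1$ $B$-factor, while the corresponding mixed contributions to $da \cup_1 b$, $db \cup_1 a$, and $da\circ db$ are all zero by the vanishing clauses in \eqref{eq:cup1-tensor} and the $\circ$-map defined above. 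The main obstacle will be the sign bookkeeping through these case distinctions, specifically ensuring that the Koszul signs arising from $d_{A\otimes B}$ and from tensor-product cup products align precisely with those encoded in \eqref{eq:c1d} and in the natural extension of $\cup_1$ to $D^2(A\otimes_R B) \otimes (A\otimes_R B)^1$ via the Hirsch identity.
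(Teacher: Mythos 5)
Your overall strategy mirrors the paper's: invoke Lemma \ref{lem:cup1bin-tensor} for condition \ref{cup1-1}, define $\circ$ componentwise on $D^2(A\otimes_R B)$, and verify conditions \ref{cup1-2}--\ref{cup1-3} by splitting degree-one elements into their $A^1\otimes_R B^0$ and $A^0\otimes_R B^1$ pieces; your explicit decomposition of $D^2(A\otimes_R B)$ and your formulas for $\circ$ agree with what the paper uses implicitly. The problem is a concrete misstep in your verification of \eqref{eq:circ-op}. Your dichotomy --- ``diagonal cases reduce to \eqref{eq:circ-op} in $A$ or $B$, off-diagonal cases vanish on both sides'' --- fails for the two mixed patterns in which $u,w \in A^1\otimes_R B^0$ while $v,z\in A^0\otimes_R B^1$ (and the symmetric one). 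Take $u=a_1\otimes b_0$, $v=a_0\otimes b_1$, $w=a_1'\otimes b_0'$, $z=a_0'\otimes b_1'$. By \eqref{eq:cup1-tensor} neither $u\cup_1 w$ nor $v\cup_1 z$ vanishes, so the right-hand side of \eqref{eq:circ-op} is
\[
\bigl((a_1\cup_1 a_1')\cup (a_0\cup a_0')\bigr)\otimes\bigl((b_0\cup b_0')\cup(b_1\cup_1 b_1')\bigr),
\]
which is nonzero in general, while $u\cup v = (a_1\cup a_0)\otimes (b_0\cup b_1)$ lands in $A^1\otimes_R B^1$ and your formula for $\circ$ there returns $\bigl((a_1\cup a_0)\cup_1(a_1'\cup a_0')\bigr)\otimes\bigl((b_0\cup b_1)\cup_1(b_0'\cup b_1')\bigr)$. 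So neither side is zero, and they do not match by a vanishing clause.

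Equality in these cases requires the compatibility $(x\cup c)\cup_1(x'\cup c') = (x\cup_1 x')\cup(c\cup c')$ for $x,x'\in A^1$ and $c,c'\in A^0$ (and likewise in $B$), which is not among the axioms of Definitions \ref{def:gr-cup1} and \ref{def:cup-one-d-algebra}: the ring structure on $R\oplus A^1$ says nothing about how $\cup_1$ interacts with the $A^0$-module structure on $A^1$. It is automatic when $A^0=B^0=R$, and it holds in cochain algebras, but the proposition is applied to non-connected factors such as $C^{\ast}(I;R)$. Note also that this is exactly the consistency condition needed for \emph{any} $\circ$ satisfying \eqref{eq:circ-op} to exist, since $(a_1\cup a_0)\otimes(b_0\cup b_1)$ admits the two factorizations $(a_1\otimes b_0)\cup(a_0\otimes b_1)$ and $\bigl((a_1\cup a_0)\otimes 1\bigr)\cup\bigl(1\otimes(b_0\cup b_1)\bigr)$, and \eqref{eq:circ-op} must return the same value for both. (The paper's own proof is terse here as well --- it defines $\circ$ on $A^1\otimes_R B^1$ via the second factorization and calls the rest ``readily verified'' --- but your write-up makes an affirmative claim, vanishing of both sides, that is false.) To repair the argument you should either isolate this compatibility as an extra hypothesis or lemma and verify it for the algebras to which the proposition is actually applied, or restrict the mixed-case analysis and prove the identity rather than dismiss it.
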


\begin{proof}
By Lemma \ref{lem:cup1bin-tensor}, $A \otimes_{R} B$ is a graded algebra with 
cup-one products. The $\circ$ operations on $D^2(A)$ and $D^2(B)$ extend 
to a binary operation, $\circ \colon D^2(A\otimes_R B) \otimes_R D^2(A\otimes_R B) \to 
D^2(A\otimes_R B)$, by letting
$[(a_1\otimes b_0)\cup (a'_1\otimes b'_0)] \circ [(a''_1\otimes b''_0)\cup (a'''_1\otimes b'''_0)] $
equal to $ [(a_1 \cup a'_1)\circ (a''_1\cup a'''_1)]\otimes b_0b'_0b''_0b'''_0$, 
and so on. To verify that this operation is well-defined, suppose 
$(a_1\otimes b_0)\cup (a'_1\otimes b'_0)=
(\bar a_1\otimes \bar b_0)\cup (\bar a'_1\otimes \bar b'_0)$
and 
$(a''_1\otimes b''_0)\cup (a'''_1\otimes b'''_0)=
(\bar a''_1\otimes \bar b''_0)\cup (\bar a'''_1\otimes \bar b'''_0)$. 
Then $a_1\cup a'_1=\bar a_1\cup \bar a'_1$ and 
$a''_1\cup a'''_1=\bar a''_1\cup \bar a'''_1$, and likewise, 
$b_0b'_0 = \bar b_0 \bar b'_0 $ and 
$b''_0b'''_0 =  \bar b''_0 \bar b'''_0$. The claim follows.%

Using \eqref{eq:cup1-tensor}, it is readily checked that 
equation \eqref{eq:circ-op} holds for $(A \otimes_R B,d_{A\otimes B})$. 
Next, we verify that equation \eqref{eq:c0d} holds:
\begin{gather}
\begin{aligned}
\label{eq:c0d-tensor}
(a_1\otimes b_0) \cup_1 d_{A \otimes B}(a_0'\otimes b_0') 
&= (a_1\otimes b_0) \cup_1 (d_{A}(a_0')b_0' +a_0'd_B(b_0')) \\
&=(a_1\cup_1 d_{A}(a_0')) \otimes b_0b_0'\\
&=(a_1\cup a_0' -a_0'\cup a_1) \otimes b_0b_0'\\
&= (a_1\otimes b_0) \cup (a_0' \otimes b_0') - 
(a_0'\otimes b_0')\cup (a_1\otimes b_0),
\end{aligned}
\end{gather}
and similarly for the other cases. 

The last step is to verify that equation \eqref{eq:c1d} holds for the tensor product 
of $A$ and $B$. First let $a\in A^1$ such that $d_A(a)\in D^2(A)$, 
and let $b\in B^0$. It is readily seen that 
$d_{A\otimes B}(a\otimes b)\in D^2(A\otimes_R B)$; for instance, 
if $d_A(a)=u\cup v$ for some $u,v\in A^1$, then
\begin{equation}
\label{eq:tensor-0}
d_{A\otimes B}(a\otimes b)=d_Aa\otimes b-a\otimes d_Bb=
(u\otimes 1)\cup (v\otimes b) - (a\otimes 1)\cup (1\otimes d_Bb).
\end{equation}
Now also let $a'$ be an element in $A^1$ such that $d_A(a')\in D^2(A)$. 
and let $b'\in B^0$.  Using equations \eqref{eq:circ-op} and \eqref{eq:cup1-tensor}, 
we find that 
\begin{gather}
\begin{aligned}
\label{eq:tensor-1}
(d_{A}(a) \otimes b)\circ (a'\otimes d_B (b'))&=
((u\cup v)\otimes b)\circ ((a'\otimes 1)\cup (1\otimes d_B (b')))\\
&=
(u\cup_1 (a'\otimes b)) \cup ((v\otimes 1)\cup_1 (1\otimes d_B (b')))\\
&=((u\cup_1 a') \otimes b )\cup 0\\
&=0,
\end{aligned}
\end{gather}
and similarly $(a \otimes d_B(b))\circ (d_A (a')\otimes b')=0$. 
Furthermore, using equations \eqref{eq:circ-op} and \eqref{eq:c0d}, we get
\begin{gather}
\begin{aligned}
\label{eq:tensor-2}
(a \otimes d_B(b))\circ (a'\otimes d_B(b'))&=
((a\otimes 1)\cup (1 \otimes d_B(b)))\circ((a'\otimes 1)\cup (1 \otimes d_B(b')))\\
&=((a\otimes 1)\cup_1 (a'\otimes 1))\cup ((1 \otimes d_B(b))\cup_1 (1 \otimes d_B(b'))) \\
&=(a\cup_1 a')\otimes  (d_B(b' )\cup_1 d_B(b))\\
&=(a\cup_1 a')\otimes  (d_B(b')b -bd_B(b')).
\end{aligned}
\end{gather}

Finally, using formula \eqref{eq:c1d} for $A$ as well as 
equations \eqref{eq:tensor-1} and \eqref{eq:tensor-2} we find that 
\begin{multline*}
d_{A \otimes B} [(a \otimes b)\cup_1 (a' \otimes b') ]=
- (a \otimes b) \cup (a' \otimes b') -
(a' \otimes b')\cup (a \otimes b) + \\
\quad
d_{A\otimes B} (a \otimes b) \cup_1 (a' \otimes b') + 
d_{A\otimes B} (a' \otimes b') \cup_1 (a \otimes b) - 
d_{A\otimes B}(a \otimes b) \circ d_{A\otimes B} (a' \otimes b').
\end{multline*}
This shows that \eqref{eq:c1d} holds for elements in
$A \otimes_R B$ of the form 
$(a \otimes b) \cup_1 (a^\prime \otimes b^\prime)$ with
$|a| = |a^\prime| = 1$ and $|b| = |b^\prime|=0$.
The case $|a| = |b^\prime| =1$, 
$|b|= |a^\prime| = 0$ follows using similar computations
to show that in this case the right side of 
equation \eqref{eq:c1d} equals zero.
The case
$|a| = |a^\prime| = 0$, $|b| =|b^\prime|=1$ follows by 
the same computations as in the first case with the elements
in $A$ and $B$ interchanged.
\end{proof}

\section{Binomial cup-one differential graded algebras}
\label{sect:bin-cup1}
In this section we begin by reviewing the definition and basic properties of
binomial rings and then define $\F_p$-binomial algebras for $p$ a prime. 
This leads to the definition of binomial cup-one differential graded algebras 
over the ring $R = \Z$ or $\F_p$. A consequence of including the
binomial algebra structure is that it then follows that the cohomology 
of the free binomial cup-one differential graded algebra
on a single generator in degree $1$ is isomorphic
to the cohomology ring $H^\ast(K(R,1);R)$ of the
Eilenberg--MacLane space $K(R,1)$ with $R=\Z$ or $\F_p$
(see Theorem \ref{thm:coho-iso-R}).
   
\subsection{Binomial rings and $R$-valued polynomials}
\label{subsec:binomial}
Following P.~Hall \cite{Hall-1976} and J.~Elliott \cite{Elliott-2006},
we say that a commutative ring $A$ is a {\em binomial ring}\/ 
if $A$ is torsion-free (as a $\Z$-module) and the element
\begin{equation}
\label{eq:d-a}
\tbinom{a}{n}\coloneqq a(a-1)\cdots (a-n+1)/n! \in A\otimes_{\Z} \Q
\end{equation}
lies in $A$ for every $a\in A$ and every $n>0$.  Therefore 
we have maps $\zeta_n\colon A\to A$, $a\mapsto \binom{a}{n}$ 
for all $n\in \N$, with the convention that $\zeta_0(a)=1$ for all $a\in A$. 

Let $(x)_n \coloneqq x(x-1)\cdots (x-n+1)\in \Z[x]$ be the ``falling factorial" 
polynomial. Writing $(x)_n=\sum_{k=0}^{n} s(n,k)x^k$, the coefficients 
$s(n,k)$ of this polynomial are the Stirling numbers of the first kind. 
Now note that numerator of the fraction in \eqref{eq:d-a} is 
obtained by evaluating the polynomial $(x)_n$ at the value $x=a$, 
and so we may also write $\zeta_n(a)=\frac{(a)_n}{n!}$. 

The next lemma follows straight from the definitions.

\begin{lemma}
\label{lem:br}
Let $R$ be a binomial ring and let $M=R^{\bX}$ be a free $R$-module on 
a set $\bX$. Then the dual module, $M^{\vee}=\Hom_R(M,R)$, 
is a binomial ring with product defined by
$f\cdot g (x) = f(x)\cdot g(x)$ for all $x \in \bX$ and 
with maps $\zeta_n\colon M^{\vee} \to M^{\vee}$ given by
$\zeta_n(f)(x) = \zeta_n(f(x))$ for all $f \in M^{\vee}$ and $x \in \bX$.
\end{lemma}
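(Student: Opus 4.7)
The plan is to transfer the binomial ring structure on $R$ to $M^{\vee}$ pointwise. First, I would equip $M^{\vee}$ with a commutative ring structure by defining pointwise operations $(f+g)(m) = f(m) + g(m)$ and $(f \cdot g)(m) = f(m)\cdot g(m)$, with unit the constant function sending each $m \in M$ to $1_R \in R$. The additive part is the usual $R$-module structure that $\Hom_R(M,R)$ already carries, and the multiplicative identities (commutativity, associativity, distributivity) descend coordinate-wise from $R$.

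Second, I would verify the two defining properties of a binomial ring. For torsion-freeness over $\Z$: suppose $n \in \Z\setminus\{0\}$ and $nf = 0$ in $M^{\vee}$. Then $nf(m) = 0$ in $R$ for every $m \in M$, and since $R$ is torsion-free as a $\Z$-module, $f(m) = 0$ for all $m$, whence $f = 0$. For the binomial property, given $f \in M^{\vee}$ and $n \ge 1$, I would show that the map $\zeta_n(f)\colon m \mapsto \binom{f(m)}{n}$, which is well-defined because $R$ is binomial, represents $\binom{f}{n}$ inside $M^{\vee} \otimes_{\Z} \Q$. This amounts to the identity
\[
n!\,\zeta_n(f) \;=\; f(f-1)(f-2)\cdots(f-n+1)
\]
in the ring $M^{\vee}$, which holds because evaluating at any $m \in M$ yields $n!\,\binom{f(m)}{n} = (f(m))_n$ on the left, matching the pointwise value of the right-hand side via the corresponding identity in $R$.

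The main obstacle, as I see it, is justifying that pointwise multiplication $(f \cdot g)(m) = f(m)g(m)$ produces a well-defined binary operation on $\Hom_R(M, R)$: for an arbitrary $r \in R$ we have $(fg)(rm) = f(rm)g(rm) = r^2 f(m) g(m)$, which is not $R$-linear in $m$ unless additional hypotheses are imposed. I expect this to be resolved by the conventions implicit in the setup of Sections \ref{sect:bin-cup1} and \ref{sect:free-bin-dga}, where $M$ will arise as a free $R$-module equipped with a distinguished generating set $\bX$, so that $M^{\vee}$ may be identified with the ring $R^{\bX}$ of set-theoretic functions $\bX \to R$ under pointwise operations -- a context in which the multiplication is manifestly well-defined. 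Once this structural point is settled, every step above proceeds coordinate-wise, using nothing beyond the fact that $R$ itself is a binomial ring.
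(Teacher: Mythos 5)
Your proof is correct and is essentially the paper's argument: the paper offers no proof beyond the remark that the lemma ``follows straight from the definitions,'' and the intended verification is exactly your pointwise check of $\Z$-torsion-freeness together with the identity $n!\,\zeta_n(f)=(f)_n$, which pins down $\binom{f}{n}$ inside $M^{\vee}\otimes_{\Z}\Q$ by torsion-freeness. Your flagged obstacle is genuine and your guessed resolution is the right one: for an arbitrary $R$-module $M$, neither the pointwise product $fg$ nor the map $m\mapsto\zeta_n(f(m))$ lands in $\Hom_R(M,R)$ (the product fails even additivity, and $\zeta_n$ is non-additive by the Vandermonde identity $\binom{a+b}{n}=\sum_{i+j=n}\binom{a}{i}\binom{b}{j}$), so the lemma only makes literal sense when $M$ is free on a distinguished set and $M^{\vee}$ is read as the ring of $R$-valued functions on that set, with all operations defined on basis elements and extended linearly. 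That is precisely the situation in which the lemma is invoked in the proof of Theorem \ref{thm:cochain-bincup1d}, where $M=C_1(X;\Z)$ is free on the $1$-simplices and the cup-one product and $\zeta_n$-maps are defined simplexwise.
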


Now suppose the binomial ring $R$ is an integral domain, and let 
$K=\Frac(R)$ be its field of fractions. Let $K[\bX]$ be the ring of polynomials in a 
set of formal variables $\bX$, with coefficients in $K$. 
Following \cite{CC97, Elliott-2006}, we define 
the {\em ring of $R$-valued polynomials}\/ (in the 
variables $\bX$ and with coefficients in $K$) as the subring 
\begin{equation}
\label{eq:bin-int}
\Int(R^{\bX})\coloneqq \{p\in K[\bX] \mid p(R^{\bX})\subseteq R\}.
\end{equation}
If, moreover, the domain $R$ has characteristic $0$ (that is, 
$R$ is torsion-free as an abelian group), then $\Int(R^{\bX})$ is a 
binomial ring, generated by the polynomials 
$\binom{\bX}{\mathbf{n}}\coloneqq \prod_{x\in \bX}\binom{x}{\mathbf{n}_x}$, 
for all multi-indices $\mathbf{n}=(\mathbf{n}_x)_{x\in \bX} \in \bigoplus_{\bX} \Z_{\ge 0}$,  
and $\Int(R^{\bX})$ satisfies a type of universality property which makes it into 
the {\em free binomial ring}\/ on variables in $\bX$ (see Corollary \ref{cor:extend-map}). 
As a consequence (at least when $R=\Z$), any binomial ring is a quotient 
of $\Int(R^{\bX})$, for some set $\bX$.

Let $I\colon \bX\to \Z_{\ge 0}$ be a function whose support, 
$\supp(I) \coloneqq \{x \in \bX \mid I(x)\ne 0 \}$, is a finite set. 
Given a binomial domain $R$ with field of fractions $K$, we 
associate to such a function the polynomial $\zeta_{I} \in K[\bX] $ given by
\begin{equation}
\label{eq:zeta-x}
\zeta_I  \coloneqq \prod_{x\in \bX} \zeta_{I(x)}.
\end{equation}
Note that this possibly infinite product is well defined, since the support of $I$ is finite 
and since $\z_0(x) = 1$ for all $x \in \bX$. Clearly, $\zeta_I$ is an $R$-valued 
polynomial in $\Int(R^{\bX})$. That is to say, given an element 
$\ba=\sum_{x\in \bX} \ba_x x \in R^{\bX}$ with $\ba_x\in R$ and 
$\ba_x=0$ for all but finitely many $x\in \bX$, the evaluation 
$\zeta_I(\ba)=\prod_{x\in \bX} \zeta_{I(x)} (\ba_x)$ is an element of $R$.

\subsection{A basis for integer-valued polynomials}
\label{subsec:basis}
We restrict now to the case when $R=\Z$. 
The next theorem provides a useful $\Z$-basis for the ring $\Int(\Z^{\bX})$ 
of integer-valued polynomials; for a proof, we refer to \cite[Proposition XI.I.12]{CC97} 
and \cite[Lemma 2.2]{Elliott-2006}. 

\begin{theorem}[\cite{CC97,Elliott-2006}]
\label{thm:basis}
The $\Z$-module $\Int(\Z^{\bX})$ is free, with basis consisting of all 
polynomials of the form $\zeta_I$ with $I\colon \bX\to \Z_{\ge 0}$ 
a function with finite support.
\end{theorem}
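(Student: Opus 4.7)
The plan is to reduce the claim to finitely many variables and then extract the coefficients in the $\zeta_I$-expansion of a polynomial via iterated finite-difference operators. Since every element of $\Q[\bX]$ involves only finitely many variables, we have $\Int(\Z^{\bX}) = \bigcup_{\bx} \Int(\Z^{\bx})$ as $\bx$ ranges over the finite subsets of $\bX$, so it suffices to prove the theorem when $\bX = \{x_1, \ldots, x_n\}$ is finite.

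For the finite case I would first establish $\Q$-linear independence, then $\Z$-spanning. Because the polynomials $\binom{x_k}{j}$, $j \ge 0$, have strictly increasing degrees, they form a $\Q$-basis of $\Q[x_k]$; taking tensor products, the products $\zeta_I(\bX) = \prod_{k=1}^{n} \binom{x_k}{i_k}$ form a $\Q$-basis of $\Q[\bX] = \Q[x_1]\otimes_\Q \cdots \otimes_\Q \Q[x_n]$, so in particular they are $\Z$-linearly independent. To show they span $\Int(\Z^n)$ over $\Z$, given $p \in \Int(\Z^n)$ I would write $p = \sum_I a_I \zeta_I(\bX)$ uniquely with $a_I \in \Q$ and verify that each $a_I \in \Z$. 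For this, introduce the finite-difference operators
\[
(\Delta_j p)(x_1,\ldots,x_n) = p(x_1,\ldots,x_j+1,\ldots,x_n) - p(x_1,\ldots,x_n),
\]
which commute pairwise and carry $\Int(\Z^n)$ into itself. By Pascal's identity $\Delta_j \binom{x_j}{m} = \binom{x_j}{m-1}$, while $\Delta_j$ fixes $\binom{x_i}{m}$ for $i\ne j$; hence $\Delta_j \zeta_I = \zeta_{I - e_j}$, with the convention that $\zeta_J = 0$ if $J$ has a negative component. Iterating, and using $\binom{0}{m} = \delta_{m,0}$, one obtains $(\Delta_1^{i_1}\cdots \Delta_n^{i_n} p)(0,\ldots,0) = a_I$. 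Since this iterated difference still lies in $\Int(\Z^n)$ and evaluation at an integer point of $\Z^n$ produces an integer, we conclude $a_I \in \Z$.

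The proof is essentially bookkeeping, and no step presents a genuine obstacle. The one identity requiring a little care is $\Delta_j \zeta_I = \zeta_{I - e_j}$, which reduces to Pascal's identity applied to a single factor at a time, since $\Delta_j$ is a translation operator in the $j$-th variable only. Once this is in hand, both the $\Z$-linear independence and the integrality of the coefficients $a_I$ follow directly from the extraction formula above.
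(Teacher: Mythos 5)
Your proof is correct. Note that the paper does not prove this statement itself---it defers to \cite{CC97} and \cite{Elliott-2006}---and your argument is a clean, self-contained version of the standard one found there: reduce to finitely many variables, use the degree filtration to get a $\Q$-basis, and extract the coefficients via the Newton forward-difference formula $a_I = (\Delta_1^{i_1}\cdots\Delta_n^{i_n}p)(\mathbf{0})$, whose integrality follows because the difference operators preserve $\Int(\Z^{\bX})$. The key identity $\Delta_j \zeta_I = \zeta_{I-e_j}$ and the evaluation $\zeta_J(\mathbf{0})=\delta_{J,\mathbf{0}}$ are both verified correctly, and the passage to infinite $\bX$ as a directed union of free modules with compatible bases is sound.
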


Alternatively, one may take as a basis for $\Int(\Z^{\bX})$ all polynomials 
$\zeta_I(\bx)$ with $\supp(I)=\bx$, a finite set of variables $\bx\subset \bX$, 
together with the constant polynomial $\zeta_{\bz}$. 
We emphasize that in the products $\zeta_{I}(\bx)$, 
there is no repetition allowed among the variables comprising the set $\bx$.  
For instance, the product $\zeta_m \zeta_n$ is not part of the 
aforementioned $\Z$-basis; rather, it may be expressed 
as a linear combination of the binomials $\zeta_m,\dots,$ $\zeta_{m+n}$. 
On the other hand, if $I$ and $J$ have disjoint supports, we have that 
$\zeta_{I}\cdot \zeta_{J}=\zeta_{I+J}$, and this polynomial is again 
part of the aforementioned basis for $\Int(\Z^{\bX})$.

\begin{corollary}[\cite{Elliott-2006, Porter-Suciu-2021}]
\label{cor:extend-map}
Let $\bX$ be a set, let $A$ be a binomial ring, and let 
$\phi \colon \bX \to A$ be a map of sets. There is then a unique 
extension of $\phi$ to a map $\tilde\phi \colon \Int(\Z^{\bX}) \to A$ 
of binomial rings.
\end{corollary}

\begin{corollary}[\cite{Elliott-2006, Porter-Suciu-2021}]
\label{cor:bproduct}
Let $R_1$ and $R_2$ be binomial rings. Then the tensor product 
$R_1 \otimes_{\Z} R_2$, with product 
$(a \otimes b)\cdot (c \otimes d) = ac \otimes bd$, is a binomial ring.
\end{corollary}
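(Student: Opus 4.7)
The plan is to apply the characterization of binomial rings given in Theorem \ref{thm:Yau}: I need to show that $R_1 \otimes_\Z R_2$ is $\Z$-torsion-free and that it is a homomorphic image of $\Int(\Z^{\bX})$ for some set $\bX$.

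\textbf{Step 1 (torsion-freeness).} Since $R_1$ and $R_2$ are binomial rings, both are torsion-free $\Z$-modules, hence flat over the PID $\Z$. Consequently, $R_1 \otimes_\Z R_2$ is also torsion-free as a $\Z$-module. This verifies condition (1) of Theorem \ref{thm:Yau}.

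\textbf{Step 2 (identifying the tensor product of $\Int$'s).} The essential algebraic observation is that for any two disjoint sets $\bX_1$ and $\bX_2$, the multiplication map
\[
\Int(\Z^{\bX_1}) \otimes_\Z \Int(\Z^{\bX_2}) \longrightarrow \Int(\Z^{\bX_1 \sqcup \bX_2}),
\qquad p \otimes q \longmapsto p\,q,
\]
is an isomorphism of rings. Indeed, by Theorem \ref{thm:basis}, the elements $\zeta_I$ indexed by finitely supported functions $I\colon \bX_i \to \Z_{\ge 0}$ form a $\Z$-basis for $\Int(\Z^{\bX_i})$, so the tensors $\zeta_I \otimes \zeta_J$ form a $\Z$-basis for the left-hand side. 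These map to $\zeta_{I \sqcup J}$, which (because the variables in $\bX_1$ are disjoint from those in $\bX_2$) is again a basis element of $\Int(\Z^{\bX_1 \sqcup \bX_2})$; ranging over all pairs $(I,J)$ exhausts the basis of the right-hand side. Thus the map is a $\Z$-linear isomorphism, and it is clearly a ring homomorphism.

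\textbf{Step 3 (producing a surjection).} By Theorem \ref{thm:Yau} applied to each $R_i$, pick a set $\bX_i$ together with a surjection of rings $\pi_i \colon \Int(\Z^{\bX_i}) \twoheadrightarrow R_i$. Since $\Z$ is a PID and tensor product is right exact, the induced map
\[
\pi_1 \otimes \pi_2 \colon \Int(\Z^{\bX_1}) \otimes_\Z \Int(\Z^{\bX_2})
\twoheadrightarrow R_1 \otimes_\Z R_2
\]
is a surjective ring homomorphism. Composing with the isomorphism of Step 2, we obtain a surjective ring homomorphism $\Int(\Z^{\bX_1 \sqcup \bX_2}) \twoheadrightarrow R_1 \otimes_\Z R_2$. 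This verifies condition (2) of Theorem \ref{thm:Yau}, and so $R_1 \otimes_\Z R_2$ is a binomial ring.

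The only substantive point is the ring isomorphism in Step 2; everything else is routine once Theorem \ref{thm:Yau} is in hand. I do not anticipate a genuine obstacle, since the basis description of $\Int(\Z^{\bX})$ from Theorem \ref{thm:basis} makes the identification of tensor products transparent. A direct route via Corollary \ref{lem:extend-map} is \emph{not} available here, because extending the set map $\bX_1 \sqcup \bX_2 \to R_1 \otimes_\Z R_2$ requires knowing in advance that the target is binomial — precisely the statement we are proving — whereas the basis argument avoids this circularity.
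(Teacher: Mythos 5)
Your proof is correct and follows the route the paper intends: the statement is placed as a corollary of Theorem \ref{thm:Yau} (with details deferred to \cite{Porter-Suciu-2021}), namely verifying $\Z$-torsion-freeness and exhibiting $R_1\otimes_{\Z}R_2$ as a ring-homomorphic image of a ring of integer-valued polynomials. Your Step 2 identification $\Int(\Z^{\bX_1})\otimes_{\Z}\Int(\Z^{\bX_2})\cong \Int(\Z^{\bX_1\sqcup\bX_2})$ via the $\zeta_I$-basis of Theorem \ref{thm:basis} correctly supplies the required surjection, and your closing remark about why Corollary \ref{lem:extend-map} cannot be invoked directly (circularity) is well taken.
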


\subsection{$\F_p$-binomial algebras}
\label{subsec:zp-bin-alg}
Fix a prime $p$, and let $\F_p=\Z/p\Z$ be the field with $p$ elements. 
Let $A$ be a commutative $\F_p$-algebra; we will assume that the structure 
map $\F_p\to A$ which sends $1\in \F_p$ to the identity $1\in A$ is injective. 
Note that the binomial operations $\zeta_n(a) = (a)_n/n!$ with $a\in A$ 
are defined for $1 \le n \le p-1$, since $n!$ is then a unit in $\F_p$.

\begin{example}
\label{ex:cochains-zp}
Let $A = C^*(X;\F_p)$ be the cochain algebra  over $\F_p$ 
of a $\Delta$-complex $X$. For a cochain $a \in A^1$, 
we have that $(a)_p= 0$, where the product is the $\cup_1$-product on $A^1$. 
To see this, let $e$ be any $1$-simplex in $X$; then the elements
$a(e), a(e)-1, \ldots , a(e)-p+1$ are distinct elements in
$\F_p$. Since there are $p$ of these elements, one of the
elements must be $0$ and the property follows.
\end{example}

This motivates the following definition.

\begin{definition}
\label{def:zp-binomial algebra}
Let $A$ be a commutative $\F_p$-algebra. We say that $A$ is a 
\emph{$\F_p$-binomial algebra}\/ if $(a)_p=0$, for all $a \in A$.  
\end{definition}

Clearly, this condition is equivalent to $(a)_n=0$ for all integers $n \ge p$ 
and all $a \in A$. Note that in $\F_p[x]$ we have the equality
$(x)_p = x^p -x$. 
Indeed, both polynomials are monic, of degree $p$, and both have the 
same set of $p$ distinct roots, namely $0,1,\dots , p-1$. Therefore, 
a commutative $\F_p$-algebra $A$ is a 
$\F_p$-binomial algebra if and only if $a^p=a$, for all $a \in A$; that is, 
$A$ is a {\em $p$-Boolean algebra}, in the sense of Kriz \cite{Kriz}.

The next step is to derive properties of binomials in a 
$\F_p$-binomial algebra analogous to those for a binomial
ring over $\Z$. We start by defining the analog of $\Int(\Z^{\bX})$.

Given a set $\bX$, we will denote by $\Int(\F_p^{\bX})$ the quotient of
the free binomial algebra $\Int(\Z^{\bX})$ by the ideal generated 
by the elements $\z_n(x)$ for $x \in \bX$ and $n \ge p$, tensored with
$\F_p$. The next result shows that, modulo the constant terms,
$\Int(\F_p^{\bX})$ has $\F_p$-basis given by products of the elements
$\z_i(x)$ for $0< i <p$ and $x \in \bX$. Recall from \eqref{eq:zeta-x} 
that, for a finitely supported function $I\colon \bX\to \Z_{\ge 0}$, we write 
$\zeta_I=\prod_{x\in \bX}\zeta_{I(x)}$.

\begin{lemma}[\cite{Porter-Suciu-2021}]
\label{lem:basis-p} 
The ring $\Int(\F_p^{\bX})$ is a $\F_p$-binomial algebra, with $\F_p$-basis 
given by the $\F_p$-valued polynomials $\zeta_I(\bx)$ with 
$I\colon \bX\to \{0,\dots, p-1\}$.
\end{lemma}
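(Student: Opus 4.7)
The plan is to identify the ideal $J \subset \Int(\Z^{\bX})$ generated by $\{\z_n(x) : x \in \bX,\ n \ge p\}$, thereby establishing the claimed basis, and then deduce the $\Z_p$-binomial property via a Frobenius argument.

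First, I would call a finitely supported function $I \colon \bX \to \Z_{\ge 0}$ \emph{large} if $I(x) \ge p$ for some $x$ and \emph{small} otherwise; by Theorem~\ref{thm:basis} these partition the $\Z$-basis of $\Int(\Z^{\bX})$. The claim reduces to showing that $J$ coincides with the $\Z$-span of the large $\z_I$. One inclusion is immediate, since any large $\z_I$ contains an explicit factor $\z_n(x)$ with $n \ge p$. For the reverse inclusion, I would verify that the product of any basis element $\z_{I'}$ with a generator $\z_n(x)$ of $J$ lies in the large $\Z$-span: if $x \notin \supp(I')$ this is direct, while if $I'(x) = m$, I would invoke the product expansion $\z_m(x)\z_n(x) = \sum_{k=\max(m,n)}^{m+n} c_k\, \z_k(x)$ (of the type mentioned right after Theorem~\ref{thm:basis}), in which every appearing index satisfies $k \ge n \ge p$, so each term in the resulting expansion of $\z_{I'}\cdot\z_n(x)$ is large. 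Consequently $\Int(\Z^{\bX})/J$ is $\Z$-free on the small $\z_I$, and tensoring with $\Z_p$ preserves this as a $\Z_p$-basis for $\Int(\Z_p^{\bX})$.

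Second, I would deduce the $\Z_p$-binomial property using Frobenius. The identity \eqref{eq:falling-p} is a polynomial identity in $\Z_p[x]$, so by substitution it gives $(a)_p = a^p - a$ for every element $a$ of any $\Z_p$-algebra. Hence the required condition $(a)_p = 0$ is equivalent to $a^p = a$. In characteristic $p$ the Frobenius identity $(a+b)^p = a^p + b^p$ together with multiplicativity of the $p$th power makes $\{a \in \Int(\Z_p^{\bX}) : a^p = a\}$ a sub-$\Z_p$-algebra. Since $i!$ is invertible in $\Z_p$ for $0 \le i < p$, each basis element $\z_i(x) = (x)_i/i!$ lies in the $\Z_p$-subalgebra generated by $\bX$; combined with the basis claim from the first step, this shows $\Int(\Z_p^{\bX})$ itself is generated as a $\Z_p$-algebra by $\bX$. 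For $x \in \bX$ the vanishing $\z_p(x) = 0$ together with $(x)_p = p!\,\z_p(x)$ yields $(x)_p = 0$, hence $x^p = x$ by \eqref{eq:falling-p}, and the Frobenius argument then extends this equality to every element of $\Int(\Z_p^{\bX})$.

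The main obstacle is the first step, namely ruling out that $J$ might be strictly larger than the $\Z$-span of the large basis elements. Everything hinges on the product expansion $\z_m(x)\z_n(x) = \sum_{k=\max(m,n)}^{m+n} c_k\, \z_k(x)$ and on the observation that each appearing index $k$ satisfies $k \ge n$, so that inserting a generator $\z_n(x)$ with $n \ge p$ cannot produce any basis element of index below $p$ in the resulting basis expansion.
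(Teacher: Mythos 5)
The paper does not actually prove Lemma \ref{lem:basis-p} here; it is imported verbatim from \cite{Porter-Suciu-2021}, so there is no in-paper argument to compare against. Your proposal is, as far as I can check, a correct and self-contained proof. The key step is the sharper form of the product expansion: the paper's remark following Theorem \ref{thm:basis} only says that $\z_m(x)\z_n(x)$ is a combination of $\z_m(x),\dots,\z_{m+n}(x)$, but what you need (and correctly assert) is that the indices actually start at $\max(m,n)$; this follows from the Vandermonde-type identity $\binom{x}{m}\binom{x}{n}=\sum_{k}\binom{k}{m}\binom{m}{k-n}\binom{x}{k}$, whose coefficient vanishes unless $\max(m,n)\le k\le m+n$. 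With that, the ideal $J$ generated by the $\z_n(x)$, $n\ge p$, is exactly the $\Z$-span of the ``large'' basis elements, the quotient is $\Z$-free on the small ones, and tensoring with $\Z_p$ preserves the basis. The second half is also sound: by \eqref{eq:falling-p} the condition $(a)_p=0$ is equivalent to $a^p=a$, the set of such elements is a $\Z_p$-subalgebra by Frobenius, it contains $\bX$ (since $(x)_p=p!\,\z_p(x)$ dies in $\Z_p$), and the ring is generated over $\Z_p$ by $\bX$ because each $\z_i(x)$ with $i<p$ is a polynomial in $x$ with coefficients in $\Z_p$. One small remark: the vanishing $(x)_p=0$ already follows from $p!\equiv 0$ in $\Z_p$ without invoking $\z_p(x)=0$ in the quotient, so that part of your argument is even slightly more robust than you state.
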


\begin{theorem}[\cite{Porter-Suciu-2021}]
\label{thm:universal-p}
Let $A$ be a $\F_p$-binomial algebra. There is then a  
bijection between maps of    
$\F_p$-binomial algebras from $\Int(\F_p^{\bX})$ to
$A$ and set maps from $\bX$ to $A$.
\end{theorem}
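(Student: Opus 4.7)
The plan is to establish the bijection via restriction to $\bX$. The forward direction sends a $\Z_p$-binomial algebra homomorphism $\tilde\phi \colon \Int(\Z_p^{\bX}) \to A$ to the set map $\tilde\phi|_{\bX} \colon \bX \to A$. Injectivity is an immediate consequence of Lemma~\ref{lem:basis-p}: since $\Int(\Z_p^{\bX})$ has $\Z_p$-basis $\{\zeta_I(\bx)\}$ with $I(x) \in \{0, \dots, p-1\}$, and $\tilde\phi$ must commute with multiplication and with the operations $\zeta_n$ for $1 \le n < p$, we have $\tilde\phi(\zeta_I(\bx)) = \prod_k \zeta_{I(x_k)}(\tilde\phi(x_k))$, which is determined by the restriction to $\bX$.

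For surjectivity, given $\phi \colon \bX \to A$, I would first extend $\phi$ to a $\Z_p$-algebra homomorphism $\bar\phi \colon \Z_p[\bX] \to A$ using the universal property of the polynomial ring. The hypothesis that $A$ is a $\Z_p$-binomial algebra means $(a)_p = 0$ for every $a \in A$, which by the polynomial identity $(x)_p = x^p - x$ from \eqref{eq:falling-p} is equivalent to $a^p = a$. Hence $\bar\phi(x^p - x) = \phi(x)^p - \phi(x) = 0$ for every $x \in \bX$, so $\bar\phi$ descends to a $\Z_p$-algebra homomorphism from the quotient ring $\Z_p[\bX]/(x^p - x : x \in \bX)$ to $A$.

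To convert this into a map out of $\Int(\Z_p^{\bX})$, I would next identify $\Int(\Z_p^{\bX}) \cong \Z_p[\bX]/(x^p - x : x \in \bX)$ as $\Z_p$-algebras. There is a natural $\Z_p$-algebra homomorphism $\alpha \colon \Z_p[\bX] \to \Int(\Z_p^{\bX})$ sending $x$ to $\zeta_1(x) = x$; this map is surjective because for $0 \le i < p$, the factorial $i!$ is a unit in $\Z_p$ and so $\zeta_i(x) = (x)_i/i!$ lies in the image of $\alpha$, and Lemma~\ref{lem:basis-p} shows that products of such elements span the target. Moreover, $\alpha(x^p - x) = (x)_p = p!\, \zeta_p(x) = 0$ in $\Int(\Z_p^{\bX})$, since $\zeta_p(x)$ is killed by construction. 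Restricting to any finite subset $\bx \subset \bX$, both sides then have $\Z_p$-dimension $p^{|\bx|}$ (the quotient $\Z_p[\bx]/(x^p - x)$ by its monomial basis, and $\Int(\Z_p^{\bx})$ by Lemma~\ref{lem:basis-p}), so the induced surjection $\Z_p[\bX]/(x^p - x) \to \Int(\Z_p^{\bX})$ is an isomorphism by a dimension count and a direct limit argument over finite subsets of $\bX$.

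Combining these ingredients yields a $\Z_p$-algebra map $\tilde\phi \colon \Int(\Z_p^{\bX}) \to A$ restricting to $\phi$ on $\bX$. Finally, to confirm it is a $\Z_p$-\emph{binomial} algebra map, note that for $1 \le n < p$ the element $\zeta_n(x) = (x)_n/n!$ is a polynomial in $x$ with coefficients in $\Z_p$, so $\tilde\phi(\zeta_n(x)) = \zeta_n(\tilde\phi(x))$ automatically for any $\Z_p$-algebra map. The main obstacle is the ring-theoretic identification $\Int(\Z_p^{\bX}) \cong \Z_p[\bX]/(x^p - x : x \in \bX)$; once this is in hand, the universal property reduces to the standard one for polynomial rings combined with the defining relation $a^p = a$ of a $\Z_p$-binomial algebra.
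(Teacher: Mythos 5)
The paper itself gives no proof of this theorem --- it is imported verbatim from \cite{Porter-Suciu-2021} --- so there is no internal argument to compare against; judged on its own merits, your proof is correct. Your route is to identify $\Int(\Z_p^{\bX})$ with $\Z_p[\bX]/(x^p-x : x\in\bX)$ and then invoke the universal property of the polynomial ring together with the defining relation $a^p=a$ of a $\Z_p$-binomial algebra. All the pieces check out: the observation that a ``map of $\Z_p$-binomial algebras'' is nothing more than a $\Z_p$-algebra homomorphism (because $\zeta_n=(\,\cdot\,)_n/n!$ is a $\Z_p$-polynomial expression for $n<p$, so every ring map commutes with it) disposes of both injectivity of the restriction map and the final compatibility check; the surjectivity of $\alpha$ follows from Lemma \ref{lem:basis-p} since each basis element $\zeta_I(\bx)$ is a product of the $\zeta_i(x_k)$ with $i\le p-1$; the vanishing $\alpha(x^p-x)=0$ uses \eqref{eq:falling-p} together with $(x)_p=p!\,\zeta_p(x)$ and the fact that $\zeta_p(x)$ is killed in the quotient defining $\Int(\Z_p^{\bX})$; and the dimension count $p^{\abs{\bx}}$ on both sides (monomials with exponents $<p$ versus the basis of Lemma \ref{lem:basis-p}), followed by a colimit over finite subsets, upgrades the surjection to an isomorphism. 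The one place where a referee would want you to be slightly more explicit is the equality $\alpha(x^p-x)=(x)_p$: in $\Int(\Z^{\bX})$ the integer polynomials $x^p-x$ and $(x)_p$ differ by $p$ times an integer polynomial, so the identification only holds after both reducing mod $p$ and killing $\zeta_p(x)$ --- but you do perform both operations, so the step is sound. What this approach buys is a completely concrete model of $\Int(\Z_p^{\bX})$ as a quotient of a polynomial ring, which makes the universal property an immediate consequence of the standard one; the statement as cited would presumably be proved in \cite{Porter-Suciu-2021} by extending $\phi$ over the basis $\zeta_I(\bx)\mapsto\prod_k\zeta_{I(x_k)}(\phi(x_k))$ directly, which requires verifying multiplicativity by hand, so your argument is arguably cleaner.
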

\begin{lemma}
\label{lem:tpzp}
Let $A$ and $B$ be $\F_p$-binomial algebras. Then the tensor product 
$A \otimes_{\F_p} B$, with product $(a \otimes b)\cdot (a^\prime \otimes b^\prime)
= aa^\prime \otimes bb^\prime$, is a $\F_p$-binomial algebra.
\end{lemma}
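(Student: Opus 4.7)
The plan is to use the characterization established via equation \eqref{eq:falling-p}: a commutative $\Z_p$-algebra $C$ (with injective structure map) is a $\Z_p$-binomial algebra if and only if $c^p = c$ for every $c \in C$. Reducing to this Frobenius-style condition is the main simplification, because powers behave much better under tensor products than do the falling-factorial polynomials $(c)_p$.

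First I would verify the routine structural points: $A \otimes_{\Z_p} B$ with the componentwise product is a commutative $\Z_p$-algebra, and since $A$ and $B$ each contain $\Z_p$ as a subring, so does their tensor product, giving an injective structure map $\Z_p \to A \otimes_{\Z_p} B$. In particular, $A \otimes_{\Z_p} B$ has characteristic $p$.

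The core step is then a short computation. Take an arbitrary element $c = \sum_{i=1}^n a_i \otimes b_i \in A \otimes_{\Z_p} B$. Because $A \otimes_{\Z_p} B$ is a commutative ring of characteristic $p$, the Frobenius endomorphism $x \mapsto x^p$ is additive (the binomial coefficients $\binom{p}{k}$ vanish mod $p$ for $0 < k < p$), so
\begin{equation*}
c^p \;=\; \Bigl(\sum_{i=1}^n a_i \otimes b_i\Bigr)^{\!p} \;=\; \sum_{i=1}^n (a_i \otimes b_i)^p \;=\; \sum_{i=1}^n a_i^p \otimes b_i^p.
\end{equation*}
By hypothesis, $A$ and $B$ are $\Z_p$-binomial algebras, so $a_i^p = a_i$ and $b_i^p = b_i$ for each $i$, by the equivalence coming from \eqref{eq:falling-p}. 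Therefore $c^p = \sum_i a_i \otimes b_i = c$, and applying the equivalence in the other direction gives $(c)_p = 0$ for all $c \in A\otimes_{\Z_p} B$, which is the defining condition in Definition \ref{def:zp-binomial algebra}.

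There is essentially no obstacle here: the only subtle point is the additivity of the Frobenius in characteristic $p$, and the fact that this passes through the tensor product. Everything else is a direct application of the $a^p = a$ reformulation established just above the statement of the lemma.
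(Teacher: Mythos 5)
Your proof is correct and takes essentially the same route as the paper: both reduce to the reformulation $c^p=c$ coming from $(x)_p=x^p-x$ over $\Z_p$ and then compute $(a\otimes b)^p=a^p\otimes b^p=a\otimes b$. In fact you are slightly more careful than the paper, which only checks the condition on pure tensors and leaves implicit the passage to general elements $\sum_i a_i\otimes b_i$ via the additivity of the Frobenius map in characteristic $p$ --- the step you spell out explicitly.
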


\begin{proof}
Let $a \in A$ and $b\in B$, so that $a^p=a$ and $b^p=b$. 
Then $(a\otimes b)^p=a^p\otimes b^p=a\otimes b$, and the 
claim follows.
\end{proof}

\subsection{Binomial cup-one differential graded algebras}
\label{subsec:bc1-dga}
Following our previous work \cite{Porter-Suciu-2021}, we combine the 
notions of cup-one algebras and binomial algebras 
into a single package. 
Henceforth we will assume that our ground ring $R$ is equal to either 
$\Z$ or $\F_p$ for some prime $p$.

\begin{definition}
\label{def:binomial-cup-one-algebra}
A differential graded algebra $(A,d)$ over $R=\Z$ or $\F_p$ is called a 
{\em binomial cup-one algebra}\/ if 
\begin{enumerate}[label=(\roman*), itemsep=2pt]
\item $A$ is a cup-one algebra.
\item $A^0$, with multiplication $A^0\otimes_{R} A^0\to A^0$ given 
by the cup-product, is an $R$-binomial algebra.
\item The $R$-algebra $R \oplus A^1$ from Definition \ref{def:gr-cup1}, 
part \ref{grc-1} is an $R$-binomial algebra.
\end{enumerate}
\end{definition}

A morphism of binomial cup-one $R$-algebras is a map $\varphi\colon A\to B$ 
between two such objects which is a map of cup-one $R$-algebras that satisfies 
$\varphi(\zeta_n(a)) = \zeta_n(\varphi(a))$, for all $n\ge 1$ and all $a\in  A^1$.

\begin{proposition}
\label{prop:cup1bin-tensor}
Let $(A,d_A)$ and $(B,d_B)$ be binomial cup-one algebras over  
$R=\Z$ or $\F_p$. Then the tensor product $(A \otimes_R B,d_{A\otimes B})$ 
of the underlying dgas is again a binomial cup-one algebra.
\end{proposition}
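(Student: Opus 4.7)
The plan is to verify, in turn, the three conditions of Definition \ref{def:binomial-cup-one-algebra} for $(A\otimes_R B, d_{A\otimes B})$. Condition (i), that $A\otimes_R B$ is a cup-one algebra, is exactly Proposition \ref{prop:cup1alg-tensor}. Condition (ii) is immediate, since $(A\otimes_R B)^0 = A^0\otimes_R B^0$ and the tensor product of binomial $R$-algebras is a binomial $R$-algebra by Corollary \ref{cor:bproduct} (when $R=\Z$) or Lemma \ref{lem:tpzp} (when $R=\Z_p$).

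The substantive content is condition (iii), namely that the cup-one product \eqref{eq:cup1-tensor} gives $R \oplus (A\otimes_R B)^1 = R \oplus (A^1\otimes_R B^0) \oplus (A^0\otimes_R B^1)$ the structure of a binomial $R$-algebra. The key structural reduction is that the vanishing of the cross cup-one products $(A^1 \otimes B^0)\cup_1 (A^0\otimes B^1) = 0$ recorded in \eqref{eq:cup1-tensor} makes the $R$-submodules $C_A \coloneqq R\oplus (A^1\otimes_R B^0)$ and $C_B \coloneqq R\oplus(A^0\otimes_R B^1)$ into sub-$R$-algebras of $R \oplus (A\otimes_R B)^1$, and the assignment $(r+\alpha, r+\beta)\mapsto r+\alpha+\beta$ yields an isomorphism of $R$-algebras from the fiber product $C_A \times_R C_B$ (taken along the projections $C_A\twoheadrightarrow R$ and $C_B\twoheadrightarrow R$ onto the $R$-summand) onto $R\oplus (A\otimes_R B)^1$.

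The next step is to show that each of $C_A$ and $C_B$ is itself a binomial $R$-algebra; by symmetry, I describe only the argument for $C_A$. The plan is to realize $C_A$ as an $R$-subalgebra of $(R\oplus A^1) \otimes_R B^0$, which is binomial by Corollary \ref{cor:bproduct} or Lemma \ref{lem:tpzp} applied to the binomial algebras $R\oplus A^1$ and $B^0$. The main obstacle is checking that $C_A$ is closed under the binomial operations $\zeta_n$; using the Vandermonde-type identity $\zeta_n(r+x) = \sum_{k+\ell=n}\zeta_k(r)\zeta_\ell(x)$, valid for commuting elements in any binomial ring, this reduces to showing that $\zeta_\ell(x) \in A^1\otimes_R B^0$ for $x\in A^1 \otimes_R B^0$ and $\ell\ge 1$, which I verify by noting that $A^1\otimes_R B^0$ is the kernel of the binomial ring epimorphism $(R\oplus A^1)\otimes_R B^0 \twoheadrightarrow B^0$ induced by $R \oplus A^1 \twoheadrightarrow R$. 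Over $\Z_p$ the analogous conclusion follows even more directly by checking via the Frobenius identity and the relation $a^p=a$ in each factor that $u^p = u$ for all $u \in C_A$. Finally, the fiber product $C_A\times_R C_B$ is an $R$-subalgebra of the binomial ring $C_A\times C_B$ (on which $\zeta_n$ acts coordinatewise) that remains closed under the binomial operations, because the $R$-algebra projections $C_A, C_B \to R$ automatically commute with $\zeta_n$; hence $C_A\times_R C_B$ is itself a binomial $R$-algebra, yielding condition (iii) and completing the proof.
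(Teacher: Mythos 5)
Your proof is correct, and its skeleton is the same as the paper's: verify the three conditions of Definition \ref{def:binomial-cup-one-algebra}, with Proposition \ref{prop:cup1alg-tensor} handling condition (i) and Corollary \ref{cor:bproduct} together with Lemma \ref{lem:tpzp} handling the binomial conditions. The comparison is lopsided, however, because the paper's entire argument is the remark that $(A\otimes_R B)^1=(A^1\otimes_R B^0)\oplus(A^0\otimes_R B^1)$ followed by the assertion that the claim ``follows at once'' from those three results. For condition (iii) this is not literally true: $R\oplus(A\otimes_R B)^1$ is neither a tensor product of binomial algebras nor an obvious subring of one. Your decomposition of it as the fiber product $C_A\times_R C_B$ (legitimate precisely because the cross cup-one products in \eqref{eq:cup1-tensor} vanish), together with the realization of $C_A=R\oplus(A^1\otimes_R B^0)$ as a subring of the honest tensor product $(R\oplus A^1)\otimes_R B^0$, the closure of that subring under the $\zeta_n$ via the Vandermonde identity \eqref{eq:binzsum}, and the observation that ring maps between torsion-free binomial rings automatically commute with the $\zeta_n$ (so that kernels and fiber products stay closed under them), is exactly the bridge the paper omits. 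So you have not taken a different route so much as completed the paper's route; the cost is length, and the payoff is an actual verification of condition (iii), plus the pleasantly short direct check $u^p=u$ in the $\Z_p$ case.
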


\begin{proof}
Recall that $(A\otimes_R B)^1=
(A^1 \otimes_R B^0) \oplus (A^0 \otimes_R B^1)$. 
The claim follows at once from Proposition \ref{prop:cup1alg-tensor}, 
Corollary \ref{cor:bproduct}, and Lemma \ref{lem:tpzp}.
\end{proof}

\subsection{Binomial operations on cochains}
\label{subsec:binomials}
In this section, we show that cochain complexes with coefficients in a
binomial algebra are binomial cup-one algebras and give examples. The 
next result builds on work from \cite{Porter-Suciu-2021}.

\begin{theorem}
\label{thm:cochain-bincup1d}
For any non-empty $\Delta$-complex $X$, the cochain 
algebra $C^{*} (X;R)$, where $R=\Z$ or $\F_p$, is a binomial $\cup_1$-dga.
\end{theorem}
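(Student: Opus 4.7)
The plan is to verify the three conditions in Definition \ref{def:binomial-cup-one-algebra}. Condition (i), that $C^{*}(X;R)$ is a cup-one algebra, is already supplied by Theorem \ref{thm:cochain-cup1}, so only the two binomial conditions (ii) and (iii) remain. Both will reduce, after a suitable identification, to standard facts about pointwise operations on $R$-valued functions.

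For condition (ii), I would identify $A^0 = C^0(X;R)$ with the product ring $R^{X_0}$ of $R$-valued functions on the set of vertices, with pointwise multiplication coming from the cup product. A product of copies of a binomial ring is again binomial, so when $R=\Z$ the operations $\zeta_n(f)(v) = \binom{f(v)}{n}$ take values in $\Z$, showing $C^0(X;\Z)$ is a binomial ring; when $R=\Z_p$, Fermat's little theorem gives $f^p(v) = f(v)^p = f(v)$ for every vertex $v$, so $f^p = f$, confirming the defining condition of Definition \ref{def:zp-binomial algebra}.

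For condition (iii), the cup-one product on $C^1(X;R)$ is pointwise multiplication on edges, so $C^1(X;R)$, viewed as a non-unital commutative $R$-algebra, is isomorphic to $R^{X_1}$. The unitalization $R \oplus C^1(X;R)$, with product $(r,a)(s,b) = (rs,\,rb+sa+a\cup_1 b)$, embeds as a subring into the product ring $R \times R^{X_1}$ via $(r,a) \mapsto (r,\,r\mathbf{1}+a)$, where $\mathbf{1}$ denotes the constant function with value $1$ on edges; a short computation confirms this map respects multiplication. For $R=\Z$, I would check the subring is closed under the binomial operations of the ambient ring: the $k$th binomial of the image of $(n,a)$ has $*$-component $\binom{n}{k}$ and edge-component $e \mapsto \binom{n+a(e)}{k}$, and their pointwise difference defines an integer-valued $1$-cochain, exhibiting $\binom{(n,a)}{k}$ as the image of a genuine element of $\Z \oplus C^1(X;\Z)$. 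For $R=\Z_p$, closure under $a \mapsto a^p$ in the unitalization follows from pointwise Fermat.

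The main obstacle will be handling condition (iii) cleanly, since the unitalization mixes scalar and cochain parts in a way that tends to obscure the binomial structure. The embedding into the product ring $R \times R^{X_1}$ is the key device, because closure under the binomial operations then reduces to elementary identities in $R$. The degenerate cases $X_0 = \emptyset$ or $X_1 = \emptyset$ reduce respectively to the triviality that $R$ itself is a binomial $R$-algebra and are handled separately.
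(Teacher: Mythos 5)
Your proposal is correct and follows essentially the same route as the paper: condition (i) is delegated to Theorem \ref{thm:cochain-cup1}, and the binomial conditions (ii) and (iii) are verified by defining the operations pointwise on simplices, reducing everything to the fact that $R$ itself is binomial (via $\binom{\cdot}{n}$ over $\Z$ and Fermat's little theorem over $\Z_p$), which is exactly the content of the paper's appeal to Lemma \ref{lem:br} and Example \ref{ex:cochains-zp}. Your explicit embedding of the unitalization $R\oplus C^1(X;R)$ into $R\times R^{X_1}$ is a slightly more careful rendering of a step the paper leaves implicit, but it is not a different argument.
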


\begin{proof}
First assume $R=\Z$. 
The cochain algebra $C^\ast(X;\Z)$ is then a cup-one dga by Theorem
\ref{thm:cochain-cup1}, and the claim follows from Lemma \ref{lem:br}.

Now assume $R=\F_p$. We define maps 
$\zeta^X_n\colon C^{1}(X;\F_p)\to C^{1}(X;\F_p)$  for 
$1 \le n \le p-1$, by setting 
$\zeta^X_n(f)(e) \coloneqq (f(e))_n/{n!}$
for each $1$-cochain $f \in C^1(X;\F_p)= \Hom(C_1(X;\F_p),\F_p)$ 
and each $1$-simplex $e$ in $X$. As noted in 
Example \ref{ex:cochains-zp}, we have that $(f)_p=0$. 
With this structure, it is readily verified that 
$C^\ast(X;\F_p)$ is a $\F_p$-binomial $\cup_1$-dga.
\end{proof}

This theorem together with Proposition \ref{prop:cup1bin-tensor} yield 
the following corollary.

\begin{corollary}
\label{cor:cup1-ci-tensor}
Let $A$ be a binomial $\cup_1$-dga over $R=\Z$ or $\F_p$. Then the tensor product 
$A \otimes_{R} C^*(I;R)$ is again a binomial $\cup_1$-dga.
\end{corollary}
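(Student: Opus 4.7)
The plan is to simply chain together the two results immediately preceding the corollary. The unit interval $I = [0,1]$, endowed with its standard simplicial (equivalently, $\Delta$-complex) structure from Example \ref{ex:interval}, is a non-empty $\Delta$-complex. Therefore, applying Theorem \ref{thm:cochain-bincup1d} with $X=I$ yields that $C^\ast(I;R)$ is a binomial $\cup_1$-dga, for $R=\Z$ or $\Z_p$.

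Once $C^\ast(I;R)$ has been identified as a binomial $\cup_1$-dga, the hypothesis of Proposition \ref{prop:cup1bin-tensor} is met with the second factor taken to be $C^\ast(I;R)$: both $A$ and $C^\ast(I;R)$ are binomial cup-one algebras over the same ring $R$. Applying that proposition gives directly that $A \otimes_R C^\ast(I;R)$ is again a binomial cup-one algebra, i.e.\ a binomial $\cup_1$-dga in the terminology used here.

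There is no substantive obstacle in the argument, since both inputs have been proved in the paper; the only thing worth double-checking is the compatibility of conventions. Specifically, I would verify that the $\cup_1$-structure on $(A \otimes_R C^\ast(I;R))^1$ supplied by the tensor product construction of Lemma \ref{lem:cup1bin-tensor} (formula \eqref{eq:cup1-tensor}), the $\circ$-operation supplied in the proof of Proposition \ref{prop:cup1alg-tensor}, and the binomial structure on the degree-zero and degree-one parts supplied by Corollary \ref{cor:bproduct} (or Lemma \ref{lem:tpzp} in the $\Z_p$ case) are all consistent with one another, and that the differential $d_{A \otimes C^\ast(I;R)}$ is indeed a derivation for these refined structures satisfying the $\cupd$ formula \eqref{eq:c1d}. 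All of this, however, is already subsumed in Proposition \ref{prop:cup1bin-tensor}, so the corollary follows by citation without further computation.
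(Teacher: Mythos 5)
Your proposal matches the paper's argument exactly: the corollary is stated immediately after Theorem \ref{thm:cochain-bincup1d} with the remark that it follows from that theorem together with Proposition \ref{prop:cup1bin-tensor}, which is precisely the chain of citations you give. The additional consistency checks you mention are indeed already subsumed in Proposition \ref{prop:cup1bin-tensor}, so nothing further is needed.
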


It is readily seen that the $\zeta$-maps enjoy the following naturality 
property: If $h\colon X \to Y$ is a map of $\Delta$-complexes, 
then each $\zeta_n$ commutes with the pullback of cochains, 
that is, $h^*\circ \zeta^Y_n=\zeta^X_n\circ h^*\colon C^*(Y;R)\to C^*(X;R)$.

Note that in the case $R = \Z$, the evaluation $\zeta_n(f)(e)$ is simply the 
binomial coefficient $\binom{f(e)}{n}$, for all $f\in \Hom(C_1(X;R),R)$ and 
all $e\in C_1(X;R)$.

\begin{example}
\label{ex:cochain-I-bis}
For the cochain algebra $C=C^{\ast}(I;\Z)$ from Example \ref{ex:interval}, 
the $\zeta_n$-maps are given by $\z_n(k u) = \binom{k}{n} u$. In particular,
$\z_n(u) = 0$ for $n \ge 2$.
\end{example}

\begin{example}
\label{ex:cochains-bg-bis}
Let $G$ be a group, and let $C^*(B(G);R)$ be the cochain algebra of 
the bar construction on $G$, as described in Example \ref{ex:chain-bar}, 
with coefficient ring $R$ a binomial algebra.
Then the $\zeta_n$ maps on $C^1(B(G);R)$ are given by
$\zeta_n(f)([g])  = \binom{f(g)}{n}$.
\end{example}

\section{Free binomial $\cup_1$-differential graded algebras}
\label{sect:free-bin-dga}

In this section, we define $\T(\bX)$, the free binomial
graded cup-one algebra over the rings $R=\Z$ and
$R=\F_p$ generated by a set $X$. 
Given a map $d\colon \T(\bX) \to \T(\bX)$ that satisfies 
the $\cupd$ formula and the Leibniz rule, we show 
that $d^2\equiv 0$ if and only if $d^2(x)=0$
for all $x \in \bX$. In particular, setting $d_{\bz}(x)=0$ 
for all $x\in \bX$ yields the dga $(\T(\bX),d_{\bz})$, which 
we call the free binomial $\cup_1$-dga generated by $\bX$.

\subsection{The free binomial cup-one graded algebra}
\label{subsec:free-bin-alg}
Let $R= \Z$ or $\F_p$. 
Given a set $\bX$, we let $\fm_{\bX,R}$ denote the $R$-submodule 
of $\Int(R^{\bX})$ consisting of polynomials with zero constant term.%

\begin{definition}[\cite{Porter-Suciu-2021}]
\label{def:free-binomial-dga}
The {\em free binomial $\cup_1$-graded algebra}\/ 
over $R$ on a set $X$, denoted $\T=\T_R(\bX)$, 
is the tensor algebra on the free $R$-module $\fm_{\bX,R}$; 
that is,
\begin{equation}
\label{eq:free-binomial-dga}
\T^{*}_R(\bX)=T^{*}(\fm_{\bX,R})  .
\end{equation} 
\end{definition}

By construction, $\T^0_R(\bX)=R$ and $\T^1_R(\bX)=\fm_{\bX,R}$,
and so $\T^{\le 1}_R(\bX)=\T^0\oplus \T^1$ is isomorphic to the free 
binomial algebra $\Int(R^{\bX})$. By Theorem \ref{thm:basis} and 
Lemma \ref{lem:basis-p}, respectively, $\T^1$ is a free $R$-module, 
with basis consisting of all $R$-valued polynomials of the form 
$\zeta_I$, where $I\colon \bX\to \Z_{\ge 0}$ has finite, non-empty 
support when $R=\Z$, and $I\colon \bX\to \{0,1,\dots, p-1\}$ when 
$R=\F_p$, excluding the constant-$0$ function. 
Furthermore, the $R$-module $\T^1$ comes 
endowed with a cup-one product map, 
$\cup_1\colon \T^1\otimes \T^1\to \T^1$, 
given by $a\cup_1 b=ab$. 
By analogy with the classical Hirsch formula for cochain algebras, we 
use this cup-one product to define a linear map $\T^2 \otimes \T^1\to \T^2$ by 
\begin{equation}
\label{eq:hirsch-b}
(a \otimes b)\otimes c \mapsto ac \otimes b 
+ a \otimes bc \, . 
\end{equation}

Recall that the $\cupd$ formula \eqref{eq:c1d} involves 
an operation (denoted by $\circ$) between degree $2$ elements. 
For this to work, we include the linear map 
$\circ \colon \T^2 \otimes \T^2 \to \T^2$ defined 
on basis elements by
\begin{equation}
\label{eq:circ-2}
(a_1 \otimes a_2) \circ (b_1 \otimes b_2)=
(a_1 b_1) \otimes (a_2 b_2)\, .
\end{equation}
With this structure, $\T(\bX)$ is a graded $R$-algebra 
with cup-one products, in the sense of Definition \ref{def:gr-cup1}.

The assignment $X\leadsto \T_R(\bX)$ is functorial: 
a map of sets, $h\colon \bX\to \bX'$, extends to a map
of polynomials from $\Int(R^{\bX})$ to $\Int(R^{\bX'})$ that
restricts to an $R$-linear map $\fm_{\bX,R} \to \fm_{\bX',R}$
which then extends to a map between tensor 
algebras, $\T(h)\colon \T_R(\bX)\to \T_R(\bX')$. 
Clearly, $\T(h)$ is a morphism of graded algebras that preserves 
$\cup_1$-products; moreover, $\T(h\circ g)=\T(h)\circ \T(g)$.

In the following, the ring $R$ will be equal to either $\Z$ or $\F_p$. 
A graded $R$-algebra $A$ with cup-one products such that the 
augmented algebra $R \oplus A^1$ is a binomial algebra is called a 
\emph{binomial graded $R$-algebra with cup-one products}.  In this 
category, the free binomial graded $R$-algebra $\T_R(\bX)$ enjoys the 
following universality property.

\begin{lemma}
\label{lem:TtoA}
Let $A$ be a binomial graded $R$-algebra with cup-one products, 
let $\bX$ be a set, and let $\phi \colon \bX \to A^1$ be a map of sets. 
There is then a unique extension of $\phi$ to a map $f \colon \T_R(\bX) \to A$ 
of binomial graded $R$-algebras with cup-one products.
\end{lemma}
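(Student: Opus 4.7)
The plan is to construct $f$ by combining the universal property of the free binomial ring $\Int(R^{\bX})$ with that of the tensor algebra on $\fm_{\bX,R}$.

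First I would observe that, by hypothesis on $A$, the $R$-submodule $R\oplus A^1$ equipped with the cup-one product on $A^1$ is a binomial $R$-algebra. Composing $\phi$ with the inclusion $A^1\inj R\oplus A^1$ and applying Corollary \ref{lem:extend-map} in the case $R=\Z$, respectively Theorem \ref{thm:universal-p} in the case $R=\Z_p$, yields a unique morphism of binomial $R$-algebras $\tilde\phi\colon \Int(R^{\bX})\to R\oplus A^1$ extending $\phi$. Since each generator $x\in\bX$ is sent into $A^1$ and cup-one products of elements of $A^1$ lie in $A^1$, the map $\tilde\phi$ sends the maximal ideal $\fm_{\bX,R}\subset\Int(R^{\bX})$ into $A^1$, and thus restricts to an $R$-linear map $f^1\colon \T^1_R(\bX)=\fm_{\bX,R}\to A^1$.

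Next, because $\T^{*}_R(\bX)=T^{*}(\fm_{\bX,R})$ is the tensor $R$-algebra on $\fm_{\bX,R}$, the universal property of the tensor algebra extends $f^1$ uniquely to a morphism of graded $R$-algebras $f\colon \T_R(\bX)\to A$ with $f^0=\id_R$ and $f(a_1\otimes\cdots\otimes a_n)=f^1(a_1)\cup\cdots\cup f^1(a_n)$. It remains to check that $f$ preserves the extra structure. Preservation of $\cup_1$ on $\T^1\otimes\T^1\to\T^1$ is immediate from \eqref{eq:free-cup1}: there the cup-one product coincides with the product in $\Int(R^{\bX})$, and $\tilde\phi$ is a ring map into $(R\oplus A^1,\cup_1)$ by construction. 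Preservation of the binomial operations $\zeta_k$ on $\T^1$ likewise holds because $\tilde\phi$ was produced as a map of binomial rings.

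Finally, I would verify preservation of $\cup_1\colon D^2(\T)\otimes \T^1\to\T^2$. Since this product is \emph{defined} via the Hirsch identity \eqref{eq:hirsch-1c}, and $A$ also satisfies Hirsch by hypothesis, for any $a,b,c\in\T^1$ we get
\[
f\bigl((a\cup b)\cup_1 c\bigr)=f\bigl(a\cup(b\cup_1 c)+(a\cup_1 c)\cup b\bigr)=f(a)\cup(f(b)\cup_1 f(c))+(f(a)\cup_1 f(c))\cup f(b),
\]
which by Hirsch in $A$ equals $(f(a)\cup f(b))\cup_1 f(c)=f(a\cup b)\cup_1 f(c)$, and this extends by $R$-linearity to all of $D^2(\T)\otimes\T^1$. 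There is no serious obstacle to the argument; the only mildly delicate point is checking that the ring map $\tilde\phi$ coming from the universal property of $\Int(R^{\bX})$ is compatible with the grading on $\T_R(\bX)$ in the sense that it really does restrict to a map into $A^1$, which is what allows one to invoke the tensor-algebra extension in Step~2. All of the content has been absorbed into the universal properties of $\Int(R^{\bX})$ and of $T^{*}(\fm_{\bX,R})$ together with the Hirsch identity.
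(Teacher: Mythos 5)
Your proposal is correct and follows essentially the same route as the paper, which simply cites \cite[Lemmas 7.4 and 8.13]{Porter-Suciu-2021} for the extension in positive degrees and then defines $f^0$ via the structure maps; you have unpacked that citation into the universal property of $\Int(R^{\bX})$ in degree one, the freeness of the tensor algebra in higher degrees, and the Hirsch-identity check on $D^2(\T)\otimes \T^1$. The one detail worth recording explicitly is why $\zeta_n(\phi(x))$ lands in $A^1$ rather than merely in $R\oplus A^1$ for $n\ge 1$: one has $n!\,\zeta_n(\phi(x))=(\phi(x))_n\in A^1$ and $R\oplus A^1$ is torsion-free over $\Z$ (resp.\ $n!$ is a unit in $\Z_p$ for $n<p$), which forces the $R$-component of $\zeta_n(\phi(x))$ to vanish.
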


\begin{proof}
From \cite[Lemmas 7.4 and 8.13]{Porter-Suciu-2021},
it follows that there is a unique extension of $\phi$
to a degree-preserving map $f^{>0}\colon \T_R^{>0}(\bX) \to A^{>0}$ 
which commutes with cup products, cup-one products, and the 
$\zeta$ maps. Let $\sigma_{\T} \colon R \to \T_R^0(\bX)$
and $\sigma_A \colon R \to A^0$ be the structure
maps for $\T$ and $A$; respectively, and define
$f^0\colon \T_R^0(\bX) \to A^0$ to be the composition
$\sigma_A \circ \sigma_{\T}^{-1}$. Then the resulting
map $f\colon \T_R(\bX) \to A$ is a morphism of
binomial graded $R$-algebras and the proof is complete.
\end{proof}

\subsection{Maps from free binomial cup-one dgas}
\label{subsec:free-cup1-dga}
Consider now a differential $d\colon \T_R(\bX) \to \T_R(\bX)$ making $(\T_R(\bX),d)$ 
into a binomial cup-one dga, and let $(A,d_A)$ be an arbitrary binomial 
cup-one dga over $R$. The next lemma gives a handy criterion for deciding whether 
a map $f \colon \T_R(\bX) \to A$ between the underlying binomial graded algebras 
with cup-one products is a morphism of binomial cup-one dgas.

\begin{lemma}
\label{lem:map-bcd}
A map $f \colon (\T_R(\bX),d) \to (A,d_A)$ of binomial graded $R$-algebras 
with cup-one products commutes with the differentials
if and only if $d_A f(x)=f(dx)$ for all $x \in \bX$.
\end{lemma}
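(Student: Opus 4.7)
The forward direction is immediate. For the converse, define $S = \{u \in \T_R(\bX) : d_A(f(u)) = f(du)\}$. By hypothesis $\bX \subseteq S$, and the $R$-linearity of $f$, $d$, and $d_A$ makes $S$ an $R$-submodule. The plan is to show $S = \T_R(\bX)$ by verifying that $S$ is closed under the operations that generate $\T_R(\bX)$ from $\bX$ as a binomial graded $R$-algebra with cup-one products, namely cup products, cup-one products on $\T^1$, and the binomial operations $\z_n$ on $\T^1$.

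Closure of $S$ under $\cup$ follows from a direct calculation using the graded Leibniz rule in both $\T$ and $A$ together with the fact that $f$ is a morphism of graded $R$-algebras. For closure of $S \cap \T^1$ under $\cup_1$, observe that every element of $\T^2 = \T^1 \otimes \T^1$ is decomposable, so the $\cupd$ formula \eqref{eq:c1d} applies in $\T$, and it applies in $A$ to the images $f(u), f(v) \in A^1$. Since $f$ preserves $\cup$, $\cup_1$, and---via formula \eqref{eq:circ-op}---the operation $\circ$, comparing the resulting expressions for $d_A f(u \cup_1 v)$ and $f(d(u \cup_1 v))$ term by term yields $u \cup_1 v \in S$.

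The most delicate step is closure of $S \cap \T^1$ under the binomial operations $\z_n$. I would argue by induction on $n$, with base case $\z_1 = \mathrm{id}$. The binomial recurrence $n\,\z_n(a) = \z_{n-1}(a) \cup_1 a - (n-1)\z_{n-1}(a)$, valid in any binomial cup-one algebra, combined with the $\cupd$ formula and the inductive hypothesis for $\z_{n-1}$, places $n \cdot \z_n(u)$ in $S$. Since $f$ commutes with $\z_n$ as a morphism of binomial algebras, this yields $n \cdot (d_A f(\z_n(u)) - f(d\z_n(u))) = 0$ in $A^2$. Over $\Z_p$ with $1 \le n \le p-1$, $n$ is a unit and this immediately gives $\z_n(u) \in S$. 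Over $\Z$, where $A^2$ may have $n$-torsion, one instead invokes the explicit formula for $d\z_n(a)$ (the general form of \eqref{eq:dzeta-intro}, derivable by iterated $\cupd$, as announced in Theorem \ref{thm:cup-zeta}), which expresses $d\z_n(a)$ solely in terms of $a$, $da$, the $\z_k(a)$ for $k<n$, and the preserved operations $\cup$, $\cup_1$, $\circ$; $f$-equivariance is then automatic. Handling this $\Z$-torsion issue cleanly is the main obstacle.

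Closure of $S$ under $\cup_1$, $\z_n$, and $R$-linear combinations, starting from $\bX \subseteq S$, then captures the spanning set $\{\z_I(\bx)\}$ of $\fm_{\bX,R} = \T^1$, so $\T^1 \subseteq S$; closure under $\cup$ propagates this to all of $\T_R(\bX)$, completing the proof.
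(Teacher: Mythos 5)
Your proposal is essentially the paper's own proof: the paper likewise reduces to the $\z_I$-basis of $\T^1$, handles cup-one products via the $\cupd$ formula \eqref{eq:c1d} together with \eqref{eq:circ-op}, treats $\z_n(x)$ by induction on $n$ using the recurrence \eqref{eq:zeta-binomial}, and extends by the Leibniz rule. The torsion issue you single out as ``the main obstacle'' is a legitimate worry, but be aware that the paper's one-line proof does not resolve it either---it performs the same induction and silently divides by $n+1$; your proposed patch via an integral formula for $d\z_n$ does not fully close it, since the axioms of a binomial cup-one dga only ever constrain $(n+1)\,d_A\z_{n+1}(a)$, so $d_A\z_{n+1}(f(x))$ is pinned down only modulo $(n+1)$-torsion in $A^2$. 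In every application in the paper the target $A$ is a free binomial $\cup_1$-dga or a cochain algebra, where $A^2$ is a free $R$-module and the difficulty evaporates, so your argument is exactly as rigorous as the paper's and correctly isolates the one genuinely delicate point.
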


\begin{proof} 
Recall from Section \ref{subsec:free-bin-alg} that $\T^1_R(\bX)$ is the free 
$R$-module with basis consisting of all $R$-valued polynomials of the 
form $\zeta_I$, where $I\colon \bX\to \Z_{\ge 0}$ has finite, non-empty 
support when $R=\Z$ and $I\colon \bX\to \{0,\dots, p-1\}$ has non-empty 
support when $R=\F_p$. The claim follows from formulas \eqref{eq:circ-op} and 
\eqref{eq:c1d}; formula \eqref{eq:zeta-binomial} expressing 
$\zeta_{n+1}(x)$ in terms of $\zeta_n(x)$; and induction on $n$.
\end{proof}

Under a connectivity assumption on $H^*(A)$, we may improve on the conclusion 
of Lemma \ref{lem:TtoA}, as follows.

\begin{lemma}
\label{lem:TtoA-diff}
Let $(A,d_A)$ be a binomial  cup-one $R$-dga with $H^0(A)\cong R$, 
let $(\T_R(\bX),d)$ be a free binomial cup-one dga, and let 
$\phi \colon \bX \to A^1$ be a map of sets. 
There is then a unique extension of $\phi$ to a map $f \colon \T_R(\bX) \to A$ of
binomial graded $R$-algebras with cup-one products such that 
$H^0(f)\colon H^0( \T_R(\bX) )\to H^0(A)$ is an isomorphism. 
\end{lemma}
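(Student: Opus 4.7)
My plan is to separate existence from uniqueness. For existence, I would apply Lemma \ref{lem:TtoA} directly to obtain an extension $f \colon \T_R(\bX) \to A$ of $\phi$ to a map of binomial graded $R$-algebras with cup-one products. That construction produces $f^{>0}$ via the unique-extension property (Lemmas 7.4 and 8.13 of \cite{Porter-Suciu-2021}) and sets $f^0 = \sigma_A \circ \sigma_{\T}^{-1}$, where $\sigma_A$ and $\sigma_{\T}$ denote the respective structure maps of $A$ and $\T_R(\bX)$.

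Next I would verify the $H^0$-condition. Applying the graded Leibniz rule to $1 = 1 \cup 1$ yields $d(1) = d(1) + d(1)$, hence $d(1) = 0$; by $R$-linearity $d$ vanishes on all of $\T^0_R(\bX) = R$, and therefore $H^0(\T_R(\bX)) = R$. Combined with the hypothesis $H^0(A) \cong R$ (via $\sigma_A$), the induced map $H^0(f) \colon R \to R$ is the unital ring endomorphism sending the class of $1$ to the class of $1$; this is the identity, and hence an isomorphism.

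For uniqueness, suppose $f' \colon \T_R(\bX) \to A$ is another extension of $\phi$ as in the statement. The uniqueness clause of Lemmas 7.4 and 8.13 of \cite{Porter-Suciu-2021} gives $(f')^{>0} = f^{>0}$, so the only freedom left is in $(f')^0 \colon R = \T^0_R(\bX) \to A^0$. This is a map of binomial $R$-algebras, and since $R$ is the initial object in that category (see Corollary \ref{lem:extend-map} and Theorem \ref{thm:universal-p}), it is already pinned down to $\sigma_A \circ \sigma_{\T}^{-1} = f^0$; the $H^0$-hypothesis on $f'$ furnishes the final check, since an alternative choice of $(f')^0$ would fail to make $H^0(f')$ the identity on $R$. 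Therefore $f' = f$.

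The main technical point is essentially bookkeeping: one must make sure that the $H^0$-isomorphism hypothesis is doing its job of fixing the degree-zero component of $f$ (ruling out, for instance, a zero map or any non-unital alternative one's conventions might allow), after which the uniqueness in positive degrees from the earlier lemmas closes the argument. No new calculation is needed beyond the observation that $d$ automatically vanishes on $\T^0 = R$.
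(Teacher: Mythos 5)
Your proposal is correct and takes essentially the same route as the paper: existence via Lemma \ref{lem:TtoA}, the observation that the differential vanishes in degree zero so that $H^0(f)$ is (up to the given identification of $H^0(A)$ with $R$) a unital ring endomorphism of $R$ and hence the identity, and uniqueness from the uniqueness clause for $f^{>0}$ in the cited lemmas together with the fact that $f^0$ is pinned down by $R$-linearity and unitality (the paper phrases this as $f^0$ commuting with the structure maps rather than via initiality of $R$, but the content is the same).
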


\begin{proof}
Let  $f \colon \T_R(\bX) \to A$ be the extension of $\phi$ constructed in 
Lemma \ref{lem:TtoA} and let $\varepsilon$ denote the 
isomorphism from $H^0(A)$ to $R$. 
Since $H^0(A) = \ker d_A \colon A^0 \to A^1$, it follows
that the image of the structure map $\sigma_A$
is contained in $H^0(A) \subseteq A^0$. Thus, 
the composition $\varepsilon \circ \sigma_A$
is an isomorphism of rings from $R$ to $R$, and so
equals $\id_R$. It then follows that
$f^0$ is the unique $R$-linear map from $\T^0(\bX)$ to
$A^0$ that commutes with the structure maps;
that is, $f^0 \circ \sigma_{\T} = \sigma_A$, and the
proof is complete.
\end{proof}

\subsection{Differentials on $\T_R(\bX)$}
\label{subsec:diff-tx}
As before, let  $\T=\T_R(\bX)$ be a free binomial $\cup_1$-graded 
$R$-algebra on a set $X$. In this section we show that if a map 
$d \colon \T\to \T$ satisfies the $\cupd$ formula
and the Leibniz rule, then $d^2(u) = 0$ for
all $u \in \T$ if and only if $d^2(x) = 0$ for all $x \in X$. 
For that, we define additional $\cup_1$ 
and $\circ$ maps in $\T$, as follows.
First, we define a linear map $\cup_1 \colon \T^3 \ot \T^1 \to \T^3$ by
\begin{equation}
\label{eq:cup-1-3-1}
(u_1 \cup u_2 \cup u_3) \cup_1 v
		 = (u_1 \cup_1 v) \cup u_2 \cup u_3
		+ u_1 \cup (u_2 \cup_1 v) \cup u_3
		 + u_1 \cup u_2 \cup (u_3 \cup_1 v) 
\end{equation}
and a map $\cup_1 \colon \T^2 \ot \T^2 \to \T^3$ by
\begin{equation}
\label{eq:cup-1-2-2}
\begin{split}
(a_1\cup a_2)\cup_1 (b_1 \cup b_2)
		& =
			{-a_1 \cup (b_1 \cup_1 a_2) \cup b_2}%
					{-a_1 \cup b_1 \cup (b_2 \cup_1 a_2)}%
					\\
		& \quad	
			+ \sum_j a_1 \cup (a_{2,1,j} \cup_1 b_1) 
			\cup (a_{2,2,j} \cup_1 b_2)\\		
	& \quad   
	 + (b_1 \cup_1 a_1) \cup b_2 \cup a_2
				+ {b_1 \cup (b_2 \cup_1 a_1) \cup a_2}
						\\	
			& \quad	 - \sum_i (a_{1,1,i}\cup_1 b_1) 
					\cup (a_{1,2,i} \cup_1 b_2)\cup a_2 ,
\end{split}									
\end{equation}
where $d a_1 = \sum_i a_{1,1,i} \cup a_{1,2,i}$ and
$da_2 = \sum_j a_{2,1,j} \cup a_{2,2,j}$. 
Next, we define a map $\circ \colon \T^2 \ot \T^3 \to \T^3$ by
\begin{equation}
\label{eq:circ-2-3}
\begin{split}
(a_1 \cup a_2) \circ (v_1 \cup v_2 \cup v_3)
		& = (a_1 \cup_1 v_1) \cup (a_2 \cup_1 v_2) \cup v_3\\
		& \quad + (a_1 \cup_1 v_1) \cup v_2 
									\cup (a_2 \cup_1 v_3)\\
		& \quad  + v_1 \cup (a_1 \cup_1 v_2) 
										\cup	(a_2 \cup_1 v_3)\\		
		& \quad - \sum_i (a_{1,1,i} \cup_1 v_1)
			\cup (a_{1,2,i} \cup_1 v_2) 	\cup (a_2 \cup_1 v_3),						
\end{split}
\end{equation}
where $da_1 = \sum_i a_{1,1,i} \cup a_{1,2,i}$. Finally, we define a map 
$\circ \colon \T^3 \otimes \T^2 \to \T^3$ by 
\begin{equation}
\label{eq:circ-3-2}
\begin{split}
(u_1 \cup u_2 \cup u_3) \circ (b_1 \cup b_2)
		& = u_1 \cup (u_2 \cup_1 b_1) \cup (u_3 \cup_1 b_2)\\
		& \quad + (u_1 \cup_1 b_1) \cup (u_2 \cup _1 b_2)
								\cup u_3\\
		&\quad 	+ (u_1 \cup_1 b_1) \cup u_2
									\cup (u_3 \cup_1 b_2)\\
		& \quad - \sum_k (u_1 \cup_1 b_1) \cup(u_2 \cup_1 b_{2,1,k})
					\cup (u_3 \cup_1 	b_{2,2,k}) , 									
\end{split}
\end{equation}
where $db_2 = \sum_{k} b_{2,1,k} \cup b_{2,2,k}$.

The proof of the following two equations is a 
straightforward, though computationally intensive, 
verification using the definitions of the $\cupd$ 
formula along with the $\cup_1$ and $\circ$
maps in $\T$. 
\begin{gather}
\begin{aligned}
\label{eq:da1b}
d(da \cup_1 b) &= da \cup b - b \cup da + da \cup_1 db 
						+ d^2 a \cup_1 b ,\\
d(da \circ db) &= da \cup_1 db + db \cup_1 da 
					+ d^2 a \circ db + da \circ d^2 b.
\end{aligned}
\end{gather}
Note that these equations are analogous to
equation \eqref{eq:cup-i-steenrod} for $i=1,2$ and 
$\abs{a} + \abs{b} -i \le 3$, with the $\cup_2$ map replaced
by the $\circ$ map.

\begin{lemma}
\label{lem:sp}If $d \colon \T \to \T$ is a degree one map
satisfying the $\cupd$ formula and the Leibniz rule, and if 
$a,b$ are elements in $\T^1$ with
$d^2a = d^2 b =0$, then $d^2 (a \cup_1 b) = 0$.
\end{lemma}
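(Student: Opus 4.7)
The plan is to apply $d$ directly to both sides of the $\cupd$ formula \eqref{eq:c1d} and then collect terms, using the auxiliary identities \eqref{eq:da1b} and \eqref{eq:dadb} together with the hypothesis $d^2 a = d^2 b = 0$ to make everything cancel.

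First I would write
\[
d^2(a \cup_1 b) = -d(a \cup b) - d(b \cup a) + d(da \cup_1 b) + d(db \cup_1 a) - d(da \circ db),
\]
which is legitimate since $d$ is $R$-linear and \eqref{eq:c1d} applies to $a, b \in \T^1$. Next I would evaluate each of the five summands on the right. The first two terms are handled by the graded Leibniz rule: since $|a| = |b| = 1$,
\[
d(a \cup b) = da \cup b - a \cup db, \qquad d(b \cup a) = db \cup a - b \cup da.
\]
The next two terms are handled by the identity \eqref{eq:da1b}; combined with $d^2 a = d^2 b = 0$ this gives
\[
d(da \cup_1 b) = da \cup b - b \cup da + da \cup_1 db,
\]
\[
d(db \cup_1 a) = db \cup a - a \cup db + db \cup_1 da.
\]
The last term is handled by \eqref{eq:dadb}; again using $d^2 a = d^2 b = 0$,
\[
d(da \circ db) = da \cup_1 db + db \cup_1 da.
\]

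Finally I would substitute these five expressions into the displayed formula for $d^2(a \cup_1 b)$ and observe termwise cancellation: the two occurrences of $da \cup b$ cancel, as do the pairs $a \cup db$, $db \cup a$, $b \cup da$, $da \cup_1 db$, and $db \cup_1 da$. This yields $d^2(a \cup_1 b) = 0$, completing the proof.

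I do not anticipate a genuine obstacle here, provided the auxiliary identities \eqref{eq:da1b} and \eqref{eq:dadb} are in hand; those are the real content and were verified (by the excerpt's remark) through a direct but computation-heavy check using the definitions of the extended $\cup_1$ and $\circ$ maps in \eqref{eq:cup-1-3-1}--\eqref{eq:circ-3-2}. The only care needed in the cancellation step is tracking signs carefully from the Leibniz rule on the $-a \cup b - b \cup a$ summands, but these are precisely what kills the corresponding decomposable terms produced by \eqref{eq:da1b}.
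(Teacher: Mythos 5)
Your proposal is correct and is essentially identical to the paper's proof: both apply $d$ to the $\cupd$ formula, expand the five resulting terms via the Leibniz rule together with identities \eqref{eq:da1b} and \eqref{eq:dadb}, and observe that everything cancels once $d^2a = d^2b = 0$ is used. The only cosmetic difference is that the paper carries the $d^2a$ and $d^2b$ terms symbolically to the final line before setting them to zero, whereas you substitute them earlier.
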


\begin{proof}
By the $\cupd$ formula \eqref{eq:c1d}, we have that
\[
d(a \cup_1 b) = - a \cup b - b \cup a 
			+ da \cup_1 b + db \cup_1 a - da \circ db.
\]
From equations \eqref{eq:da1b}, it follows that 
\[
d^2(a \cup_1 b)=d^2 a \cup_1 b + d^2 b \cup_1 a - d^2 a \circ db - da \circ d^2b ,
\]
and this proves the claim, since $d^2 a = d^2 b = 0$.
\end{proof}

\begin{theorem}
\label{thm:dsquared}
Let $d \colon \T_R(\bX) \to \T_R(\bX)$ be a degree-one map satisfying the
$\cupd$ formula and the Leibniz rule.
Then $d^2(x) = 0$ for all $x \in \bX$ if and only if 
$d^2(u)=0$ for all $u \in \T_R(\bX)$, in which case $(\T_R(\bX), d)$
is a binomial $\cup_1$-dga.
\end{theorem}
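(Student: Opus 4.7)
The ``only if'' direction is immediate, so my focus is on the ``if'' direction. The plan is to reduce the vanishing of $d^2$ on all of $\T_R(\bX)$ to vanishing on the module $\T^1 = \fm_{\bX,R}$, and then to reduce further from $\T^1$ to the generating set $\bX$ by exploiting the binomial structure.

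First I would observe that, for homogeneous $a,b \in \T_R(\bX)$, the Leibniz rule applied twice gives
\[
d^2(a \cup b) = d^2(a) \cup b + a \cup d^2(b),
\]
because the cross terms $(-1)^{\abs{a}+1}\,(da)\cup(db)$ and $(-1)^{\abs{a}}\,(da)\cup(db)$ cancel. Thus $d^2$ is an (ungraded) $R$-linear derivation of $\T_R(\bX)$. Since $d(1) = 0$ (apply Leibniz to $1 \cdot 1 = 1$ and use torsion-freeness of $\T^1$) and $\T_R(\bX)$ is generated as a graded algebra by $\T^0 \oplus \T^1$, it suffices to prove $d^2$ vanishes on every element of $\T^1$.

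To do this, recall that $\T^1 = \fm_{\bX,R}$ has an $R$-basis consisting of products $\zeta_I(\bx) = \prod_{k} \zeta_{i_k}(x_k)$ with $\bx \subset \bX$ finite (and in the $\Z_p$ case restricted further by $0 < i_k < p$; see Theorem \ref{thm:basis} and Lemma \ref{lem:basis-p}). Since the cup-one product on $\T^1$ is polynomial multiplication, iterated application of Lemma \ref{lem:sp} reduces the problem to showing $d^2(\zeta_n(x)) = 0$ for every $x \in \bX$ and every admissible $n$. I would argue this by induction on $n$: the base case $n=1$ holds by hypothesis, since $\zeta_1(x) = x$. For the inductive step, the falling-factorial identity $(n+1)\,\zeta_{n+1}(x) = \zeta_n(x)(x-n)$ yields
\[
x \cup_1 \zeta_n(x) \,=\, (n+1)\,\zeta_{n+1}(x) + n\,\zeta_n(x)
\]
in $\T^1$. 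Applying Lemma \ref{lem:sp} to $a = x$ and $b = \zeta_n(x)$ (both annihilated by $d^2$), together with $R$-linearity, gives $(n+1)\, d^2(\zeta_{n+1}(x)) = -n\, d^2(\zeta_n(x)) = 0$. When $R = \Z$, the module $\T_R(\bX)$ is free, hence torsion-free, and so $d^2(\zeta_{n+1}(x)) = 0$; when $R = \Z_p$, the scalar $n+1$ is a unit whenever $n+1 \le p-1$, while the case $n+1 = p$ is vacuous since $\zeta_p(x) = 0$ in $\T^1$.

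The main technical obstacle has already been dispatched by the preceding Lemma \ref{lem:sp}, whose proof rests on the identities \eqref{eq:da1b}--\eqref{eq:dadb} and on the $\cupd$ formula; this theorem builds on that lemma by combining it with the explicit $R$-basis of $\T^1$ and with the polynomial recursion above. To complete the statement, I would verify that $(\T_R(\bX), d)$ is a binomial $\cup_1$-dga: the graded algebra, cup-one, and binomial structures come from the construction in Section \ref{subsec:free-bin-alg}; the Leibniz rule and $\cupd$ formula are among the hypotheses; the identity \eqref{eq:c0d} holds because $dc = 0$ for $c \in \T^0 = R$, so both sides vanish; and $d^2 \equiv 0$ has just been shown.
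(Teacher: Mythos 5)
Your proof is correct and follows essentially the same route as the paper's: both reduce to $\T^1$ via the Leibniz rule, use the basis $\zeta_I(\bx)$ together with iterated applications of Lemma \ref{lem:sp}, and handle $\zeta_n(x)$ by induction on $n$ via the recursion $(n+1)\zeta_{n+1}(x)=\zeta_n(x)\cup_1 x - n\zeta_n(x)$. Your explicit treatment of the division by $n+1$ (torsion-freeness over $\Z$, invertibility over $\Z_p$) and of the derivation property of $d^2$ merely spells out steps the paper leaves implicit.
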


\begin{proof}
Let $x \in \bX$.  Recall that the binomial $\zeta$-functions satisfy the formula
\begin{equation}
\label{eq:zeta-binomial}
\z_{n+1}(x) = \frac{\z_n(x)\cup_1 x - n\z_n(x)}{n+1}.
\end{equation}
where $n+1 \le p-1$ for $R=\F_p$.
Using this formula and Lemma \ref{lem:sp}, 
induction on $n$ shows 
that $d^2\z_n(x) = 0$ for all $n \ge 1$ in the case
$R=\Z$ and for all $n \le p-2$ in the case $R=\F_p$.

Making use of this fact and 
of Lemma \ref{lem:sp} once again, induction on the number of elements 
in the support of $I$ shows that $d^2(\z_I(\mathbf{x}))=0$ 
for all $I$. The claim now follows by the Leibniz rule.
\end{proof}

As a corollary, we recover a result from \cite{Porter-Suciu-2021}, which gives 
the free binomial graded algebra $\T_R(\bX)$ the structure of a binomial 
$\cup_1$-dga structure, with differential $d_{\bz}$ vanishing on all the 
generators $x\in X$. 

\begin{corollary}[\cite{Porter-Suciu-2021}]
\label{cor:cup1-t}
For any set $X$, the algebra $\T_R(\bX)$ is a binomial $\cup_1$-dga, 
with differential $d_{\bz}$ satisfying $d_{\bz}(x) = 0$ for all $x \in \bX$.
\end{corollary}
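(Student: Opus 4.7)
The plan is to deduce this corollary directly from Theorem~\ref{thm:dsquared}: once a degree-one map $d_{\bz}\colon \T_R(\bX) \to \T_R(\bX)$ is produced that satisfies the graded Leibniz rule and the $\cupd$ formula, and for which $d_{\bz}(x) = 0$ for every $x \in \bX$, the condition $d_{\bz}^2(x) = 0$ is trivially met on generators, so Theorem~\ref{thm:dsquared} immediately promotes $(\T_R(\bX), d_{\bz})$ to a binomial $\cup_1$-dga. Thus the real work is building such a $d_{\bz}$.

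First I would define $d_{\bz}$ on $\T^1_R(\bX) = \fm_{\bX,R}$. By Theorem~\ref{thm:basis} (when $R=\Z$) and Lemma~\ref{lem:basis-p} (when $R=\Z_p$), this module is free over $R$ with basis given by the products $\z_I(\bx) = \prod_k \z_{i_k}(x_k)$, where the $x_k$ are distinct elements of $\bX$ and the $i_k \ge 1$ (with $i_k \le p-1$ when $R=\Z_p$). On a single-variable basis element I set
\[
d_{\bz}(\z_n(x)) = - \sum_{\ell=1}^{n-1} \z_\ell(x) \otimes \z_{n-\ell}(x) \in \T^2_R(\bX),
\]
which in particular gives $d_{\bz}(x) = d_{\bz}(\z_1(x)) = 0$. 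For a product $\z_I(\bx) = \z_{i_1}(x_1) \cup_1 \cdots \cup_1 \z_{i_k}(x_k)$ of two or more such factors, I define $d_{\bz}(\z_I(\bx))$ recursively by forcing the $\cupd$ formula \eqref{eq:c1d} at each stage, using that each intermediate $d_{\bz}(\z_J(\by))$ already lies in $D^2(\T)$ and that all the constituent operations $\cup$, $\cup_1$, and $\circ$ have already been specified on $\T^{\le 2}$. Then extend $R$-linearly to all of $\T^1$.

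Next, I extend $d_{\bz}$ to the whole tensor algebra $\T_R(\bX)$ uniquely as a degree-one derivation by the graded Leibniz rule \eqref{eq:derivation}. By construction the Leibniz rule holds on all of $\T_R(\bX)$ and the $\cupd$ formula holds on the distinguished $\z_I$-basis of $\T^1$; the remaining task is to verify that the $\cupd$ formula persists under arbitrary $R$-linear combinations in $\T^1$. Since each side of \eqref{eq:c1d} is $R$-bilinear in $(a,b)$ (because $\cup$, $\cup_1$, $\circ$ are $R$-bilinear and $d_{\bz}$ is $R$-linear), and since the hypothesis $d_{\bz} a, d_{\bz} b \in D^2(\T)$ is preserved by $R$-linear combinations of basis elements, it is enough to have checked the identity on the $\z_I$-basis, which was built in by construction.

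With the hypotheses of Theorem~\ref{thm:dsquared} verified for $d_{\bz}$ and with $d_{\bz}(x) = 0$ trivially giving $d_{\bz}^2(x) = 0$ for every $x \in \bX$, the theorem yields $d_{\bz}^2 \equiv 0$ on $\T_R(\bX)$ and endows $\T_R(\bX)$ with the structure of a binomial $\cup_1$-dga. The main obstacle in this approach is the combinatorial bookkeeping required to check that the recursive definition of $d_{\bz}$ on multi-variable basis elements $\z_I(\bx)$ is independent of how the $\cup_1$-product is parenthesized and that the resulting map genuinely satisfies the $\cupd$ formula on all of $\T^1$; once that consistency is in hand, Theorem~\ref{thm:dsquared} does all the work of propagating $d_{\bz}^2 = 0$ from the generators to the entire algebra.
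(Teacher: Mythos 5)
Your overall framing---construct a degree-one map $d_{\bz}$ with $d_{\bz}(x)=0$ that satisfies the $\cupd$ formula and the Leibniz rule, then invoke Theorem~\ref{thm:dsquared} to get $d_{\bz}^2\equiv 0$ and the binomial $\cup_1$-dga structure---is exactly how the paper packages the corollary, and your explicit formula $d_{\bz}(\z_n(x)) = -\sum_{\ell=1}^{n-1}\z_\ell(x)\otimes\z_{n-\ell}(x)$ is the correct one (it is equation~\eqref{eq:da-cup-2}). The problem is that the entire difficulty of the statement is concentrated in the step you defer as ``combinatorial bookkeeping'': showing that a well-defined $R$-linear $d_{\bz}\colon \T^1\to\T^2$ exists which satisfies the $\cupd$ formula for \emph{all} pairs $a,b\in\T^1$. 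Your bilinearity argument does not close this. The $\cupd$ formula is bilinear in $(a,b)$ only \emph{after} $d_{\bz}$ has been fixed as a linear map; your recursion builds the formula in only for the particular factorizations used to define $d_{\bz}$ on basis elements (e.g.\ $a=\z_{i_1}(x_1)$ against $b=\z_{i_2}(x_2)\cdots\z_{i_k}(x_k)$ with disjoint variables). For other pairs of basis elements---say $a=\z_m(x)$ and $b=\z_n(x)$ in the \emph{same} variable, whose product $\z_m(x)\cup_1\z_n(x)=\z_m(x)\z_n(x)$ is not a basis element but a $\Z$-linear combination of $\z_m(x),\dots,\z_{m+n}(x)$---the left side of \eqref{eq:c1d} is forced by linearity from your explicit formula, and the assertion that it equals the right side is a genuine combinatorial identity that must be proved, not a consequence of bilinearity. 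The same issue arises for $\z_I(\bx)$ and $\z_J(\by)$ with overlapping supports, and for independence of parenthesization. Without these verifications the hypothesis of Theorem~\ref{thm:dsquared} (that $d_{\bz}$ satisfies the $\cupd$ formula) has not been established, so the theorem cannot yet be applied.

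The paper sidesteps all of this by never constructing $d_{\bz}$ combinatorially. It takes the admissible map $\tau=0$, forms the abelian group $M=M(\bX,R)$ and the $\Delta$-set $\Delta(M)$ (the bar construction on $R^{\bX}$), and embeds $\T_R^{\le 2}(\bX)$ into the simplicial cochain algebra via the monomorphism $\psi$ of Lemma~\ref{lem:one}. There the simplicial coboundary $d_\Delta$ already satisfies the $\cupd$ formula (by Steenrod's identities, Theorem~\ref{thm:cochain-cup1}), and the content of Theorem~\ref{thm:extend-diff} is that $d_\Delta$ preserves the image of $\psi$; the restriction \emph{is} $d_{\bz}$, with existence, uniqueness, and the $\cupd$ formula all inherited for free, and $d_{\bz}^2=0$ then follows either from Theorem~\ref{thm:dsquared} or directly from Theorem~\ref{thm:assoc-2}. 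If you want to keep your direct approach, you would need to supply the consistency proof (essentially a hands-on verification of \eqref{eq:c1d} on all pairs of $\z_I$-basis elements using the Vandermonde-type identity $\z_k(a+b)=\sum_{i+j=k}\z_i(a)\z_j(b)$); otherwise you should route the construction of $d_{\bz}$ through the cochain-algebra embedding as the paper does.
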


\section{Differentials defined by admissible maps}
\label{sect:admissible}

In this section, we define embeddings of the free binomial $R$-algebra 
$\T_R(\bX)$ (or its truncation in degree $2$) into a suitable cochain algebra.  
Using these embeddings, we show in Theorem \ref{thm:extend-diff} that there 
is a bijection between degree one linear maps $d\colon \T_R(\bX)\to \T_R(\bX)$ 
that satisfy the $\cupd$ formula and the Leibniz rule and maps of sets
$\tau \colon \bX \to \T_R^2(\bX)$. Theorem \ref{thm:assoc-2} gives a sufficient 
condition on $\tau$, called admissibility, for $d^2$ to be the zero map. 

\subsection{Embedding $\T_R^{\le 2}(\bX)$ into a cochain algebra}
\label{subsec:phi-map}
Recall that the ring $R$ equals $\Z$ or $\F_p$ with $p$ a prime.
Given a set $\bX$, we let $M(\bX,R)$ be the set of all functions 
$\ba\colon \bX\to R$. This is an abelian group under pointwise 
addition, with neutral element the zero function, denoted $\bz$. 
Furthermore, to every set map $f\colon \bX\to \bY$ we 
assign (in a functorial way) the $R$-linear map 
$f^{\vee}\colon M(\bY,R)\to M(\bX,R)$ given by 
$f^{\vee}(\bb)(x)=\bb(f(x))$, for $\bb\colon \bY\to R$ 
and $x\in \bX$. 

Now let $\mu\colon M\times M\to M$ be an arbitrary binary operation 
on $M=M(\bX,R)$. By the construction from Section \ref{subsec:construction}, 
the magma $(M,\mu)$ determines a $2$-dimensional $\Delta$-set, 
$\Delta^{(2)}(M,\mu)$. Let $C_{\mu}(\bX)=C_{\mu}(\bX;R)$ denote 
the cochain complex (over $R$) of this $\Delta$-set. Next, we 
define a degree-preserving, $R$-linear map, 
\begin{equation}
\label{eq:phi-map}
\begin{tikzcd}[column sep=20pt]
\psi=\psi_{\bX,\mu}\colon \T_R^{\le 2}(\bX) \ar[r]& C_{\mu}(\bX),
\end{tikzcd}
\end{equation}  
as follows. First define a map 
$\psi\colon \T_R^0(\bX) \to C^0_{\mu}(\bX)$ by sending 
$1\in \T_R^0(\bX)=R$ to the unit cochain $1\in C^0_{\mu}(\bX)$. 
For each polynomial $p\in \T_R^1(\bX)=\fm_{\bX}$, we set 
$\psi(p)\in C^1_{\mu}(\bX)$ equal to the $1$-cochain whose value 
on a $1$-simplex $\ba$ is $p(\ba)$.
Finally, we set $\psi(p\otimes q)\in C^2_{\mu}(\bX)$ 
equal to the $2$-cochain whose value on a $2$-simplex 
$(\ba, \ba^\prime)$ is $p(\ba)\cdot q(\ba^\prime)$. 
If the zero function $\bz$ is a two-sided identity in
$(M,\mu)$, it is readily seen that the image of 
$\psi$ is contained in the normalized cochains on 
$\Delta^{(2)}(M,\mu)$.

\begin{lemma}
\label{lem:one}
With notation as above, the map 
$\psi=\psi_{\bX,\mu}\colon \T_R^{\le 2}(\bX) \to C_{\mu}(\bX)$ is a
monomorphism that commutes with
cup products, cup-one products, and the $\circ$ map.
\end{lemma}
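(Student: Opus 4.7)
The plan is to verify injectivity of $\psi$ degree by degree, and then to check compatibility with each of the three operations by direct computation from the definitions.

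First, I would establish injectivity. In degree $0$ this is immediate since $\psi$ just sends $1 \in R$ to the unit $0$-cochain. In degree $1$, injectivity amounts to showing that a non-zero polynomial $p \in \fm_{\bX,R}$ has a non-zero evaluation $p(\ba)$ for some function $\ba \colon \bX \to R$. For $R = \Z$, this is clear because $\Int(\Z^{\bX}) \subset \Q[\bX]$ and a non-zero polynomial over $\Q$ takes non-zero values on some integer point in any finite set of variables it involves. For $R = \Z_p$, one uses the explicit basis $\{\zeta_I(\bx)\}$ from Lemma \ref{lem:basis-p}, together with the standard fact that such products are distinguished by their evaluations on $\Z_p^{\bx}$. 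In degree $2$, I would take a non-zero element $w = \sum_{i,j} c_{ij}\, e_i \otimes e_j$ expressed in terms of a basis $\{e_i\}$ of $\fm_{\bX,R}$. Writing $\psi(w)(\ba,\ba') = \sum_i\bigl(\sum_j c_{ij} e_j(\ba')\bigr) e_i(\ba)$ and using linear independence of the $\{e_i\}$ as functions of $\ba$ (which is the degree-$1$ injectivity just proved), if $\psi(w) = 0$ then $\sum_j c_{ij} e_j(\ba')=0$ for every $i$ and every $\ba'$; applying degree-$1$ injectivity once more forces $c_{ij}=0$ for all $i,j$.

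Next, I would verify that $\psi$ commutes with each operation. For cup products of two elements $p,q \in \T^1$, the simplicial formula \eqref{eq:cup-simplicial} gives $(\psi(p) \cup \psi(q))(\ba,\ba') = p(\ba)\cdot q(\ba') = \psi(p\otimes q)(\ba,\ba')$, which equals $\psi(p \cup q)(\ba,\ba')$ since $p\cup q = p \otimes q$ in the tensor algebra. For the cup-one product in degree $1$, formula \eqref{eq:cup-simplicial} gives $(\psi(p) \cup_1 \psi(q))(\ba) = p(\ba)\cdot q(\ba) = (pq)(\ba) = \psi(pq)(\ba)$, and $p \cup_1 q = pq$ in $\T$ by \eqref{eq:free-cup1}.

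For the $\cup_1$-product $D^2(\T) \otimes \T^1 \to \T^2$, I would use the Hirsch identity \eqref{eq:hirsch-1c}, which holds in $C_\mu(\bX)$ by Theorem \ref{thm:cochain-cup1} and holds in $\T$ by definition. Writing $((p\cup q)\cup_1 r) = p\cup(q \cup_1 r) + (p\cup_1 r) \cup q = p \otimes (qr) + (pr)\otimes q$ on both sides and applying the already-established compatibility with $\cup$ and the degree-$1$ $\cup_1$ gives the equality $\psi((p \cup q)\cup_1 r) = \psi(p\cup q) \cup_1 \psi(r)$. Finally, for the $\circ$-map on decomposables, I would observe that the $\circ$-map on $C_\mu(\bX)$ is given on $2$-simplices by pointwise multiplication (Remark \ref{rem:cup2}), so $(\psi(p\otimes q) \circ \psi(r\otimes s))(\ba,\ba') = p(\ba)q(\ba')r(\ba)s(\ba') = (pr)(\ba)\cdot(qs)(\ba') = \psi(pr \otimes qs)(\ba,\ba')$, matching the formula \eqref{eq:circ-2} for $\circ$ on $\T^2 \otimes \T^2$.

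The main obstacle is the degree-$2$ injectivity, since one must carefully leverage the basis structure of $\fm_{\bX,R}$ together with the two separate evaluations $\ba,\ba'$; the commutation statements are then routine unwindings of the definitions combined with the Hirsch identity.
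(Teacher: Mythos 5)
Your proposal is correct and follows essentially the same route as the paper: the compatibility with $\cup$, $\cup_1$, and $\circ$ is a direct unwinding of the definitions, and injectivity reduces to the faithfulness of the evaluation of $R$-valued polynomials on $R^{\bX}$. The only (cosmetic) difference is in degree $2$, where the paper introduces a disjoint copy $\bX'$ of the variables and invokes the basis $\zeta_K$ of $\Int(R^{\bX\sqcup\bX'})$, whereas you iterate the degree-$1$ injectivity twice (first in $\ba$ for fixed $\ba'$, then in $\ba'$); the two arguments are equivalent.
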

\begin{proof}
The proof of the lemma follows in general outline the proof of the 
first part of \cite[Theorem 7.2]{Porter-Suciu-2021}; since the context 
here is somewhat different, we provide full details. 

It follows directly from the definitions that the map $\psi$ commutes with
cup products, cup-one products, and the $\circ$ map, so
it suffices to show that $\psi$ is a monomorphism. To prove 
this, first suppose that $\psi(p) =0$, for some $p\in \fm_{\bX}$. 
Then for all functions $\ba\colon \bX\to R$, we have that $p(\ba)= 0$, 
and so $p$ is the zero polynomial. Therefore, 
$\psi\colon \T_R^1(\bX) \to C^1_{\mu}(\bX)$ is a monomorphism.
Now suppose $\psi\bigl( \sum \alpha_{I,J} \zeta_{I}
\otimes \zeta_{J}\bigr) = 0$,  
for some $\alpha_{I,J}\in R$ and each of $I, J$ 
not identically $0$. Then 
$ \sum \alpha_{I,J} \zeta_{I}(\ba)\cdot \zeta_{J}(\ba')=0$, for all functions 
$\ba, \ba'\colon \bX\to R$. Let $X'$ be another (disjoint) copy of $\bX$, 
and for each $J$, 
let $J'\colon \bX'\to \Z_{\ge 0}$ be the corresponding indexing function. 
Viewing each $\zeta_{J'}$ as a polynomial in 
$\Int(R^{\bX'})$, it follows that 
$\sum \alpha_{I,J} \zeta_{I}\cdot\zeta_{J'} \in \Int(R^{\bX\sqcup \bX'})$ 
is the zero polynomial. 
For each pair $I$ and $J$ of indexing functions, 
the functions $I$ and $J'$ have disjoint supports; hence, $\z_{I} \cdot \z_{J'}=\zeta_{K}$, 
where $\left.K\right|_{\bX}=I$ and $\left.K\right|_{\bX'}=J'$. 
Since these polynomials are elements in a basis for 
$\Int(R^{\bX\sqcup \bX'})$, 
it follows that each $\alpha_{I,J}$ is equal to $0$, thus showing that 
$\psi \colon \T_R^2(\bX) \to C^2_{\mu}(\bX)$ is a monomorphism.
\end{proof}

The map $\psi=\psi_{\bX,\mu}$ constructed above enjoys a naturality property 
that we now proceed to describe. Let $h\colon \bX\to \bX'$ be a map of sets. 
By the discussion in Section \ref{subsec:free-bin-alg}, we have an 
induced morphism, $\T(h)\colon \T_R^{\le 2}(\bX)\to \T_R^{\le 2}(\bX')$.  
Now let $\mu'$ be a binary operation on $M'=M(\bX',R)$. 
Composition with $h$ defines a map $h^*\colon M'\to M$. 
Suppose this map is a morphism of magmas, 
that is, $\mu'(\ba\circ h,\ba'\circ h)=h(\mu(\ba,\ba'))$ for all 
$\ba, \ba'\colon \bX\to R$. Then, as noted in Section \ref{subsec:construction}, 
$h^*$ yields a simplicial map between the respective $\Delta$-complexes, 
$\Delta(h^*)\colon \Delta^{(2)}(M',\mu') \to\Delta^{(2)}(M,\mu)$, which in 
turn induces a morphism between the corresponding cochain algebras, 
$\Delta(h^*)^*\colon C_{\mu}(\bX;R)\to C_{\mu'}(\bX';R)$. The next lemma 
now follows straight from the definitions.

\begin{lemma}
\label{lem:nat-psi}
Let $h\colon \bX\to \bX'$ be a set map and suppose 
$h^*\colon (M',\mu')\to (M,\mu)$ is a magma map. Then 
the following diagram commutes
\begin{equation}
\label{eq:psi-nat}
\begin{tikzcd}
\T^{\le 2}_R(\bX) \ar[r, "\psi_{\bX,\mu}"] \ar[d, "\T(h)"] 
& C_{\mu}(\bX;R)\phantom{.} \ar[d, "\Delta(h^*)^*"]
\\
\T^{\le 2}_R(\bX') \ar[r, "\psi_{\bX',\mu'}"]& C_{\mu}(\bX';R) .
\end{tikzcd}
\end{equation} 
\end{lemma}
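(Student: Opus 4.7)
The strategy is to verify the identity $\Delta(h^*)^*\circ \psi_{\bX,\mu} = \psi_{\bX',\mu'}\circ \T(h)$ by checking it degree by degree on the natural $R$-basis of $\T^{\le 2}_R(\bX)$. Since all four maps in the diagram are $R$-linear, it suffices to verify the identity on a generating set. A convenient choice is the basis of $\T^{\le 1}_R(\bX)\cong \Int(R^{\bX})$ given by the polynomials $\zeta_I$ (Theorem \ref{thm:basis} and Lemma \ref{lem:basis-p}), together with tensors $\zeta_I\otimes \zeta_J$ spanning $\T^2_R(\bX)$. I also need the preliminary observation that $\Delta(h^*)$ is actually a simplicial map: the only nontrivial face-map compatibility concerns the middle face of a $2$-simplex, and it reduces precisely to the identity $\mu(h^*(\bb_1),h^*(\bb_2))=h^*(\mu'(\bb_1,\bb_2))$, which holds by the hypothesis that $h^*\colon (M',\mu')\to (M,\mu)$ is a magma map.

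In degree $0$, both compositions send $1\in R$ to the unit $0$-cochain, so commutativity is immediate. In degree $1$, fix $p=\zeta_I\in \fm_{\bX,R}$ and a $1$-simplex $\bb\colon \bX'\to R$ in $\Delta^{(2)}(M',\mu')$. On one side,
\begin{equation*}
\Delta(h^*)^*\bigl(\psi_{\bX,\mu}(p)\bigr)(\bb)=\psi_{\bX,\mu}(p)(h^*(\bb))=p(\bb\circ h).
\end{equation*}
On the other side, $\T(h)(p)$ is by construction the polynomial obtained from $p$ by substituting each $x\in \bX$ with $h(x)\in \bX'$, so its value at $\bb$ equals $p(\bb\circ h)$. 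Hence
\begin{equation*}
\psi_{\bX',\mu'}\bigl(\T(h)(p)\bigr)(\bb)=\T(h)(p)(\bb)=p(\bb\circ h),
\end{equation*}
and the two sides agree.

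In degree $2$, take a basis element of the form $p\otimes q$ and a $2$-simplex $(\bb_1,\bb_2)$ in $\Delta^{(2)}(M',\mu')$. By the definition of $\psi$ and the definition of pullback by $\Delta(h^*)$,
\begin{equation*}
\Delta(h^*)^*\bigl(\psi_{\bX,\mu}(p\otimes q)\bigr)(\bb_1,\bb_2)
= p(\bb_1\circ h)\cdot q(\bb_2\circ h).
\end{equation*}
On the other hand, $\T(h)(p\otimes q)=\T(h)(p)\otimes \T(h)(q)$, and evaluating $\psi_{\bX',\mu'}$ of this tensor on $(\bb_1,\bb_2)$ gives $\T(h)(p)(\bb_1)\cdot \T(h)(q)(\bb_2)$, which equals $p(\bb_1\circ h)\cdot q(\bb_2\circ h)$ by the degree-$1$ computation. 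Linearity then extends the identity from basis tensors to all of $\T^2_R(\bX)$.

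The main obstacle is purely bookkeeping: one has to keep clearly separate the polynomial $p$ as an element of $\T^1_R(\bX)$, its image $\T(h)(p)$ as a polynomial in the variables $\bX'$, the cochain $\psi(p)$, and the effect of pulling that cochain back by $\Delta(h^*)$. Once these are unraveled, the commutativity reduces to the tautology that evaluating the substituted polynomial coincides with evaluating the original polynomial at the precomposed function, and the magma-map hypothesis enters only to ensure that $\Delta(h^*)$ is well defined as a map of $\Delta$-sets.
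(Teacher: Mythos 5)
Your proof is correct and is precisely the ``follows straight from the definitions'' verification that the paper asserts without writing out: you unwind $\psi$, $\T(h)$, and $\Delta(h^*)^*$ on basis elements in each degree and observe that both composites evaluate a substituted polynomial at a precomposed function. Your added remark that the magma-map hypothesis is exactly what makes $\Delta(h^*)$ compatible with the middle face map of $2$-simplices is a correct and worthwhile point that the paper leaves implicit.
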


In particular, if $h\colon \bX\to \bX'$ is injective, $\mu'\colon M'\times M'\to M'$ 
is a binary operation on $M'$, and $\mu$ is the restriction of $\mu'$ to 
$M\times M$, then clearly $h^*\colon (M',\mu')\surj(M,\mu)$ is a magma map, 
and thus $\psi_{X',\mu'} \circ \T(h) = \Delta(h^*)^*\circ \psi_{\bX,\mu}$.

\subsection{From $2$-tensors to simplicial complexes}
\label{subsec:tau-delta}
We now refine the above construction, in a more specialized setting. 
Consider a set map $\tau \colon \bX\to \T_R^2(\bX)$.  
Recall from \eqref{eq:free-binomial-dga} that $\T_R(\bX)$ is the 
tensor algebra on the maximal ideal $\fm_{\bX,R} \subset  \Int(R^{\bX})$. 
Thus, for each $x\in \bX$, the $2$-tensor $\tau(x)\in \T_R^2(\bX)$ 
may be written as 
\begin{equation}
\label{eq:delta-x}
\tau(x) = \sum_{i=1}^{s_x} p_{x,i} \otimes q_{x,i} ,
\end{equation}
for some polynomials $p_{x,i},q_{x,i} \in \Int(R^{\bX})$ with $p_{x,i}(\bz)=q_{x,i}(\bz)=0$.

As before, let $M=M(\bX,R)$ be the set of all functions 
$\ba\colon \bX\to R$, with $R$-module structure given by 
pointwise addition. We define an operation, 
$\mu_\tau\colon M \times M\to M$, by setting 
\begin{equation}
\label{eq:mu-tau}
\mu_\tau (\ba,\ba^\prime) = \ba + \ba^\prime - \sum_{i=1}^{s_x} p_{x,i}(\ba)\cdot q_{x,i}(\ba').
\end{equation}

The pair $M_{\tau} \coloneqq (M,\mu_{\tau})$ is a unital magma, with unit  
the zero function $\bz$; indeed, equation \eqref{eq:mu-tau} 
implies that $\mu_\tau (\ba,\bz)=\mu_\tau (\bz,\ba)=\ba$, for all $\ba$. 
In the particular case when $\tau$ itself is the zero function 
(that is, $\tau(x)=0$ in $\T_R^2(\bX)$ for all $x\in \bX$), the 
corresponding magma is just the aforementioned 
abelian group $M$. In general, though, the operation $\mu_\tau$ 
is not associative, and so $M_{\tau}$ need not be a (unital) 
semigroup (also known as monoid).

We denote by $\Delta^{(2)}(M_{\tau})$ the $2$-dimensional $\Delta$-set 
associated to the magma $M_{\tau}$ by the 
constructions from Sections \ref{subsec:construction} and \ref{subsec:phi-map}, 
and we let $C_{\tau}(\bX)\coloneqq (C^{\bullet}( \Delta^{(2)}(M_{\tau})), 
d_{\Delta})$ denote the simplicial cochain algebra associated to  
the $\Delta$-set $\Delta^{(2)}(M_{\tau})$.

\subsection{The endomorphism $d_{\tau}$ of $\T_R(\bX)$}
\label{subsect:d-tau}

Our next objective is to define a degree-$1$ endomorphism 
$d_{\tau}\colon \T_R(\bX) \to \T_R(\bX)$ of the free binomial algebra 
on $\bX$ that extends the map $\tau$ and satisfies some 
desirable properties. We achieve this by embedding $\T_R^{\le 2}(\bX)$ 
into the cochain algebra $C_{\tau}(\bX)$ defined above.

\begin{theorem}
\label{thm:extend-diff}
Given a map of sets, $\tau \colon \bX\to \T_R^2(\bX)$, there is 
a unique degree-$1$ linear map, $d_{\tau}\colon \T_R(\bX) \to \T_R(\bX)$, 
such that $d_{\tau}(x)=\tau(x)$ 
for all $x\in \bX$ and both the $\cupd$ formula and the
graded Leibniz rule are satisfied.
\end{theorem}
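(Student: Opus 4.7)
The plan is to transport the simplicial coboundary $d_\Delta$ on the cochain algebra $C_\tau(\bX)$ of $\Delta^{(2)}(M_\tau)$ back to $\T_R(\bX)$ via the embedding $\psi\colon \T_R^{\le 2}(\bX) \inj C_\tau(\bX)$ of Lemma \ref{lem:one}, and then extend by the graded Leibniz rule. Uniqueness is almost immediate: once $d_\tau$ is defined on $\T^1$, the Leibniz rule forces its values on the rest of the tensor algebra; and on $\T^1$, the binomial recursion $(n+1)\,\z_{n+1}(x) = \z_n(x) \cup_1 x - n\,\z_n(x)$ combined with the $\cupd$ formula determines $d_\tau(\z_{n+1}(x))$ inductively from $d_\tau(\z_n(x))$ and $d_\tau(x) = \tau(x)$, while the analogous decomposition $\z_I = \z_{I_1} \cup_1 \z_{I_2}$ over disjoint subsupports handles the multi-variable case $\abs{\supp(I)} \ge 2$.

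For existence, the first step is the direct computation
\begin{equation*}
(d_\Delta \psi(x))(\ba,\ba^\prime) = \ba^\prime(x) - \mu_\tau(\ba,\ba^\prime)(x) + \ba(x) = f_\tau(\ba,\ba^\prime)(x) = \psi(\tau(x))(\ba,\ba^\prime),
\end{equation*}
using \eqref{eq:tau-aa} and \eqref{eq:mu-tau}; thus $d_\Delta \psi(x) = \psi(\tau(x)) \in \psi(\T^2)$. The main claim is that $d_\Delta(\psi(p)) \in \psi(\T^2(\bX))$ for every $p \in \T^1$, and I would prove it by a double induction on $\abs{\supp(I)}$ and on the value $n = I(x)$ in the single-variable case. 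For the inductive step, both $d_\Delta\psi(\z_n(x))$ and $d_\Delta\psi(x)$ lie in $\psi(\T^2) \subseteq D^2(C_\tau(\bX))$ (the inclusion holds because $\psi$ commutes with $\cup$), so the $\cupd$ formula in $C_\tau(\bX)$ available from Theorem \ref{thm:cochain-cup1} applies to $\psi(\z_n(x)\cup_1 x) = \psi(\z_n(x)) \cup_1 \psi(x)$ and expresses its coboundary as a combination of $\cup$-, $\cup_1$-, and $\circ$-products of elements of $\psi(\T^{\le 2})$, which lies back in $\psi(\T^2)$ by naturality of $\psi$. The multi-variable case is handled analogously by applying the $\cupd$ formula to the decomposition $\z_I = \z_{I_1} \cup_1 \z_{I_2}$.

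With the image condition in hand, define $d_\tau(p) \coloneqq \psi^{-1}(d_\Delta \psi(p))$ for $p \in \T^1$, set $d_\tau(1)=0$, and extend to $\T$ by the graded Leibniz rule on the tensor algebra $T^\ast(\fm_{\bX,R})$. The $\cupd$ formula for $d_\tau$ then pulls back from $C_\tau(\bX)$: for $a,b \in \T^1$, apply $\psi$ to both sides of the proposed formula and use that $\psi$ intertwines $\cup$, $\cup_1$, $\circ$ and satisfies $\psi \circ d_\tau = d_\Delta \circ \psi$ on $\T^{\le 1}$; the resulting identity in $C_\tau(\bX)$ is precisely the $\cupd$ formula from Theorem \ref{thm:cochain-cup1}, and injectivity of $\psi$ yields the $\cupd$ formula for $d_\tau$. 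The main obstacle is the inductive verification that $d_\Delta$ preserves $\psi(\T^1)$ inside $C_\tau^2(\bX)$, which requires careful interlocking of the induction on $\abs{\supp(I)}$ with the induction on binomial depth, and extra care in the case $R = \Z_p$, where only the $\z_n$ with $n<p$ are available and the recursion must be kept within that range.
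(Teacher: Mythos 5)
Your overall architecture coincides with the paper's: embed $\T_R^{\le 2}(\bX)$ into $C_{\tau}(\bX)$ via the monomorphism $\psi$ of Lemma \ref{lem:one}, compute $d_{\Delta}\psi(x)=\psi(\tau(x))$, show that $d_{\Delta}$ preserves $\psi\bigl(\T_R^{\le 2}(\bX)\bigr)$, restrict to obtain $d_{\tau}$ on $\T^1$, extend by the graded Leibniz rule, pull the $\cupd$ formula back through the injection $\psi$, and prove uniqueness by the recursion $(n+1)\z_{n+1}(x)=\z_n(x)\cup_1 x-n\z_n(x)$ together with induction on $\abs{\supp(I)}$. The uniqueness half and the transport of the $\cupd$ formula are essentially the paper's argument. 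Where you genuinely differ is the key invariance claim $d_{\Delta}(\psi(\T^1))\subseteq\psi(\T^2)$: you run a double induction driven by the $\cupd$ formula of Theorem \ref{thm:cochain-cup1} applied to $\psi(\z_n(x))\cup_1\psi(x)$, whereas the paper observes that $d_{\Delta}\psi(p)(\ba,\ba')=p(\ba)+p(\ba')-p\bigl(\mu_{\tau}(\ba,\ba')\bigr)$ is an $R$-valued polynomial in the doubled variable set $\bX\sqcup\bX'$, expands it in the basis $\z_{I}(\bx)\otimes\z_{J}(\bx')$, and kills the coefficients $c_{0,J}$ and $c_{I,0}$ by evaluating at $\bz$ in each slot.

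Your inductive step has a gap over $R=\Z$. The recursion yields $(n+1)\,d_{\Delta}\psi(\z_{n+1}(x))=d_{\Delta}\psi(\z_n(x)\cup_1 x)-n\,d_{\Delta}\psi(\z_n(x))$, and your argument places the right-hand side in $\psi(\T^2)$; but $\psi(\T^2)$ is only a subgroup of the group of all $R$-valued functions on $M\times M$, and from $(n+1)v\in\psi(\T^2)$ one cannot conclude $v\in\psi(\T^2)$ unless the relevant quotient is known to be torsion-free. (Over $\Z_p$ there is no issue, since $n+1\le p-1$ is a unit.) The repair is to note that $v=d_{\Delta}\psi(\z_{n+1}(x))$ is itself an element of $\Int(\Z^{\bX\sqcup\bX'})$ --- being the composite polynomial $q(\bx)+q(\bx')-q(\mu_{\tau}(\bx,\bx'))$ with $q=\z_{n+1}(x)$ --- and that the submodule of products with zero constant term in each factor is spanned by a subset of the $\z_K$-basis of $\Int(\Z^{\bX\sqcup\bX'})$, so the complementary basis coefficients of $(n+1)v$ vanish if and only if those of $v$ do. But once this observation is in hand, the induction is superfluous: evaluating $d_{\Delta}\psi(p)$ at $\ba=\bz$ and at $\ba'=\bz$ kills the offending coefficients directly for every $p\in\T^1$, which is exactly the paper's one-step argument. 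A secondary point: your inductive step silently uses that $\psi$ also intertwines the Hirsch-extended product $\cup_1\colon\T^2\otimes\T^1\to\T^2$ with its counterpart on decomposable $2$-cochains; this is true (it follows from the Hirsch identity on both sides together with Lemma \ref{lem:one}), but it should be stated.
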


\begin{proof}
Let $\psi=\psi_{\bX,\tau} \colon \T_R^{\le 2}(\bX) \inj C_{\tau}(\bX)$
be the monomorphism from Lemma \ref{lem:one}. 
First note that from the formula for the coboundary of a $1$-cochain in 
a $\Delta$-complex, we have 
\begin{align}
d_{\Delta}\psi(x)(\ba,\ba') 
			& = \ba(x) + \ba'(x) 
				- (\ba(x) + \ba'(x) -\tau(\ba,\ba')(x)) \notag\\ \notag
			& =  \tau(\ba,\ba')(x)\\ \notag
			& = \sum_i ( \varphi(p_{x,i})(\ba) 
						\cdot \varphi(q_{x,i})(\ba')) \label{eq:d}\\
			& = \sum_i ( \varphi(p_{x,i})(\bx) 
						\cup \varphi(q_{x,i})(\bx)) (\ba,\ba')\\ \notag
			& = \sum_i \varphi(p_{i,x}(\bx)) 
							\ot \varphi(q_{x,i}(\bx)))
						(\ba,\ba')	\\ \notag
			& = (\varphi \tau(x)) (\ba,\ba').					
\end{align}
Since this holds for all pairs $(\ba,\ba')\in M\times M$, it follows that
$d_{\Delta}(\psi(x)) = \psi(\tau(x))$. 

The next step is to show that the differential 
$d_{\Delta}\colon C^{\bullet}_{\tau}(\bX)\to C^{\bullet+1}_{\tau}(\bX)$ 
leaves invariant the subgroup 
$\psi\bigl(\T_R^{\le 2}(\bX)\bigr) \subset C_{\tau}(\bX)$. 
Let $p = p(\bx)$ be a polynomial in $\T_R^1(\bX)$, where
$\bx \subset \bX$
denotes the subset of variables appearing in the monomials 
comprising $p$. Given $1$-chains $\ba$, $\ba'$, we have
\begin{equation}
\label{eq:d-del-p}
d_{\Delta}p(\ba,\ba') 
= p(\ba) + p(\ba') - p\bigl( \ba + \ba' - \tau(\ba,\ba') \bigr).
\end{equation}
By Theorem \ref{thm:basis}, we may write
\begin{equation}
\label{eq:dpxx}
d_{\Delta}p(\bx, \bx') 
		=\sum c_{I_i,J_i} \cdot \z_{I_i}(\bx) \otimes\z_{J_i}(\bx') ,
\end{equation}
where the $c_{I_i,J_i}$ are constants in $R$.
From equation \eqref{eq:d-del-p} it follows 
that  the polynomial $d_{\Delta}p(\bz,\bx') = \sum c_{0,J_i} \cdot \z_{J_i}(\bx')$ 
vanishes for all values of $\bx'$; thus all coefficients $c_{0,J_i}$ vanish. 
A similar argument shows that $c_{I_i,0} =0$ for all $i$. Therefore, 
$d_{\Delta}p$ is a sum of products of polynomials in 
$\Int(R^{\bX})$ and polynomials in $\Int(R^{\bX'})$  with 
zero constant term in each factor; that is, $d_{\Delta}p$ 
is in the image of $\T_R^{\le 2}(\bX)$ under the map $\psi$. 
This completes the proof that $\psi \bigl(\T_R^{\le 2}(\bX)\bigr)$ 
is closed under $d_{\Delta}$.

Now set $d_{\tau}\colon \T_R^{\le 2}(\bX)\to \T_R^{\le 2}(\bX)$ equal to the 
restriction of $d_{\Delta}$ to the invariant subgroup $\psi\bigl(\T_R^{\le 2}(\bX)\bigr)$.
Since the differential $d_{\Delta}$ satisfies the $\cupd$ formula \eqref{eq:c1d}, 
it follows that $d_{\tau}$ also satisfies this formula.
Finally, we extend $d_{\tau}$ to the whole free cup-one algebra 
$\T=\T_R(\bX)$ using the graded Leibniz rule. 

The final step is to show that the map $d_{\tau}\colon \T\to \T$ defined above 
is the unique degree $1$ linear map for which $d_\tau (x) = \tau(x)$ 
for all $x \in \bX$, and the $\cupd$ formula and the graded Leibniz rule
are satisfied. Let $d\colon\T^1 \to \T^2$ be any map
that satisfies the $\cupd$ formula with $d(x) = d_{\tau}(x)$
for all $x \in \bX$. It suffices to show that $d(p)=d_{\tau}(p)$
for all $p \in \T^1$.  Since both $d$ and $d_\tau$ satisfy the 
$\cupd$ formula, it follows that
\begin{equation}
\label{eq:ddt}
d(p \cup_1 q) =d_\tau(p \cup_1 q) \:
\text{ if $d(p) = d_\tau(p)$ and $d(q) = d_\tau(q)$}.
\end{equation}
Then from equation \eqref{eq:ddt} and induction on $i$
using the formula 
$\z_{i+1}(x) = (\z_{i}(x)\cup_1 x - i\z_i(x))/(i+1)$,
it follows that $d(\z_i(x)) = d_\tau(\z_i(x)$ for
all $x \in \bX$ and all $i \ge 1$ in the case $R=\Z$ 
and $1 \le i \le p-2$ in the case $R=\F_p$.

It then follows using \eqref{eq:ddt} and induction on 
the length of $\supp(I)$ that $d(\z_I(\bx)) = d_\tau(\z_I(\bx))$ for
all $I$ and $\bx$. Since the polynomials $\z_I(\bx)$ form a basis for
$\T^1$, the proof of uniqueness is complete. 

This completes the proof of the theorem.
\end{proof}

\begin{remark}
\label{rem:f-tau}
Let $\mu \colon M \times M \to M$ be an operation on the set 
$M=M(\bX,R)$ of all functions from $\bX$ to $R$ and 
let  $\psi \colon \T_R^{\le 2}(\bX) \inj C_{\mu}(\bX)$ be the 
monomorphism in Lemma \ref{lem:one}. 
It can be shown that the differential 
$d_{\Delta}\colon C_{\mu}(\bX)\to C_{\mu}(\bX)$ 
leaves invariant the subgroup 
$\psi\bigl(\T_R^{\le 2}(\bX)\bigr) \subset C_{\mu}(\bX)$ if 
and only if $\mu=\mu_{\tau}$, for some function 
$\tau \colon \bX \to \T^2(\bX)$.
\end{remark}

\subsection{The differential $d_{\tau}$ associated to an admissible map $\tau$}
\label{subsect:diff-tau}

Recall that a map $\tau \colon \bX \to \T_R^2(\bX)$ given by \eqref{eq:delta-x}
determines a binary operation, $\mu_\tau\colon M \times M\to M$, given by \eqref{eq:mu-tau}.%
The case when the corresponding unital magma, $M_{\tau}=(M, \mu_{\tau})$, 
is a monoid is particularly interesting.

\begin{definition}
\label{def:admissible}
A map of sets  $\tau \colon \bX \to\T_R^2(\bX)$ is said to be {\em admissible}\/ 
if the corresponding binary operation, $\mu_\tau\colon M \times M\to M$, 
is associative, or, equivalently, the magma $M_{\tau}$ is a monoid.
\end{definition}

\begin{theorem}
\label{thm:assoc-2}
If the map $\tau\colon \bX \to\T_R^2(\bX)$ is admissible, then
the map $\psi=\psi_{\bX,\tau} \colon \T_R(\bX) \to C^\ast(\Delta(M_\tau))$
is a monomorphism and $d^2_\tau \equiv 0$.
\end{theorem}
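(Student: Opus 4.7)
The plan is to extend the monomorphism $\psi\colon \T_R^{\le 2}(\bX) \inj C_\tau(\bX)$ of Lemma \ref{lem:one} to a dga monomorphism on all of $\T_R(\bX)$, using the fact that admissibility upgrades the $2$-dimensional $\Delta$-set $\Delta^{(2)}(M_\tau)$ to a genuine $\Delta$-set $\Delta(M_\tau)$ (Lemma \ref{lem:delta-mu}), and then to leverage $d_\Delta^2=0$ together with injectivity to conclude $d_\tau^2\equiv 0$.

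First I would define $\psi$ in each degree $n\ge 0$ on a basic tensor by the formula
\[
\psi(p_1 \otimes \cdots \otimes p_n)(\ba_1,\ldots,\ba_n) \;=\; \prod_{i=1}^{n} p_i(\ba_i),
\]
where $(\ba_1,\ldots,\ba_n)\in M^n$ is the generic $n$-simplex of $\Delta(M_\tau)$, and extend $R$-linearly. Because $\Delta(M_\tau)$ arises from a semigroup, the face maps $d_i$ agree with the ones used in the Alexander--Whitney formula for the simplicial cup product, so this definition equals $\psi(p_1)\cup\cdots\cup\psi(p_n)$, and the extended map agrees with the original $\psi$ in degrees $\le 2$ while being a morphism of graded $R$-algebras by construction. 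Injectivity on $\T_R^n(\bX)$ follows by the same pattern as in Lemma \ref{lem:one}: using the basis tensors $\zeta_{I_1}\otimes\cdots\otimes\zeta_{I_n}$ of $\T_R^n(\bX)$ and passing to $n$ disjoint copies $\bX_1,\ldots,\bX_n$ of $\bX$, any relation $\psi(\sum c_{I_1,\ldots,I_n}\zeta_{I_1}\otimes\cdots\otimes\zeta_{I_n})=0$ becomes a polynomial identity in $\Int(R^{\bX_1\sqcup\cdots\sqcup\bX_n})$ among the basis elements $\zeta_{I_1}(\bx_1)\cdots\zeta_{I_n}(\bx_n)$, forcing each coefficient to vanish.

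Next I would verify the intertwining relation $\psi\circ d_\tau = d_\Delta\circ\psi$. In degrees $\le 1$ this was established inside the proof of Theorem \ref{thm:extend-diff}, where we showed that $d_\Delta$ preserves $\psi(\T_R^{\le 2}(\bX))$ and its restriction there realizes $d_\tau$. Both $d_\tau$ (by construction) and $d_\Delta$ (as the simplicial coboundary on a $\Delta$-set cochain algebra) satisfy the graded Leibniz rule with respect to the cup product, and $\psi$ is multiplicative; since $\T_R(\bX)$ is generated as an algebra by $\T_R^1(\bX)$, an induction on tensor length propagates the intertwining to all of $\T_R(\bX)$. Once this is in place, applying $d_\Delta$ twice gives $\psi\circ d_\tau^2 = d_\Delta^2\circ\psi = 0$, and injectivity of $\psi$ forces $d_\tau^2\equiv 0$.

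The main obstacle I anticipate is the compatibility bookkeeping at the multiplicative extension step: one must confirm that the evaluation formula above really coincides with the iterated simplicial cup product on $\Delta(M_\tau)$ (this is where associativity of $\mu_\tau$ is used crucially, since only then are the higher face maps $d_i$ defined and the Alexander--Whitney product available), and that the Leibniz-driven induction is consistent with the definition of $d_\tau$ produced by Theorem \ref{thm:extend-diff} on mixed tensors of the basis polynomials $\zeta_I$. Once multiplicativity and the degree-$\le 2$ intertwining are secured, the remainder of the argument is formal.
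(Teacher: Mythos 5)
Your proposal is correct and follows essentially the same route as the paper's proof: extend $\psi$ multiplicatively to all of $\T_R(\bX)$ (the paper takes it to be the unique algebra map restricting to $\psi_{\le 2}$, which agrees with your explicit evaluation formula), prove injectivity by passing to disjoint copies $\bX_1,\ldots,\bX_n$ and the basis of $\Int\bigl(R^{\bX_1\cup\cdots\cup\bX_n}\bigr)$, propagate the intertwining $d_\Delta\circ\psi=\psi\circ d_\tau$ from degree $1$ to all degrees via the Leibniz rule and generation in degree $1$, and conclude $d_\tau^2\equiv 0$ from $d_\Delta^2\equiv 0$ and injectivity. The compatibility bookkeeping you flag as a potential obstacle is handled in the paper exactly as you anticipate, so there is no gap.
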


\begin{proof}
Let $\Delta_\tau=\Delta(M_{\tau})$ be the $\Delta$-set associated to the 
monoid $M_\tau$. Note that the $2$-skeleton of $\Delta_\tau$ 
is the previously defined $\Delta$-set $\Delta^{(2)}(M_{\tau})$.
The arguments in the proofs of Lemma \ref{lem:one} and
Theorem \ref{thm:extend-diff} generalize as follows 
to show that the map
$\psi_{\le 2} \colon\T_R^{\le 2}(\bX) \to 
C^{\le 2}(\Delta_\tau^{(2)})$
extends to a monomorphism
$\psi\colon \T_R(\bX) \to C^*(\Delta_\tau)$ with
$d_\Delta \circ \psi = \psi \circ d_\tau$.

Set $\psi\colon \T_R(\bX) \to C^*(\Delta_\tau)$ equal to the
unique map of algebras that restricts to $\psi_{\le 2}$
on elements of degree less than or equal to $2$.

The next step is to show that $\psi$ is a monomorphism.
Let $p_1(\bx) \otimes \cdots \otimes p_n(\bx)$ be a basis
element in $\T^n_R(\bX)$ and let $\bX_1, \ldots , \bX_n$ be disjoint
copies of $\bX$. Set $e \colon \T_R^n(\bX) \to 
\Int\bigl( R^{\bX_1\cup \cdots \cup \bX_n} \bigr)$ 
equal to the map that sends 
$p_1(\bx) \otimes \cdots \otimes p_n(\bx)$ to the
product of polynomials $p_1(\bx_1)\cdots p_n(\bx_n)$, where 
$p_i(\bx_i)$ denotes the polynomial $p_i(\bx)$ with
the variables $x \in \bX$ replaced by the corresponding
variables $x_i \in \bX_i$. Since $e$ is a bijection on basis
elements, it follows that $e$ is a bijection.
If $\psi \left( \sum_i p_{1,i}(\bx) \otimes \cdots 
\otimes p_{n,i}(\bx) \right)$ is the zero element in $C^n(\Delta_\tau)$, then
$e\left( \sum_i p_{1,i} (\ba_1) \otimes \cdots 
\otimes p_{n,i} (\ba_n) \right)$ is zero
for all maps $\ba_i\colon \bX_i \to R$, and hence,
is the zero polynomial. The result that $\psi$ is a 
monomorphism now follows since $e$ is a monomorphism.

Since $d_\Delta \circ \psi = \psi \circ d_\tau
\colon\T_R^1(\bX) \to C^2(\Delta_\tau)$
by Theorem \ref{thm:extend-diff}, and since $\T_R(\bX)$ is generated
by products of elements in $\T_R^1(\bX)$, it follows that
$d_\Delta \circ \psi = \psi \circ d_\tau
\colon \T_R^i(\bX) \to C^{i+1}(\Delta_\tau)$ for all $i \ge 1$.
Then since $d_\Delta^2 \equiv 0$ and $\psi$ is a
monomorphism, it follows that $d_\tau^2 \equiv 0$,
and the proof is complete.
\end{proof}
\begin{remark}
\label{rem:cup-2}
It can be shown that the homomorphism $\psi \colon 
\T_R(\bX) \to C^{\bullet}(\Delta_\tau)$ sends the $\cup_1$ and 
$\circ$ products in $\T_R(\bX)$ given in Section \ref{subsec:diff-tx}; respectively, 
to the $\cup_1$ and $\cup_2$ product maps in $C^{\bullet}(\Delta_\tau)$.
\end{remark}
\subsection{The differentials of the $\zeta$-maps}
\label{subsec:diff-zeta}

We consider now in more detail the simplest possible case of 
Theorem \ref{thm:extend-diff}; namely, the case when 
$\tau = 0$. To begin, we recall the following result, which is 
proved in \cite[Theorems 7.5 and 8.14]{Porter-Suciu-2021}.

\begin{theorem}
\label{thm:tx-a}
Let $(A,d_A)$ be a $\cup_1$-dga over $R=\Z$ or $\F_p$, let 
$\bX$ be a set, and let $f\colon \T_R(\bX) \to A$ be a morphism
of graded $R$-algebras with cup-one products. Then 
$f\colon (\T_R(\bX),d_{\bz}) \to (A,d_A)$ is a map of $\cup_1$-dgas
if and only if $d_A \circ f(x) = f \circ d_{\bz}(x)$ for all $x \in \bX$.
\end{theorem}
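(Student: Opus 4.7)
The \textbf{only if} direction is immediate from the definition of a dga morphism. For the \textbf{if} direction, I would show that the two degree-one $R$-linear maps
$d_A \circ f$ and $f \circ d_{\bz}$ from $\T_R(\bX)$ to $A$ agree on every element of $\T_R(\bX)$, starting from the hypothesis that they agree on $\bX$ and propagating outward through the algebraic structure. The governing principle is that both maps satisfy, relative to $f$, the same Leibniz rule and the same $\cupd$ formula. Specifically, since $f$ is a morphism of graded algebras with cup-one products and (via Lemma~\ref{lem:cup1bin-tensor}-style reasoning together with \eqref{eq:circ-op}) commutes with the $\circ$ operations, one checks that whenever $d_A f(a) = f d_{\bz}(a)$ and $d_A f(b) = f d_{\bz}(b)$ for homogeneous $a, b$ of degrees $\le 1$ with differentials in the decomposables, the same identity holds for $a \cup b$ (by Leibniz) and for $a \cup_1 b$ (by substituting the $\cupd$ formula \eqref{eq:c1d} on each side and using that $f$ preserves $\cup$, $\cup_1$, and $\circ$).

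With this compatibility in hand, the proof proceeds by three nested inductions, mirroring the uniqueness argument at the end of the proof of Theorem~\ref{thm:extend-diff}. First, fix $x \in \bX$; using the recursion \eqref{eq:zeta-binomial}, namely
\begin{equation*}
\z_{n+1}(x) = \frac{\z_n(x) \cup_1 x - n\,\z_n(x)}{n+1},
\end{equation*}
I would prove by induction on $n$ that $d_A f(\z_n(x)) = f d_{\bz}(\z_n(x))$ for all $n \ge 1$ (and $n \le p-1$ when $R = \Z_p$). The base case $n=1$ is the hypothesis, and the inductive step uses the compatibility observed above applied to the product $\z_n(x) \cup_1 x$. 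Second, for a finitely supported $I\colon \bX \to \Z_{\ge 0}$ with $\supp(I) = \{x_1,\dots, x_k\}$, I would induct on $k = \abs{\supp(I)}$ using the product decomposition $\z_I(\bx) = \z_{I(x_1)}(x_1) \cdots \z_{I(x_k)}(x_k)$ and the Leibniz-rule compatibility. By Theorem~\ref{thm:basis} and Lemma~\ref{lem:basis-p}, the resulting elements span $\T^1_R(\bX)$, so $d_A \circ f = f \circ d_{\bz}$ on all of $\T^1_R(\bX)$. Third, since $\T_R(\bX)$ is generated as a graded algebra by $\T^1_R(\bX)$ and both maps satisfy the graded Leibniz rule over $f$, agreement on $\T^1_R(\bX)$ forces agreement on every tensor, completing the proof.

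The main obstacle is the second bullet of the compatibility verification: showing that the $\cupd$ formula transports the inductive hypothesis across the operation $\cup_1$. This requires carefully tracking that each of the five terms appearing in \eqref{eq:c1d} when applied on the $A$-side matches the corresponding term produced by $f \circ d_{\bz}$, which in turn depends on $f$ respecting all three bilinear operations ($\cup$, $\cup_1$, and $\circ$). Once this one identity is verified, the cascade of inductions is essentially formal and parallels the uniqueness portion of Theorem~\ref{thm:extend-diff}.
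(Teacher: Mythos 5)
Your proposal is correct and follows essentially the same route as the paper: Theorem \ref{thm:tx-a} is recalled here from \cite{Porter-Suciu-2021}, but the argument the paper sketches for the closely analogous Lemma \ref{lem:map-bcd} --- and the uniqueness portion of Theorem \ref{thm:extend-diff} that you explicitly invoke --- is precisely your scheme of propagating $d_A\circ f=f\circ d_{\bz}$ from $\bX$ via the $\cupd$ formula and the $\circ$-compatibility of $f$, the recursion \eqref{eq:zeta-binomial}, induction on $\abs{\supp(I)}$, and finally the graded Leibniz rule on tensors. The only point you should make explicit is that over $\Z$ the recursion \eqref{eq:zeta-binomial} yields $(n+1)\bigl(d_A f - f d_{\bz}\bigr)(\z_{n+1}(x))=0$ rather than the identity itself, so one must be able to cancel $n+1$ in $A^2$ --- automatic over $\Z_p$ (where $n+1\le p-1$) and in the torsion-free targets arising in practice, but a hypothesis worth acknowledging since it is used silently in the paper's own sketch as well.
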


Lemma \ref{lem:TtoA-diff} and the above theorem have the following 
immediate corollary. 

\begin{corollary}
\label{cor:dx0}
If $(A,d_A)$ is a binomial cup-one $R$-dga with $H^0(A)=R$, 
then there is a bijection between binomial $\cup_1$-dga 
maps from $(\T_R(\bX), d_{\bz})$ to $(A, d_A)$ 
and maps of sets from $\bX$ to $Z^1(A)$.
\end{corollary}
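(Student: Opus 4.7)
The plan is to combine Lemma~\ref{lem:TtoA-diff} with Theorem~\ref{thm:tx-a} in a straightforward way, using crucially the fact that $d_{\bz}(x)=0$ for every generator $x\in\bX$.

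First, I would define the map in one direction. Given a set map $\phi\colon \bX\to Z^1(A)$, view it as a map $\phi\colon \bX\to A^1$ and apply Lemma~\ref{lem:TtoA-diff} to obtain a unique extension $f_{\phi}\colon \T_R(\bX)\to A$ of binomial graded $R$-algebras with cup-one products such that $H^0(f_{\phi})$ is an isomorphism. Since $d_{\bz}(x)=0$ for all $x\in\bX$ (by Corollary~\ref{cor:cup1-t}) and $f_{\phi}(x)=\phi(x)\in Z^1(A)$, we have
\[
d_A\circ f_{\phi}(x)=d_A(\phi(x))=0=f_{\phi}(0)=f_{\phi}\circ d_{\bz}(x)
\]
for all $x\in\bX$. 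Theorem~\ref{thm:tx-a} then guarantees that $f_{\phi}\colon(\T_R(\bX),d_{\bz})\to(A,d_A)$ is a morphism of binomial $\cup_1$-dgas.

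Next, I would define the map in the other direction. Given a binomial $\cup_1$-dga morphism $f\colon(\T_R(\bX),d_{\bz})\to(A,d_A)$ inducing an isomorphism on $H^0$, set $\phi_f\coloneqq f|_{\bX}\colon\bX\to A^1$. Commutativity of $f$ with the differentials gives $d_A(f(x))=f(d_{\bz}(x))=0$ for each $x\in\bX$, so $\phi_f$ actually lands in $Z^1(A)$.

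Finally, I would verify that the two constructions are mutually inverse. The equality $\phi_{f_{\phi}}=\phi$ is immediate from the construction. For the other composition, note that the morphism $f_{\phi_f}$ produced from $\phi_f$ is, by construction, an extension of $\phi_f=f|_{\bX}$ to a morphism of binomial graded $R$-algebras with cup-one products inducing an isomorphism on $H^0$; since $f$ itself has all of these properties, the uniqueness assertion in Lemma~\ref{lem:TtoA-diff} forces $f_{\phi_f}=f$. No step here looks difficult: all of the substantive work has already been isolated in Lemma~\ref{lem:TtoA-diff} and Theorem~\ref{thm:tx-a}, and the only new ingredient is the observation that $d_{\bz}\equiv 0$ on $\bX$ translates the cocycle condition of Theorem~\ref{thm:tx-a} into the requirement that the generators land in $Z^1(A)$.
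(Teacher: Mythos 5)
Your proof is correct and follows essentially the same route as the paper, which simply cites Lemma~\ref{lem:TtoA-diff} together with Theorem~\ref{thm:tx-a} and declares the corollary immediate; you have merely spelled out the details, including the key observation that $d_{\bz}\equiv 0$ on $\bX$ turns the differential-compatibility condition into the requirement that generators map to cocycles.
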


The following theorem (which will be used in the proof 
of Proposition \ref{lem:ss}) gives an explicit formula for the differential
$d_{\bz} \colon\T_R^1(\bX) \to\T_R^2(\bX)$. This result 
recovers Theorem 6.11 from \cite{Porter-Suciu-2021}, proved 
there by other methods. Since the current proof is shorter, 
more transparent, and illustrates the strength of the techniques 
developed here, we provide full details. 

\begin{theorem}
\label{thm:cup-zeta}
Let $X$ be a set, let $\tau \colon \bX \to\T_R^2(\bX)$ be the
zero map, and let $d_{\bz}$ be the corresponding differential on $\T_R(\bX)$, 
given by $d_{\bz} (x)=0$ for all $x \in \bX$. Then
we have 
\begin{equation}
\label{eq:da-cup-2}
d_{\bz} (\z_k (x)) = - \sum_{\ell = 1}^{k-1}\z_\ell(x) \otimes \z_{k-\ell}(x).
\end{equation}
for all $x \in \bX$ and for $k \ge 1$ in the case $R=\Z$ and 
for $1 \le k \le p-1$ in the case $R=\F_p$.
More generally,
\begin{equation}
\label{eq:d-zeta-2}
d_{\bz} (\z_I(\bx)) 
= - \sum_{\substack{I_1 + I_2=I \\ I_j\ne \bz}} 
\z_{I_1}(\bx) \otimes \z_{I_2}(\bx) .
\end{equation}
where in the case $R=\F_p$ we have $k\le p-1$ and each
of the indices in $I$ is less than or equal to $p-1$.
\end{theorem}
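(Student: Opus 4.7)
The plan is to reduce the computation to a direct simplicial-coboundary calculation on $\Delta(M)$ via the embedding developed in Section~\ref{sect:admissible}, and then apply the (multi-variable) Vandermonde identity to identify the resulting $2$-cochain with the $\psi$-image of the claimed right-hand side.

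First I would apply Theorem~\ref{thm:extend-diff} and Theorem~\ref{thm:assoc-2} in the case $\tau = 0$. With the zero map $\tau$, formula \eqref{eq:tau-aa} gives $f_\tau \equiv 0$, so the induced operation on $M = M(\bX,R)$ is $\mu_\tau(\ba,\ba') = \ba + \ba'$, the abelian-group addition. In particular $\mu_\tau$ is associative, so $\tau$ is admissible and Theorem~\ref{thm:assoc-2} yields a monomorphism of dgas
\begin{equation*}
\psi = \psi_{\bX,0} \colon (\T_R(\bX), d_{\bz}) \inj (C^\ast(\Delta(M)), d_\Delta)
\end{equation*}
satisfying $\psi \circ d_{\bz} = d_\Delta \circ \psi$. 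Since $\psi$ is injective, to verify \eqref{eq:da-cup-2} and \eqref{eq:d-zeta-2} it suffices to verify their $\psi$-images after applying $d_\Delta$ to $\psi(\zeta_k(x))$ and $\psi(\zeta_I(\bx))$, respectively.

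Next I would compute $d_\Delta\psi(\zeta_I(\bx))$ using equation \eqref{eq:d-del-p}, which in our case (with $\tau=0$) gives, for any polynomial $p \in \fm_{\bX,R}$ and $\ba,\ba' \in M$,
\begin{equation*}
d_\Delta \psi(p)(\ba,\ba') = p(\ba) + p(\ba') - p(\ba + \ba').
\end{equation*}
Taking $p = \zeta_I(\bx) = \prod_{x \in \supp I} \binom{x}{I(x)}$ and using the Vandermonde identity $\binom{a+b}{n} = \sum_{\ell=0}^n \binom{a}{\ell}\binom{b}{n-\ell}$ applied variable-by-variable, one obtains
\begin{equation*}
\zeta_I(\bx)(\ba + \ba') = \sum_{I_1 + I_2 = I} \zeta_{I_1}(\bx)(\ba)\cdot \zeta_{I_2}(\bx)(\ba').
\end{equation*}
The terms $I_1 = \bz$ and $I_2 = \bz$ cancel exactly $\zeta_I(\bx)(\ba')$ and $\zeta_I(\bx)(\ba)$ in $d_\Delta \psi(\zeta_I(\bx))(\ba,\ba')$, leaving
\begin{equation*}
d_\Delta\psi(\zeta_I(\bx))(\ba,\ba') = -\!\!\sum_{\substack{I_1+I_2=I\\I_j\ne \bz}} \zeta_{I_1}(\bx)(\ba)\cdot \zeta_{I_2}(\bx)(\ba').
\end{equation*}

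Finally, by the definition of $\psi$ on $\T_R^2(\bX)$, the right-hand side equals
$-\psi\!\left(\sum_{I_1+I_2=I,\, I_j\ne\bz}\zeta_{I_1}(\bx)\otimes \zeta_{I_2}(\bx)\right)(\ba,\ba')$. Combining this with $\psi\circ d_{\bz} = d_\Delta\circ \psi$ and the injectivity of $\psi$ yields \eqref{eq:d-zeta-2}; specializing to a single variable $x$ with $I(x) = k$ recovers \eqref{eq:da-cup-2}. In the case $R = \Z_p$, the same proof applies provided every index in $I$ is at most $p-1$ (so that the relevant binomials make sense modulo $p$ and $\zeta_{I_1}(\bx)$, $\zeta_{I_2}(\bx)$ are defined in $\T_R(\bX)$). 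The only subtle point is the bookkeeping to see that the non-zero multi-index terms in the Vandermonde expansion match the $\psi$-image of the claimed sum of $2$-tensors, but this is essentially a direct translation; the conceptual work is entirely carried by the admissibility of $\tau = 0$ and the existence of the dga embedding $\psi$.
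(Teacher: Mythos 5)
Your proposal is correct and follows essentially the same route as the paper's proof: both establish admissibility of $\tau=0$, invoke the embedding $\psi$ from Theorem~\ref{thm:assoc-2} together with the coboundary formula \eqref{eq:d-del-p}, apply the Vandermonde identity, and conclude by injectivity of $\psi$. The only cosmetic difference is that you treat the general multi-index case \eqref{eq:d-zeta-2} directly, whereas the paper does the single-variable case \eqref{eq:da-cup-2} in detail and indicates the extension to products.
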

\begin{proof}
Since $\tau$ is the zero map, the magma $M=M_\tau$ is a 
monoid (in fact, an abelian group), and hence, $\tau$ is admissible. 
By Theorem \ref{thm:assoc-2}, we have that $d_{\bz}=d_{\tau}$ 
is a differential on $\T_R(\bX)$, and hence, $\T=(\T_R(\bX), d_{\bz})$ is a 
binomial $\cup_1$-dga.

To prove equations \eqref{eq:da-cup-2} and 
\eqref{eq:d-zeta-2} recall that in a binomial algebra 
with elements $a,b$, we have
\begin{equation}
\label{eq:binzsum}
\z_k(a+b) = \sum_{i+j=k} \z_i(a)\z_j(b), 
\end{equation}
for $k \ge 1$ in the case $R=\Z$ and for $1 \le k \le p-1$
in the case $R=\F_p$.
Set $C_{\tau}(\bX)= (C^{\bullet}( \Delta_{\tau}(\bX)), d_{\Delta})$, 
and let $\psi \colon \T^{\le 2} \inj C^{\le 2}_\tau(\bX)$ be the injective 
map of binomial $\cup_1$-dgas defined in the proof of Theorem \ref{thm:extend-diff}, 
so that $\psi(\z_k(x)) = \z_k(\ba(x))$ for all  $\ba\in C^1_\tau(\bX)$ and all $k\ge 1$. 
Then, 
\begin{align*}
\psi(d_\tau \z_k(x))(\ba,\ba')&=
d_\Delta \psi (\z_k(x))(\ba,\ba')\\
		&= \ba(x) + \ba'(x) - \z_k(\ba(x) + \ba'(x)) 
		      &&\text{by \eqref{eq:d-del-p}}\\
		& = \ba(x) + \ba'(x) - \ba(x) - \ba'(x)
						-\sum_{\ell = 1}^{k-1}\z_\ell(\ba(x) 
						\cdot \z_{k-\ell}(\ba'(x))
		       &&\text{by \eqref{eq:binzsum}}\\
		& =	-\sum_{\ell = 1}^{k-1}\z_\ell(\ba(x) 
						\cdot \z_{k-\ell}(\ba'(x))\\
		& = -\sum_{\ell = 1}^{k-1}[\psi(\z_\ell(x))
						\cup \psi(\z_{k-\ell}(x))](\ba,\ba')\\		
		& = -\sum_{\ell=1}^{k-1} \psi [\z_\ell (x)
						\otimes \psi(\z_{k-\ell}(x))](\ba,\ba') .
\end{align*}
Since this equality holds for all $2$-simplices $(\ba, \ba')$ in $\Delta_{\tau}(\bX)$,
equation \eqref{eq:da-cup-2} now follows.
Equation \eqref{eq:d-zeta-2} follows by a similar argument, 
by applying equation \eqref{eq:binzsum} to products
of the form $\z_{i_1}(a_1+b_1)\z_{i_2}(a_2+b_2)
\cdots \z_{i_n}(a_n+b_n)$.
\end{proof}

\begin{corollary} 
\label{cor:dzeta-cocycle}
Let $A$ be a binomial $\cup_1$-dga over $R$. Then for $a \in Z^1(A)$
we have 
\begin{equation}
\label{eq:da-cup-2-bis}
d_{A} (\z_k (a)) = - \sum_{\ell = 1}^{k-1}\z_\ell(a) 
\otimes \z_{k-\ell}(a),
\end{equation}
for all $k \ge 1$ in the case $R=\Z$ and 
for $1 \le k \le p-1$ in the case $R=\F_p$.
\end{corollary}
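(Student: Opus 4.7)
The plan is to reduce the statement to Theorem \ref{thm:cup-zeta} via the universality of the free binomial $\cup_1$-dga on one generator. Take $\bX=\{x\}$ to be a one-element set, and define $\phi\colon \bX\to A^1$ by $\phi(x)=a$. Since $a$ is a $1$-cocycle, the compatibility condition $d_A\phi(x)=0=\phi(d_{\bz}(x))$ holds trivially.

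First, I would invoke Lemma \ref{lem:TtoA} to extend $\phi$ to a morphism $f\colon \T_R(\bX)\to A$ of binomial graded $R$-algebras with cup-one products. Then I would apply Theorem \ref{thm:tx-a}: since $d_A f(x)=d_Aa=0=f(d_{\bz}(x))$, the map $f\colon (\T_R(\bX),d_{\bz})\to (A,d_A)$ is in fact a morphism of binomial $\cup_1$-dgas, hence commutes with the differentials on all of $\T_R(\bX)$.

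Next, I would apply $f$ to both sides of the identity
\[
d_{\bz}(\z_k(x)) = -\sum_{\ell=1}^{k-1}\z_\ell(x)\otimes \z_{k-\ell}(x)
\]
from equation \eqref{eq:da-cup-2} of Theorem \ref{thm:cup-zeta}. Because $f$ is a morphism of binomial graded algebras, it commutes with the $\zeta$-operations, so $f(\z_k(x))=\z_k(f(x))=\z_k(a)$, and because $f$ is a morphism of graded algebras, it sends the tensor product in $\T_R(\bX)$ (which represents the cup product there) to the cup product in $A$. Combining this with $f\circ d_{\bz}=d_A\circ f$ yields
\[
d_A(\z_k(a)) = -\sum_{\ell=1}^{k-1}\z_\ell(a)\cup \z_{k-\ell}(a),
\]
which is the claim (with $\otimes$ interpreted as $\cup$ in $A$). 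In the $R=\Z_p$ case, the same argument restricts to $1\le k\le p-1$, as this is the range in which $\z_k$ is defined and Theorem \ref{thm:cup-zeta} applies.

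There is no genuine obstacle; the proof is essentially a transport-of-structure argument, and the only care needed is to verify the hypotheses of Lemma \ref{lem:TtoA} and Theorem \ref{thm:tx-a} — namely that $f$ preserves cup-one products and the binomial operations (automatic by construction), and that $d_Af(x)=f(d_{\bz}x)$ holds on generators (automatic from $a\in Z^1(A)$).
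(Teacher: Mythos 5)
Your proof is correct and follows essentially the same route as the paper's: the paper's proof simply says that by Corollary \ref{cor:dx0} the cocycle $a$ corresponds to a binomial $\cup_1$-dga map $(\T_R(\{x\}),d_{\bz})\to (A,d_A)$ sending $x\mapsto a$, and then applies Theorem \ref{thm:cup-zeta}. Your version unpacks this through Lemma \ref{lem:TtoA} and Theorem \ref{thm:tx-a} directly, which has the minor virtue of not invoking the hypothesis $H^0(A)=R$ that appears in Corollary \ref{cor:dx0} but not in the statement being proved.
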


\begin{proof}
By Corollary \ref{cor:dx0}, an element $a \in Z^1(A)$
corresponds to a map of binomial $\cup_1$-dgas
from $(\T_R(\{ x \}), d_{\bz})$ to $(A,d_A)$ 
which sends $x$ to $a$, and the result follows from
Theorem \ref{thm:cup-zeta}.
\end{proof}

\subsection{Homotopies between binomial cup-one dga maps}
\label{subsec:cup1-homotopy}

As we saw in Theorem \ref{thm:cup-coho}, homotopic dga maps induce the 
same homomorphism on cohomology. The next lemma provides a partial 
converse to this theorem, in the context of binomial cup-one algebras.

\begin{lemma}
\label{lem:hh}
Let $(A,d_A)$ be a binomial cup-one dga over $R=\Z$ or $\F_p$ such that 
$H^0(A)=R$ and $H^1(A)$ is a finitely generated, free $R$-module. Suppose 
$\varphi_0, \varphi_1\colon (\T_R(\bX),d_{\bz})\to (A,d_A)$ are morphisms 
of binomial $\cup_1$-dgas such that $H^1(\varphi_0)=H^1(\varphi_1)$.
Then $\varphi_0\simeq \varphi_1$.
\end{lemma}

\begin{proof}
We construct a homotopy $\Phi\colon \T_R(\bX)\to A\otimes_R C^*(I;R)$ between 
$\varphi_0$ and $\varphi_1$, as follows. For each $x\in X$, set 
\begin{equation}
\label{eq:Phi-map}
\Phi(x)=\varphi_0(x)t_0 + \varphi_1(x)t_1 - c(x) u, 
\end{equation}
where $c(x)$ is an element in $A^0$ such that 
$d_A(c(x)) = \varphi_0(x) - \varphi_1(x)$; 
such an element exists by our assumption that $H^1(\varphi_0)([x])=H^1(\varphi_1)([x])$. 
It is readily verified that $\Phi(x)$ is a $1$-cocycle in $A\otimes_R C^*(I;R)$; indeed,
\begin{equation}
\label{eq:dac-phi}
d_{ A\otimes C^*(I;R)}(\Phi(x))=
\varphi_0(x)u - \varphi_1(x)u  - (\varphi_0(x) - \varphi_1(x)) u = 0.
\end{equation}
It now follows from Corollary \ref{cor:dx0} that the set map 
$\Phi\colon X\to Z^1(A\otimes_R C^*(I;R))$ 
extends to a map of binomial $\cup_1$-dgas, 
$\Phi\colon \T_R(\bX)\to A\otimes_R C^*(I;R)$. By construction, 
this map is a homotopy between $\varphi_0$ and $\varphi_1$.
\end{proof}

\section{Hirsch extensions}
\label{sect:HirschExt}

In this section, we consider Hirsch extensions of $(\T_R(\bX),d)$, the 
free binomial graded algebra with cup-one products on a set $\bX$ 
equipped with a differential $d\colon \T_R(\bX) \to \T_R(\bX)$ making it into a 
$\cup_1$-dga.  Furthermore, for $R=\Z$ or $\F_p$, we show that the map
$\psi_{\bX,R} \colon (\T_R(\bX), d_\bz) \to C^\ast(B(R^n);R)$
induces an isomorphism of cohomology rings.

\subsection{Hirsch extensions of $\T_R(\bX)$}
\label{subsec:hirsch-tx}

The following definition is motivated by the notion 
of Hirsch extension in the context of commutative dgas 
over fields of characteristic $0$. Let $(\T_R(\bX),d)$ be as above.

\begin{definition}
\label{def:ext}
Let $\bY$ be a set.
An inclusion $i\colon (\T_R(\bX), d) \to (\T_R(\bX \cup \bY), \bar{d})$ 
of binomial $\cup_1$-dgas 
is called a \emph{Hirsch extension}\/ (over $R$) 
if $\bar{d}(y)$ is a cocycle in $(\T^2_R(\bX),d)$ for all $y \in \bY$. 
If $Y=\{y\}$ is a singleton, we call such an extension an 
{\em elementary Hirsch extension}.
\end{definition}

\begin{theorem}
\label{thm:ext-bijection}
Let $(\T_R(\bX), d)$ be a free binomial $\cup_1$-dga on a set $\bX$. 
Then,
\begin{enumerate}[itemsep=2pt]
\item \label{prt1}
For every set $\bY$, there is a bijection between Hirsch extensions 
$(\T_R(\bX), d) \to (\T_R(\bX \cup \bY), \bar{d})$ 
and maps of sets $\tau_{\bY}\colon \bY \to Z(\T^2_R(\bX))$.

\item \label{prt2}
If $\tau = d \vert_{\bX}$ is admissible, then
$\bar{\tau} = \bar{d}|_{\bX \cup \bY}$ is admissible. 
\end{enumerate}
\end{theorem}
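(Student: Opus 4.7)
For Part (1), the plan is to set up the bijection directly. In one direction, a Hirsch extension $i\colon (\T_R(\bX), d) \to (\T_R(\bX\cup\bY),\bar{d})$ yields the map $\tau_\bY \coloneqq \bar{d}\vert_{\bY}\colon \bY \to Z^2(\T_R^2(\bX),d)$ by restriction. In the other direction, given $\tau_\bY$, I would combine it with $d\vert_{\bX}$ (composed with the natural inclusion $\T_R^2(\bX) \inj \T_R^2(\bX \cup \bY)$) into a single map $\bar{\tau}\colon \bX \cup \bY \to \T_R^2(\bX\cup\bY)$, and invoke Theorem \ref{thm:extend-diff} to obtain a unique degree-one operator $\bar{d} \coloneqq d_{\bar{\tau}}$ satisfying the $\cupd$ formula and the graded Leibniz rule. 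The uniqueness in Theorem \ref{thm:extend-diff} applied to the subalgebra $\T_R(\bX)$ forces $\bar{d}\vert_{\T_R(\bX)} = d$; hence for $x \in \bX$ we get $\bar{d}^2(x) = d(d(x)) = 0$, while for $y \in \bY$, $\bar{d}^2(y) = d(\tau_\bY(y)) = 0$ by hypothesis. Theorem \ref{thm:dsquared} then yields $\bar{d}^2 \equiv 0$, so that $(\T_R(\bX\cup\bY),\bar{d})$ is a binomial $\cup_1$-dga and the inclusion is the desired Hirsch extension. The two constructions are visibly inverse.

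For Part (2), suppose $\tau = d\vert_{\bX}$ is admissible, so that $M_\tau = (M(\bX,R),\mu_\tau)$ is a monoid. I would analyze $\mu_{\bar{\tau}}$ on $M(\bX \cup \bY, R) = M(\bX,R) \oplus M(\bY,R)$ coordinate by coordinate. Because $\bar{\tau}(z)$ lies in $\T_R^2(\bX)$ for every $z \in \bX \cup \bY$, the function $f_{\bar{\tau}}(\ba,\ba')$ depends only on the restrictions $\ba\vert_{\bX}$ and $\ba'\vert_{\bX}$; hence the $\bX$-component of $\mu_{\bar{\tau}}$ coincides with $\mu_\tau$, and is associative by hypothesis. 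For the $\bY$-component at a fixed $y$, define $\nu_y \colon M_\tau \times M_\tau \to R$ by $\nu_y(a,a') \coloneqq \tau_\bY(y)(a,a')$; then $\mu_{\bar{\tau}}$ restricted to that coordinate realizes the extension of $M_\tau$ by $R$ via $-\nu_y$ in the sense of equation \eqref{eq:assoc}.

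The hard part, and the only non-routine step, is verifying associativity in the $\bY$-direction. By part (2) of Lemma \ref{lem:assoc}, this reduces to showing that each $\nu_y$ is a cocycle in $C^2(\Delta(M_\tau); R)$. Admissibility of $\tau$ enters precisely here: by Theorem \ref{thm:assoc-2}, the map $\psi_\tau\colon (\T_R(\bX), d) \inj (C^\ast(\Delta(M_\tau); R), d_\Delta)$ is a dga monomorphism, and by construction $\nu_y$ equals $\psi_\tau(\tau_\bY(y))$. Since $\tau_\bY(y) \in Z^2(\T_R(\bX), d)$ by assumption, we conclude $d_\Delta(\nu_y) = 0$. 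Combining both coordinates then shows that $\mu_{\bar{\tau}}$ is associative, so $\bar{\tau}$ is admissible.
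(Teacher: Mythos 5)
Your proposal is correct and follows essentially the same route as the paper: part (1) via Theorem \ref{thm:extend-diff} and Theorem \ref{thm:dsquared}, and part (2) by recognizing $M_{\bar\tau}$ as an extension of $M_\tau$ by a $2$-cocycle on $\Delta(M_\tau)$ and invoking Lemma \ref{lem:assoc}. The only cosmetic differences are that you verify the cocycle condition componentwise (one $R$-valued $\nu_y$ per $y\in\bY$, using the embedding $\psi$ from Theorem \ref{thm:assoc-2}) and cite Lemma \ref{lem:assoc}(2) for associativity, whereas the paper packages the data as a single normalized cocycle valued in $M(\bY,R)$ and cites part (3); these are equivalent since $M_{\bar\tau}$ always has $\bz$ as a two-sided unit.
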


\begin{proof}
Given a Hirsch extension 
$(\T_R(\bX), d)\inj  (\T_R(\bX \cup \bY), \bar{d})$, the restriction of 
$\bar{d}$ to $\bY$ gives a map $\tau_{\bY}=\bar{d}|_{\bY}\colon 
\bY\to Z(\T^2_R(\bX))$.
In the opposite direction, assume that the map $\tau_{\bY} \colon \bY \to Z(\T^2_R(\bX))$
is given. Set $\bar{\tau} \colon \bX \cup \bY \to \T^2_R(\bX \cup \bY)$
equal to the map given by $\bar{\tau}\vert_{\bX} = d\vert_{\bX}$ and
$\bar{\tau}\vert_{\bY} = \tau\vert_{\bY}$.
By Theorem \ref{thm:extend-diff}, the map $\bar{\tau}$ determines
an extension of $d$ to a map
$\bar{d} = d_{\bar{\tau}} \colon \T_R(\bX\cup \bY) \to \T_R(\bX\cup \bY)$
satisfying the $\cupd$ formula and the Leibniz rule with
$d_{\bar{\tau}}\vert_{\bX \cup \bY} = \bar{\tau}$.
Since $\tau_{\bY}(y)$ is a cocycle for all $y \in \bY$, it follows from
Theorem \ref{thm:dsquared} that $\bar{d}^2(u) = 0$ for all
$u \in \T_R(\bX \cup \bY)$, and the proof of claim \eqref{prt1} is complete.

Recall from Definition \ref{def:admissible} that $\tau$ is admissible 
precisely when the corresponding magma, $M_{\tau}$, is a monoid.  
It follows from the above proof that $M_{\bar\tau}$ is the extension 
of $M_{\tau}$ by the $R$-module $M(\bY,R)$ of functions
from $\bY$ to $R$ given by the normalized
cocycle $\nu\in Z^2(\Delta(M_{\tau});M(\bY,R))$, where
for $y\in \bY$, we have $\tau(y) = \sum_{i=1}^{s_y}
p_{y,i} \otimes q_{y,i}$ and $\nu(\ba,\ba^\prime)(y) = 
\sum_{i=1}^{s_y}p_{\by,i}(\ba) \cdot q_{y,i}(\ba^\prime)$.

Claim \eqref{prt2} now follows from Lemma \ref{lem:assoc}, 
part \eqref{ext:p3}, and the proof is complete.
\end{proof}

Given a map $\tau \colon \bY \to Z(\T^2_R(\bX),d)$, denote by
$[\tau]$ the map from $\bY$ to $H^2(\T_R(\bX),d)$ that sends
each element $y\in \bY$ to the cohomology class of $\tau(y)$.

\begin{definition}
\label{def:eq-hirsch}
Given maps $\tau$ and $\tau^\prime$ from a set $\bY$
to $Z(\T^2_R(\bX),d)$, the corresponding Hirsch extensions,
$(\T_R(\bX \cup \bY), \bar{d})$ and $(\T_R(\bX \cup \bY), \bar{d}^\prime)$, 
are called \textit{equivalent Hirsch extensions}\/ if $[\tau] = [\tau^\prime]$.
\end{definition}

\begin{lemma}
\label{lem:equiv-hirsch}
If $(\T_R(\bX \cup \bY), \bar{d})$ and $(\T_R(\bX \cup \bY), \bar{d}^\prime)$ 
are equivalent Hirsch extensions, then $(\T_R(\bX \cup \bY), \bar{d})$ and 
$(\T_R(\bX \cup \bY), \bar{d}^\prime)$ are isomorphic.
\end{lemma}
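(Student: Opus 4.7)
The plan is to produce an explicit isomorphism of binomial cup-one dgas, $\phi \colon (\T_R(\bX \cup \bY), \bar{d}) \isom (\T_R(\bX \cup \bY), \bar{d}')$, whose induced map on cohomology is the desired isomorphism of graded $R$-algebras. Since $[\tau] = [\tau']$ in $H^2(\T_R(\bX),d)$, for each $y \in \bY$ I can choose an element $h(y) \in \T^1_R(\bX)$ with $d(h(y)) = \tau(y) - \tau'(y)$, which assembles into a set map $h\colon \bY \to \T^1_R(\bX)$.

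First I would define a set map $\phi_0 \colon \bX \cup \bY \to \T^1_R(\bX \cup \bY)$ by $\phi_0(x) = x$ for $x \in \bX$ and $\phi_0(y) = y + h(y)$ for $y \in \bY$. By the universal property of Lemma \ref{lem:TtoA}, this extends to a morphism $\phi$ of binomial graded $R$-algebras with cup-one products from $\T_R(\bX \cup \bY)$ to itself; since $\phi$ fixes $\bX$, it restricts to the identity on the subalgebra $\T_R(\bX)$. Next, I would invoke Lemma \ref{lem:map-bcd} to upgrade $\phi$ to a morphism of binomial $\cup_1$-dgas: it suffices to verify $\bar{d}' \phi(z) = \phi \bar{d}(z)$ for each generator $z \in \bX \cup \bY$. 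On $\bX$ both sides equal $d(x)$. For $y \in \bY$, the right side is $\phi(\tau(y)) = \tau(y)$ since $\tau(y) \in \T^2_R(\bX)$, while the left side is $\bar{d}'(y) + \bar{d}'(h(y)) = \tau'(y) + d(h(y)) = \tau(y)$, using that $h(y) \in \T^1_R(\bX)$ and that $\bar{d}'$ agrees with $d$ on $\T_R(\bX)$ (both are Hirsch extensions).

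Finally, to see that $\phi$ is an isomorphism, I would construct a map $\psi$ the same way from the set map $\psi_0(x) = x$, $\psi_0(y) = y - h(y)$. Each of the two compositions $\psi \circ \phi$ and $\phi \circ \psi$ is a morphism of binomial graded $R$-algebras with cup-one products, and direct computation on generators shows each agrees with $\id$ on $\bX \cup \bY$; since any such morphism is determined by its restriction to $\bX \cup \bY$, both composites equal $\id$. Hence $\phi$ is an iso of dgas, and $H^\ast(\phi)$ gives the required iso of cohomology algebras.

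The one point requiring care is the uniqueness asserted in the last step: I expect this to be the main (though mild) obstacle, since Lemma \ref{lem:TtoA} is phrased only as existence. However, uniqueness follows by combining the universal property of $\Int(R^{\bX \cup \bY})$ as the free binomial ring on $\bX \cup \bY$ (Corollary \ref{lem:extend-map}), which pins down any binomial algebra map on $\T^1 = \fm_{\bX\cup\bY,R}$ by its values on $\bX\cup\bY$, with the fact that $\T_R(\bX \cup \bY)$ is the tensor algebra on $\T^1$, so that a graded-algebra map out of it is determined by its restriction to degree one.
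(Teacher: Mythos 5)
Your proposal is correct and follows essentially the same route as the paper: both choose a degree-one primitive of the difference $\tau(y)-\tau'(y)$, translate each generator $y$ by it to define a dga morphism fixing $\T_R(\bX)$, and exhibit the inverse as the opposite translation. The only difference is that you spell out the universal-property and uniqueness bookkeeping (via Lemma \ref{lem:TtoA}, Lemma \ref{lem:map-bcd}, and Corollary \ref{lem:extend-map}) that the paper's proof leaves implicit; this is a sound and slightly more careful rendering of the same argument.
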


\begin{proof}
First recall from Lemmas \ref{lem:map-bcd} and \ref{lem:TtoA-diff} 
that a map $f\colon (\T_R(\bX),d) \to (A,d_A)$ of binomial graded 
$R$-algebras with cup-one products commutes with the differentials
if and only if $d_A f(x)=f(dx)$ for all $x \in \bX$; moreover, if $H^0(A)=R$ 
and $f$ induces an isomorphism on $H^0$, 
then $f$ is determined by its restriction to the set $\bX$.

From the definition of equivalent Hirsch extensions it follows 
that for each $y\in \bY$ there is an element 
$c_1(y) \in \T^1_R(\bX)$ with $\tau^\prime(y) = \tau(y) + dc_1(y)$. 
Define a linear map $f\colon (\T_R(\bX \cup \bY), \bar{d}) \to
(\T_R(\bX \cup \bY), \bar{d}^\prime)$ by setting
$f(u)=u$ for $u \in \T_R(\bX)$ and $f(y) = y - c_1(y)$
for $y \in \bY$. Then
\begin{align*}
\bar{d}^\prime( f(y))
		& = \bar{d}^\prime(y - c_1(y))
		 = \bar{d}^\prime(y) - dc_1(y)
		 = \tau^\prime(y) - dc_1(y)\\
		& = (\tau(y) + dc_1(y)) - dc_1(y)
		 =  \tau(y)
		 = f(\bar{d}(y)), 
\end{align*}
and so $f$ commutes with the differentials. Similarly, a linear map 
$g \colon (\T_R(\bX \cup \bY), \bar{d}^\prime) \to (\T_R(\bX \cup \bY), \bar{d})$ 
is defined by setting $g(u)= u$ for $u \in \T_R(\bX)$ and 
$g(y) = y + c_1(y)$ for $y \in \bY$.
Then $g$ commutes with the differentials, and the result
follows since $f$ and $g$ are inverses of each other.
\end{proof}

\subsection{A spectral sequence}
\label{subsec:specsec}

We now set up a cohomological spectral sequence that will prove 
useful for our purposes.

\begin{lemma}
\label{lem:ss}
Let $(\T_R(\bX), d)$ be a free binomial $\cup_1$-dga on 
a set $X$. Given an elementary Hirsch extension 
$(\T_R(\bX), d) \to (\T_R(\bX \cup \{y\}), \bar{d})$, 
there is a spectral sequence, $(E_r^{p,q},d_r)$, 
converging to $H^\ast(\T_R(\bX \cup \{y\}), \bar{d})$ with 
$d_r\colon E_r^{p,q} \to E_r^{p+r,q-r+1}$ and 
$E_2^{p,q} \cong H^p(\T_R(\bX),d) \ot H^q(\T_R(\{y\}),d_{\bz})$, 
where $d_{\bz}(y) = 0$.
\end{lemma}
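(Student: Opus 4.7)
My plan is to construct the spectral sequence as that of a natural decreasing, multiplicative filtration $F^\bullet$ on the underlying cochain complex of the Hirsch extension $(\T_R(\bX\cup\{y\}), \bar{d})$. The approach parallels the classical filtration used for Hirsch extensions in rational homotopy theory, adapted to the tensor-algebra and binomial setting.

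First I would set up the filtration. Using the $R$-module decomposition $\fm_{\bX\cup\{y\}}\cong\fm_{\bX}\oplus\bigl(\Int(R^{\bX})\otimes_R\fm_{\{y\}}\bigr)$ arising from $\Int(R^{\bX\cup\{y\}})\cong\Int(R^{\bX})\otimes_R\Int(R^{\{y\}})$, let $J$ be the two-sided ideal of $\T_R(\bX\cup\{y\})$ generated by the ``$\fm_{\bX}$-involving'' subspace of $\T_R^1(\bX\cup\{y\})$, i.e., the kernel of the natural $R$-linear projection $\T_R^1(\bX\cup\{y\})\to\fm_{\{y\}}$. Define the decreasing filtration by $F^p:=J^p$; concretely, $F^p\cap \T_R^n(\bX\cup\{y\})$ is the $R$-span of the tensor summands of $\bigl(\T_R^1(\bX\cup\{y\})\bigr)^{\otimes n}$ having at least $p$ factors in the $\fm_{\bX}$-involving subspace.

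Second I would verify that $\bar{d}$ preserves the filtration. Applying $d$ to a factor in $\fm_{\bX}$ lands in $\fm_{\bX}\otimes\fm_{\bX}$, which adds $\fm_{\bX}$-letters, and the defining relation $\bar{d}(y)=\tau(y)\in\fm_{\bX}\otimes\fm_{\bX}$ converts the generator $y$ into two $\fm_{\bX}$-letters. For higher binomial generators $\zeta_n(y)$ and for mixed letters $\zeta_I(\bx)\zeta_n(y)$, an induction using the $\cupd$ formula~\eqref{eq:c1d}, the Hirsch identity~\eqref{eq:hirsch-1c}, and the binomial recursion~\eqref{eq:zeta-binomial} shows that every summand of the differential retains at least the original $J$-degree. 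Combined with the Leibniz rule, this gives $\bar{d}(F^p)\subseteq F^p$.

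Third I would identify the $E_0$-page with a tensor-product complex. At the associated graded, the filtration-jumping contributions — both those from $\tau(y)$ and those from the polynomial cross products between $\fm_{\bX}$ and $\fm_{\{y\}}$ — disappear, leaving only the independent actions of $d$ on $\fm_{\bX}$-letters and of $d_{\bz}$ on $\fm_{\{y\}}$-letters. Collecting shuffle summands yields an $R$-linear identification of $F^p/F^{p+1}$ in cohomological degree $p+q$ with $\T_R^p(\bX)\otimes_R\T_R^q(\{y\})$, under which the $E_0$-differential matches the tensor-product differential (essentially $d\otimes 1 + 1\otimes d_{\bz}$, up to the standard Koszul signs); here Corollary~\ref{cor:dzeta-cocycle} serves to identify the pure-$y$ part. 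The Künneth theorem — valid because each $\T_R^q(\{y\})$ is a free $R$-module — then gives $E_2^{p,q}\cong H^p(\T_R(\bX), d)\otimes_R H^q(\T_R(\{y\}), d_{\bz})$, with differentials $d_r$ having the claimed bidegree by the standard machinery.

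The main obstacle is the bookkeeping in Step~3: showing that the associated-graded differential genuinely decouples as a tensor-product differential. The difficulty comes from the mixed elements of $\T_R^1(\bX\cup\{y\})$ lying in $\fm_{\bX}\otimes_R\fm_{\{y\}}$, which have no direct counterpart in the ordinary tensor product $\T_R(\bX)\otimes_R\T_R(\{y\})$; tracking their differentials via the $\cupd$ formula, the Hirsch identity, and the binomial recursion, and checking that all resulting cross terms lie strictly in $F^{p+1}$, is the technical heart of the argument.
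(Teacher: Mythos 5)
Your filtration and its verification are the same as the paper's: $F^p$ is spanned by the tensors having at least $p$ factors among the ``$\fm_{\bX}$-involving'' degree-one letters (the paper encodes this as the bigrading $D^{p,q}$ and the containments $d(D^{0,1})\subseteq D^{2,0}\oplus D^{0,2}$, $d(D^{1,0})\subseteq D^{2,0}\oplus D^{1,1}$), and the invariance under $\bar d$ is proved by the same induction using the $\cupd$ formula, the Hirsch identity, and the recursion for $\z_{n+1}$.

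There is, however, a genuine gap in your Step~3. The associated graded $F^p/F^{p+1}$ in total degree $p+q$ is \emph{not} isomorphic to (shuffles of) $\T^p_R(\bX)\ot_R\T^q_R(\{y\})$. A mixed degree-one letter such as $\z_I(\bx)\z_k(y)$ with $I\ne 0$ and $k\ge 1$ is a \emph{single} tensor factor of filtration weight one; it is a basis element of $F^1\cap\T^1_R(\bX\cup\{y\})$ not lying in $F^2$, so it survives to the associated graded rather than ``disappearing'' there. Already for $(p,q)=(1,0)$ one has $F^1/F^2=\fm_{\bX}\oplus\bigl(\fm_{\bX}\ot\fm_{\{y\}}\bigr)$, strictly larger than $\T^1_R(\bX)$; what vanishes in the associated graded are only the filtration-raising \emph{components of the differential}, not the mixed letters themselves. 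Relatedly, the $E_0$-differential cannot be ``$d\ot 1+1\ot d_{\bz}$'': the component $d\ot 1$ sends a pure $\fm_{\bX}$-letter into $\fm_{\bX}\ot\fm_{\bX}$, i.e.\ it raises $p$, so it lives in $d_1$ (and the $\tau(y)$-terms in $d_2$), not in $d_0$. Hence the K\"unneth theorem does not apply to $E_0$ as you describe. The missing step --- which is the actual content of the paper's ``direct computation'' --- is to show that the mixed letters contribute acyclically to $(E_0^{p,\bullet},d_0)$ (for instance via a contracting homotopy in the spirit of the one used for the subcomplex $\T_1$ in the proof of Proposition \ref{prop:coho-ext}), so that $E_1^{p,q}\cong\T^p_R(\bX)\ot_R H^q(\T_R(\{y\}),d_{\bz})$; then $d_1$ is induced by $d\ot 1$, and freeness of $H^q(\T_R(\{y\}),d_{\bz})$ yields the stated $E_2$. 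As written, your identification of the $E_0$-page fails at the first nontrivial check.
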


\begin{proof}
Denote $\T_R(\bX \cup \{y\})$ by $\T$. A basis for the free 
$R$-module $\T^1$ is given by elements of the form
$\z_{I}(x_1, \ldots,x_\ell)\z_k (y)$, 
where $I = (i_1,  \ldots ,i_\ell)$ and 
$\z_{I}(x_1, \ldots,x_\ell)= \z_{i_1}(x_1)\cdots \z_{i_\ell}(x_\ell) 
\in \T_R(\bX)$, with the $x_j$ distinct elements in $\bX$.
If $k= 0$, then $\z_{I}(x_1, \ldots,x_\ell)\z_k (y)$ denotes 
$\z_{I}(x_1, \ldots,x_\ell)$;
and if $I = (0, \ldots , 0)$, then $\z_{I}(x_1, \ldots,x_\ell)\z_k (y)$
denotes $\z_k (y)$, where in the case $R=\F_p$, we 
have that $1 \le i_j \le p-1$.

Define a bigrading on $\T$ by setting $D^{p,q}$ equal to the
summand of $\T^{p+q}$ generated by the tensor products
$u_1 \ot \cdots \ot u_{p+q}$ of basis elements in $\T^1$ for which
exactly $p$ of the factors have $I \ne (0,\ldots , 0)$. 
We claim that the differential $d$ restricts to maps 
\begin{equation}
\label{eq:ss-1}
d\colon  D^{0,1} \to D^{2,0} \oplus D^{0,2}
\quad\text{and}\quad
d\colon D^{1,0} \to D^{2,0} \oplus D^{1,1} .
\end{equation}
The claim follows by induction 
using the $\cupd$ formula, the Hirsch identity, and the formula
$\z_{n+1}(y) = [\z_n(y)y - n\z_n(y)]/(n+1)$. The group 
$D^{0,1}$ is free abelian, with basis $\{\z_i(y)\}$, and the 
induction is on $i$ with base case $i=1$. The group $D^{1,0}$ has basis 
$\{\z_I(x_1, \ldots x_\ell) \z_i(y)\}$, where $I\ne \{0\}$, and the induction 
is on $i$ with base case $i=0$.

From equation \eqref{eq:ss-1}, it follows that
$F^{\ell}(\T) \coloneqq \bigoplus_{p \ge \ell, q\ge 0} D^{p,q}$
defines a decreasing filtration, 
$\T=F^{0} \supseteq F^{1}\supseteq F^{2} \supseteq \cdots$, 
of subcomplexes. 
A direct computation shows that in the resulting
spectral sequence the $E_2$ term is given by 
$E_{2}^{p,q} = H^p(\T_R(\bX),d)  \otimes H^q(\T_R(y), d_{\bz})$, 
where $d_{\bz}(y) = 0$, and the proof is complete.
\end{proof}

\subsection{The cohomology of $(\T_R(\bX),d_{\bz})$}
\label{subsec:htxd0}

We are now in a position to compute the cohomology algebra of the 
free binomial graded cup-one algebra $\T_R(\bX)$, endowed with the 
differential $d_{\tau}=d_{\bz}$ corresponding to the admissible function 
$\tau\colon \bX\to \T^2_R(\bX)$ given by $\tau(x)=0$ for all $x\in \bX$.
We first assume $R=\Z$, in which case we write $\T(\bX)\coloneqq \T_R(\bX)$.

\begin{proposition}
\label{prop:coho-ext}
Given a finite set $\bX$, there is a natural isomorphism
\begin{equation}
\begin{tikzcd}[column sep=20pt]
\kappa_X\colon H^*(\T(\bX))\ar[r, "\simeq"] & \bwedge^*(\bX)
\end{tikzcd}
\end{equation}
between the cohomology algebra of the dga $(\T(\bX),d_{\bz})$ 
and the exterior algebra on the free abelian group $\Z^{\bX}$.
\end{proposition}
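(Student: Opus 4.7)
The plan is to induct on $n = \abs{\bX}$ using the spectral sequence from Lemma \ref{lem:ss}. First I construct the candidate isomorphism: since every $x \in \bX$ is a cocycle, the $\cupd$ formula \eqref{eq:c1d} gives $d_{\bz}(x \cup_1 x') = -x \cup x' - x' \cup x$ for $x, x' \in \bX$, and Corollary \ref{cor:dzeta-cocycle} gives $d_{\bz}(\z_2(x)) = -x \cup x$. Hence the classes $[x] \in H^1(\T(\bX))$ pairwise anticommute and square to zero, yielding a well-defined graded-algebra map $\lambda_{\bX}\colon \bwedge^*(\bX) \to H^*(\T(\bX))$; once $\lambda_{\bX}$ is shown to be an isomorphism, the desired $\kappa_{\bX}$ is its inverse.

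For the base case $n = 1$, write $\bX = \{x\}$. A cocycle $\sum_{k \ge 1} a_k \z_k(x) \in \T^1(\{x\})$ satisfies $\sum_{k \ge 2} a_k \sum_{\ell=1}^{k-1} \z_\ell(x) \otimes \z_{k-\ell}(x) = 0$ in $\T^2(\{x\})$, and linear independence of the basis elements $\{\z_\ell(x) \otimes \z_m(x)\}_{\ell, m \ge 1}$ forces $a_k = 0$ for $k \ge 2$; thus $H^0(\T(\{x\})) = H^1(\T(\{x\})) = \Z$, matching $\bwedge^*(\{x\})$ through degree one. The vanishing $H^n(\T(\{x\})) = 0$ for $n \ge 2$ forms the technical core of the base case: since $\tau = 0$ is admissible, Theorem \ref{thm:assoc-2} provides an embedding $\psi \colon (\T(\{x\}), d_{\bz}) \inj C^*(B(\Z); \Z)$, and one must show that this embedding is a quasi-isomorphism, exploiting the fact that $B(\Z) \simeq K(\Z, 1) \simeq S^1$ has cohomology concentrated in degrees at most one.

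For the inductive step, write $\bX = \bX' \cup \{y\}$ with $\abs{\bX'} = n - 1$. Since $d_{\bz}(y) = 0 \in Z^2(\T(\bX'))$, the inclusion $(\T(\bX'), d_{\bz}) \inj (\T(\bX), d_{\bz})$ is an elementary Hirsch extension, so Lemma \ref{lem:ss} together with the inductive hypothesis and the base case yields
\[
E_2^{p,q} \cong \bwedge^p(\bX') \otimes \bwedge^q(\{y\}),
\]
concentrated in bidegrees $(p, q)$ with $q \in \{0, 1\}$. Both the classes in $E_2^{p, 0}$ (represented by cocycles in $\T(\bX') \subset \T(\bX)$) and the class $[y] \in E_2^{0, 1}$ are represented by genuine cocycles in $\T(\bX)$, so they are permanent cycles. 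Since $E_2$ is generated as a bigraded algebra by $E_2^{*, 0}$ and $E_2^{0, 1}$, multiplicativity of the spectral sequence forces all $d_r$ $(r \ge 2)$ to vanish on $E_2$. Hence $E_\infty = E_2 \cong \bwedge^*(\bX)$ as bigraded algebras, and because each $E_\infty^{p, q}$ is a free $\Z$-module, the filtration on $H^*(\T(\bX))$ splits; the resulting graded-algebra isomorphism agrees with $\lambda_{\bX}$ on the degree-one generators, finishing the induction.

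The main obstacle will be the base-case vanishing $H^n(\T(\{x\})) = 0$ for $n \ge 2$: while the degree-one case is elementary linear algebra, higher-degree vanishing is subtle, and will require either a direct chain-level contracting homotopy (reflecting the identification of $(\T(\{x\}), d_{\bz})$ with a cobar-type construction on the divided-power coalgebra $\Int(\Z^{\{x\}})$), or a careful comparison with $C^*(B(\Z); \Z) \simeq C^*(S^1; \Z)$ via the embedding $\psi$, together with an identification of its cokernel as an acyclic complex.
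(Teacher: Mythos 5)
Your overall architecture matches the paper's proof: induction on $\abs{\bX}$, with the inductive step handled by the spectral sequence of Lemma \ref{lem:ss} (the collapse argument, the freeness of $E_\infty$, and the verification of the multiplicative relations via $d_{\bz}\z_2(x)=-x\cup x$ and $d_{\bz}(x\cup_1 x')=-x\cup x'-x'\cup x$ are all exactly what the paper does). The inductive step as you present it is correct.

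However, there is a genuine gap in the base case, and you acknowledge it yourself without closing it: you never actually prove $H^n(\T(\{x\}),d_{\bz})=0$ for $n\ge 2$. Your degree-one computation is fine, but the statement that higher vanishing ``will require either a direct chain-level contracting homotopy \dots or a careful comparison with $C^*(B(\Z);\Z)$'' is a plan, not an argument. The paper resolves this by splitting $\T(\{x\})=\T_0\oplus\T_1$, where $\T_0=\Z\oplus\Z x$ carries the zero differential and $\T_1$ is spanned in degree one by the $\z_k(x)$ with $k\ge 2$ and equals $\T^j(\{x\})$ for $j\ge 2$, and then writing down the explicit contraction
\[
h_\ell(\z_{i_1}\otimes\cdots\otimes\z_{i_\ell})=
\begin{cases}
-\z_{i_2+1}\otimes\cdots\otimes\z_{i_\ell} & \text{if } i_1=1,\\
0 & \text{if } i_1>1,
\end{cases}
\]
which satisfies $d_{\bz}h_\ell+h_{\ell+1}d_{\bz}=\id_{\T_1}$ by formula \eqref{eq:d-zeta-2}; this is the ``cobar-type'' homotopy you gesture at, and it is the content you are missing. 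Your alternative route, via the embedding $\psi\colon\T(\{x\})\inj C^*(B(\Z);\Z)$, is problematic as stated: $\psi$ is far from surjective (its degree-one image is the integer-valued polynomials vanishing at $0$, inside the module of all functions $\Z\to\Z$), so ``$B(\Z)\simeq S^1$'' alone gives no control over $H^{\ge 2}(\T(\{x\}))$ without a separate acyclicity argument for the cokernel; moreover, in the paper's logical order the quasi-isomorphism property of $\psi$ (Corollary \ref{cor:iso}) is \emph{deduced from} Proposition \ref{prop:coho-ext}, so invoking it here would be circular unless you supply an independent proof.
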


\begin{proof}
We establish the existence of the isomorphism $\kappa_{\bX}$ is by 
induction on $k$, the size of $\bX$. For the base case $k=1$, 
write $\T=\T(\bX)$, and define two subcomplexes, $\T_0$ and $\T_1$, 
as follows.  Set $\T_0^0 = \Z$, $\T_0^1 = \Z$ with generator $x$, and
$\T_0^i = 0$ for $i \ge 2$.
Furthermore, set $\T_1^1$ equal to the submodule of $\T^1(\{x\})$
generated by the elements of the form $\z_k(x)$ with $k \ge 2$, and 
set $\T_1^j = \T^j(\{x\})$ for $j \ge 2$.
It is now readily verified that $\T=\T_0 \oplus \T_1$.

Clearly, $\T_0=\bigwedge (x)$, with zero differential; thus, 
$H^*(\T_0)=\bigwedge (x)$. 
Denote $\z_k(x)$ by $\z_k$, and define homomorphisms 
$h_\ell\colon \T_1^\ell \to \T_1^{\ell-1}$ by
\begin{equation}
\label{eq:h-ell}
h_\ell(\z_{i_1} \otimes \z_{i_2} \otimes \cdots \otimes \z_{i_\ell})=
\begin{cases}
- \z_{i_2+1}\otimes \cdots \otimes \z_{i_\ell} &\text{if $i_1=1$},\\[2pt]
0&\text{if $i_1>1$}.
\end{cases}
\end{equation}
By direct computation using equation \eqref{eq:d-zeta-2}, 
it follows that 
$d_{\bz} \circ h_\ell + h_{\ell+1}\circ d_{\bz} = \id_{\T_1}$. 
Hence, the cohomology of $\T_1$ is zero, and we 
conclude that the cohomology of $(\T(\{x\}),d_{\bz})$ is the exterior
algebra with generator $x$.

For the inductive step, we assume the result holds for 
$\bX_k = \{x_1, \ldots , x_k\}$ and show the result then
holds for $\bX_{k+1} = \{x_1, \ldots , x_k, x_{k+1}\}$.
We use the spectral sequence in Lemma \ref{lem:ss} with
$\bX = \bX_k$ and $y = x_{k+1}$; applying the base 
case with $\T=\T(x_{k+1})$, 
we have by induction that the $E_2$ term in the spectral
sequence of the form 
$E_2=\bigwedge (x_1, \ldots, x_k) \otimes \bigwedge (x_{k+1})$. 
Since $d_{\bz}x_{k+1} = 0$ in $\T$, it follows that the
spectral sequence collapses, from which we obtain 
an isomorphism of graded $\Z$-modules,
$H^*(\T(\bX),d_{\bz})\cong \bwedge (x_1, \ldots, x_{k+1})$.

For $x \in \bX$, set $[x]$ equal to the cohomology class of $x$
in $H^1(\T(\bX),d_{\bz})$; note that $d_{\bz}\z_2(x) = - x \cup x$, 
and so $[x] \cup [x] = 0$
in $H^\ast(\T(\bX))$. Moreover, for $x_1, x_2$ distinct elements
in $X$, we have $d_{\bz}(x_1\cup_1 x_2) = -x_1 \cup x_2 - x_2 \cup x_1$,
so $[x_1] \cup [x_2] = - [x_2] \cup [x_1]$. It follows that
the isomorphism $\kappa_X$ of graded $\Z$-modules between
$H^\ast(\T(\bX),d_{\bz})$ and the exterior algebra is a map
of graded algebras.

To prove the naturality of the isomorphism $\kappa_{\bX}$, let $h\colon \bX\to \bY$ 
be a map of sets, let $\bwedge(h)\colon \bwedge(\bX)\to \bwedge(\bY)$ 
be its extension to exterior algebras, and let $\T(h)\colon \T(\bX)\to \T(\bY)$ 
be its extension to free binomial graded algebras with cup-one products,  
as in Section \ref{subsec:free-bin-alg}. It is readily verified  
that $\kappa_Y\circ \T(h)= \bwedge(h)\circ \kappa_{\bX}$, 
and this completes the proof.
\end{proof}

\begin{corollary}
\label{cor:iso}
If $\bX$ is a set with $n$ elements, then the map
$\psi_{\bX} \colon (\T(\bX),d_{\bz}) \to C^\ast(B(\Z^n))$
induces an isomorphism of cohomology rings.
\end{corollary}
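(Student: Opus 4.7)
The plan is to identify both cohomology rings with $\bwedge^{\ast}(\Z^n)$ and then verify that $\psi_{\bX}$ matches up the corresponding degree-one generators. Since both algebras are generated in degree $1$, an isomorphism on $H^1$ that is a ring map will automatically be an isomorphism in all degrees.

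First, I would check that the zero map $\tau\colon \bX\to \T^2(\bX)$ is admissible. Indeed, for $\tau=0$ the associated operation on $M=M(\bX,\Z)\cong \Z^n$ is just componentwise addition, $\mu_\tau(\ba,\ba^\prime)=\ba+\ba^\prime$, so $M_\tau$ is the additive group $\Z^n$ viewed as a monoid, and $\Delta(M_\tau)=B(\Z^n)$. By Theorem \ref{thm:assoc-2}, the map $\psi_{\bX}\colon (\T(\bX),d_{\bz}) \inj C^\ast(B(\Z^n))$ is then a well-defined injective morphism of binomial $\cup_1$-dgas, so in particular a dga map.

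Next, I would compare cohomology rings. By Proposition \ref{prop:coho-ext}, $H^\ast(\T(\bX),d_{\bz})\cong \bwedge^\ast(\bX)$, with basis $\{[x]\}_{x\in \bX}$ in degree $1$. On the other side, $B(\Z^n)$ is a $K(\Z^n,1)$, homotopy equivalent to the $n$-torus $(S^1)^n$, so by Künneth $H^\ast(B(\Z^n);\Z)\cong \bwedge^\ast(\Z^n)$. For each $x\in \bX$, the cocycle $\psi_{\bX}(x)\in C^1(B(\Z^n);\Z)$ evaluates a $1$-simplex $\ba\in \Z^n$ at $x$; that is, $\psi_{\bX}(x)$ is the coordinate functional $e_x^{\vee}\in \Hom(\Z^n,\Z)$, and under the standard identification $H^1(B(\Z^n);\Z)\cong \Hom(\Z^n,\Z)$ these classes form the basis dual to the standard generators of $\Z^n=H_1(B(\Z^n);\Z)$. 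Hence $H^1(\psi_{\bX})$ carries a basis to a basis and is an isomorphism. Since $\psi_{\bX}$ is a ring map and each side is an exterior algebra on $n$ generators in degree $1$, it follows that $H^\ast(\psi_{\bX})$ is a ring isomorphism in every degree.

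The main obstacle here is not really an obstacle but rather a bookkeeping check: confirming that the two presentations of the cohomology as $\bwedge^\ast(\Z^n)$ are actually matched by $\psi_{\bX}$ on degree-one generators. An alternative, perhaps more intrinsic route would be induction on $n=|\bX|$, using the spectral sequence of Lemma \ref{lem:ss} on the domain side, together with the Künneth decomposition $C^\ast(B(\Z^n))\simeq C^\ast(B(\Z^{n-1}))\otimes C^\ast(B(\Z))$ on the codomain side, to reduce to the base case $n=1$ which is already essentially handled in the proof of Proposition \ref{prop:coho-ext}.
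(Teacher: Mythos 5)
Your proposal is correct, but its main line of argument is genuinely different from the paper's. The paper proves this corollary by induction on $n$: the base case $n=1$ is checked by evaluating $\psi(x)$ on the generating $1$-cycle of $B(\Z)$ and invoking Proposition \ref{prop:coho-ext}, and the inductive step compares the spectral sequence of the Hirsch extension $\T(\bX^n)\inj\T(\bX)$ with that of the central extension $\Z\to\Z^{n+1}\to\Z^n$, using that both collapse; this is exactly the ``alternative route'' you sketch at the end. Your primary argument instead identifies both cohomology rings with $\bwedge^{\ast}(\Z^n)$ outright --- the domain via Proposition \ref{prop:coho-ext}, the codomain via the standard computation $H^{\ast}(K(\Z^n,1);\Z)\cong\bwedge^{\ast}(\Z^n)$ --- and checks that $H^1(\psi_{\bX})$ carries the basis $\{[x]\}$ to the dual basis of coordinate functionals. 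This is shorter and avoids the induction, at the cost of importing the torus computation as an external fact rather than deriving the target's cohomology internally, which is what the paper's spectral-sequence comparison accomplishes. One small caution on your framing sentence: ``generated in degree $1$ plus an isomorphism on $H^1$'' does not by itself force an isomorphism in all degrees (it gives surjectivity, not injectivity); what actually closes the argument, and what you correctly use in the body, is that both sides are exterior algebras on free $\Z$-modules of the same finite rank, so that the degree-$k$ component of $H^{\ast}(\psi_{\bX})$ is $\bwedge^k$ of the degree-$1$ isomorphism.
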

\begin{proof}
The proof is by induction. To prove the result in the case $n=1$,
let $\bX = \{x\}$. The morphism $\psi$ in Theorem \ref{thm:assoc-2} maps
$\T(\{ x \})$ to $C^\ast(\Delta (M(\{x \});\Z) = C^\ast (B(\Z);\Z)$.
Note that $C^1(B(\Z),\Z)$ is the free abelian group
of maps of sets from $\Z$ to $\Z$ and 
$\psi(x)$ is the identity map from $\Z$ to $\Z$.
The identity map of $\Z$ is a generator
for $H^1(\Z)$ and it now follows from Proposition  
\ref{prop:coho-ext} that the map $H^\ast(\psi)$ is an isomorphism.

For the inductive step, write $\bX^n = \{x_1,\ldots , x_n\}$ 
and $\bX = \bX^n \cup \{x_{n+1}\}$, and consider the morphism 
$\psi =\psi_{\bX}\colon (\T(\bX), d_{\bz}) \to C^\ast(B(\Z^n \oplus \Z);\Z)$
from Theorem \ref{thm:assoc-2}.
Assume by induction that the restriction of $\psi$
to a map $\T(\bX^n) \to C^\ast(B(\Z^n);\Z)$
induces an isomorphism on cohomology.
By the case $n=1$ above, the restriction of $\psi$
to the map from $(\T(\{x_{n+1}\}), d_\bz)$ to
$C^\ast(B(\Z);\Z)$ induces an isomorphism on cohomology.
Thus the map of $E_2$ terms from the spectral sequence
of the Hirsch extension $\T(\bX^n) \to \T(\bX)$ to
the spectral sequence of the central extension
$\Z \to \Z^{n+1} \to \Z^n$ is an isomorphism.
Since both spectral sequences collapse, it follows
that $H^\ast(\psi)$ is an isomorphism and the proof
is complete.
\end{proof}

The next step is to consider the case $R=\F_p$.
Recall that for $R=\F_2$, the cohomology ring
$H^\ast(B(R),R)$ is the polynomial algebra $R[x]$ 
on a generator $x$ in degree $1$, and for 
$R=\F_p$ with $p$ odd, $H^\ast(B(R);R)$ 
is the free commutative algebra $\bigwedge(x)\otimes_R R[y]$ 
with one generator $x$ in degree $1$ and one generator
$y$ in degree $2$, with the relation $x^2=0$. 

\begin{proposition}
\label{prop:coho-iso-p}
For $R=\F_p$, the map $\psi \colon (\T_R(\{x\}),d_{\bz}) \to C^\ast(B(R);R)$ 
induces an isomorphism $\psi^* \colon H^*(\T_R(\{x\}), d_\bz) \isom H^* (B(R);R)$.
\end{proposition}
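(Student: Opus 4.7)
The plan is to adapt the proof for $R = \Z$ (Proposition \ref{prop:coho-ext} and Corollary \ref{cor:iso}) to the characteristic-$p$ setting, where the defining relation $\z_p(x) = 0$ in the $\Z_p$-binomial algebra creates an additional polynomial generator in cohomology.

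First I would handle $p = 2$ directly. Since $a^2 = a$ in any $\Z_2$-binomial algebra, one has $\z_k(x) = 0$ for all $k \ge 2$, and hence $\T_{\Z_2}(\{x\})$ is the tensor algebra on a single degree-$1$ generator $x$ with $d_{\bz} \equiv 0$. Thus $H^\ast(\T_{\Z_2}(\{x\}), d_\bz) \cong \Z_2[x]$. Since $\psi(x) \in C^1(B(\Z_2); \Z_2)$ is the identity function $\Z_2 \to \Z_2$, which is the standard generator of $H^1(B(\Z_2); \Z_2)$, and $\psi$ is a ring map, $H^\ast(\psi)$ is the required isomorphism onto $\Z_2[x] = H^\ast(B(\Z_2); \Z_2)$.

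For $p$ odd, the new essential ingredient is the $2$-cochain
\begin{equation*}
\omega \coloneqq \sum_{\ell = 1}^{p-1} \z_\ell(x) \otimes \z_{p-\ell}(x) \in \T_{\Z_p}^2(\{x\}),
\end{equation*}
which is the formula for $-d_\bz \z_p(x)$ given by \eqref{eq:da-cup-2} in the $\Z$-case, except that $\z_p(x)$ is not present in $\T_{\Z_p}(\{x\})$. I would first verify directly via Leibniz that $d_\bz \omega = 0$: the expansion produces $\pm$ the triple sum $\sum_{i+j+k=p,\, i,j,k \ge 1} \z_i \otimes \z_j \otimes \z_k$ in two orderings that cancel by cyclic relabeling. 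A quick inspection of $B^2 = d_\bz(\T_{\Z_p}^1(\{x\}))$, which is spanned over $\Z_p$ by the elements $d_\bz \z_k(x)$ for $2 \le k \le p-1$, then shows $[\omega] \ne 0$.

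The main obstacle is to establish that $H^\ast(\T_{\Z_p}(\{x\}), d_\bz)$ is the free graded-commutative $\Z_p$-algebra $\bigwedge_{\Z_p}(x) \otimes_{\Z_p} \Z_p[y]$ on $[x]$ in degree one and $y = [\omega]$ in degree two. My plan is to adapt the decomposition from the proof of Proposition \ref{prop:coho-ext}: write $\T_{\Z_p}(\{x\})$ as a direct sum of a subcomplex $\mcm$ whose cohomology realizes the claimed algebra together with an acyclic complement carrying an explicit contracting homotopy analogous to \eqref{eq:h-ell}. The $\Z$-case homotopy $\z_1 \otimes \z_{i_2} \otimes \cdots \mapsto -\z_{i_2+1} \otimes \cdots$ must be modified at the boundary index $i_2 = p - 1$, where the target $\z_p$ vanishes in $\T_{\Z_p}$, and it is precisely this boundary failure that produces the new cohomology class $[\omega]$. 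An equivalent route is to filter $\T_{\Z_p}(\{x\})$ by the number of tensor factors equal to $\z_{p-1}(x)$ and analyze the resulting spectral sequence, whose $E_2$-page decouples into an exterior piece on $[x]$ and a polynomial piece generated by $y$. Once this identification is secured, $H^\ast(\psi)$ is checked to be an isomorphism by verifying that $\psi(x)$ and $\psi(\omega)$ map to the standard generators of $H^\ast(B(\Z_p); \Z_p)$: the former is the identity cochain in degree one, and the latter is, up to sign, the classical $2$-cocycle representing the Bockstein image of $[x]$ arising from the extension $0 \to \Z_p \to \Z_{p^2} \to \Z_p \to 0$; multiplicativity of $\psi$ then finishes the argument.
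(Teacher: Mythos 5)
Your proposal is correct and follows essentially the same route as the paper's proof: the $p=2$ case is handled by observing that $\T_{\Z_2}(\{x\})=\Z_2[x]$ with zero differential, and for odd $p$ the paper likewise isolates the extra degree-two class (its cocycle $c=\sum_{i}\z_i(x)\cup\z_{p-i}(x)$ is your $\omega$), kills the complementary summand by restricting the integral contracting homotopy, establishes the $\bigwedge(x)\otimes\Z_p[y]$ structure via the degree-two periodicity $\alpha\mapsto \z_1\otimes\z_{p-1}\otimes\alpha$, and concludes by checking $\psi$ on the degree-one and degree-two generators together with the fact that $H^\ast(B(\Z_p);\Z_p)$ is generated in those degrees. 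The only cosmetic differences are that the paper passes to the quotient $\T/D$ by an acyclic subcomplex rather than a direct-sum decomposition, and verifies the degree-two generator by evaluating on the explicit $2$-cycle $\sum_{i=1}^{p-1}[i|1]$ rather than identifying $\psi(\omega)$ with the Bockstein cocycle.
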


\begin{proof}
Consider first the case $R=\F_2$. Note that in this case
$\T_R(\{x\})$ is equal to $\F_2[x]$, the polynomial 
algebra on a single generator $x$.
Hence, the differential $d_\bz$ is 
identically zero and we have that $H^\ast(\T_R(\{x\}),d_\bz)
= \F_2[x]$. To see that $\psi$ induces an isomorphism on
cohomology, note that the $1$-chain, $[1]$, in the chain
complex of the bar construction on $\F_2$ is a generator
of $H_1(B(\F_2);\F_2)$ and $\psi(x)([1]) = 1\in \F_2$ 
so $\psi(x)$ is a generator of $H^1(B(\F_2);\F_2)$. 
The result follows, since $\psi^\ast$ is an isomorphism in 
degree $1$ and both its source and target 
are polynomial algebras on a single generator in 
degree $1$.

Now consider the case $R=\F_p$ with $p\ge 3$.
Denoting $\z_i(x)$ by $\z_i$, the $R$-vector space 
$\T = \T_R(\{x\})$ has basis consisting of all elements 
of the form $\z_{i_1} \ot \z_{i_2} \ot \cdots \ot \z_{i_\ell}$, 
where each $1 \le i_j \le p-1$.
Set $D$ equal to the graded $R$-submodule of $\T$ 
generated by the basis elements
$\z_i $ for $2 \le i \le p-1$ and 
$\z_{i_1} \ot \z_{i_2} \ot \cdots \ot \z{i_\ell}$ 
for $(i_1, i_2) \ne (1,p-1)$. 
Clearly, $D$ is closed under the differential $d_\bz$; moreover,
the cochain homotopy used over $\Z$ restricts to $D$.
Hence, $H^\ast(D, d_\bz) = 0$, and it follows that 
 $H^\ast(\T, d_\bz) \cong H^\ast(\T/D, d_\bz)$.

Write $x$ and $\zeta_i$ for the images of those elements from $\T$ 
in $\T/D$, and note that both $x$ and $\z_1 \ot \z_{p-1}$ are 
cocycles in $\T/D$. The $\F_p$-algebra $\T/D$ is generated in 
degree $1$ by $[x]$ and in degree $2$ by $[\z_1 \ot \z_{p-1}]$. 
It follows that $H^1(\T/D;\F_p) \cong \F_p$, with generator $[x]$, and 
$H^2(\T/D;\F_p) \cong \F_p$, with generator $[\z_1 \ot \z_{p-1}]$.
Now note that $(\T/D)^{i+2}$ is isomorphic to 
$\z_1 \ot \z_{p-1} \otimes \T^{i}$ for $i \ge 1$.
The degree-$2$ map $\T^{*}\to (\T/D)^{*+2}$ given by
$\alpha \mapsto \z_1 \ot \z_{p-1}\ot \alpha$ for $\alpha \in \T^i$ 
is an isomorphism of graded $R$-algebras. 
Since $\z_1 \ot \z_{p-1}$ is a cocycle
in $\T/D$, it follows from the graded Leibniz rule that this map
commutes with the differentials, and hence induces an
isomorphism on cohomology. This gives
$H^{i}(\T) \cong H^{i}(\T/D) \cong H^{i+2}(\T)$ for $i \ge 1$, 
and it follows that $H^i(\T/D) \cong \F_p$ for $i \ge 1$. 
The generators of these groups are
$\z_1 \ot \z_{p-1} \ot\z_1 \ot \z_{p-1}\ot \cdots \ot\z_1 \ot \z_{p-1}$ 
if $i$ is even and 
$\z_1 \ot \z_{p-1} \ot\z_1 \ot \z_{p-1}\ot \cdots \ot\z_1 \ot \z_{p-1}
\ot \z_1$ if $i$ is odd. 

The next step is to show that $\psi$ induces isomorphisms 
$\psi^i \colon H^{i}(\T(\{x\}),d_\bz)\to H^{i}(B(\F_p);\F_p)$ 
in degrees $i=1$ and $2$. The $1$-chain $[1]$ is a generator of 
$H_1(B(\F_p);\F_p) = \F_p$ and $\psi(x)([1]) = 1$, so
$\psi^1$ is an isomorphism. Note that the cocycle 
$c = \sum_{i=1}^{p} \z_i(x) \cup \z_{k-i}(x)
\in \T$ projects to the cocycle $ \z_1 \ot \z_{p-1} \in 
\T/D$. Moreover, the homology class of the $2$-cycle 
$g= \sum_{i=1}^{p-1} [i|1]$ is a generator 
of $H_2(B(\F_p);\F_p)$. Since $\psi(c)(g)=1$, we 
conclude that $\psi^2$ is also an isomorphism. 

Finally, since $H^\ast(B(\F_p);\F_p)$
is generated in degrees $1$ and $2$ and 
$\psi^\ast$ is an isomorphism in those degrees,
it follows that $\psi^\ast$ is an epimorphism. Since
$H^i(\T(\{x\}),d_\bz)$ and $H^i(B(\F_p);\F_p)$ are both
isomorphic to $\F_p$ for each $i \ge 0$, it follows that
$\psi^\ast$ is an isomorphism, and the proof is complete. 
\end{proof}

The next theorem is a synthesis of the preceding results. 

\begin{theorem}
\label{thm:coho-iso-R}
If $\bX$ is a finite set with $n$ elements and if 
$R=\Z$ or $\F_p$, then the dga morphism 
$\psi_{\bX,R}\colon (\T_R(\bX), d_\bz) \to C^\ast(B(R^n);R)$
induces an isomorphism of cohomology rings.
\end{theorem}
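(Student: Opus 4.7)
The plan is to reduce Theorem \ref{thm:coho-iso-R} to the results already established. The case $R=\Z$ is exactly Corollary \ref{cor:iso}, so no further work is needed there. For $R=\Z_p$ I would proceed by induction on $n$, mirroring the inductive proof of Corollary \ref{cor:iso}. The base case $n=1$ is Proposition \ref{prop:coho-iso-p}.

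For the inductive step, write $\bX^{n+1} = \bX^n \cup \{y\}$ with $y = x_{n+1}$. Since $d_\bz(y)=0$, the inclusion $\T_R(\bX^n) \hookrightarrow \T_R(\bX^{n+1})$ is an elementary Hirsch extension, and Lemma \ref{lem:ss} supplies a spectral sequence $(E_r^{p,q}, d_r)$ converging to $H^{\ast}(\T_R(\bX^{n+1}), d_\bz)$, with
\[
E_2^{p,q} \cong H^p(\T_R(\bX^n), d_\bz) \otimes H^q(\T_R(\{y\}), d_\bz).
\]
On the topological side, the splitting $R^{n+1} \cong R^n \oplus R$ realizes $R \to R^{n+1} \to R^n$ as a trivial central extension, giving $B(R^{n+1}) \simeq B(R^n)\times B(R)$. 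The Lyndon--Hochschild--Serre spectral sequence with $\Z_p$ coefficients then has
\[
\widetilde{E}_2^{p,q} \cong H^p(B(R^n);R) \otimes H^q(B(R);R),
\]
converging to $H^{\ast}(B(R^{n+1});R)$; since we work over the field $\Z_p$ and the extension is a product, K\"unneth forces $\widetilde{E}_2 = \widetilde{E}_\infty$, so this spectral sequence already collapses.

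The map $\psi_{\bX^{n+1}, R}$ respects the relevant filtrations (the Hirsch filtration of Lemma \ref{lem:ss} on the source, and the bar filtration on cochains of $B(R^{n+1})$ induced by the quotient to $B(R^n)$ on the target), and hence induces a morphism of spectral sequences $E_r \to \widetilde{E}_r$. By the inductive hypothesis and the base case, this morphism is an isomorphism on $E_2$. A standard transfer-of-collapse argument now applies: since the differentials on $\widetilde{E}_r$ vanish for $r \ge 2$ and $\psi$ is an isomorphism on $E_r$, one deduces inductively that the differentials on $E_r$ also vanish, so $\psi$ is an isomorphism on $E_\infty$. Combined with convergence and the comparison of filtrations on cohomology (both finite-dimensional in each degree), this forces $H^{\ast}(\psi_{\bX^{n+1},R})$ to be an isomorphism, completing the induction.

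The main point requiring care is the compatibility of $\psi$ with the two filtrations, for this is what makes $\psi$ into a morphism of spectral sequences in the first place. This compatibility follows from the construction of $\psi$ via the bar cochains on the magma $M_\bz$: the Hirsch filtration counts how many tensor factors of a basis element $\z_I(\bx^n) \otimes \z_k(y)\cdots$ involve generators from $\bX^n$ rather than $y$, and under $\psi$ this matches the filtration of bar cochains by how many of the coordinate entries in $R^{n+1}$ project non-trivially to the quotient $R^n$. Once this is in place, the argument reduces to the familiar comparison of collapsing spectral sequences over a field.
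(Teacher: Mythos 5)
Your proposal is correct and follows essentially the same route as the paper: the $\Z$ case is delegated to Corollary \ref{cor:iso}, the $\Z_p$ base case to Proposition \ref{prop:coho-iso-p}, and the inductive step compares the spectral sequence of the Hirsch extension (Lemma \ref{lem:ss}) with that of the split central extension $R \to R^{n+1} \to R^n$. The only (harmless) variation is that the paper asserts directly that the Hirsch-extension spectral sequence collapses with $E_2=E_\infty$, whereas you deduce its collapse from the collapse of the topological spectral sequence via the comparison isomorphism on $E_2$ --- and you are right that the filtration-compatibility of $\psi$ is the point deserving care, which the paper leaves implicit.
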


\begin{proof}
For $R=\Z$ the result is Corollary \ref{cor:iso}.
For $R=\F_p$ and $n=1$, the result is 
Proposition \ref{prop:coho-iso-p}. For $n > 1$, the result
follows by induction on $n$, using the property
that the spectral sequence of the Hirsch extension 
$(\T_R(\bX), d_\bz) \to (\T_R(\bX \cup \{y\}), d_\bz)$
collapses with $E_2 = E_\infty$.
\end{proof}

\subsection{Colimits of Hirsch extensions}
\label{subsec:colimits-hirsch}
We now consider a type of free binomial $\cup_1$-dgas that arise 
as unions (or, colimits) of certain sequences of Hirsch extensions.  
These objects will play an important role for the rest of this paper. 

\begin{definition}
\label{def:lim-hirsch}
A free binomial $\cup_1$-dga $(\T_R(\bX),d)$ is called a 
\textit{colimit of Hirsch extensions}\/ if the following 
conditions hold.
\begin{enumerate}[itemsep=2pt]
\item $\bX = \bigcup_{i \ge 1} \bX_i$ with each set $\bX_i$ finite.
\item For $\bX^n = \bigcup_{i=1}^{n} \bX_i$ and
$n \ge 1$, the differential $d$ on $\T_R(\bX)$ restricts to a 
differential $d_n$ on $\T_R(\bX^n)$.
\item Each map 
$i_n \colon (\T_R(\bX^n),d_n) \to (\T_R(\bX^{n+1}),  d_{n+1})$
is a Hirsch extension.
\item $\bX_1 \ne \emptyset$, and $d_1(x) = 0$ for all $x \in \bX_1$.
\end{enumerate}
\end{definition}

A {\em morphism of colimits of 
Hirsch extensions}\/ is a map of binomial $\cup_1$-dgas as above, 
$f\colon (\T_R(\bX),d)\to (\T_R(\bX'),d')$, with the
property that for each $n\ge 1$, the map restricts 
to a morphism $f_n\colon \T_R(\bX^n) \to \T_R({\bX'^{n}})$. 
Note that these morphisms are compatible with the 
respective colimits; that is, the diagram below commutes 
for each $n\ge 1$. 
\begin{equation}
\label{eq:compatible}
\begin{tikzcd}[row sep=2.2pc]
\T_R(\bX^{n+1}) \ar[r, "f_{n+1}" ] 
			& \T_R(\bX'^{n+1}) \phantom{\, .}\\
\T_R(\bX^n) \ar[u, "i_{n}" ] \ar[r, "f_n" ']
			& \T_R(\bX'^n) \, . \ar[u, "i'_{n}" ']
\end{tikzcd}
\end{equation}

\subsection{The group associated to a colimit of Hirsch extensions}
\label{subsec:hirsch-group}
We now associate in a functorial way to each colimit of Hirsch extensions 
a pronilpotent group.

\begin{lemma}
\label{lem:h-group}
Let $\T=(\T_R(\bX),d)$ be a colimit of Hirsch extensions 
$\T_n=(\T_R(\bX^n),d_n)$.
\begin{enumerate}[itemsep=1pt]
\item \label{hg1}
There is a pronilpotent group $G_{\T}$ and a $\cup_1$-dga map 
$\psi_{\T} \colon \T \to C^*(B(G_{\T});R)$

\item \label{hg2}
If $\bX$ is finite, then $G_{\T}$ is a nilpotent group and 
$\psi_{\T}$ is a quasi-isomor\-phism. Moreover, if $R=\Z$, 
then $G_{\T}$ is torsion-free.

\item \label{hg2bis}
In general, the map $\psi_{\T}$  induces a quasi-iso\-morphism 
from $\T$ to $C^*_{\cont}(B(G_{\T});R)=  
\colim_{n\to \infty} C^*(B(G_{\T_n});R)$, 
the continuous cochains on $B(G_{\T})$. 

\item \label{hg3}
Every morphism of colimits of Hirsch extensions, $f\colon \T\to \T'$, 
induces (in a functorial way) a group homomorphism, 
$\tilde{f}\colon G_{\T'}\to G_{\T}$.
\end{enumerate}
\end{lemma}

\begin{proof}
\eqref{hg1}
We start by defining for each $n\ge 1$ a finitely generated 
nilpotent group, $G_n=G_{\T_n}$, corresponding to the free $\cup_1$-dgas 
$\T_n=(\T_R(\bX^n),d_n)$, as well as a $\cup_1$-dga map 
$\psi_n \colon \T_R(\bX^n) \to C^*(B(G_n);R)$ inducing an 
isomorphism on cohomology. 
This is done inductively, as follows.

First let $G_1=M(\bX_1,R)$. As noted in Section \ref{subsec:phi-map}, 
this is a free $R$-module with basis $\bX_1$; we view it now as a finitely 
generated abelian group. By Theorem \ref{thm:coho-iso-R}, there is a 
quasi-isomorphism $\psi_1\colon (\T_R(\bX_1), d_\bz) \to C^\ast(B(G_1);R)$. 
Assume now that a finitely generated nilpotent group $G_n$ has been 
constructed, together with a
$\cup_1$-dga quasi-isomorphism 
$\psi_n \colon (\T_R(\bX^n),d_n) \to C^*(B(G_n);R)$ inducing 
an isomorphism on $H^1$. By Theorem \ref{thm:ext-bijection}, the differential 
$d_{n+1}$ on $\T(\bX_{n+1})$ restricts to an admissible map 
$\tau_{n}\colon \bX_{n+1} \to Z( \T^2_R(\bX^n))$. The composition 
$\psi_n\circ \tau_n$, then, defines a cocycle in $Z^2(B(G_n);M(\bX_{n+1},R))$; let 
\begin{equation}
\label{eq:ext-gn}
\begin{tikzcd}[column sep=22pt]
0 \ar[r] & M(\bX_{n+1},R)\ar[r] & G_{n+1} \ar[r, "q_n"] &G_n \ar[r] & 1
\end{tikzcd}
\end{equation}
be the corresponding central extension. Since $G_n$ is a group, 
Lemma \ref{lem:assoc} insures that $G_{n+1}$ is also a group; 
by construction, this is again a finitely generated nilpotent group 
(torsion-free if $R=\Z$).
Since the map $\tau_n$ is admissible, Theorem \ref{thm:ext-bijection} 
insures that the map $\tau_{n+1}=d_{n+1}|_{\bX_{n+1}}$ is also 
admissible. Since $\bX_{n+1}$ is finite, the Hirsch extension 
$\T_R(\bX_n) \inj \T_R(\bX_{n+1})$ 
can be realized as a sequence of elementary Hirsch extensions.
The inductive assumption together with Lemma \ref{lem:ss}
and Theorem \ref{thm:coho-iso-R} then
show that $\psi_{n+1}$ is a quasi-isomorphism.
This completes the construction of the $G_n$ and 
the argument that $\psi_n$ is a quasi-isomorphism.
 
We now let $G_{\T}=\varprojlim G_n$ be the limit of the inverse system of groups 
$\{G_n,q_n\}_{n\ge 1}$ and  $\psi_{\T} \colon \T \to C^*(B(G_{\T});R)$ be the colimit of the 
directed system of maps $\psi_n \colon (\T_R(\bX^n),d_n) \to C^*(B(G_n);R)$. 
By construction, both $G_{\T}$ and $\psi_{\T}$ satisfy the claimed properties. 
Note that the underlying magma of $G_{\T}$ is $(M(\bX,R),\mu_{\tau})$, 
where $\tau\colon \bX \to Z(\T^2_R(\bX))$ is the colimit of the maps $\tau_n$, 
while $\psi_{\T}$ coincides with the map 
$\psi_{\bX}\colon \T(\bX)\to C^*(\Delta(M_{\tau});R)$.

\eqref{hg2}
If $\bX$ is finite, then $\bX=\bX^n$ for some $n\ge 1$, and the claimed  
properties for $G_{\T}$ and $\psi_{\T}$ follow from the above proof.

\eqref{hg2bis}
The fact that the map $\T \to C^*_{\cont}(B(G_{\T});R)$ induced by $\psi_{\T}$
is a quasi-isomorphism (when $\bX$ is not necessarily finite) 
follows from part \eqref{hg2}.

\eqref{hg3}
Let $f\colon \T\to \T'$ be a morphism of colimits Hirsch extensions, 
so that, for each $n\ge 1$, the diagram \eqref{eq:compatible} 
commutes and $d'_n\circ f_n=f_{n+1}\circ d_n$. Setting 
$\tilde{i}_n\coloneqq q_n$, we define inductively homomorphisms 
$\tilde{f}_n\colon G'_n \to G_n$ which satisfy 
$\tilde{f}_n\circ \tilde{i'}_n = \tilde{i}_n\circ \tilde{f}_{n+1}$, as follows. 
We first let $\tilde{f}_1\colon G'_1\to G_1$ be equal to the $R$-linear 
map $f_1^{\vee}\colon M(\bX'_1,R) \to M(\bX_1,R)$ from Section \ref{subsec:phi-map}.
Assuming that $\tilde{f}_n$ has been defined, we let 
$(f_{n+1}|_{\bX_{n+1}})^{\vee}\colon M(\bX'_{n+1};R)\to M(\bX_{n+1};R)$ 
be equal to the $\Hom(-,R)$-dual of the set map 
$f_{n+1}|_{\bX_{n+1}} \colon \bX_{n+1} \to \bX'_{n+1}$. 
The fact that $f_n$ and $f_{n+1}$ are compatible dga-maps implies that the 
homomorphisms $\tilde{f}_n$ and $(f_{n+1}|_{\bX_{n+1}})^{\vee}$ are compatible 
with the $k$-invariants of the extensions \eqref{eq:ext-gn}, and thus define a 
homomorphism $\tilde{f}_{n+1}\colon G'_{n+1} \to G_{n+1}$ with the claimed  
property. Passing to the limit yields a homomorphism $\tilde{f}\colon G_{\T'}\to G_{\T}$. 
Finally, if $g\colon \T'\to \T''$ is another morphism of colimits of Hirsch extensions, it is 
readily verified that $\tilde{f}\circ \tilde{g}=\widetilde{g\circ f}$.
\end{proof}

\begin{remark}
\label{rem:BK}
In \cite{BK71, BK}, Bousfield and Kan associated to every space $X$ and 
commutative ring $R$ an $R$-completion, $R_{\infty}(X)$. 
This is an $R$-space (i.e., its homology groups in positive degrees 
are $R$-modules) which comes equipped with a structure 
map, $\kappa_X\colon X\to R_{\infty}X$, with the following property: 
Given a map $f\colon X\to Y$, there is an induced map, 
$R_{\infty}f\colon R_{\infty}X\to R_{\infty}Y$, such that 
$R_{\infty}f\circ \kappa_X=\kappa_Y\circ f$; moreover, $f$ 
is an $R$-homology equivalence if and only if 
$R_{\infty}f$ is a weak homotopy equivalence. 
The Bousfield--Kan completion also works at the level of groups, sending in a 
functorial way a group $G$ to its $R$-completion, $\widehat{G}_R$. 
In particular, if $R=\Z$, then $\widehat{G}_\Z=\varprojlim_{n} G/\Gamma_n(G)$ 
is the pronilpotent completion of $G$, built from the lower central series $\{\Gamma_n(G)\}_{n\ge 1}$. 
It is shown in \cite{Porter-Suciu-group-1min} that if $\T$ is the $1$-minimal model of a 
path-connected space $X$, then the classifying space $B(G_{\T})$ is related to a 
truncation of $\Z_{\infty}(X)$, using a suitable replacement for the lower central series. 
\end{remark}

The next lemma describes another type of functoriality property of the above 
construction, related to maps between classifying spaces of $R$-pronilpotent groups.

\begin{theorem}
\label{lem:group-gives-hirsch}
Let $\T=(\T_R(\bX), d)$ be a colimit of Hirsch extensions, let $G=G_{\T}$ 
be the corresponding pronilpotent group, and assume the map 
$\psi_{\T} \colon \T_R(\bX)\to C^*(B{G};R)$ is a quasi-isomorphism. 
Let also $0 \to F \to \overline{G} \to G \to 1$ be a central extension of groups, 
with $F$ a finitely generated, free $R$-module, and  let
$\pi \colon B{\overline{G}} \to B{G}$ be the corresponding fibration. Then 
\begin{enumerate}
\item \label{gh1}
There is a Hirsch extension 
$i\colon \T \inj \overline{\T}= (\T_R(\bX \cup \bY), \bar{d})$
such that $\overline{G}=G_{\overline{T}}$. 
\item \label{gh2}
The diagram below commutes
\begin{equation}
\label{eq:group-gives-a-hirsch}
\begin{tikzcd}[column sep=30pt]
\overline{\T}  \ar[r,  "\psi_{\overline{\T}}"] & C^\ast(B{\overline{G}};R)\phantom{\, .} \\
 \T \ar[r, "\psi_{\T}"] \ar[u, "i" ']& C^\ast(B{G};R) \ar[u, "\pi^*" '] \, .
\end{tikzcd}
\end{equation}
\item \label{gh3}
The map $\psi_{\overline{\T} }$ is a quasi-isomorphism. 
\end{enumerate}
\end{theorem}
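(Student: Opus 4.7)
The plan is to produce the Hirsch extension by lifting the cocycle data classifying $\overline{G} \to G$ along the quasi-isomorphism $\psi_\T$. Let $\bY = \{y_1, \dots, y_n\}$ be an $R$-basis for $F$, so $F \cong M(\bY, R)$, and fix a normalized cocycle $\nu \in Z^2(BG; F)$ classifying the central extension $0 \to F \to \overline{G} \to G \to 1$; writing $\nu = \sum_i \nu_i \, y_i$ with $\nu_i \in Z^2(BG;R)$, I use the hypothesis that $H^2(\psi_\T)$ is an isomorphism to lift each class $[\nu_i]$ to $H^2(\T)$. After replacing $\nu$ by a cohomologous normalized cocycle (which leaves the isomorphism class of $\overline{G}$ unchanged), I can then choose cocycles $\tau(y_i) \in Z^2(\T_R(\bX))$ with $\psi_\T(\tau(y_i)) = \nu_i$ on the nose. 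By part (1) of Theorem \ref{thm:ext-bijection}, the set map $\tau \colon \bY \to Z^2(\T_R(\bX))$ determines a Hirsch extension $i \colon \T \inj \overline{\T} = (\T_R(\bX \cup \bY), \bar{d})$. Reindexing the colimit structure on $\T$ so that $\bY$ is inserted at a level large enough to contain the image of $\bar{d}\vert_\bY$ realizes $\overline{\T}$ as a colimit of Hirsch extensions, while part (2) of Theorem \ref{thm:ext-bijection} supplies the admissibility needed for Lemma \ref{lem:h-group} to apply.

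For part (1) of the theorem, I invoke Lemma \ref{lem:h-group} applied to $\overline{\T}$ to produce the pronilpotent group $G_{\overline{\T}}$ and the dga morphism $\psi_{\overline{\T}}$. By the inductive construction in the proof of that lemma, $G_{\overline{\T}}$ is built from $G_\T = G$ by the central extension classified by $\psi_\T \circ \tau = \nu$; since $\nu$ also classifies $\overline{G}$, this provides a canonical isomorphism $G_{\overline{\T}} \cong \overline{G}$, under which I identify the two groups.

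For part (2), the commutativity of \eqref{eq:group-gives-a-hirsch}, I compare $\psi_{\overline{\T}} \circ i$ and $\pi^* \circ \psi_\T$ on the generators of $\T$. Under the realizations $\psi_\T(x) \colon \ba \mapsto \ba(x)$ on $G = M(\bX, R)$ and $\psi_{\overline{\T}}(x) \colon \bar{\ba} \mapsto \bar{\ba}(x)$ on $\overline{G} = M(\bX \cup \bY, R)$, the quotient $\pi$ is identified with the restriction $M(\bX \cup \bY, R) \to M(\bX, R)$, so both composites agree on $\bX$ and on the unit of $\T^0$. Since $\T$ is generated as a binomial cup-one algebra by $\T^0$ together with $\bX$, and both composites are morphisms of such algebras, the diagram commutes on all of $\T$.

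Part (3) is proven by induction on $|\bY|$, reducing to the case where $F = R$, a single elementary Hirsch extension. There I compare the spectral sequence of Lemma \ref{lem:ss} for the extension $\T \inj \overline{\T}$ with the Serre spectral sequence of the fibration $BR \to B\overline{G} \to BG$; the centrality of the extension makes the monodromy on the fiber cohomology trivial, so both spectral sequences have $E_2$-pages of the form $H^*(\text{base}) \otimes H^*(\text{fiber})$. The map $\psi_{\overline{\T}}$ induces a morphism of spectral sequences whose $E_2$-map is the tensor product of $H^*(\psi_\T)$ (an isomorphism by hypothesis) with $H^*(\psi_R) \colon H^*(\T_R(\{y\})) \to H^*(BR;R)$ (an isomorphism by Theorem \ref{thm:coho-iso-R}); convergence then forces $H^*(\psi_{\overline{\T}})$ to be an isomorphism. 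The main obstacle is the bookkeeping in the spectral sequence comparison: one must verify that $\psi_{\overline{\T}}$ is filtration-preserving and that the resulting $E_2$-identifications on each side match, so that the $E_2$-isomorphism really is induced by $\psi_{\overline{\T}}$ rather than being an abstract algebraic coincidence.
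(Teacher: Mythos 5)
Your proposal is correct and follows essentially the same route as the paper: lift the extension class through the isomorphism $H^2(\psi_\T)$ to obtain the cocycle(s) defining the Hirsch extension, reduce part (3) to the case $F=R$, and compare the spectral sequence of the Hirsch extension with that of the fibration, where the $E_2$-map is $H^*(\psi_\T)\otimes H^*(\psi_{\T_R(\{y\})})$ and Theorem \ref{thm:coho-iso-R} handles the fiber factor. You in fact supply more detail than the paper on two points it asserts without argument --- the identification $G_{\overline{\T}}\cong\overline{G}$ via cohomologous normalized cocycles, and the generator-by-generator check of the commutativity of \eqref{eq:group-gives-a-hirsch} --- which is a welcome addition rather than a deviation.
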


\begin{proof}
It suffices to prove the case where $F = R$.
Let $c \in \T_R^2(\bX)$ be a cocycle such that
the cohomology class of $\psi_{\bX}(c)$ in $H^2(B{G};R)$ corresponds 
to the central extension. Set  $\overline{\T}=(\T_{R}(\bX \cup \{y\}), \bar{d})$ 
equal to the Hirsch extension with $\bar{d} (y)= c$ and set $\overline{G}=G_{\overline\T}$. 
This gives the commutative diagram \eqref{eq:group-gives-a-hirsch}.

The map $\psi_{\overline{\T}}$ induces a map of spectral
sequences from the spectral sequence of the
Hirsch extension to the spectral sequence of the
fibration. The map of the terms $E_2^{p,q}$ is
\begin{equation}
\label{eq:hptr}
\begin{tikzcd}[column sep=36pt]
H^p(\T_R(\bX) \otimes H^q(\T_R(\{y\}))
\ar[r, "\, \psi_{\T} \otimes f"]
& H^p(B{G};R) \otimes H^q(B{R};R) ,
\end{tikzcd}
\end{equation}
where $H^q(\T(\{y\})$ denotes the cohomology
computed with $dy=0$ and $f$ denotes the map
on cohomology induced by the map 
$\psi_{\T(\{y\})} \colon \T_R(\{y\}) \to C^\ast(B{R};R)$. The map 
$\psi_{\T}$ is an isomorphism by assumption, while $f$
has been shown to be an isomorphism in Theorem \ref{thm:coho-iso-R}. 
Hence, the map of $E_2$ terms is an isomorphism and it follows that
$\psi_{\overline{\T}}$ is a quasi-isomorphism.
\end{proof}

\begin{corollary}
\label{cor:equiv-cat}
The assignment $\T=(\T_R(\bX),d)  \leadsto G_{\T}$ establishes an equivalence of 
categories between colimits of Hirsch extensions over $R$ and $R$-pronilpotent groups, 
with the property that $H^*(\T_R(\bX)) \cong H^*_{\cont}(G_{\T};R)$.
\end{corollary}

\begin{proof}
By part \eqref{hg1} of Lemma \ref{lem:h-group}
it follows that if $(\T_R(\bX), d)$ is a colimit
of Hirsch extensions over $R$, then the corresponding group
$G_{\T}$ is an $R$-pronilpotent group. 

Conversely, if $G=\varprojlim G_n$ is a pronilpotent group, 
then by Theorem \ref{lem:group-gives-hirsch} and induction
on $n$, it follows that $\T=(\T(\bX), d)$ is a colimit
of Hirsch extensions. Moreover, by part \eqref{hg2bis} 
of Lemma \ref{lem:h-group}, $H^*_{\cont}(G_{\T};R)\cong H^*(\T)$.
\end{proof}

\section{The existence of $1$-minimal models}
\label{sect:1-min}

In this section, we define $1$-minimal models and 
show that every binomial $\cup_1$-dga $A$ over the ring 
$R=\Z$ or $R=\F_p$ admits a $1$-minimal model, 
provided that $H^0(A)=R$ and $H^1(A)$ is a 
finitely generated, free $R$-module.

\subsection{Solving an extension problem}
\label{subsec:ext-maps}
We start by setting up an extension problem in the category of 
binomial $\cup_1$-dgas and give an if and only if criterion to solve it.

\begin{definition}
\label{def:extend-map}
Let $f\colon (\T_R(\bX),d)\to (A,d_A)$  be a morphism of binomial 
$\cup_1$-dgas, and let $i\colon \T_R(\bX)\to \T_R(\bX \cup \bY)$ be a Hirsch 
extension of $\T_R(\bX)$.  A morphism $\bar{f}\colon  \T_R(\bX \cup \bY) \to A$ 
is an \emph{extension of $f$}\/ if the following diagram commutes.
\begin{equation}
\label{eq:txya}
\begin{tikzcd}
 & \T_R(\bX \cup \bY)\phantom{ .}   \ar[dl, "\bar{f}" ']    \\
 A  & \T_R(\bX)  . \ar[u, "i" '] \ar[l, "f" ]		
\end{tikzcd}
\end{equation}
\end{definition}

\begin{theorem}
\label{def:ext-criterion}
Given a morphism of binomial $\cup_1$-dgas, 
$f\colon (\T_R(\bX),d)\to (A,d_A)$, and a Hirsch extension, 
$i\colon (\T_R(\bX),d)\to (\T_R(\bX \cup \bY),\bar{d})$, there is 
a bijection between the set of extensions $\bar f\colon  \T_R(\bX \cup \bY) \to A$ 
of $f$ and the set of functions $\sigma \colon \bY \to A^1$ such that 
$d_A(\sigma(y))= f(d y)$ for all $y\in \bY$.
\end{theorem}

\begin{proof}
First note that there is an extension 
$\bar{f}$ of $f$ if and only if $[f(dy)] = 0$ for all $y \in \bY$. 
Thus, the set of such extensions is empty if and only if the 
set  $\{\sigma \colon \bY \to A^1 :  d_A(\sigma(y))= f(d y),\, \forall y\in \bY\}$
is empty, and we are done in this case.

So assume there is an extension $\bar{f}\colon  \T_R(\bX \cup \bY) \to A$. 
The corresponding map $\sigma \colon \bY \to A^1$ is then given by 
$\sigma( y )= \bar{f}(y)$ for $y\in \bY$. The condition that 
$d_A(\sigma( y))=f(dy)$ follows from the assumption 
that $\bar{f}$ is a map of dgas.

In the opposite direction, assume $\sigma \colon \bY \to A^1$ is a map of sets
with $d_A(\sigma( y))= f(dy)$ for all $y\in \bY$. Then by Lemma \ref{lem:TtoA}, 
the function $y \mapsto \sigma(y)$ extends $f$ uniquely to a map
$\bar{f}\colon \T_R(\bX \cup \bY)\to A$ of binomial graded $R$-algebras 
with cup-one products. Since $d_A(\sigma( y)) = f(dy)$, it then follows from 
Theorem \ref{thm:tx-a} that this extension commutes with the differentials  
$d$ and $d_A$, and so $\bar{f}$ is a morphism of binomial $\cup_1$-dgas.
This completes the proof.
\end{proof}

\begin{lemma}
\label{lem:fgtf}
With notation as above, 
let $\bar{f}\colon \T_R(\bX \cup \bY ) \to A$ be an extension of $f\colon \T_R(\bX)\to A$  
with $Y = \{y\}$, where $\bar{d}$ denotes the differential
on $\T_R(\bX \cup \bY)$ and $d$ denotes its restriction to
$\T_R(\bX)$. Assume both $H^1(A)$ and $\ker(H^2(f))$ are 
finitely generated, free $R$-modules,
and that the cohomology classes of the cocycles
$dy, c_1, c_2, \ldots, c_\ell$ in $\T_R(\bX)$ 
form a basis for $\ker (H^2(f))$. Then,
\begin{enumerate}[itemsep=1.5pt]
\item 
\label{prop:1}
The inclusion $i\colon\T_R(\bX)\to \T_R(\bX \cup \{y\})$ induces
an isomorphism on $H^1$.
\item 
\label{prop:2}
The set $\{[i(c_1)], \ldots , [i(c_\ell)]\}$ is a basis for 
$\im(H^2(i))\cap \ker \big(H^2(\bar{f})\big)$.
\item 
\label{prop:3}
The kernel of $H^2(\bar{f})$ is a finitely generated, free $R$-module.
\end{enumerate}
\end{lemma}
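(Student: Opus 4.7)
The plan is to exploit the spectral sequence from Lemma \ref{lem:ss} for the elementary Hirsch extension $i\colon \T_R(\bX)\inj\T_R(\bX\cup\{y\})$, whose second page is $E_2^{p,q}\cong H^p(\T_R(\bX),d)\otimes H^q(\T_R(\{y\}),d_\bz)$ and whose nontrivial differentials are controlled by right-multiplication by $[dy]$. The decisive input from the hypotheses is that $[dy]$ belongs to a basis of the free $R$-module $\ker(H^2(f))\subseteq H^2(\T_R(\bX))$; in particular $[dy]$ is non-torsion in $H^2(\T_R(\bX))$ and generates a free cyclic summand of $\ker(H^2(f))$.

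For part \eqref{prop:1}, injectivity of $H^1(i)$ follows from $E_\infty^{1,0}=E_2^{1,0}=H^1(\T_R(\bX))$, while surjectivity reduces to $E_\infty^{0,1}=\ker\bigl(d_2\colon R\cdot[y]\to R\cdot[dy]\bigr)=0$, which uses the non-torsion property of $[dy]$. For part \eqref{prop:2}, the same spectral sequence identifies $\ker(H^2(i))$ with $R\cdot[dy]$ and $\im(H^2(i))$ with the deepest filtration $F^2 H^2(\T_R(\bX\cup\{y\}))$. Each $[i(c_j)]$ lies in $\im(H^2(i))\cap\ker(H^2(\bar f))$ because $\bar f\circ i=f$ and $[f(c_j)]=0$; linear independence is obtained by lifting a relation $\sum r_j[i(c_j)]=0$ to $\sum r_j[c_j]\in R\cdot[dy]$ and invoking the basis hypothesis, while spanning follows by writing any such class as $[i(c)]$ with $[c]\in\ker(H^2(f))$, expanding in the given basis, and dropping the $[dy]$-term since $[i(dy)]=0$.

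Part \eqref{prop:3} is the main obstacle. From part \eqref{prop:2} one has the short exact sequence
\[
0\longrightarrow\im(H^2(i))\cap\ker(H^2(\bar f))\longrightarrow\ker(H^2(\bar f))\longrightarrow\overline{K}\longrightarrow 0,
\]
whose left-hand term is free of rank $\ell$ and whose right-hand term $\overline{K}$ is the image of $\ker(H^2(\bar f))$ in the quotient $H^2(\T_R(\bX\cup\{y\}))/F^2 H^2$; the latter is filtered with successive subquotients $E_\infty^{1,1}$ and, in the case $R=\Z_p$, $E_\infty^{0,2}$. Since over the PIDs $R=\Z$ and $R=\Z_p$ an extension of finitely generated free modules is again finitely generated and free, it suffices to prove the same for $\overline{K}$. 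My approach is to represent each class in $\overline{K}$ by a cocycle $z=u\cup y+v$ with $u\in Z^1(\T_R(\bX))$, $v\in\T_R^2(\bX)$, and $dv=-u\cup dy$; the cocycle condition places $[u]$ in $E_\infty^{1,1}$, and the requirement $[\bar f(z)]=0$ in $H^2(A)$, rewritten using $d_A\bar f(y)=f(dy)$, becomes a Massey-product-type relation forcing $[f(u)\cup\bar f(y)+f(v)]=0$. I expect the crux of the argument to be unpacking this Massey-type constraint in order to embed $\overline{K}$ into the finitely generated free module $H^1(A)$ (up to the indeterminacy controlled by part \eqref{prop:2}), from which finite generation and freeness of $\overline{K}$, and hence of $\ker(H^2(\bar f))$, follow.
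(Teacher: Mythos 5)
Your treatment of parts \eqref{prop:1} and \eqref{prop:2} is correct and coincides with the paper's: both use the spectral sequence of Lemma \ref{lem:ss}, kill $E_\infty^{0,1}$ using that $[dy]$ is a non-torsion basis element of the free module $\ker(H^2(f))$, and identify $\im(H^2(i))$ with $E_\infty^{2,0}=H^2(\T_R(\bX))/R\cdot[dy]$ via the exact sequence $0\to E_\infty^{2,0}\to H^2(\T_R(\bX\cup\{y\}))\to E_\infty^{1,1}\to 0$; your independence/spanning argument for $\{[i(c_j)]\}$ is the same diagram chase the paper performs.

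Part \eqref{prop:3} is where there is a genuine gap. You set up the correct short exact sequence
$0\to\im(H^2(i))\cap\ker(H^2(\bar f))\to\ker(H^2(\bar f))\to\overline{K}\to 0$,
but you never actually prove that $\overline{K}$ is finitely generated and free: the sentence ``I expect the crux of the argument to be unpacking this Massey-type constraint'' is a placeholder, not an argument, and the route you sketch is also the wrong one. The constraint $[\bar f(z)]=0$ is what carves $\overline{K}$ out as a \emph{subset}, but it contributes nothing to freeness; no Massey-product analysis is needed. The observation that finishes the proof (and is what the paper uses) is purely module-theoretic: over $R=\Z$ one has $E_\infty^{0,2}=0$, so $\overline{K}\subseteq H^2/F^2\cong E_\infty^{1,1}$, and $E_\infty^{1,1}=\ker\bigl(d_2\colon E_2^{1,1}\to E_2^{3,0}\bigr)$ is a submodule of $E_2^{1,1}=H^1(\T_R(\bX))\otimes R\cdot[y]$, which is finitely generated and torsion-free; hence $E_\infty^{1,1}$, and therefore $\overline{K}$, is finitely generated free over the PID $\Z$, the sequence splits, and $\ker(H^2(\bar f))\cong(\text{free of rank }\ell)\oplus\overline{K}$ is free. (Your intended embedding of $\overline{K}$ into $H^1(A)$ is, once unwound, nothing but the composite $\overline{K}\hookrightarrow E_\infty^{1,1}\hookrightarrow E_2^{1,1}\cong H^1(\T_R(\bX))$, so it would succeed, but only because of this torsion-freeness, not because of any relation in $H^2(A)$.) For $R=\Z_p$ the claim is immediate since every submodule of a finitely generated vector space is free, as you note.
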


\begin{proof}
Consider the spectral sequence from Lemma \ref{lem:ss} 
associated with the elementary Hirsch extension 
$(\T_R(\bX), d)\inj (\T_R(\bX \cup \{y\}),\bar{d})$.
We then have $E_2^{0,1} = R$ with generator $y$, and 
$\bar{d} y \in \ker(H^2(f))$. Hence, by assumption, 
$n dy\ne 0$ for all $n \in R$, $n\ne 0$. 
It follows that $E_3^{0,1} = 0$. Therefore, the induced homomorphism 
$H^1(f)\colon H^1(\T_R(\bX))\to H^1(\T_R(\bX \cup \{y\}))$ is an isomorphism, 
and the proof of claim \eqref{prop:1} is complete.

To prove claim \eqref{prop:2} in the case $R=\F_p$,
note that since $\F_p$ is a field, we can write
$H^2(\T_R(\bX))$ as a direct sum 
$\spn([dy], [c_1], \ldots, [c_\ell]) \oplus \bar{B}$, with
$H^2(f)$ restricted to $\bar{B}$ a monomorphism.
Then $\im(H^2(i)) = E_\infty^{2,0} =
\spn([c_1], \ldots, [c_\ell]) \oplus \bar{B}$, and the result
follows since $H^2(\bar{f})\circ H^2(i) = H^2(f)$.

To prove claim \eqref{prop:2} in the case
$R =\Z$, note that since $E_2^{0,2}=0$, the terms 
$E_\infty^{2,0}$ and $E_\infty^{1,1}$ give an exact sequence, 
\begin{equation}
\label{eq:split}
\begin{tikzcd}[column sep=20pt]
0 \ar[r]
			& E_\infty^{2,0} = H^2(\T_R(\bX))/[\bar{d}y]\ar[r]
 			& H^2(\T_R(\bX \cup \{y\})) \ar[r]
 			& E_\infty^{1.1} \ar[r]
 			& 0 ,
\end{tikzcd}
\end{equation}
and claim \eqref{prop:2} follows at once.

For $R=\F_p$, claim \eqref{prop:3} follows since in this
case every submodule of a finitely generated, free 
$R$-module is a finitely generated, free $R$-module.

To prove claim \eqref{prop:3} in the case $R=\Z$,
let $\{[dy], [c_1], \ldots, [c_\ell]\}$ be a basis for $\ker(H^2(f))$, 
and let $[c_2, \ldots , c_\ell]$ denote the submodule of
$H^2(\T_R(\bX \cup \{y\}))$ generated by the elements 
$[i(c_2)], \dots, [i(c_{\ell})]$.
Then since $E_\infty^{1,1}$ is finitely generated and 
torsion-free, the sequence \eqref{eq:split} is split exact and the
kernel of the map from $H^2(\T_R(\bX \cup \{y\}))/[c_2, \ldots, c_\ell]$
to $H^2(A)$ is the submodule $K$ of $E_\infty^{1,1}$ consisting
of all elements $k$ for which there is an element
$\alpha \in \im(H^2(i))/[c_2, \ldots , c_\ell]$
with $H^2(\bar{f})(k + \alpha)=0$.  It follows that 
$\ker \big(H^2(\bar{f}) \big)\cong [c_2, \ldots , c_\ell] \oplus K$. 
This establishes claim \eqref{prop:3} in the case $R=\Z$, 
and the proof is complete.
\end{proof}

\subsection{A lifting criterion}
\label{subsec:lift-crit}
The next theorem corresponds to an analogous rational homotopy 
result from \cite{FHT} (Lemma 12.4). 

\begin{theorem} 
\label{thm:1-qi-lift}
Let $(A,d_A)$ and $(A',d_{A'})$ be binomial cup-one $R$-dgas over 
$R=\Z$ or $\F_p$, let $f\colon A\to A'$ be a surjective $1$-quasi-iso\-morphism, 
and let $\varphi$ be a morphism from a colimit of Hirsch extensions, $(\T_R(\bX),d)$, 
to $(A',d_{A'})$. There is then a lift of $\varphi$ through $f$; that is, a morphism 
$\widehat{\varphi} \colon \T_R(\bX) \to A$ such that
the following diagram commutes
\[
\begin{tikzcd}
& A \phantom{.} \ar[d, "f" ] \\
\T_R(\bX) \ar[r, "\varphi" ] \ar[ur, dashed, "\widehat{\varphi}" ] & A^\prime .
\end{tikzcd}
\]
\end{theorem}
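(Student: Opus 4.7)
The plan is to construct $\widehat{\varphi}$ inductively along the filtration $\T_R(\bX^1)\subset \T_R(\bX^2)\subset\cdots$ whose colimit defines $(\T_R(\bX),d)$. Writing $\varphi_n = \varphi\circ i_n$ for the restriction to $\T_R(\bX^n)$, I will build morphisms of binomial $\cup_1$-dgas $\widehat{\varphi}_n\colon \T_R(\bX^n)\to A$ satisfying $f\circ \widehat{\varphi}_n = \varphi_n$ and compatible with the Hirsch extensions, and then set $\widehat{\varphi} = \colim_n \widehat{\varphi}_n$. The key input throughout is the observation from Remark~\ref{rem:coho-vanish}: since $f$ is a surjective $1$-quasi-isomorphism, $H^i(\ker f) = 0$ for $i\le 2$.

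\textbf{Base case} ($n=1$). Since $d_1 x = 0$ for every $x \in \bX_1$, each $\varphi(x)\in Z^1(A')$. For each such $x$, use surjectivity to choose $a_0 \in A^1$ with $f(a_0) = \varphi(x)$; then $d_A a_0 \in (\ker f)^2$ is a cocycle, so by $H^2(\ker f) = 0$ there exists $b \in (\ker f)^1$ with $d_A b = d_A a_0$, and I set $\widehat{\varphi}_1(x) \coloneqq a_0 - b \in Z^1(A)$. By Lemma~\ref{lem:TtoA} and Theorem~\ref{thm:tx-a}, the assignment $x\mapsto \widehat{\varphi}_1(x)$ extends uniquely to a morphism $\widehat{\varphi}_1\colon (\T_R(\bX_1),d_{\bz})\to (A,d_A)$ of binomial $\cup_1$-dgas; since $f\circ \widehat{\varphi}_1$ and $\varphi_1$ are both such extensions of the common set map $x\mapsto \varphi(x)\in Z^1(A')$, they coincide.

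\textbf{Inductive step.} Assume $\widehat{\varphi}_n$ has been constructed. For each $y\in \bX_{n+1}$ the Hirsch extension data provide a cocycle $dy \in Z^2(\T_R(\bX^n))$, and by Theorem~\ref{def:ext-criterion} the extensions of $\widehat{\varphi}_n$ to a morphism $\T_R(\bX^{n+1})\to A$ of binomial $\cup_1$-dgas correspond bijectively to set maps $\sigma\colon \bX_{n+1}\to A^1$ with $d_A\sigma(y) = \widehat{\varphi}_n(dy)$. It therefore suffices to produce such a $\sigma$ satisfying additionally $f\circ\sigma = \varphi|_{\bX_{n+1}}$. Using surjectivity, pick $a_0\in A^1$ with $f(a_0) = \varphi(y)$; then
\[
f\bigl(d_A a_0 - \widehat{\varphi}_n(dy)\bigr) = d_{A'}\varphi(y) - \varphi(dy) = 0,
\]
so $d_A a_0 - \widehat{\varphi}_n(dy) \in Z^2(\ker f)$. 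Invoking $H^2(\ker f)=0$, pick $b \in (\ker f)^1$ with $d_A b = d_A a_0 - \widehat{\varphi}_n(dy)$, and set $\sigma(y)\coloneqq a_0 - b$. Theorem~\ref{def:ext-criterion} then yields the desired $\widehat{\varphi}_{n+1}$, and the identity $f\circ \widehat{\varphi}_{n+1} = \varphi_{n+1}$ follows by applying the uniqueness half of that theorem to both $f\circ \widehat{\varphi}_{n+1}$ and $\varphi_{n+1}$ as extensions of $\varphi_n$ determined by the common boundary data $y\mapsto \varphi(y)$.

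The main obstacle is essentially bookkeeping: ensuring that the $\widehat{\varphi}_n$ are morphisms of binomial $\cup_1$-dgas, respecting the cup-one product and the $\zeta$-maps, rather than merely of underlying dgas. This is handled cleanly by routing every construction through the universal properties encoded in Theorem~\ref{def:ext-criterion} and Lemma~\ref{lem:TtoA}, which automatically build in those compatibilities; the only genuine analytic content is the vanishing of $H^2(\ker f)$, which is precisely what powers the lifting at each stage.
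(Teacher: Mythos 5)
Your proposal is correct and follows essentially the same route as the paper's proof: induction over the filtration $\T_R(\bX^n)$, using surjectivity of $f$ to choose a preimage and the vanishing $H^2(\ker f)=0$ from Remark \ref{rem:coho-vanish} to correct it by a coboundary in $\ker f$, then invoking the extension machinery (Lemma \ref{lem:TtoA}, Theorem \ref{thm:tx-a} / Theorem \ref{def:ext-criterion}) to upgrade the set-level data to a morphism of binomial $\cup_1$-dgas. The only difference is cosmetic: you route the inductive step through Theorem \ref{def:ext-criterion} and explicitly verify $f\circ\widehat{\varphi}_{n+1}=\varphi_{n+1}$ via its uniqueness clause, whereas the paper checks the same identities directly.
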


\begin{proof}
As in Definition \ref{def:lim-hirsch}, 
let $\{(\T_R(\bX^n),d_n)\}_{n\ge 1}$ be the sequence of 
binomial $\cup_1$-dgas whose colimit is $(\T_R(\bX),d)$.
Set $\varphi_n\colon \T_R(\bX^n) \to A'$ equal to the restriction of 
$\varphi$ to $\T_R(\bX^n)$. It suffices to show that for each $n\ge 1$, 
there is a lift $\widehat{\varphi}_n$ of $\varphi_n$ through $f$.

The argument proceeds by induction.
For $n=1$, we have that $d_1(x) =0$ for all $x \in \bX_1$.
Thus, by Lemmas \ref{lem:TtoA} and 
\ref{lem:map-bcd} it suffices to show that for each
$x \in X_1$ there is a cocycle $a_x$ in $A$ with 
$f(a_x) = \varphi(x)$. To do this, given $x \in \bX_1$, 
let $b_x$ be an element in $A^1$ with 
$f(b_x) = \varphi(x)$. Then $d_A( b_x)$ is a 
cocycle in $\ker(f)$. By Remark \ref{rem:coho-vanish}, 
we have that $H^2(\ker(f)) = 0$. Hence, there
is an element $c_x \in \ker(f)$ with $d_A(c_x) = d_A(b_x)$.
Then $a_x = b_x - c_x$ is a cocycle in $A$ with
$f(a_x) = \varphi(x)$. This completes the argument
for the case $n=1$.

Now assume there is a lifting $\widehat{\varphi}_n$
of $\varphi_n$ through $f$. 
In order to show that $\widehat{\varphi}_n$ extends to a
lifting $\widehat{\varphi}_{n+1}$ through $f$, 
it suffices by Lemmas \ref{lem:TtoA} and 
\ref{lem:map-bcd} to show that for each
$x \in \bX_{n+1}$ there is an element $a_x$ in $A$ with 
$f(a_x) = \varphi(x)$ and 
$\widehat{\varphi}_n(dx)= d_A(a_x)$.
Given $x \in \bX_{n+1}$, let $b_x$ be an element in $A$
with $f(b_x) = \varphi(x)$. Then 
\begin{align*}
f\left( \widehat{\varphi}_n (dx) - d_A(b_x)\right) 
			& = \varphi(dx) - f(d_A(b_x))
			 = \varphi(dx) - d_{A'}(f(b_x))\\
			& = \varphi(dx) - d_{A'}(\varphi(x))
			= 0,
\end{align*}
and so $\widehat{\varphi}_n (dx) - d_A(b_x) \in \ker(f)$.
We have that $\widehat{\varphi}_n (dx) - d_A(b_x)$ is a 
cocycle in $\ker(f)$ and $H^2(\ker(f))=0$; therefore, 
there is an element $c_x \in \ker(f)$ with
$d_A(c_x) = \widehat{\varphi}_n (dx) - d_A(b_x)$. 
Setting $a_x = b_x - c_x$, we have that  
$f(a_x) = \varphi(x)$ and $\widehat{\varphi}_n(dx)= d_A(a_x)$,
and the argument is complete.
\end{proof}

\subsection{$1$-minimal models}
\label{subsec:1-min-models}

Colimits of Hirsch extensions lead to the notion 
of $1$-minimal model, which is central to the study done in this paper.
Let $(A,d_A)$ be a binomial $\cup_1$-dga over $R=\Z$ or $\F_p$ such that 
$H^0(A)=R$ and $H^1(A)$ is a finitely generated, free $R$-module.  

\begin{definition}
\label{def:1-min-model}
A {\em $1$-minimal model}\/ for $A$ is a free binomial $\cup_1$-dga 
$\mcm=(\T_R(\bX),d)$ which arises as the colimit of a sequence of Hirsch extensions, 
$\mcm_n=(\T_R(\bX^n),d_n)$, together with morphisms $\rho_n\colon  \mcm_n\to A$ 
such that, for each $n\ge 1$, the diagram below,
\begin{equation}
\label{eq:atx-diag}
\begin{tikzcd}[row sep=2.5pc]
    & \mcm_{n+1} \phantom{\, ,}\ar [dl, "\rho_{n+1}" ']\\
A  & \mcm_{n}\, , \ar[ l, "\rho_n" ] \ar[u , "i_{n}" ']
\end{tikzcd}
\end{equation}
is a commutative diagram of binomial $\cup_1$-dgas and 
the following conditions are satisfied:
\begin{enumerate}[itemsep=2pt]
\item \label{mm4}  The maps $H^i(\rho_1)\colon H^i(\mcm_1)\to H^i(A)$ are 
isomorphisms for $i=0$ and $i=1$.
\item \label{mm5}  
The submodule $\ker( H^2(\rho_n)) \subset H^2(\mcm_n)$ is a free $R$-module
with basis given by the cohomology classes of the cocycles
$\{ d_{n+1}(x) \mid x\in \bX_{n+1}\} \subset Z^2(\T_R(\bX^n))$.
\end{enumerate}
\end{definition}

Set $\mcm\coloneqq\bigcup_n \mcm_n$.  
Since all diagrams of type \eqref{eq:atx-diag} commute, 
there is a morphism of $\cup_1$-dgas, $\rho\colon (\mcm,d) \to (A,d_A)$, 
whose restriction to $ \mcm_{n}$ coincides with $\rho_n$ 
for all $n\ge 1$.  We will oftentimes refer to 
$\rho\colon \mcm \to A$, or simply to 
$\mcm$ as being a $1$-minimal model for $A$; when needed, 
we will refer to the map $\rho\colon \mcm \to A$ as the structural  
morphism for $\mcm$.
If the inclusion maps $i_n \colon \mcm_n \to \mcm$
satisfy properties \eqref{mm4} and \eqref{mm5},
then we simply say that $\mcm$ is a $1$-minimal model.
If $\mcm = \T(\bX)$ is a $1$-minimal model
with $\bX$ a finite set, then we say 
that $\mcm$ is a finitely-generated $1$-minimal model.

From the definition of colimit of Hirsch extensions
we have that $\mcm_1=(\T_R(\bX_1), d_{\bz})$. 
Also note that from part \eqref{prt2} of Theorem \ref{thm:ext-bijection}
it follows that the map $d\vert_{\bX^n}$ is admissible for 
each $n \ge 1$.

\begin{lemma}
\label{lem:deg1-min}
Let $\mcm=\bigcup_{n\ge 1} \mcm_n$ be a $1$-minimal model
for $A$.  Then, for all $n\ge 1$,
\begin{enumerate}[itemsep=1pt]
\item \label{zz1}
$Z^1(\mcm_{n}) = H^1(\mcm_{n})$.
\item  \label{zz2}
The inclusion $\mcm_1\inj \mcm_n$ induces an 
isomorphism, $H^1(\mcm_1) \isom H^1(\mcm_n)$.
\end{enumerate}
\end{lemma}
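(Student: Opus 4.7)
The plan is to handle the two parts separately. Part \eqref{zz1} is immediate from the fact that $\mcm_n^0 = R$, while part \eqref{zz2} requires a more intricate inductive argument based on the spectral sequence of Lemma \ref{lem:ss}.

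For part \eqref{zz1}, I would simply observe that $\mcm_n^0 = \T_R^0(\bX^n) = R$, and since the differential $d_n$ is an $R$-linear derivation satisfying $d_n(1)=0$, it vanishes identically on $\mcm_n^0$. Therefore $B^1(\mcm_n) = 0$, and hence $Z^1(\mcm_n) = H^1(\mcm_n)$.

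For part \eqref{zz2}, I would proceed by induction on $n$, the case $n=1$ being trivial. For the inductive step it suffices to show that each elementary constituent of the Hirsch extension $i_n\colon \mcm_n \inj \mcm_{n+1}$ induces an isomorphism on $H^1$. Enumerating $\bX_{n+1} = \{y_1,\dots,y_m\}$ (finite by Definition \ref{def:lim-hirsch}), I would interpolate intermediate dgas $\mcm_n^{[k]} \coloneqq (\T_R(\bX^n \cup \{y_1,\dots,y_k\}), d|)$ with $\mcm_n^{[0]} = \mcm_n$ and $\mcm_n^{[m]} = \mcm_{n+1}$, so that each step $\mcm_n^{[k]} \to \mcm_n^{[k+1]}$ is an elementary Hirsch extension, and then establish by a secondary induction on $k$ the following two companion statements in tandem: (a) the map $\mcm_n \to \mcm_n^{[k]}$ induces an isomorphism on $H^1$; and (b) the kernel of the induced map $H^2(\mcm_n) \to H^2(\mcm_n^{[k]})$ is precisely the submodule generated by the classes $[d y_1],\dots,[d y_k]$.

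The inductive step for both (a) and (b) is driven by the spectral sequence of Lemma \ref{lem:ss} applied to the elementary extension $\mcm_n^{[k]} \to \mcm_n^{[k+1]}$, in the form used in the proof of Lemma \ref{lem:fgtf}\eqref{prop:1}. The linchpin is the non-torsion condition $r\cdot [dy_{k+1}] \ne 0$ for every nonzero $r \in R$, tested in $H^2(\mcm_n^{[k]})$. I would extract this from condition \eqref{mm5} of Definition \ref{def:1-min-model}: the classes $\{[dy_j] : 1 \le j \le m\}$ form an $R$-basis of the free submodule $F \coloneqq \ker H^2(\rho_n) \subset H^2(\mcm_n)$, and using statement (b) at stage $k$ one obtains an injection $H^2(\mcm_n)/\langle [dy_j] : j \le k\rangle \hookrightarrow H^2(\mcm_n^{[k]})$. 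The free direct summand $F/\langle [dy_j] : j \le k\rangle = \bigoplus_{j>k} R\cdot [dy_j]$ of the source contains $[dy_{k+1}]$ as a free generator, which guarantees that $[dy_{k+1}]$ is non-torsion in $H^2(\mcm_n^{[k]})$. The spectral sequence then yields (a) at $k+1$ together with a filtration inclusion $H^2(\mcm_n^{[k]})/R\cdot[dy_{k+1}] \hookrightarrow H^2(\mcm_n^{[k+1]})$, from which a short diagram chase upgrades (b) from $k$ to $k+1$.

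The main obstacle will be the bookkeeping required to carry statement (b) through every elementary extension: without the precise identification of the kernel in (b), one cannot rule out that the class $[dy_{k+1}]$ has acquired torsion in the intermediate dga $\mcm_n^{[k]}$, which would invalidate the non-torsion input needed by the spectral sequence at the crucial step.
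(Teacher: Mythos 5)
Your proof is correct and follows essentially the same route as the paper: part (1) is the same one-line observation that $d$ vanishes on $\mcm_n^0=R$, and part (2) is an explicit unwinding of the paper's citation of Lemma \ref{lem:fgtf}\eqref{prop:1}, using the same spectral sequence of Lemma \ref{lem:ss} with the non-torsion of $[dy]$ as the key input, your companion statement (b) being precisely the bookkeeping that the paper packages into Lemma \ref{lem:fgtf}, parts \eqref{prop:2} and \eqref{prop:3}. (One small remark: the image of $F$ in $H^2(\mcm_n)/\langle [dy_j]: j\le k\rangle$ need not be a direct summand, but all you actually use is that $[dy_{k+1}]$ is a basis element of a free submodule, hence non-torsion, so nothing is lost.)
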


\begin{proof}
Part \eqref{zz1} follows since 
$d\colon (\mcm_n)^0 \to (\mcm_n)^1$ is the zero map for all $n$.
Part \eqref{zz2} follows from Lemma \ref{lem:fgtf}, part \eqref{prop:1}.
\end{proof}

The next corollary follows at once from the lemma. 

\begin{corollary}
\label{cor:h1-mcm}
If $\mcm$ is a $1$-minimal model for $A$, then, for all $n\ge 1$,
\begin{enumerate}[itemsep=1pt]
\item \label{zx1}
$H^1(\mcm_{n})\cong M(\bX_1, R)$, the free 
$R$-module with basis given by the elements in $\bX_1$. 
\item \label{zx2}
$Z^1(\mcm_{n+1}) = Z^1(\mcm_{n})$.
\end{enumerate}
\end{corollary}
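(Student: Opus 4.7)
Both parts are immediate consequences of Lemma~\ref{lem:deg1-min} once one knows what $H^1(\mcm_1)$ is, so the strategy is to first pin down $H^1(\mcm_1)$ and then propagate.

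For part~\eqref{zx1}, I would reduce to the case $n=1$ via part~\eqref{zz2} of Lemma~\ref{lem:deg1-min}: the inclusion $\mcm_1 \inj \mcm_n$ induces an isomorphism $H^1(\mcm_1) \isom H^1(\mcm_n)$. To compute $H^1(\mcm_1)$ itself, recall that by Definition~\ref{def:lim-hirsch} the set $\bX_1$ is finite and $d_1(x) = 0$ for every $x \in \bX_1$, so $\mcm_1 = (\T_R(\bX_1), d_{\bz})$. Theorem~\ref{thm:coho-iso-R} then supplies a quasi-isomorphism
\[
\psi_{\bX_1,R}\colon (\T_R(\bX_1), d_{\bz}) \longrightarrow C^\ast(B(R^{|\bX_1|});R),
\]
so I would identify $H^1(\mcm_1)$ with $H^1(B(R^{|\bX_1|});R) \cong \Hom_R(R^{|\bX_1|}, R)$. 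Using $\bX_1$ as a basis, the latter is canonically identified with $M(\bX_1,R)$, and one checks that under $\psi_{\bX_1,R}$ each generator $x \in \bX_1 \subset \T^1$ is sent to the evaluation cochain $\ba \mapsto \ba(x)$, whose cohomology class corresponds to the dual basis vector at $x$. This identifies $H^1(\mcm_1)$ with the free $R$-module $M(\bX_1,R)$.

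For part~\eqref{zx2}, the inclusion of sub-dgas $\mcm_n \inj \mcm_{n+1}$ restricts to an inclusion $Z^1(\mcm_n) \subseteq Z^1(\mcm_{n+1})$. By part~\eqref{zz1} of Lemma~\ref{lem:deg1-min}, each side equals the corresponding $H^1$, so this inclusion is the same as the map $H^1(\mcm_n) \to H^1(\mcm_{n+1})$ induced by functoriality. Applying part~\eqref{zz2} of the lemma to both $\mcm_1 \inj \mcm_n$ and $\mcm_1 \inj \mcm_{n+1}$ and using the two-out-of-three property, the map $H^1(\mcm_n) \to H^1(\mcm_{n+1})$ is also an isomorphism. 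Therefore $Z^1(\mcm_n) \inj Z^1(\mcm_{n+1})$ is surjective as well as injective, giving equality.

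The whole argument is bookkeeping on top of previously established results; there is no real obstacle. The only subtle point is the canonical identification $H^1(B(R^{|\bX_1|});R) \cong M(\bX_1,R)$ in part~\eqref{zx1}, which is obtained by tracking exactly where the generators $x\in\bX_1$ go under $\psi_{\bX_1,R}$ and pairing them with the tautological basis of $M(\bX_1,R)=R^{|\bX_1|}$.
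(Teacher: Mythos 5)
Your proof is correct and matches the paper's route: the paper derives this corollary ``at once'' from Lemma~\ref{lem:deg1-min}, and your argument is exactly the natural filling-in of that derivation (reduce part~\eqref{zx1} to $n=1$, compute $H^1(\mcm_1)$, and for part~\eqref{zx2} combine $Z^1=H^1$ with the two-out-of-three isomorphism). The only optional streamlining is that for part~\eqref{zx1} you could invoke Proposition~\ref{prop:coho-ext} (or Theorem~\ref{thm:coho-iso-R}) directly to get $H^1(\T_R(\bX_1),d_{\bz})\cong M(\bX_1,R)$ with basis the classes $[x]$, $x\in\bX_1$, rather than routing the identification through $C^\ast(B(R^{\abs{\bX_1}});R)$; both amount to the same computation.
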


\subsection{Existence of $1$-minimal models}
\label{subsec:exist-1min}

We are now in a position to state and prove the main result of this section.

\begin{theorem}
\label{thm:cupd-minmodel}
Let $A$ be a binomial $\cup_1$-dga over $R=\Z$ or $\F_p$,  
with  $H^0(A) = R$ and $H^1(A)$ a 
finitely-generated, free $R$-module. There is then a $1$-minimal model, $\mcm$, and a 
structural morphism, $\rho\colon \mcm\to A$, which is a $1$-quasi-isomorphism.
\end{theorem}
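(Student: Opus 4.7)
The plan is to build the $1$-minimal model $\mcm = \bigcup_{n\ge 1} \mcm_n$ inductively, step by step, using the structural results developed earlier in the paper.

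\textbf{Base case.} Choose a finite basis $\{[a_x]\}_{x \in \bX_1}$ for the free $R$-module $H^1(A)$, with each $a_x \in Z^1(A)$. Let $\mcm_1 = (\T_R(\bX_1), d_{\bz})$ be the free binomial $\cup_1$-dga with trivial differential. By Corollary \ref{cor:dx0}, the set map $x \mapsto a_x$ extends uniquely to a $\cup_1$-dga morphism $\rho_1\colon \mcm_1 \to A$ inducing the identity on $H^0 = R$. By construction $H^1(\rho_1)$ is an isomorphism, and by Proposition \ref{prop:coho-ext} together with Theorem \ref{thm:coho-iso-R}, $H^2(\mcm_1)$ is a finitely generated free $R$-module, hence so is its submodule $\ker H^2(\rho_1)$ (over $\Z_p$ this is automatic; over $\Z$, submodules of f.g.\ free modules are f.g.\ free).

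\textbf{Inductive step.} Assume $\rho_n\colon \mcm_n \to A$ has been constructed with $H^1(\rho_n)$ an isomorphism and $\ker H^2(\rho_n)$ a finitely generated free $R$-module. Choose a finite basis $\{[c_y]\}_{y \in \bX_{n+1}}$ of $\ker H^2(\rho_n)$ with $c_y \in Z^2(\mcm_n)$. By part \eqref{prt1} of Theorem \ref{thm:ext-bijection}, the assignment $y \mapsto c_y$ determines a Hirsch extension $i_n\colon \mcm_n \inj \mcm_{n+1} = (\T_R(\bX^n \cup \bX_{n+1}), d_{n+1})$ with $d_{n+1}(y) = c_y$. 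Since $[\rho_n(c_y)] = 0$ in $H^2(A)$, we may pick $a_y \in A^1$ with $d_A(a_y) = \rho_n(c_y)$, and then by Theorem \ref{def:ext-criterion} the map $\sigma\colon y \mapsto a_y$ produces an extension $\rho_{n+1}\colon \mcm_{n+1} \to A$ of $\rho_n$ making diagram \eqref{eq:atx-diag} commute.

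\textbf{Propagation of the hypothesis.} To continue the induction we must check that $\ker H^2(\rho_{n+1})$ is again finitely generated and free. Realize the Hirsch extension $i_n$ as a finite sequence of elementary Hirsch extensions (adjoining one element of $\bX_{n+1}$ at a time), and apply Lemma \ref{lem:fgtf} inductively: part \eqref{prop:1} gives the isomorphism on $H^1$, while part \eqref{prop:3} preserves the property that the kernel of $H^2$ on the target is finitely generated and free. Thus the inductive data persist.

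\textbf{Passage to the colimit and $1$-quasi-isomorphism.} Set $\mcm = \bigcup_n \mcm_n$ with structural morphism $\rho = \varinjlim \rho_n$. Then $\rho$ restricts to $\rho_1$ in degree $0$, so $H^0(\rho) = \id_R$. By Corollary \ref{cor:h1-mcm}\eqref{zx2}, $Z^1(\mcm) = Z^1(\mcm_1)$, while every $1$-coboundary in $\mcm$ is automatically zero (since all elements of $\mcm_n^0 = R$ are cocycles); hence $H^1(\rho) = H^1(\rho_1)$ is an isomorphism. For injectivity of $H^2(\rho)$, any class in $H^2(\mcm)$ is represented by some $u \in Z^2(\mcm_n)$; if $[\rho(u)] = 0$ in $H^2(A)$, then $[u] \in \ker H^2(\rho_n)$, which lies in the span of the $[c_y]$ with $y \in \bX_{n+1}$, so $u$ becomes a coboundary in $\mcm_{n+1}$ and thus in $\mcm$. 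Therefore $\rho$ is a $1$-quasi-isomorphism.

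\textbf{Main obstacle.} The subtle point is the third step: guaranteeing that at each stage $\ker H^2(\rho_n)$ remains a finitely generated free $R$-module, so that the induction can be carried out with all sets $\bX_n$ finite. This is where the spectral-sequence analysis in Lemma \ref{lem:fgtf} plays an essential role; in particular, one must observe that the new contributions to the kernel arising from $E_\infty^{1,1}$ are controlled by $H^1(\mcm_n) \otimes H^1(\T_R(\{y\}))$, which is finitely generated and free by Corollary \ref{cor:h1-mcm} and Theorem \ref{thm:coho-iso-R}.
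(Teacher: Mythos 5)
Your proposal is correct and follows essentially the same route as the paper's proof: a base case built on a basis of $H^1(A)$ via Corollary \ref{cor:dx0}, an inductive step that kills $\ker H^2(\rho_n)$ through a Hirsch extension (Theorem \ref{thm:ext-bijection} and Theorem \ref{def:ext-criterion}), propagation of the finitely-generated-free hypothesis via repeated application of Lemma \ref{lem:fgtf}, and passage to the colimit. Your write-up is in fact somewhat more explicit than the paper's at the final stage, where you spell out why $H^1(\rho)$ is an isomorphism and $H^2(\rho)$ is injective rather than asserting it ``by construction.''
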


\begin{proof}
Since $\mcm$ is connected, we can 
define $\rho^0 \colon \mcm^0 \to A^0$ to be the
composition of the inverse of the structure map
from $R$ to $\mcm^0$ followed by the structure map for $A$.

Now let $u_1, \ldots , u_k$ be cocycles in $A^1$ whose cohomology classes
give a basis for $H^1(A)$. Let $\bX_1=\{x_1, \ldots , x_k\}$ and 
set $\mcm_1 = (\T_R(\bX_1), d_{\tau_1})$, 
where $\tau_1=0$, that is, $d_{\tau_1}(x_i) = 0$ for all $i$. In view of 
Corollary \ref{cor:dx0}, 
we may define a morphism $\rho_1 \colon \mcm_1  \to A$ by setting 
$\rho_1(x_i) = u_i$ for $1\le i\le k$ such that the induced map 
on $H^0$ is an isomorphism. By construction, the map $H^1(\rho_1)$ 
is also an isomorphism.

Assume by induction that an extension $(\mcm_{n}, d_{\tau_n}) 
= (\T_R(\bX_1 \cup \cdots \cup \bX_n),d_{\tau_n})$ 
has been constructed, along with
a map $\rho_n \colon \mcm_n \to A$ inducing  
isomorphisms on
$H^0$ and $H^1$ and such that the kernel of $\rho_n$ is a
finitely generated, free $R$-module. Then by repeated applications 
of  Corollary \ref{cor:dx0}, Theorem \ref{thm:ext-bijection}, and 
Lemma \ref{lem:fgtf}, it follows that there is a finite set $\bX_{n+1}$,
an extension $\mcm_{n+1} = \T_R(\bX_1 \cup \cdots \cup \bX_{n+1})$ 
with differential $d_{\tau_{n+1}}$, 
and an extension $\rho_{n+1}$ of $\rho_n$ such that
$\rho_{n+1}$ induces isomorphisms on $H^0$ and $H^1$, the kernel
of $H^2(\rho_{n+1})$ is a finitely generated, free $R$-module, and 
the restriction of $H^2(\rho_{n+1})$ to the image of $\mcm_n$ in
$\mcm_{n+1}$ is a monomorphism.

If for some $n$ the map $H^2(\rho_n)$ is a monomorphism, then
set $\mcm = \mcm_n$; if not, then set $\mcm = \bigcup_{n\ge 1}\mcm_n$.
It then follows that $\mcm$ is a $1$-minimal model for $A$. Its structural 
morphism, $\rho\colon \mcm\to A$, is defined to be the direct limit of the morphisms 
$\rho_n\colon \mcm_n\to A$; that is, $\rho|_{\mcm_n}=\rho_n$, for all $n\ge 1$. 
By construction, the map $H^i(\rho)\colon H^i(\mcm)\to H^i(A)$ is an isomorphism 
for $i=0$ and $1$ and a monomorphism for $i=2$. Therefore, $\rho$ is 
a $1$-quasi-isomorphism, and the proof is complete.
\end{proof}

The theorem has an immediate corollary in the case when $A$ is a cochain 
algebra of a space.

\begin{corollary}
\label{cor:minmodel-space}
Let $X$ be a connected $\Delta$-complex, and assume $H^1(X;R)$ is a 
finitely generated module over $R=\Z$ or $\F_p$. There is then a 
$1$-minimal model, $\rho\colon \mcm\to A$, for the cochain algebra $A=C^*(X;R)$.
\end{corollary}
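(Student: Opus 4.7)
The plan is to verify that the cochain algebra $A = C^\ast(X;R)$ satisfies the three hypotheses of Theorem \ref{thm:cupd-minmodel}, after which the existence of a $1$-minimal model $\rho\colon \mcm \to A$ that is moreover a $1$-quasi-isomorphism follows immediately by invoking that theorem.

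First I would check that $A$ is a binomial $\cup_1$-dga. Since $X$ is a $\Delta$-complex and $R$ is $\Z$ or $\Z_p$, this is precisely the content of Theorem \ref{thm:cochain-bincup1d}. Second, since $X$ is connected, the cellular $0$-cocycles are the constant functions on the vertex set, so $H^0(A) = H^0(X;R) = R$, matching the connectivity hypothesis.

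The only remaining point is to show that $H^1(A) = H^1(X;R)$ is a finitely generated \emph{free} $R$-module, given that it is finitely generated. When $R = \Z_p$, there is nothing to prove, since $\Z_p$ is a field and every finitely generated $\Z_p$-module is free. When $R = \Z$, by the universal coefficient theorem there is a natural short exact sequence
\begin{equation*}
0 \to \ext^1(H_0(X;\Z),\Z) \to H^1(X;\Z) \to \Hom(H_1(X;\Z),\Z) \to 0 .
\end{equation*}
Since $H_0(X;\Z) = \Z$ is free, the $\ext^1$ term vanishes, and thus $H^1(X;\Z) \cong \Hom(H_1(X;\Z),\Z)$, which is torsion-free. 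A finitely generated torsion-free $\Z$-module is free, which completes the verification.

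With all three hypotheses of Theorem \ref{thm:cupd-minmodel} in place, that theorem directly supplies a $1$-minimal model $\mcm$ together with a structural morphism $\rho\colon \mcm \to A$ that is a $1$-quasi-isomorphism. There is no genuine obstacle here; the only mild subtlety is the freeness of $H^1$ in the integral case, handled by the universal coefficient observation above.
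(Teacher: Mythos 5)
Your proposal is correct and follows the same route the paper intends: the corollary is stated as an immediate consequence of Theorem \ref{thm:cupd-minmodel}, with the hypotheses supplied by Theorem \ref{thm:cochain-bincup1d} and connectivity of $X$. Your universal coefficient argument correctly fills in the one detail the paper leaves implicit, namely that finite generation of $H^1(X;\Z)$ upgrades to freeness because $H^1(X;\Z)\cong\Hom(H_1(X;\Z),\Z)$ is torsion-free.
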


\subsection{Augmented $1$-minimal models}
\label{subsec:aug-1min}
When the binomial $\cup_1$-dga $A$ admits an augmentation, the above theorem 
can be enhanced, accordingly. 

\begin{theorem}
\label{thm:aug-minmodel}
Let $A$ be binomial $\cup_1$-dga such that there is an augmentation 
$\varepsilon_A\colon A\to R$ which induces an isomorphism from 
$H^0(A)$ to $R$ and such that $H^1(A)$ is a finitely generated, free $R$-module. 
There is then an augmented $1$-minimal model, $\mcm$, such 
that the structural morphism, $\rho\colon \mcm\to A$, is an augmentation-preserving 
$1$-quasi-isomorphism.
\end{theorem}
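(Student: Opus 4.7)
The plan is to invoke Theorem \ref{thm:cupd-minmodel} essentially verbatim and then observe that the $1$-minimal model it produces is already naturally augmented and that its structural morphism respects the augmentation. The two key observations are that any colimit of Hirsch extensions $\mcm = (\T_R(\bX), d)$ satisfies $\mcm^0 = R$ (so $\mcm$ is connected), and hence, by the discussion in Section \ref{subsec:augmented}, $\mcm$ carries a unique augmentation $\varepsilon_\mcm$, equal to $\sigma_\mcm^{-1}$ in degree zero and to the zero map in positive degrees.

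First, I would apply Theorem \ref{thm:cupd-minmodel} to the binomial $\cup_1$-dga $(A,d_A)$, whose hypotheses $H^0(A)=R$ and $H^1(A)$ finitely generated and free are supplied by the augmentation assumption. This yields a $1$-minimal model $\mcm = \bigcup_{n\ge 1} \mcm_n$ together with a structural morphism $\rho \colon \mcm \to A$ which is a $1$-quasi-isomorphism. Recall from the proof that the degree-zero component is defined by $\rho^0 = \sigma_A \circ \sigma_\mcm^{-1}$, while on each generator $x \in \bX_n$ (all of which live in degree $1$) the value $\rho(x)$ is some chosen element of $A^1$.

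The next step is to verify that the morphism $\rho$ thus obtained is augmentation-preserving, that is, $\varepsilon_A \circ \rho = \varepsilon_\mcm$. In positive degrees the identity is automatic, since both sides vanish (the target $R$ has no elements in positive degree, so in particular the augmentations applied to any $\rho(x)$ with $x \in \bX_n$ give zero). In degree zero, $\varepsilon_A \circ \rho^0 = \varepsilon_A \circ \sigma_A \circ \sigma_\mcm^{-1}$. The composition $\varepsilon_A \circ \sigma_A$ is a unital ring homomorphism $R \to R$; for $R = \Z$ or $R = \Z_p$, the only such map is the identity, so $\varepsilon_A \circ \rho^0 = \sigma_\mcm^{-1} = \varepsilon_\mcm^0$. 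Combined, these two verifications show that $\rho$ is augmentation-preserving, and hence $(\mcm, \varepsilon_\mcm)$ together with $\rho$ is an augmented $1$-minimal model for $(A, \varepsilon_A)$ in the sense of Section \ref{subsec:augmented}.

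There is no substantive obstacle in this argument: all of the work is already packaged into Theorem \ref{thm:cupd-minmodel}, and the compatibility with augmentations reduces to the two elementary facts recalled above, namely the rigidity of the ground ring under unital self-maps and the fact that augmentations are forced to vanish in positive degrees. The only thing to watch is that the inductive choice of $\rho_n$ on new generators in $\bX_{n+1} \subset \mcm^1$ remains automatically compatible with the augmentations at every stage, which is immediate from the degree considerations.
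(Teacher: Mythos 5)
Your proposal is correct and follows essentially the same route as the paper: invoke Theorem \ref{thm:cupd-minmodel}, observe that the connected dga $\mcm$ carries a canonical (unique) augmentation, and check that $\varepsilon_A\circ\rho$ agrees with it using the vanishing in positive degrees and the rigidity of unital ring endomorphisms of $R$. The paper phrases the degree-zero step via uniqueness of augmentations for connected dgas rather than your direct computation with $\sigma_A\circ\sigma_\mcm^{-1}$, but the content is identical.
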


\begin{proof}
By Theorem \ref{thm:cupd-minmodel}, the binomial 
$\cup_1$-dga $A$ 
has a $1$-minimal model, $\rho\colon \mcm\to A$, which is a $1$-quasi-isomorphism.
Since the tensor algebra $\mcm=\T_R(\bX)$ is connected, it admits a canonical augmentation, 
$\varepsilon_{\mcm}\colon \mcm\to R$,  which sends $\mcm^{>0}$ to $0$ 
and identifies $\mcm^0$ with $R$. 

Since both $\varepsilon_A$ and $\rho$ are dga maps, their composite, 
$\varepsilon_A\circ \rho\colon \mcm\to R$ 
is again a dga map. Owing to our hypothesis on $\varepsilon_A$, 
the map from $R=H^0(\mcm)$ to $R$ induced by the composition
is an isomorphism of rings from $R$ to $R$
and so equals the identity of $R$. It follows that $\varepsilon_A\circ \rho$
is an augmentation for $\mcm$. By the uniqueness of 
augmentation maps for connected dgas, we have that  
$\varepsilon_A\circ \rho=\varepsilon_{\mcm}$,
and the proof is complete.
\end{proof}

Recall from Section \ref{subsec:augmented} that a choice of basepoint $x_0$ 
for a space $X$ yields an augmentation map, $\varepsilon_0\colon C^*(X;R)\to R$.

\begin{corollary}
\label{cor:minmodel-basepoint}
Let $(X,x_0)$ be a connected, pointed $\Delta$-complex, and assume 
$H^1(X;R)$ is a finitely generated module over $R=\Z$ or $\F_p$.
There is then an augmented $1$-minimal model, $\mcm$, for the 
cochain algebra $C^*(X;R)$ and a structural morphism, $\rho\colon \mcm\to A$, 
which is a $1$-quasi-isomorphism preserving augmentations, that is, 
$\varepsilon_0\circ \rho = \varepsilon_{\mcm}$.
\end{corollary}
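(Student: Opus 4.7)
The plan is to show that the pointed cochain algebra $(C^\ast(X;R),\varepsilon_0)$ satisfies the hypotheses of Theorem \ref{thm:aug-minmodel}, and then invoke that theorem directly. There are only three things to verify: that $A=C^\ast(X;R)$ is a binomial $\cup_1$-dga, that $\varepsilon_0$ induces an isomorphism from $H^0(A)$ to $R$, and that $H^1(A)$ is a finitely generated, free $R$-module.

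First, by Theorem \ref{thm:cochain-bincup1d}, the cochain algebra $A=C^\ast(X;R)$ has the structure of a binomial $\cup_1$-dga, with $\zeta_k$-maps and $\circ$-map as described in Section \ref{subsec:binomials}. The map $\varepsilon_0\colon C^\ast(X;R)\to R$ defined in Section \ref{subsec:augmented} by evaluating a $0$-cochain on $x_0$ (and sending higher-degree cochains to $0$) is an augmentation of $A$. Since $X$ is connected, every $0$-cocycle is a constant function on the vertices of $X$, so $H^0(A)$ is canonically isomorphic to $R$ and $\varepsilon_0$ restricts to this isomorphism on cocycles; in particular, $\varepsilon_0$ induces an isomorphism $H^0(A)\isom R$.

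Next, I would verify that $H^1(X;R)$ is finitely generated and free. By hypothesis, finite generation holds. If $R=\Z_p$, then $H^1(X;\Z_p)$ is a finite-dimensional $\Z_p$-vector space and hence free. If $R=\Z$, the universal coefficient theorem gives $H^1(X;\Z)\cong \Hom(H_1(X;\Z),\Z)$, since $H_0(X;\Z)=\Z$ is free so the $\ext$-term vanishes; thus $H^1(X;\Z)$ is torsion-free, and finite generation then forces it to be free.

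With these verifications in hand, Theorem \ref{thm:aug-minmodel} applies and yields an augmented $1$-minimal model $\mcm$ for $A$ with structural morphism $\rho\colon \mcm\to A$ that is a $1$-quasi-isomorphism and satisfies $\varepsilon_0\circ\rho=\varepsilon_{\mcm}$. Since the proof is a direct appeal to previously established machinery, there is no substantive obstacle; the only mild subtlety is the freeness of $H^1(X;\Z)$, which is handled by universal coefficients as above.
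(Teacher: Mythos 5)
Your proposal is correct and follows exactly the route the paper intends: the corollary is a direct application of Theorem \ref{thm:aug-minmodel} to $A=C^*(X;R)$ with the basepoint augmentation $\varepsilon_0$, using Theorem \ref{thm:cochain-bincup1d} for the binomial $\cup_1$-structure and connectivity for $H^0(A)\cong R$. Your explicit verification that $H^1(X;\Z)$ is free — via the universal coefficient theorem, since $\ext(H_0(X;\Z),\Z)=0$ forces $H^1(X;\Z)\cong\Hom(H_1(X;\Z),\Z)$ to be torsion-free and hence free once finitely generated — is a worthwhile addition, as the corollary's hypothesis states only finite generation while the theorem requires freeness, a gap the paper leaves implicit.
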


\section{Uniqueness and functoriality of $1$-minimal models}
\label{subsec:1-min-unique}

\subsection{Maps between $1$-minimal models}
\label{subsec:min1-maps}

Let $(A,d)$ and $(A^\prime,d^\prime)$
be binomial $\cup_1$-dgas over the ring $R=\Z$ or $\F_p$, 
with  $H^0 = R$ and $H^1$ a finitely generated, free $R$-module. 
By Theorem \ref{thm:cupd-minmodel}, there are $1$-minimal 
models $\rho\colon \mcm\to A$ and 
$\rho^\prime \colon \mcm^\prime \to A^\prime$, 
in the sense of Definition \ref{def:1-min-model}.
We then have $\mcm=\bigcup_{n\ge 1} \mcm_n$, with inclusion maps 
$i_n\colon \mcm_n \inj \mcm_{n+1}$ which are Hirsch extensions,  
and morphisms $\rho_n\colon \mcm_n\to A$ such that 
$\rho_n=\rho\vert_{\mcm_n}$ and $\rho_{n+1}\circ i_n=\rho_n$ 
for all $n\ge 1$, and similarly for $A^\prime$ and $\mcm^\prime$.

Let $\varphi \colon A \to A^\prime$ be a map of binomial $\cup_1$-dgas. 
A map $f\colon \mcm\to \mcm'$ between the corresponding $1$-minimal 
models is said to be a {\em morphism compatible with $\varphi$}\/ if $f$ is  
a map of binomial $\cup_1$-dgas such that $\rho' \circ f=\varphi \circ \rho$. 
That is, we have a sequence of morphisms, 
$f_n\colon  \mcm_n\to \mcm'_n$, such the following diagrams 
commute, for all $n\ge 1$,
\begin{minipage}{.5\columnwidth}
\begin{equation}
\label{eq:square-f}
\begin{tikzcd}[row sep=2.5pc]
\mcm_{n+1} \ar[r, "f_{n+1}" ] 
			& \mcm_{n+1}^\prime \phantom{\, .}\\
\mcm_{n} \ar[u, "i_{n}" ] \ar[r, "f_n" ']
			& \mcm_{n}^\prime \, , \ar[u, "i'_{n}" ']
\end{tikzcd}
\end{equation}
\end{minipage}
\begin{minipage}{.5\columnwidth}
\begin{equation}
\label{eq:triangle-f}
\begin{tikzcd}[row sep=2.5pc, column sep=1 pc]
\mcm_n  \ar[rr,"f_n" ] \ar[d,  "\rho_{n}" ']
&& \mcm_n^\prime \phantom{\, .} \ar[d, "\rho_n^\prime" ]
\\
A \ar[rr, "\varphi" '] && A^\prime \, .
\end{tikzcd}
\end{equation}
\end{minipage}
The commutativity of the diagrams \eqref{eq:square-f} means that the 
morphisms $f_n\colon \mcm_n\to \mcm'_n$ between the $n$-th stages 
of the respective colimits of Hirsch extensions are compatible, in the sense  
delineated in Section \ref{subsec:colimits-hirsch}.

A weaker notion is that of a morphism \textit{compatible up to homotopy}; 
that is, a morphism $f\colon \mcm\to \mcm'$ preserving dga and binomial 
structures and such that $\rho' \circ f\simeq \varphi \circ \rho$. 
In this case, the diagram \eqref{eq:square-f} commutes
for all $n \ge 1$ and the diagram \eqref{eq:triangle-f}
commutes only up to homotopy, in the sense that there are homotopies
$\Phi_n\colon \mcm_n\to A'\otimes_R C^*(I;R)$  between $\varphi\circ \rho_n$ 
and $\rho_n^\prime\circ f_n$ for all 
$n \ge 1$, with the restriction of $\Phi_{n+1}$ 
to $\mcm_n$ equal to $\Phi_n$ for all $n \ge 1$.
Since by Theorem \ref{thm:cup-coho}
homotopic maps induce the same map on cohomology,
we still get commuting diagrams in cohomology,
\begin{equation}
\label{eq:triangle-f-coho}
\begin{tikzcd}[row sep=2.5pc, column sep=1.3 pc]
H^{i}(\mcm_n)  \ar[rr,"H^{i}(f_n)" ] \ar[d,  "H^{i}(\rho_{n})" ']
&&	H^{i}(\mcm_n^\prime)\phantom{\, ,} \ar[d, "H^{i}(\rho_n^\prime)" ]
\\
H^{i}(A) \ar[rr, "H^{i}(\varphi)" '] && H^{i}(A^\prime)\, ,
\end{tikzcd}
\end{equation}
for all $i \ge 0$ and all $n \ge 1$. 
 
\subsection{Extending dga maps to $1$-minimal models}
\label{subsec:extend}
Our next goal is to show that, given a morphism $\varphi \colon A \to A^\prime$, 
there is a morphism $f\colon \mcm \to \mcm^\prime$ compatible with $\varphi$ 
up to homotopy. We start with a lemma. 

\begin{lemma}
\label{lem:ext-bij}
Let $\varphi\colon A\to A'$ be a morphism of binomial 
$\cup_1$-dgas as above.  Let 
$\rho\colon \mcm \to A$ and $\rho'\colon \mcm' \to A$ be $1$-minimal 
models for $(A,d)$ and $(A',d')$, respectively, and 
let $n$ be a positive integer.  Assume  
there is a morphism $f_n\colon \mcm_n \to \mcm^\prime_n$ 
such that the diagram below commutes. 
\begin{equation}
\label{eq:triangle-f-coho-2}
\begin{tikzcd}
H^2(\mcm_n) \ar[r, "H^2(f_n)"] \ar[d, "H^2(\rho_n)"  ']
& H^2(\mcm_n^\prime) \phantom{.} \ar[d, "H^2(\rho_n^\prime)"] \\
H^2(A) \ar[r, "H^2(\varphi)"] & H^2(A^\prime) .
\end{tikzcd}
\end{equation}
Then 
\begin{enumerate}[itemsep=2pt]
\item \label{mcm1}
There is a morphism 
$f_{n+1}\colon \mcm_{n+1} \to \mcm_{n+1}^\prime$ 
such that diagram \eqref{eq:square-f} commutes.
\item \label{mcm2}
There is a bijection between morphisms $f_{n+1}$ as above 
and maps of sets from $\bX_{n+1}$ to 
$H^1(\mcm_{n}^\prime)$.
\item \label{mcm3}
If $f_n$ is an isomorphism and $H^2(\varphi)$ is a monomorphism, 
then $f_{n+1}$ is also an isomorphism.
\end{enumerate}
\end{lemma}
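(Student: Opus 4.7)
For claim~\eqref{mcm1} and then claim~\eqref{mcm2}, my plan is to apply Theorem~\ref{def:ext-criterion} to the composite $i_n' \circ f_n \colon \mcm_n \to \mcm_{n+1}'$. For each $x \in \bX_{n+1}$, the class $[d_{n+1}(x)] \in H^2(\mcm_n)$ lies in $\ker H^2(\rho_n)$ by Definition~\ref{def:1-min-model}, and the commutativity of~\eqref{eq:triangle-f-coho-2} gives
\begin{equation*}
H^2(\rho_n')\bigl[f_n(d_{n+1}(x))\bigr]
 = H^2(\varphi)\,H^2(\rho_n)[d_{n+1}(x)] = 0.
\end{equation*}
Hence $[f_n(d_{n+1}(x))]$ lies in $\ker H^2(\rho_n')$, which by Definition~\ref{def:1-min-model} is the $R$-span of $\{[d_{n+1}'(x')] : x' \in \bX_{n+1}'\}$; each of these generators is killed by $H^2(i_n')$, so $f_n(d_{n+1}(x))$ is a coboundary in $\mcm_{n+1}'$ and Theorem~\ref{def:ext-criterion} produces an extension $f_{n+1}$. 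That same theorem identifies the set of extensions with the set of maps $\sigma \colon \bX_{n+1} \to (\mcm_{n+1}')^1$ satisfying $d_{n+1}'\sigma(x) = f_n(d_{n+1}(x))$; any two such $\sigma$ differ by a map into $Z^1(\mcm_{n+1}')$, which by Corollary~\ref{cor:h1-mcm} and Lemma~\ref{lem:deg1-min} equals $H^1(\mcm_n')$. Fixing the extension from~\eqref{mcm1} as a base point then yields the bijection asserted in~\eqref{mcm2}.

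For claim~\eqref{mcm3}, the first step is to verify that $H^2(f_n)$ restricts to an isomorphism $\ker H^2(\rho_n) \isom \ker H^2(\rho_n')$: given $\alpha \in H^2(\mcm_n)$, the identity $H^2(\rho_n') \circ H^2(f_n) = H^2(\varphi) \circ H^2(\rho_n)$ together with $H^2(\varphi)$ monic shows $\alpha \in \ker H^2(\rho_n)$ if and only if $H^2(f_n)(\alpha) \in \ker H^2(\rho_n')$. In particular $|\bX_{n+1}| = |\bX_{n+1}'|$, and the basis $\{H^2(f_n)[d_{n+1}(x)]\}_{x \in \bX_{n+1}}$ of $\ker H^2(\rho_n')$ is expressed in the basis $\{[d_{n+1}'(x')]\}_{x' \in \bX_{n+1}'}$ by an invertible $R$-matrix $U = (u_{x,x'})$. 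Modifying $f_{n+1}$ within the ambiguity from~\eqref{mcm2}, one obtains
\begin{equation*}
f_{n+1}(x) = \sum_{x' \in \bX_{n+1}'} u_{x,x'}\, x' + \xi_x,
\qquad \xi_x \in (\mcm_n')^1.
\end{equation*}
Applying Theorem~\ref{def:ext-criterion} to $f_n^{-1}\colon \mcm_n' \to \mcm_n$---the cohomological obstruction vanishes because $H^2(f_n^{-1})$ carries $\ker H^2(\rho_n')$ into $\ker H^2(\rho_n)$---produces an extension $g_{n+1} \colon \mcm_{n+1}' \to \mcm_{n+1}$ of $f_n^{-1}$. Using the freedom from~\eqref{mcm2} to modify $g_{n+1}$ on $\bX_{n+1}'$ by elements of $H^1(\mcm_n)$, combined with invertibility of~$U$ over~$R$, one arranges $g_{n+1} \circ f_{n+1} = \id$ on $\bX_{n+1}$, hence on all of $\mcm_{n+1}$; a symmetric argument yields $f_{n+1} \circ g_{n+1} = \id$.

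The main obstacle is claim~\eqref{mcm3}: at the level of $H^2$-kernels the existence of an inverse is immediate, but promoting this to an inverse morphism of binomial $\cup_1$-dgas requires simultaneously aligning the leading-term matrix $U$ with the trailing cochain correction in $(\mcm_n')^1$. The key observation is that the parameter space from~\eqref{mcm2} is a free $R$-module of precisely the right rank, so that together with the invertibility of $U$, the linear system expressing $g_{n+1} \circ f_{n+1} = \id$ on generators is solvable.
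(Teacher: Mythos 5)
Your proposal is correct, and it departs from the paper's proof in two respects worth noting. For parts \eqref{mcm1} and \eqref{mcm2} the paper splits into three cases according to whether $\ker H^2(\rho_n)$ and $\ker H^2(\rho'_n)$ vanish, and in the main case builds $f_{n+1}$ directly from the induced map on these kernels; you instead apply Theorem \ref{def:ext-criterion} to $i'_n\circ f_n$ uniformly, observing that $[f_n(d_{n+1}x)]$ lands in $\ker H^2(\rho'_n)=\spn\{[d'_{n+1}(x')]\}$, which dies under $H^2(i'_n)$. This is cleaner and subsumes the case analysis; the parametrization of extensions by $Z^1(\mcm'_{n+1})=Z^1(\mcm'_n)=H^1(\mcm'_n)$ is then the same as the paper's. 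For part \eqref{mcm3} the paper constructs one particular isomorphism from the induced isomorphism $k_n\colon \ker H^2(\rho_n)\to\ker H^2(\rho'_n)$ and then deduces that \emph{every} compatible $\hat f_{n+1}$ is an isomorphism via the Five Lemma applied to $0\to\mcm_n\to\mcm_{n+1}\to\mcm_{n+1}/\mcm_n\to 0$; you instead produce an explicit two-sided inverse $g_{n+1}$ by extending $f_n^{-1}$ and solving the linear system $U\delta=w$ over $R$ for the cocycle corrections. Both work; the Five Lemma route is shorter once one isomorphism is in hand, while yours gives the inverse explicitly. One small point of phrasing: you say ``modifying $f_{n+1}$ within the ambiguity from \eqref{mcm2}'' to reach the form $f_{n+1}(x)=\sum_{x'}u_{x,x'}x'+\xi_x$, but no modification is needed --- every extension automatically has this form, since the coefficients $u_{x,x'}$ are forced by $d'_{n+1}f_{n+1}(x)=f_n(d_{n+1}x)$ and the basis property of $\{[d'_{n+1}(x')]\}$, with only $\xi_x$ varying by cocycles. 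Stating it that way makes clear that your inverse construction, like the paper's Five Lemma argument, establishes claim \eqref{mcm3} for an arbitrary $f_{n+1}$, which is what the statement requires.
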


\begin{proof}
Set $K_n=\ker H^2(\rho_n)$ and $K'_n=\ker H^2(\rho'_n)$.
To prove parts \eqref{mcm1} and  \eqref{mcm2}, there are  three 
cases to consider: 
(a) $K_n = 0$, 
(b) $K_n \ne 0$, $K_n^\prime =0$, and
(c) $K_n \ne 0$, $K_n^\prime \ne 0$.

(a) If $K_n=0$, then $\mcm_n=\mcm_{n+1}$ and the
results in both parts follow, since
$f_{n+1} = i_n^\prime \circ f_n$ is the unique map
for which the diagram \eqref{eq:square-f} commutes.

(b) Next consider the case where $K_n\ne 0$ and $K_n^\prime =0$. 
In this case, $\mcm_{n+1} = \T(\bX_n \cup \bX_{n+1})$ with 
$X_{n+1} \ne \emptyset$, and $\mcm_{n+1}^\prime = \mcm_n^\prime$.
For $x \in X_{n+1}$, it follows from the commutativity of the diagram 
\eqref{eq:triangle-f-coho-2} that the cocycle $f_n(d_{\mcm_n}(x))$ is 
cohomologous to $0$ in $H^2(\mcm_n^\prime)$. Thus, for each 
$x \in \bX_{n+1}$, we can choose an element, 
$f_{n+1}(x) \in \mcm_n^\prime$ with 
$d_{\mcm_n^\prime} (f_{n+1}(x))= f_n(d_{\mcm_n}(x))$. Then by
Theorem \ref{def:ext-criterion} the map of sets
$\bX_{n+1} \to \mcm_n^\prime$ given by
$x \mapsto f_{n+1}(x)$ extends uniquely to a morphism
$f_{n+1} \colon \mcm_{n+1} \to \mcm_{n+1}^\prime$ 
of binomial $\cup_1$-dgas.
This completes the proof of part \eqref{mcm1} in this case.

Note that if $\hat{f}_{n+1}\colon \mcm_{n+1}\to 
\mcm_{n+1}^\prime$ is a morphism with $\hat{f}_{n+1}\circ i_n = f_n$, then 
$d_{\mcm_{n+1}^\prime} (f_{n+1}(x)) - d_{\mcm_{n+1}^\prime} (\hat{f}_{n+1}(x)) = 
f_n(d_{\mcm_n}(x))-f_n(d_{\mcm_n}(x))=0$.
Hence, the map of sets $X_{n+1} \to Z^1(\mcm_n^\prime)$
given by $x \mapsto c_x \coloneqq f_{n+1}(x) -\hat{f}_{n+1}(x)$
is a bijection, in this case, between morphisms
$\hat{f}_{n+1}$ with  $\hat{f}_{n+1}\circ i_n = f_n$ 
and maps of sets from $X_{n+1}$ to 
$Z^1\bigl(\mcm_n^\prime\bigr)$. 
From part \eqref{zz1} of Lemma \ref{lem:deg1-min}, 
we have that $Z^1(\mcm_n) = H^1(\mcm_n)$. This
completes the proof of part \eqref{mcm2} in case (b).

(c) Finally, consider the case where $K_n \ne 0$ and 
$K_n^\prime \ne 0$. Since the diagram \eqref{eq:triangle-f-coho-2}
commutes, the map $H^2(f_n)$ restricts to a homomorphism 
$k_n\colon K_n\to K'_n$ which fits into the commuting diagram below,
\begin{equation}
\label{eq:cd-snake}
\begin{tikzcd}[column sep=32]
0 \ar[r]              & K_{n} \ar[r ]  \ar[d, "k_{n}" ] 
			& H^2(\mcm_{n}) \ar[r, "H^2(\rho_n)" ]  \ar[d, "H^2(f_{n})" ]
			& H^2(A)  \ar[d, "{H^2(\varphi)}" ] \phantom{\, .}
\\
0 \ar[r]               & K'_{n}\ar[r]  
			& H^2(\mcm'_{n})  \ar[r, "H^2(\rho'_n)" ']  
			& H^2(A') \, .
\end{tikzcd}
\end{equation}
Note that by assumption both $K_n$ and $K'_n$ are non-zero, 
finitely generated, free $R$-modules.
Now let $\mcm_{n+1} = \T(\bX^n \cup \bX_{n+1})$
and $\mcm_{n+1}^\prime 
= \T\bigl(\bX^{\prime n} \cup \bX_{n+1}^\prime)$.
From hypothesis \eqref{mm5} in the definition
of a $1$-minimal for $A$, it follows that
the composition of $d\colon \T^1(\bX_{n+1})
\to Z^2(\mcm_n)$ followed by the projection of
a cocycle to its cohomology class gives an isomorphism
$\T^1(\bX_{n+1})\isom K_n$.  
Similarly, the differential from $\T^1(\bX_{n+1}^\prime)$
to $\mcm_{n+1}^\prime$ gives an isomorphism 
$\T^1(\bX_{n+1}^\prime)\isom K'_n$.
Therefore, $k_n$ gives a homomorphism
$\T^1(\bX_{n+1}) \to \T^1(\bX_{n+1}^\prime)$. 
By Theorem \ref{thm:tx-a}, this homomorphism 
extends uniquely to a map  
$f_{n+1}\colon \mcm_{n+1} \to \mcm^\prime_{n+1}$
of binomial $\cup_1$-dgas such that the diagram
\eqref{eq:square-f} commutes. This completes the 
proof of part \eqref{mcm1} in this case. 

Now fix a choice for $f_{n+1}$ and 
let $\hat{f}_{n+1}\colon \mcm_{n+1} 
\to \mcm^\prime_{n+1}$ be any morphism such that
the diagram \eqref{eq:square-f} commutes.
Let $x$ be any element in $X_{n+1}$.
Since $\hat{f}_{n+1}$ commutes with the differentials
and $dx \in \mcm_n$, we have 
$f_n \circ d(x) =d^\prime \circ f_{n+1}(x)$ and 
similarly $f_n \circ d(x) = d^\prime \circ\hat{f}_{n+1}(x)$. 
Hence,
\begin{equation}
\label{eq:dprime-f}
d^\prime f_{n+1}(x )- d^\prime \hat{f}_{n+1}(x)
			= f_n (d x) - f_n (d x )= 0,
\end{equation}
and it follows that $f_{n+1}(x )- \hat{f}_{n+1}(x)$ is a cocycle 
in $\bigl(\mcm^\prime_{n}\bigr)^1$.  
Therefore, for each $x \in \bX_{n+1}$ there is a cocycle
$c(x) \in \bigl(\mcm^\prime_{n}\bigr)^1$ such that
\begin{equation}
\label{eq:hat-f}
\hat{f}_{n+1}(x) = f_{n+1}(x) + c(x).
\end{equation}
By Lemma \ref{lem:deg1-min}, part \eqref{zz1} 
we have that 
$Z^1(\mcm_{n}^\prime) = H^1(\mcm_{n}^\prime)$.
This shows that a choice for $f_{n+1}$ gives a map
that sends isomorphisms $\hat{f}_{n+1}$ such that
the diagram \eqref{eq:square-f} commutes to
maps from $\bX_{n+1}$ to $H^1(\mcm_{n}^\prime)$.
This completes the proof of part \eqref{mcm2} in the last case. 

We now turn to part \eqref{mcm3}. 
Suppose that $f_n$ is an isomorphism and $H^2(\varphi)$ is  
a monomorphism.  Then $H^2(f_n)$ is also an isomorphism, 
and so chasing diagram \eqref{eq:cd-snake} 
shows that $H^2(f_n)$ restricts to an isomorphism 
$k_n\colon K_n\isom K'_n$. Since the differentials 
$d\colon \T^1(\bX_{n+1}) \to Z^2(\mcm_n)$ and
$d^\prime\colon \T^1(\bX_{n+1}^\prime) 
\to Z^2(\mcm_n^\prime)$ are monomorphisms, it 
follows that $k_n$ gives an isomorphism
$\T^1(\bX_{n+1}) \isom \T^1(\bX_{n+1}^\prime)$. 
By Theorem \ref{thm:tx-a}, this isomorphism 
extends uniquely to a morphism 
$f_{n+1}\colon \mcm_{n+1} \to \mcm^\prime_{n+1}$
of binomial $\cup_1$-dgas such that the diagram
\eqref{eq:square-f} commutes. 

We claim that if $\hat{f}_{n+1}$ is any morphism
from $\mcm_{n+1}$ to $\mcm_{n+1}^\prime$ such
that the diagram \eqref{eq:square-f} commutes, then
$\hat{f}_{n+1}$ is in fact, an isomorphism.  
To prove the claim, first note that since
$f_{n+1}$ restricts to an isomorphism from
$\T^1(\bX_{n+1})$ to $\T^1(\bX_{n+1}^\prime)$,
it follows that
$f_{n+1}\colon \mcm_{n+1} \to \mcm_{n+1}^\prime$
is an isomorphism. By equation \eqref{eq:hat-f} it follows
that $f_{n+1}$ and $\hat{f}_{n+1}$ induce the same
map of $R$-modules from $\mcm_{n+1}/\mcm_n$ to
$\mcm_{n+1}^\prime/\mcm_{n}^\prime$. 
Consider the following commutative diagram of
exact sequences of $R$-modules.
\begin{equation}
\label{eq:cd-min}
\begin{tikzcd}
0 \ar[r]   
			& \mcm_{n} \ar[r, "i_n" ]  \ar[d, "f_n" ] 
			& \mcm_{n+1} \ar[r, "q_n" ]  \ar[d, "\hat{f}_{n+1}" ]
			& \mcm_{n+1}/\mcm_n \ar[r]
								\ar[d, "{[f_{n+1}] =[\hat{f}_{n+1}]}" ]
			& 0 \phantom{\, .}\\
0 \ar[r]   
			& \mcm_{n}^\prime \ar[r, "i_n^\prime" ']  
			& \mcm_{n+1}^\prime \ar[r, "q_n^\prime" ']  
			& \mcm_{n+1}^\prime/\mcm_n^\prime \ar[r]
			& 0	\, .
\end{tikzcd}
\end{equation}
By assumption, $f_n$ is an isomorphism; moreover, $[\hat{f}_{n+1}]$
is an isomorphism, since $[f_{n+1}]$ is an isomorphism.
The claim now follows from the Five Lemma, and 
this completes the proof.
\end{proof}

\subsection{Lifting homotopies to $1$-minimal models}
\label{subsec:homotopy-lift}

The next step is to show that homotopies between maps 
of binomial $\cup_1$-dgas lift to the respective $1$-minimal models.

\begin{lemma}[Homotopy Lifting Lemma]
\label{lem:homotopy-lift}
Let $(A,d_A)$ and $(A',d_{A'})$ be binomial cup-one dgas over $R=\Z$ or $\F_p$ 
with $H^0=R$ and $H^1$ finitely generated, free $R$-modules.
Let $\rho\colon \mcm(A) \to A$ and $\rho' \colon  \mcm(A') \to A'$ 
be $1$-minimal models, and let $\varphi \colon A \to A'$ be a morphism. 
Suppose for a given $n \ge 1$ there is a morphism
$f_n\colon \mcm_n \to \mcm_n^\prime$ and a
homotopy $\Phi_n\colon \mcm_n\to A'\otimes_R C^*(I;R)$ 
between $\varphi\circ \rho_n$ and $\rho_n^\prime\circ f_n$. Then,
\begin{enumerate}[itemsep=2pt]
\item \label{hl1}
There is a unique morphism 
$f_{n+1}\colon \mcm_{n+1} \to \mcm_{n+1}^\prime$
such that $f_{n+1}\circ i_n = i_n^\prime \circ f_n$ and such that
there is a homotopy $\Phi_{n+1}\colon \mcm_{n+1}\to A'\otimes_R C^*(I;R)$ 
between $\varphi\circ \rho_{n+1}$ 
and $\rho_{n+1}^\prime\circ f_{n+1}$ with
$\Phi_{n+1}\vert_{\mcm_n} = \Phi_n$.
\item \label{hl2}
If in addition $f_n$ is an isomorphism and 
$H^2(\varphi)$ is a monomorphism, 
then $f_{n+1}$ is also an isomorphism.
\end{enumerate}
\end{lemma}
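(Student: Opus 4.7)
The plan is to construct the pair $(f_{n+1}, \Phi_{n+1})$ simultaneously, generator by generator on $\bX_{n+1}$. Writing $\Phi_n(a) = (\varphi\rho_n)(a)\, t_0 + (\rho_n^\prime f_n)(a)\, t_1 - (-1)^{|a|} h_n(a)\, u$ as in \eqref{eq:h-map}, the fact that $\Phi_n$ is a dga map yields the chain-homotopy identity $\rho_n^\prime f_n - \varphi \rho_n = d_{A'} h_n + h_n d_{\mcm_n}$. By Theorem \ref{thm:cup-coho}, the existence of $\Phi_n$ implies that diagram \eqref{eq:triangle-f-coho-2} commutes, so Lemma \ref{lem:ext-bij} already provides \emph{some} extension of $f_n$; the task is to select one for which a compatible extension of $\Phi_n$ exists, and to show this pins $f_{n+1}$ down uniquely.

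Fix $x \in \bX_{n+1}$. The class $[dx] \in \ker H^2(\rho_n) = K_n$ satisfies $H^2(f_n)[dx] \in \ker H^2(\rho_n^\prime) = K_n^\prime$ by commutativity of \eqref{eq:triangle-f-coho-2}, and by the definition of a $1$-minimal model $K_n^\prime$ is generated by cohomology classes of cocycles of the form $d_{n+1}^\prime(x^\prime)$ with $x^\prime \in \bX_{n+1}^\prime$, each of which is a coboundary in $\mcm_{n+1}^\prime$. Hence $[f_n(dx)] = 0$ in $H^2(\mcm_{n+1}^\prime)$, and we may choose $y_x \in (\mcm_{n+1}^\prime)^1$ with $d^\prime(y_x) = f_n(dx)$. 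Set $\omega_x = \varphi\rho_{n+1}(x) + h_n(dx) \in (A')^1$. Applying the chain-homotopy identity to $dx \in Z^2(\mcm_n)$ gives $d_{A'}(\omega_x) = \rho_n^\prime f_n(dx)$, so $\rho_{n+1}^\prime(y_x) - \omega_x \in Z^1(A')$. Expanding $d \Phi_{n+1}(x)$ and comparing with $\Phi_n(dx)$ shows that the homotopy-extension condition on $x$ is equivalent to $\rho_{n+1}^\prime(y_x) - \omega_x = d_{A'} h_{n+1}(x)$ for some $h_{n+1}(x) \in (A')^0$. The freedom in $y_x$ is $Z^1(\mcm_{n+1}^\prime)$, which by Lemma \ref{lem:deg1-min} equals $H^1(\mcm_{n+1}^\prime)$; since $\rho^\prime$ is a $1$-quasi-isomorphism, the map $Z^1(\mcm_{n+1}^\prime) \to H^1(A')$, $z \mapsto [\rho_{n+1}^\prime(z)]$, is surjective. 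Choose $z_x$ so that $[\rho_{n+1}^\prime(y_x + z_x) - \omega_x] = 0$, replace $y_x$ by $y_x + z_x$, and pick $h_{n+1}(x)$ accordingly.

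Define $f_{n+1}(x) = y_x$ and $\Phi_{n+1}(x) = \varphi\rho_{n+1}(x)\, t_0 + \rho_{n+1}^\prime(y_x)\, t_1 + h_{n+1}(x)\, u$ for each $x \in \bX_{n+1}$, and extend both $f_{n+1}$ and $\Phi_{n+1}$ to morphisms on $\mcm_{n+1}$ via Lemmas \ref{lem:TtoA} and \ref{lem:map-bcd} (the target $A' \otimes_R C^*(I;R)$ being a binomial cup-one dga by Corollary \ref{cor:cup1-ci-tensor}). The conditions $d^\prime f_{n+1}(x) = f_n(dx)$ together with the identity derived above ensure that both extensions commute with differentials, so $\Phi_{n+1}$ is a homotopy from $\varphi \rho_{n+1}$ to $\rho_{n+1}^\prime f_{n+1}$ extending $\Phi_n$. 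For uniqueness of $f_{n+1}$: if $(\hat{f}_{n+1}, \hat\Phi_{n+1})$ is another such pair, then for $x \in \bX_{n+1}$ the difference $w_x = f_{n+1}(x) - \hat{f}_{n+1}(x) \in Z^1(\mcm_{n+1}^\prime)$, and the cocycle condition on $\Phi_{n+1} - \hat\Phi_{n+1}$ forces the $u$-coefficient difference $k_x \in (A')^0$ to satisfy $d_{A'}(k_x) = \rho_{n+1}^\prime(w_x)$. Hence $[\rho_{n+1}^\prime(w_x)] = 0$ in $H^1(A')$, and since $H^1(\rho_{n+1}^\prime)$ is an isomorphism while $H^1(\mcm_{n+1}^\prime) = Z^1(\mcm_{n+1}^\prime)$, we conclude $w_x = 0$. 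Part (2) is then immediate from Lemma \ref{lem:ext-bij}(3) applied to this $f_{n+1}$.

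The main obstacle is reconciling the two constraints on $y_x$: the differential equation $d^\prime(y_x) = f_n(dx)$ pins $y_x$ down only up to an element of $Z^1(\mcm_{n+1}^\prime)$, while the homotopy-extension condition demands that $\rho_{n+1}^\prime(y_x) - \omega_x$ be exact in $A'$. The reconciliation hinges on $\rho^\prime$ being a $1$-quasi-isomorphism together with the degeneracy $Z^1 = H^1$ for $\mcm_{n+1}^\prime$: the former supplies enough flexibility in $Z^1(\mcm_{n+1}^\prime)$ to kill any obstruction class in $H^1(A')$, while the latter provides enough rigidity to force uniqueness of $f_{n+1}$.
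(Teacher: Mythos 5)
Your proof is correct and follows essentially the same route as the paper's: both identify the obstruction to extending the homotopy over a new generator $x\in\bX_{n+1}$ as a class in $H^1(A')$ (your $[\rho_{n+1}'(y_x)-\omega_x]$ is the paper's $[z(x)]$) and kill it by adjusting the lift by a $1$-cocycle, using $Z^1(\mcm_{n+1}')=H^1(\mcm_{n+1}')\cong H^1(A')$. Your explicit uniqueness argument for $f_{n+1}$, which the paper leaves implicit in the bijection of Lemma \ref{lem:ext-bij}, part \eqref{mcm2}, is a worthwhile addition, and part \eqref{hl2} is correctly reduced to Lemma \ref{lem:ext-bij}, part \eqref{mcm3}.
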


\begin{proof}
As before, let $\mcm_{n+1} = \T(\bX^n \cup \bX_{n+1})$ and 
let $\mcm_{n+1}^\prime = \T\bigl(\bX^{\prime n} \cup \bX_{n+1}^\prime)$, 
with corresponding maps $\rho$ and $\rho^\prime$ 
as pictured in the diagram below.
\begin{equation}
\label{eq:1-mm-maps}
\begin{tikzcd}[column sep= 18pt, row sep=30pt]
\mcm_{n+1} \ar[rrrr,  "f_{n+1}" ]
             \ar[ddr, bend right=40, "\rho_{n+1}" ']
		&&&& \mcm^\prime_{n+1}
					\ar[ddl, bend left=40, "\rho_{n+1}^\prime" ]
					\\
    &\mcm_{n} \ar[rr, "f_{n}" ] \ar[d, "\rho_n" ]
       \ar[ul, "i_n" ']
		&& \mcm^\prime_{n} \ar[d, "\rho_n^\prime" ']	
		   \ar[ur, "i_n^\prime" ]
		\\
		&A  \ar[rr, "\varphi" ']
		&& A^\prime
\end{tikzcd}
\end{equation}
Let $d_{n+1}$ denote the differential on $\mcm_{n+1}$,
and note that for $x \in \bX_{n+1}$, we have $d_{n+1} (x) \in \mcm_n$.
By Theorem \ref{thm:tx-a}, it suffices to define for each $x \in \bX_{n+1}$ 
an element $\Phi_{n+1}(x) \in A' \otimes_R C^\ast(I;R)$ such that
$ \Phi_{n+1} \circ d_{n+1}(x) = d_{A'\otimes_R C^*(I;R)} \circ \Phi_{n+1} (x)$.
Let $x \in \bX_{n+1}$. Then $d_{n+1}(x)$ is a cocycle in $\mcm_n$,
and hence, $\Phi_n(d_{n+1}(x))$ is a cocycle in $A' \otimes_R C^\ast(I;R)$.
We can assume this cocycle has the form
\begin{equation}
\label{eq:varphi-n-dx}
\Phi_n(d_{n+1} (x)) = 
\varphi(\rho_n(d_{n+1} x)) t_0 
		+ \rho_n^\prime ( f_n(d_{n+1} x)) t_1 + c_1(x) u ,
\end{equation}
with $c_1(x) \in (A^\prime)^1$.

The condition that $\Phi_n(d_{n+1} (x))$ is a cocycle 
then leads to an equation for 
$d_{A^\prime} (c_1(x))$, as follows.
Recall that if $u_0, u_1, c$ are homogeneous elements
in $A^\prime$ 
with $\abs{u_0}=\abs{u_1}= \abs{c}+1$, then
$u = u_0t_0 + u_1t_1 + cu$ is a homogeneous
element in $A' \otimes_R C^\ast(I;R)$ with
\begin{equation}
\label{eq:dAtimesI}
\begin{split}
d_{A' \otimes C^\ast(I;R)}(u_0t_0 + u_1t_1 + cu)
			& = d_{A^\prime}(u_0)t_0 + d_{A^\prime}(u_1)t_1\\
	 & \quad + \left( (-1)^{|u_0|+1} u_0 + (-1)^{|u_1|}u_1
	 						+ d_{A^\prime}(c) \right)u.
\end{split}	 						
\end{equation}
In particular, if $u_0t_0 +u_1t_1 + cu$ is a cocycle
in $A' \otimes_R C^\ast(I;R)$, then $u_0$ and $u_1$ are 
cocycles in $A^\prime$ and 
$d_{A^\prime} (c) 
			=  (-1)^{|u_0|} u_0 + (-1)^{|u_1|+1}u_1$.
Since $\Phi_n(d_{n+1} x)$ is a cocycle 
in $A' \otimes_R C^\ast(I;R)$,
we have that
\begin{equation}
\label{eq:dc1x}
d_{A^\prime} (c_1(x) )= \varphi(\rho_n(d_{n+1}x)) 
				- \rho_n^\prime ( f_n(d_{n+1}x)).
\end{equation}
Now by Lemma \ref{lem:ext-bij} there are morphisms
$f_{n+1}\colon \mcm_{n+1} \to \mcm^\prime_{n+1}$
such that the diagram \eqref{eq:square-f} commutes.
Choose such a map $f_{n+1}$. 
The map $f_{n+1}$ then determines a map from
$X_{n+1}$ to $H^1(\mcm_{n}^\prime)$, as follows.
For each $x \in X_{n+1}$, we have
\begin{equation}
\label{eq:drho}
d_{A^\prime}(\varphi(\rho_{n+1}(x)))
		=\varphi(\rho_n(d_{n+1} x))
\quad\text{and}\quad
d_{A^\prime}(\rho_{n+1}^\prime (f_{n+1}(x)))
		= \rho_n^\prime ( f_n (d_{n+1}x)),
\end{equation}
and it follows from equations \eqref{eq:dc1x} and \eqref{eq:drho} that the element 
$z(x)\coloneqq c_1(x)  -\varphi(\rho_{n+1}(x) ) + \rho^\prime_{n+1}( f_{n+1}(x))$ 
is a cocycle in ${A^\prime}$. Thus, we have a map of sets
$\bX_{n+1} \to H^1(A^\prime)$ given by $x \mapsto [z(x)]$, 
where $[w]$ is the cohomology class of a cocycle $w$.
By Definition \ref{def:1-min-model} and 
Lemma \ref{lem:deg1-min}, this map corresponds uniquely
to a map of sets from $\bX_{n+1}$ to $H^1(\mcm_{n}^\prime)$.

By Lemma \ref{lem:ext-bij} we can assume that $f_{n+1}$
has been chosen so that for each $x \in \bX_{n+1}$, 
we have that $[z(x)]=0$.
It then follows that for each $x$ there is an element
$c_0(x) \in (A^\prime)^0 $ with 
\begin{equation}
\label{eq:dz}
d_{A^\prime}( c_0(x) )
		= z(x)= c_1(x) -\varphi(\rho_{n+1}(x)) 
								+ \rho^\prime_{n+1}( f_{n+1}(x)).
\end{equation}
 Now set
\begin{equation}
\label{eq:varphi-n1-x}
\Phi_{n+1}(x) \coloneqq 
\varphi(\rho_{n+1}(x))t_0 + \rho_{n+1}^\prime (f_{n+1}(x)) t_1 + c_0(x) u.
\end{equation}
The final step is to show that with this choice
of $\Phi_{n+1}$, it follows that
$\Phi_n (d_{n+1}(x)) 
		= d_{A^\prime \otimes C^\ast(I)} 
					(\Phi_{n+1} (x))$ for all $x\in \bX_{n+1}$. 
Using equations \eqref{eq:varphi-n1-x}, \eqref{eq:dz}, 
and  \eqref{eq:varphi-n-dx}, we have that
\begin{gather}
\begin{aligned}
\label{eq:dphi-n}
d_{A' \otimes C^\ast(I)} (\Phi_{n+1}(x) )
		& =d_{A^\prime \otimes C^\ast(I)}
					\big(\varphi(\rho_{n+1}(x))t_0 
					+ \rho_{n+1}^\prime (f_{n+1}(x))t_1 
					+ c_0(x) u \big)\\
		& = \varphi(\rho_n(d_{n+1}x) )t_0 
					+ \rho_n^{\prime}( f_n(d_{n+1}x)) t_1\\		
		& \quad			+[\varphi(\rho_{n+1}(x) )
								- \rho_{n+1}^\prime( f_{n+1}(x))
								+d_{A^\prime}(c_0(x))]u\\
		& =\varphi( \rho_n(d_{n+1}x) )t_0 
					+ \rho_n^{\prime} (f_n(d_{n+1}x)) t_1		
					+c_1(x)u \\
		& = \Phi_n(d_{n+1}x), 
\end{aligned}
\end{gather}
and the proof is complete.
\end{proof}

\subsection{Homotopy functoriality of $1$-minimal models}
\label{subsec:unique-functoriality}
We are now in a position to show that $1$-minimal models are 
unique up to homotopy.

\begin{theorem} 
\label{thm:1-min-lift}
Let $(A,d)$ and $(A',d')$ be binomial cup-one dgas over  $R=\Z$ or $\F_p$ 
such that $H^0(A)$ and $H^0(A')$ are isomorphic to $R$ and 
$H^1(A)$ and $H^1(A')$ are finitely generated, free $R$-modules.
Let $\rho \colon \mcm(A) \to A$ and $\rho' \colon  \mcm(A') \to A'$ 
be $1$-minimal models, and let $\varphi \colon A \to A'$ be a morphism. 
Then
\begin{enumerate}
\item \label{1-min-1}
There is a morphism $\widehat{\varphi} \colon \mcm(A) \to \mcm(A^\prime)$ 
compatible with the respective colimit structures (that is, 
$\widehat{\varphi}_{n+1}\circ i_n = i_n^\prime \circ \widehat{\varphi}_n$ 
for all $n$), and there is a homotopy 
$\Phi\colon \mcm(A)\to A'\otimes_R C^*(I;R)$  
between $\varphi \circ \rho$ and $\rho' \circ \widehat{\varphi}$ (also 
preserving colimit structures), so that 
the diagram below commutes up to homotopy.
\begin{equation}
\label{diag:1-mm-lift}
\begin{tikzcd}
\mcm(A) \ar[d, "\rho" ] \ar[r, dashed, "\widehat{\varphi}"]
		& \mcm(A')\phantom{.} \ar[d, "\rho'"]\\
A \ar[r, "\varphi"]  & A' .
\end{tikzcd}
\end{equation}
\item \label{1-min-2}
If $\varphi$ is a $1$-quasi-isomorphism, 
 then $\widehat{\varphi}$ is an isomorphism.
\end{enumerate}
\end{theorem}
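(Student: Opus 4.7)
The plan is to construct the morphism $\widehat{\varphi}$ stage-by-stage along the Hirsch filtrations $\mcm(A) = \bigcup_n \mcm_n$ and $\mcm(A') = \bigcup_n \mcm_n'$, and to carry along a compatible sequence of homotopies $\Phi_n \colon \mcm_n \to A' \otimes_R C^*(I;R)$ between $\varphi \circ \rho_n$ and $\rho'_n \circ f_n$. Once the base case $n=1$ is handled, each inductive step is exactly what the Homotopy Lifting Lemma (Lemma~\ref{lem:homotopy-lift}) provides, so the whole argument reduces to setting up the base case correctly and then passing to the colimit.

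For the base case, recall $\mcm_1 = \T_R(\bX_1)$ with $d_1 \equiv 0$ on generators, and $\bX_1$ indexes a basis of $H^1(A) \cong H^1(\mcm_1)$, and similarly for $\mcm'_1$. The composition $H^1(\rho')^{-1} \circ H^1(\varphi) \circ H^1(\rho_1)$ sends each class $[x]$, $x \in \bX_1$, to a class in $H^1(\mcm'_1)$; choosing a cocycle representative for each such class defines a set map $\bX_1 \to Z^1(\mcm'_1)$, which extends uniquely by Corollary~\ref{cor:dx0} to a morphism $f_1 \colon \mcm_1 \to \mcm'_1$ of binomial $\cup_1$-dgas. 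By construction $\varphi \circ \rho_1$ and $\rho'_1 \circ f_1$ induce the same map on $H^1$, so for each $x \in \bX_1$ there is an element $c_0(x) \in (A')^0$ with $d_{A'}(c_0(x)) = \varphi(\rho_1(x)) - \rho'_1(f_1(x))$. Setting
\begin{equation*}
\Phi_1(x) = \varphi(\rho_1(x))\, t_0 + \rho'_1(f_1(x))\, t_1 + c_0(x)\, u
\end{equation*}
defines a $1$-cocycle in $A' \otimes_R C^*(I;R)$ for each $x$; applying Corollary~\ref{cor:dx0} once more (to the binomial $\cup_1$-dga $A' \otimes_R C^*(I;R)$, which is again of this type by Corollary~\ref{cor:cup1-ci-tensor}) extends this set map to a dga morphism $\Phi_1$, which is the desired homotopy.

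Given the pair $(f_1, \Phi_1)$, iterating Lemma~\ref{lem:homotopy-lift}(\ref{hl1}) produces, for every $n \ge 1$, a morphism $f_{n+1} \colon \mcm_{n+1} \to \mcm'_{n+1}$ extending $f_n$ and a homotopy $\Phi_{n+1}$ extending $\Phi_n$. Taking the colimit yields $\widehat{\varphi} = \bigcup_n f_n$ and $\Phi = \bigcup_n \Phi_n$, giving the diagram that commutes up to the homotopy $\Phi$, which proves part \eqref{1-min-1}.

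For part \eqref{1-min-2}, suppose $\varphi$ is a $1$-quasi-isomorphism, so $H^1(\varphi)$ is an isomorphism and $H^2(\varphi)$ is a monomorphism. Since $H^1(\rho_1)$ and $H^1(\rho'_1)$ are isomorphisms and $f_1$ was built from the isomorphism $H^1(\rho')^{-1} \circ H^1(\varphi) \circ H^1(\rho_1) \colon H^1(\mcm_1) \to H^1(\mcm'_1)$, a direct check (using that $\mcm_1^i$ and $(\mcm'_1)^i$ are free modules on products of $\zeta_I$ applied to $\bX_1$ and $\bX'_1$ respectively, and that $f_1$ permutes these bases) shows that $f_1$ is an isomorphism of binomial $\cup_1$-dgas. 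Then Lemma~\ref{lem:homotopy-lift}(\ref{hl2}) promotes this inductively: at each stage $f_n$ an isomorphism together with $H^2(\varphi)$ a monomorphism forces $f_{n+1}$ to be an isomorphism as well. Passing to the colimit, $\widehat{\varphi}$ is an isomorphism. The principal obstacle is verifying the isomorphism claim for $f_1$ cleanly—one must ensure the chosen cocycle representatives in $\mcm'_1$ assemble into a module basis matching the one on $\mcm_1$—but this follows because both $\mcm_1$ and $\mcm'_1$ are canonically free binomial $\cup_1$-dgas on bases of $H^1$ that correspond under $H^1(\varphi)$.
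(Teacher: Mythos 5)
Your proposal is correct and follows essentially the same route as the paper: the base case is handled by building $f_1$ from $H^1(\rho')^{-1}\circ H^1(\varphi)\circ H^1(\rho_1)$ and producing $\Phi_1$ exactly as in Lemma~\ref{lem:hh} (which the paper simply cites), the induction is Lemma~\ref{lem:homotopy-lift}\eqref{hl1}, and part~\eqref{1-min-2} uses Lemma~\ref{lem:homotopy-lift}\eqref{hl2} just as the paper uses Lemma~\ref{lem:ext-bij}\eqref{mcm3}. The only loose phrase is that $f_1$ ``permutes these bases''---it is in general an invertible $R$-linear change of basis on $\spn_R(\bX_1)\to\spn_R(\bX'_1)$ rather than a permutation, but invertibility plus the uniqueness of extensions from Theorem~\ref{thm:tx-a} still yields that $f_1$ is an isomorphism, so this does not affect the argument.
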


\begin{proof}
To prove part \eqref{1-min-1}, first set $\mcm_n=\mcm_n(A)$ and 
$\mcm'_n=\mcm_n(A')$.  We need to construct isomorphisms 
$\widehat{\varphi}_n \colon \mcm_n \to \mcm'_n$ 
and homotopies $\Phi_n\colon \mcm_n\to A\otimes_R C^*(I;R)$ 
between $\rho_n$ and $\rho_n^\prime\circ \widehat{\varphi}_n$ 
such that $\widehat{\varphi}_{n+1}\circ i_n = i_n^\prime \circ \widehat{\varphi}_n$ 
and $\Phi_{n+1}\vert_{\mcm_n} = \Phi_n$.
The proof is by induction on $n$.

The base case is to show that there is an isomorphism
$\widehat{\varphi}_1\colon \mcm_1 \to \mcm_1^\prime$ and a homotopy
$\Phi_1\colon \mcm_1\to A \otimes_R C^*(I;R)$ between 
$\rho_1$ and $\rho_1^\prime \circ \widehat{\varphi}_1$. Since 
$H^1(\rho_1)=H^1(\rho_1^\prime \circ \widehat{\varphi}_1)$, 
the claim follows from Lemma \ref{lem:hh}.
The induction step now follows from the homotopy
lifting lemma (Lemma \ref{lem:homotopy-lift}).

To prove part \eqref{1-min-2}, assume that $\varphi$ is a $1$-quasi-isomorphism; 
that is, $H^1(\varphi)\colon H^1(A)\to H^1(A')$ is an isomorphism and 
$H^2(\varphi)\colon H^2(A)\to H^2(A')$ is a monomorphism. 
Now, the map $H^1(\varphi)$ determines a map 
$\widehat{\varphi}_1\colon \mcm_1\to \mcm'_1$, 
which must also be an isomorphism. 
Since $H^2(\varphi)$ is a monomorphism,  
Lemma~\ref{lem:ext-bij}, part \eqref{mcm3} 
insures that $\widehat{\varphi}_1$ lifts to compatible isomorphisms, 
$\widehat{\varphi}_n\colon \mcm_n \to \mcm'_n$, 
for all $n\ge 1$. It follows that the family of maps $\big\{\widehat{\varphi}_n\big\}_{n\ge 1}$ 
defines the desired isomorphism $\widehat{\varphi}\colon \mcm(A)\to \mcm(A')$, 
and this completes the proof.
\end{proof}

Taking $A=A'$ in the above theorem, we obtain the following corollary.

\begin{corollary}
\label{cor:1-min-unique}
Let $A$ be a binomial cup-one $R$-dga as above, 
and let $(\mcm, \rho)$ and $(\mcm^\prime, \rho^\prime)$ be any two 
$1$-minimal models for $A$. Then there is an isomorphism 
$f\colon \mcm \to \mcm'$ such that $ \rho' \circ f$ is 
homotopic to $\rho$.
\end{corollary}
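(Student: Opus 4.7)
The plan is to derive Corollary \ref{cor:1-min-unique} as a direct specialization of Theorem \ref{thm:1-min-lift}. Since both $(\mcm,\rho)$ and $(\mcm',\rho')$ are $1$-minimal models for the same binomial cup-one $R$-dga $A$, the natural move is to apply the theorem to the pair of $1$-minimal models for $A$ and $A' = A$, with the morphism $\varphi = \id_A$. This is the only substantive choice in the proof, so the argument is really a matter of checking that the hypotheses of the theorem are met in this setting and then recording what its conclusions give.

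First I would verify that $\id_A \colon A \to A$ is a $1$-quasi-isomorphism in the sense of Section \ref{subsec:dga}. This is immediate: the induced map on cohomology is the identity, so in particular $H^1(\id_A)$ is an isomorphism and $H^2(\id_A)$ is a monomorphism. Moreover, the standing hypotheses of the corollary (the existence of the two $1$-minimal models) require that $H^0(A) = R$ and that $H^1(A)$ be a finitely generated, free $R$-module, and these are exactly the standing hypotheses on both $A$ and $A'$ in Theorem \ref{thm:1-min-lift}.

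With these checks in place, part \eqref{1-min-1} of Theorem \ref{thm:1-min-lift} produces a morphism $f \coloneqq \widehat{\id_A} \colon \mcm \to \mcm'$ that is compatible with the respective colimit filtrations of Hirsch extensions, together with a homotopy $\Phi \colon \mcm \to A \otimes_R C^*(I;R)$ between $\id_A \circ \rho = \rho$ and $\rho' \circ f$. Then part \eqref{1-min-2} of the theorem, applied to the $1$-quasi-isomorphism $\id_A$, upgrades $f$ to an isomorphism of binomial cup-one dgas. This is precisely the conclusion of the corollary.

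There is no real obstacle here beyond invoking the theorem cleanly; the substantive work (the inductive construction of the isomorphism stage-by-stage via Lemma \ref{lem:ext-bij}, part \eqref{mcm3}, and the compatible lifting of homotopies via Lemma \ref{lem:homotopy-lift}) has already been carried out in the proof of Theorem \ref{thm:1-min-lift}. The only point worth flagging for clarity is that the homotopy $\Phi$ produced is automatically compatible with the colimit structures on $\mcm$, in the sense that its restriction to $\mcm_n$ agrees with the homotopy $\Phi_n$ built at the $n$-th stage; this is built into the statement of part \eqref{1-min-1}, and it is what ensures that $\rho \simeq \rho' \circ f$ as dga maps and not merely stage-by-stage.
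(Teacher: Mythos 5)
Your proposal is correct and follows exactly the paper's route: the paper derives this corollary by taking $A = A'$ in Theorem \ref{thm:1-min-lift} (equivalently, applying it with $\varphi = \id_A$), so that part \eqref{1-min-1} yields the compatible morphism $f$ and the homotopy $\rho \simeq \rho' \circ f$, while part \eqref{1-min-2} makes $f$ an isomorphism since $\id_A$ is trivially a $1$-quasi-isomorphism. Your additional checks of the hypotheses and the remark on compatibility of the homotopy with the colimit structure are accurate but not needed beyond what the theorem already states.
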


\subsection{$1$-minimal models and homotopies}
\label{subsec:1-min-homotopy}

The next lemma corresponds to an analogous result in \cite{Griffiths-Morgan} 
(Corollary 11.4); see also \cite[Proposition 12.7]{FHT}.

\begin{lemma}
\label{lem:1-min-equiv}
Homotopy is an equivalence relation on the set of morphisms 
from a colimit of Hirsch extensions, $\mcm$, to a binomial cup-one dga, $A$.
\end{lemma}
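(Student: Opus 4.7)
Reflexivity is immediate: given a morphism $\varphi\colon \mcm\to A$, the assignment $\Phi(a)=\varphi(a)\otimes 1=\varphi(a)\otimes (t_0+t_1)$ defines a dga homomorphism $\mcm\to A\otimes_R C^\ast(I;R)$, since $t_0+t_1$ is the unit of $C^\ast(I;R)$ and has vanishing differential; composing with $\id_A\otimes \eta_i$ recovers $\varphi$ for each $i\in\{0,1\}$, so $\varphi\simeq\varphi$.

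Symmetry and transitivity will be handled by a uniform ``horn-filling'' device, which is where the main work lies. Let $\Delta^2$ be the standard $2$-simplex with vertices $\{0,1,2\}$ and, for $j\in\{0,1,2\}$, let $\Lambda^2_j$ denote its $2$-horn at $j$, i.e., the subcomplex consisting of the two edges meeting vertex $j$. The restriction $\pi_j\colon C^\ast(\Delta^2;R)\to C^\ast(\Lambda^2_j;R)$ is a surjective morphism of binomial $\cup_1$-dgas, because the cup product, cup-one product, and $\zeta$-maps on simplicial cochains are defined simplex-wise and hence commute with restriction to a subcomplex. Its kernel is the $2$-term complex spanned by the missing edge $e$ and the $2$-cell $f_{012}$, with $d(e)=\pm f_{012}$, and admits the contracting homotopy $h(f_{012})=\pm e$, $h(e)=0$. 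By Proposition \ref{prop:cup1bin-tensor} together with Theorem \ref{thm:cochain-bincup1d}, the tensor map $\id_A\otimes \pi_j\colon A\otimes_R C^\ast(\Delta^2;R)\to A\otimes_R C^\ast(\Lambda^2_j;R)$ is then a surjective morphism of binomial $\cup_1$-dgas whose kernel remains contractible via $\id_A\otimes h$, and is therefore a $1$-quasi-isomorphism.

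Given homotopies $\Phi_{01}, \Phi_{12}\colon \mcm\to A\otimes_R C^\ast(I;R)$ from $\varphi_0$ to $\varphi_1$ and from $\varphi_1$ to $\varphi_2$, the plan for transitivity is to identify $C^\ast(\Lambda^2_1;R)$ with the fiber product $C^\ast(I;R)\times_R C^\ast(I;R)$ (taken along $\eta_1$ on the first factor and $\eta_0$ on the second); the agreement $\eta_1\circ \Phi_{01}=\varphi_1=\eta_0\circ \Phi_{12}$ then assembles $(\Phi_{01},\Phi_{12})$ into a morphism $\widetilde{\Phi}\colon \mcm\to A\otimes_R C^\ast(\Lambda^2_1;R)$. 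Applying Theorem \ref{thm:1-qi-lift} to the surjective $1$-quasi-isomorphism $\id_A\otimes \pi_1$ lifts $\widetilde{\Phi}$ to a morphism $\Psi\colon \mcm\to A\otimes_R C^\ast(\Delta^2;R)$; post-composing with the face map $\id_A\otimes \delta_1$ corresponding to the edge $[02]$ of $\Delta^2$ yields the sought homotopy from $\varphi_0$ to $\varphi_2$, because the endpoint evaluations of $\Psi$ at vertices $0$ and $2$ factor through $\pi_1$ and so recover $\varphi_0$ and $\varphi_2$ respectively. For symmetry, the identical device applied to $\Lambda^2_0$ works: combine a given homotopy $\Phi\colon \varphi_0\simeq \varphi_1$ on edge $[01]$ with the reflexive homotopy of $\varphi_0$ on edge $[02]$, lift to $\Delta^2$ via Theorem \ref{thm:1-qi-lift}, and restrict to edge $[12]$ via $\delta_0$ to obtain $\varphi_1\simeq \varphi_0$. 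The main obstacle is verifying that $\pi_j$ genuinely respects the binomial $\cup_1$-dga structure and has an acyclic kernel even after tensoring with $A$; once that is in hand, Theorem \ref{thm:1-qi-lift} does the heavy lifting.
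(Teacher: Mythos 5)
Your proposal is correct, and for transitivity it is essentially the argument of the paper: the fiber product $C^\ast(I;R)\times_R C^\ast(I;R)$ you build from the horn $\Lambda^2_1$ is exactly the wedge $C^\ast(I;R)\vee C^\ast(I';R)$ used in the paper's proof, the surjection $\pi_1$ from $C^\ast(\Delta^2;R)$ is the paper's map $f$, and both arguments conclude by lifting through this surjection via Theorem \ref{thm:1-qi-lift} and restricting to the third edge. Where you genuinely diverge is symmetry: the paper reverses a homotopy by the explicit formula $\overline{\Phi}(a)=\varphi_1(a)t_0+\varphi_0(a)t_1+c(a)u$, whereas you reuse the horn-filling device on $\Lambda^2_0$, pairing the given homotopy on $[01]$ with the constant homotopy on $[02]$ and restricting the lift to $[12]$. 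Your route costs another invocation of the lifting theorem but buys uniformity, and it sidesteps having to check that the coordinate-swapped $\overline{\Phi}$ is still multiplicative --- a point that is not entirely innocent, since the product on $C^\ast(I;R)$ is not symmetric in $t_0$ and $t_1$ (e.g.\ $t_0u=u$ but $ut_0=0$). Your version also makes explicit two things the paper leaves implicit, namely that the surjection must be tensored with $A$ before applying Theorem \ref{thm:1-qi-lift} and that its kernel stays acyclic after tensoring. The one point both you and the paper gloss over equally is that Theorem \ref{thm:1-qi-lift} lifts \emph{morphisms of binomial cup-one dgas}, while Definition \ref{def:homotopy} only asks a homotopy to be a dga map; for the assembled map into the fiber product to be liftable one should either assume the given homotopies respect the $\cup_1$ and binomial structure or note that this is automatic in the situations where the lemma is applied. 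Since the paper's own proof has the same elision, this is not a gap in your argument relative to the paper.
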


\begin{proof}
Clearly, $\simeq$ is reflexive. To show symmetry, let 
$\Phi\colon \mcm\to A\otimes_R C^*(I;R)$ be a homotopy 
from $\varphi_0$ to $\varphi_1$, given on elements $a\in \mcm^i$ by  
$\Phi(a)=\varphi_0(a)t_0 + \varphi_1(a)t_1 + c(a) u$, for some 
$c(a)\in A^{i-1}$; then the map $\overline{\Phi}$ given by 
$\overline{\Phi}(a)=\varphi_1(a)t_0 + \varphi_0(a)t_1 + c(a) u$ 
is a homotopy from $\varphi_1$ to $\varphi_0$.

It remains to show $\simeq$ is transitive. With $\Phi$ as above,
let $I'$ be another copy of the interval, let $t'_0, t'_1, u'$ be the 
corresponding generators of $C^*(I';R)$, and let 
$\Phi'\colon \mcm\to A\otimes_R \C^*(I';R)$  be a homotopy 
from $\varphi_1$ to $\varphi_2$. 
Finally, let $C^*(I;R)\vee C^*(I';R)$ be the fiber product 
corresponding to the augmentations 
$\varepsilon\colon C^*(I;R)\to R$ and 
$\varepsilon'\colon C^*(I';R)\to R$ given by 
$\varepsilon(t_1)=\varepsilon'(t'_0)=1$ and 
$\varepsilon(t_0)=\varepsilon'(t'_1)=0$. 
With this setup, we define a map 
\begin{equation}
\label{diag:vee-map}
\begin{tikzcd}[column sep=20pt]
\Psi\colon \mcm\ar[r]& A\otimes_R (C^*(I;R)\vee C^*(I';R))
\end{tikzcd}
\end{equation}
by setting $\Psi(a)=(\Phi(a),\Phi'(a))$. Now let $\Delta$ be a triangle
with oriented edges $e_1=I$, $e_2=I'$, and $e_3=I''$. The inclusions of the 
edges in the triangle induce epimorphisms $q_j\colon C^*(\Delta;R) \surj C^*(e_j;R)$, 
which give a surjection $f\colon C^*(\Delta;R) \surj C^*(I;R)\vee C^*(I';R)$. 
By Theorem \ref{thm:1-qi-lift}, the morphism $\Psi$ lifts through $f$ to a 
morphism $\widehat{\Psi}\colon \mcm\to A\otimes_R C^*(\Delta;R)$. 
The map $q_3\circ \widehat{\Psi}\colon \mcm\to A\otimes_R \C^*(I'';R)$, 
then, is the desired homotopy from $\varphi_0$ to $\varphi_2$.
\end{proof}

We will write $[\mcm, A]$ for the set of homotopy classes of morphisms 
$\varphi \colon \mcm \to A$. Given a morphism $\xi\colon A\to A'$, composition 
with $\xi$ defines a function, $\xi_\ast\colon [\mcm, A] \to [\mcm, A^\prime]$.
Similar notions hold for augmented dgas and  augmentation-preserving 
morphisms between them. 
The next lemma corresponds to an analogous result in \cite{Griffiths-Morgan} 
(Theorem 11.5); see also \cite[Proposition 12.9]{FHT}.

\begin{lemma}
\label{lem:1qiso-homotopy}
Let $(\T_R(\bX), d)$ be a colimit of Hirsch extensions and
assume $A$ and $A^\prime$ are binomial $\cup_1$-dgas 
with augmentations, $\varepsilon_A, \varepsilon_{A^\prime}$
that induce isomorphisms from $H^0$ to $R$.
Assume further that there is 
an augmentation-preserving $1$-quasi-isomorphism, 
$\xi \colon A \to A^\prime$.
Then the induced map of equivalence classes of 
augmentation-preserving homotopies of 
augmentation preserving maps,
$\xi_{*}\colon [\T_R(\bX), A] \to [\T_R(\bX), A^\prime]$, 
is injective.
\end{lemma}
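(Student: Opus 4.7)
Assume given augmentation-preserving morphisms $\varphi_0, \varphi_1 \colon \T_R(\bX) \to A$ together with an augmentation-preserving homotopy $\Psi \colon \T_R(\bX) \to A' \otimes_R C^\ast(I;R)$ from $\xi\varphi_0$ to $\xi\varphi_1$. The plan is to construct an augmentation-preserving homotopy $\Phi \colon \T_R(\bX) \to A \otimes_R C^\ast(I;R)$ from $\varphi_0$ to $\varphi_1$, thereby showing that $\xi_\ast[\varphi_0] = \xi_\ast[\varphi_1]$ forces $[\varphi_0] = [\varphi_1]$. The strategy is to exhibit $\Phi$ as a lift of the triple $(\varphi_0, \Psi, \varphi_1)$ through a surjective $1$-quasi-isomorphism, so that Theorem \ref{thm:1-qi-lift} applies.

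Assume temporarily that $\xi$ is surjective. Form the fiber product
\[
E \;:=\; \bigl(A' \otimes_R C^\ast(I;R)\bigr) \times_{A' \times A'} (A \times A)
\]
of triples $(\alpha, a_0, a_1)$ satisfying $\eta_0(\alpha) = \xi(a_0)$ and $\eta_1(\alpha) = \xi(a_1)$. The natural map $\mu\colon A \otimes_R C^\ast(I;R) \to E$, sending $\beta$ to $\bigl((\xi \otimes \id)\beta,\, \eta_0\beta,\, \eta_1\beta\bigr)$, is surjective by surjectivity of $\xi$; its kernel is $(\ker \xi)\cdot u$, whose cohomology equals $H^{\ast-1}(\ker\xi)$ and vanishes in degrees $\le 3$ by Remark \ref{rem:coho-vanish}. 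Hence $\mu$ is a surjective $1$-quasi-isomorphism. The triple $(\Psi, \varphi_0, \varphi_1)$ defines a morphism $\T_R(\bX) \to E$, and Theorem \ref{thm:1-qi-lift} produces a lift $\Phi$ with $\eta_i\Phi = \varphi_i$ and $(\xi \otimes \id)\Phi = \Psi$. The constraint $(\xi \otimes \id)\Phi = \Psi$ automatically forces $\Phi$ to be augmentation-preserving, since
\[
(\epsilon_A \otimes \id)\Phi = (\epsilon_{A'} \otimes \id)(\xi \otimes \id)\Phi = (\epsilon_{A'} \otimes \id)\Psi = \sigma \circ \epsilon_{\T_R(\bX)},
\]
using the augmentation-preserving property of $\xi$ and $\Psi$.

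To remove the surjectivity hypothesis, factor $\xi$ as $\xi = p \circ j$ through an augmented mapping path object, built as an augmentation-compatible sub-dga of $A \oplus (A' \otimes_R C^\ast(I';R))$ for an auxiliary interval $I'$. Concretely, let $\tilde A$ consist of pairs $(a, \alpha)$ with $\eta_0'(\alpha) = \xi(a)$, subject in degree zero to the compatibility $\epsilon_{A'}(\eta_1'\alpha) = \epsilon_A(a)$; set $j(a) := (a, \xi(a)(t_0' + t_1'))$, $p(a,\alpha) := \eta_1'(\alpha)$, and $\pi(a, \alpha) := a$. Then $p \circ j = \xi$ and $\pi \circ j = \id_A$; $\pi$ is an augmentation-preserving retraction of $j$ (making $j$ an aug-pres quasi-iso), and $p$ is an augmentation-preserving surjective $1$-quasi-iso. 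Applying the surjective case above to $p$ yields an aug-pres homotopy $\tilde\Phi \colon \T_R(\bX) \to \tilde A \otimes_R C^\ast(I;R)$ from $j\varphi_0$ to $j\varphi_1$, and $\Phi := (\pi \otimes \id)\tilde\Phi$ is then the desired aug-pres homotopy from $\varphi_0 = \pi j\varphi_0$ to $\varphi_1 = \pi j\varphi_1$.

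The main obstacle is constructing the augmented mapping path object $\tilde A$ with consistent dga, augmentation, and quasi-isomorphism structures: the degree-zero augmentation-compatibility condition must be threaded through the classical mapping path construction without disrupting the acyclicity-of-kernel argument that makes $\pi$ a quasi-isomorphism. The hypothesis that $\epsilon_A$ induces an isomorphism $H^0(A) \cong R$ is essential here, ensuring the requisite flexibility in degree zero for the kernel of $\pi$ to remain acyclic and for the lift produced by Theorem \ref{thm:1-qi-lift} to be compatible with the augmentations.
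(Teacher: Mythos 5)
Your strategy is genuinely different from the paper's: the paper proves this lemma by a direct induction over the generators $x_1, x_2, \ldots$ of $\bX$, lifting the homotopy $H$ one generator at a time; at each stage the injectivity of $H^1(\xi)$ is used to solve $d\hat{c}(x) = g(x) - f(x) + \hat{c}(dx)$ with $\hat{c}(x) \in \ker(\varepsilon_A)$, and the hypothesis that $\varepsilon_{A'}$ induces an isomorphism on $H^0$ forces $\xi(\hat{c}(x)) = c(x)$, so the construction really lifts $H$. No surjectivity of $\xi$ and no auxiliary factorization are needed. Your surjective case is essentially sound and is an attractive repackaging: the fiber product $E$, the identification $\ker\mu \cong (\ker\xi)\cdot u$ with $H^{i}(\ker\mu)=H^{i-1}(\ker\xi)=0$ for $i\le 3$, and the observation that $(\xi\otimes\id)\Phi=\Psi$ forces $\Phi$ to be augmentation-preserving are all correct (modulo verifying that $E$ is a binomial $\cup_1$-dga and that $\mu$ and $(\Psi,\varphi_0,\varphi_1)$ are morphisms of such, which the paper's Theorem \ref{thm:1-qi-lift} requires but which you do not check).

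The genuine gap is exactly where you flag it: the reduction to the surjective case. With the degree-zero compatibility condition $\varepsilon_{A'}(\eta'_1\alpha)=\varepsilon_A(a)$ imposed on $\tilde A$, the map $p$ is \emph{not} a $1$-quasi-isomorphism, so the surjective case cannot be applied to it. Concretely, the element $(0,\, 1_{A'}\otimes u')$ is a $1$-cocycle of $\tilde A$ lying in $\ker(p)$, and its class in $H^1(\tilde A)$ is nonzero: a primitive would be a pair $(a,\, b_0t'_0+b_1t'_1)\in\tilde A^0$ with $d_Aa=0$, $b_0=\xi(a)$, $db_0=db_1=0$ and $b_1-b_0=1_{A'}$, whence $b_1=\xi(a)+1_{A'}$ and the degree-zero membership condition gives $\varepsilon_{A'}(b_1)=\varepsilon_A(a)+1=\varepsilon_A(a)$, a contradiction. (Without the extra condition this class is killed by $d(0,\,1_{A'}\otimes t'_1)$; it is precisely your augmentation constraint that removes the primitive.) Thus $0\ne [(0,u')]\in\ker H^1(p)$ and $p$ fails to be injective on $H^1$. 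Dropping the condition restores the quasi-isomorphism but makes $p$ fail to be augmentation-preserving, which your surjective-case argument needs in order to conclude that $\tilde\Phi$, and hence $\Phi=(\pi\otimes\id)\tilde\Phi$, is an augmentation-preserving homotopy. So as written the non-surjective case does not go through; the paper's direct obstruction-theoretic induction avoids the issue entirely.
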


\begin{proof}
Let $f,g\colon \T_R(\bX)\to A$ be augmentation-preserving 
morphisms and let $H \colon \T_R(\bX) \to A^\prime \otimes_R C^\ast(I;R)$
be an augmentation-preserving homotopy
between $\xi\circ f$ and $\xi \circ g$.
We will show that $H$ lifts to 
an augmentation-preserving homotopy $\widehat{H}$
between $f$ and $g$.

Write $\bX=\{x_1, x_2, \ldots\}$ with $\bX^n =\{x_1, \ldots , x_n\}$.
Set $\mcm_n = (\T_R(\bX^n),d_n)$ and let $f_n, g_n$ and 
$H_n$ denote the restrictions of $f, g$, and $H$ to
$\mcm_n$. Then $x_1$ is a cocycle in $\mcm_1$, and we have 
\begin{equation}
\label{eq:Hx1}
H(x_1) = \xi \circ f(x_1)t_0 + \xi \circ g(x_1)t_1 + c(x_1)u
\end{equation}
with 
\begin{equation}
\label{eq:HL-1}
dc(x_1) = \xi \circ f(x_1) - \xi \circ g(x_1).
\end{equation}
Since $H^1(\xi) \colon H^1(A) \to H^1(A^\prime)$ is a 
monomorphism, it follows that there is an element
$\hat{c}(x_1) \in \ker (\varepsilon_A)$ with 
\begin{equation}
\label{eq:HL-2}
d\hat{c}(x_1) = f(x_1) - g(x_1) 
= f_1(x_1) - g_1(x_1).
\end{equation}
Since
$f_1(x_1)t_0 + g_1(x_1)t_1 + \hat{c}(x_1)u$ is a 
cocycle in $A \otimes_R C^1(I;R)$ and 
$\hat{c}(x_1) \in \ker(\varepsilon_A)$ with
$\varepsilon_A$ a binomial subalgebra of $A^0$,
it follows that the map
$x_1 \mapsto f_1(x_1)t_0 + g_1(x_1)t_1 + \hat{c}(x_1)u$ 
extends uniquely to an augmentation-preserving
homotopy $\widehat{H}_1$ between $f_1$ and $g_1$.

The next step is to show that $\widehat{H}_1$ is a lifting
of $H_1$. Since $\hat{c}(x_1) \in \ker(\varepsilon_A)$ and $\xi$
is augmen\-tation-preserving, it follows that
$\xi\circ \hat{c}(x_1) \in \ker (\varepsilon_{A^\prime})$.
From equations \eqref{eq:HL-1} and \eqref{eq:HL-2},
we have that the elements $\xi \circ \hat{c}(x_1)$ and
$c(x_1)$ have the same coboundary. 
Since $\varepsilon_{A^\prime}$ induces an isomorphism
from $H^0(A^\prime)$ to $R$, it follows that two elements in
$\ker (\varepsilon_{A^\prime})$ with the same coboundary
are equal to each other; hence 
$\xi \circ \hat{c}(x_1) = c(x_1)$ and $\hat{H}_1$ is
a lifting of $H_1$.

Now assume by induction that the homotopy
$H_n$ lifts to a homotopy $\widehat{H}_n$ between
$f_n$ and $g_n$ and show that $\widehat{H_n}$ then
extends to a lifting $\widehat{H}_{n+1}$ of $H_{n+1}$.
Note that $dx_{n+1}$ is a cocycle in $\T(X^n)$.
Hence
$\widehat{H}_n (dx_{n+1})
= f_n(dx_{n+1}) t_0 + g_n(dx_{n+1})t_1+ \hat{c}(dx_{n+1}) u$
is a cocycle in $A \otimes_R C^\ast(I;R)$, and it follows that
$d\hat{c}(dx_{n+1}) = f_n(dx_{n+1}) - g_n(dx_{n+1})$. 

The obstruction to extending 
$\widehat{H}_n$ to a homotopy $\widehat{H}_{n+1}$ is
finding an element $\hat{c}(x_{n+1}) \in A^0$ such that the map
$x_{n+1} \mapsto \widehat{H}_{n+1}(x_{n+1}) = 
f(x_{n+1})t_0 + g(x_{n+1})t_1 + \hat{c}(x_{n+1})u$ 
commutes with the coboundary map; that is,
$d\hat{c}(x_{n+1}) = g(x_{n+1}) - f(x_{n+1}) + \hat{c}(dx_{n+1})$. 
Since $\widehat{H}_{n}$ is a lifting of $H_n$, we have
that $H_{n+1}(x_{n+1})$ has the form
$ \xi \circ f(x_{n+1})t_0 + \xi \circ g(x_{n+1})t_1+ c(x_{n+1})u$,  
where 
\[
d{c}(x_{n+1}) = \xi \circ g(x_{n+1}) 
			-\xi \circ f(x_{n+1}) 
		        + \xi \circ \hat{c}(dx_{n+1}).
\]
In particular, the cocycle 
$\xi \circ g(x_{n+1}) -\xi \circ f(x_{n+1}) 
					+ \xi \circ \hat{c}(dx_{n+1})$ in $A^\prime$
is cohomologous to zero, and then since 
$\xi \colon H^1(A) \to H^1(A^\prime)$ is a
monomorphism, it follows that the cocycle
$g(x_{n+1}) - f(x_{n+1}) + \hat{c}(dx_{n+1})$ 
is cohomologous to zero in $A$.
Thus, there is an element $\hat{c}(x_{n+1})$ in
$\ker (\varepsilon_A)$ with 
$ d\hat{c}(x_{n+1}) 
			= g(x_{n+1}) - f(x_{n+1}) + \hat{c}(dx_{n+1})$,
and hence, $\widehat{H}_n$ extends to a homotopy
$\widehat{H}_{n+1}$.

The final step is to see that $\widehat{H}_{n+1}$ is a 
lifting of $H_{n+1}$. Since 
$\hat{c}(x_{n+1}) \in \ker (\varepsilon_A)$ and 
$\xi$ is augmentation-preserving, it follows that
$\xi \circ \hat{c}(x_{n+1}) \in 
\ker (\varepsilon_{A^\prime})$. The elements
$\xi \circ \hat{c}(x_{n+1})$ and $c(x_{n+1})$
have the same coboundary and are both elements
in $\ker (\varepsilon_{A^\prime})$; hence they are equal
to each other given that the augmentation
$\varepsilon_{A^\prime}$ induces an isomorphism
from $H^0(A^\prime)$ to $R$. This completes the argument that
$\widehat{H}_{n+1}$ is a lifting of $H_{n+1}$, and hence
the proof is complete.
\end{proof}

\begin{remark}
\label{rem:pc-aug}
Note that if $A = C^\ast(X;R)$ with $X$ a path-connected $\Delta$-complex, 
then there is an augmentation $\varepsilon_A \colon A \to R$
inducing an isomorphism from $H^0(A)$ to $R$.
\end{remark}

\subsection{$1$-minimal models of augmented binomial dgas}
\label{subsec:augmented-functoriality}
Let $R= \Z$ or $\F_p$. 
For binomial cup-one $R$-dgas $(A,d_A)$ that come equipped with an augmentation, 
$\varepsilon_{A} \colon A\to R$, that induces an isomorphism from $H^0(A)$ to $R$, 
and for which $H^1(A)$ is a finitely generated, free $R$-module, 
Theorem \ref{thm:1-min-lift} may be refined. 
Recall from Theorem \ref{thm:aug-minmodel} that any such dga admits an 
augmented $1$-minimal model, $\rho \colon \mcm \to A$. 

\begin{theorem}
\label{thm:1-min-cup-aug}
Let $(A,d)$ and $(A',d')$ be augmented binomial cup-one dgas as above. 
Let $\rho \colon \mcm \to A$ and $\rho' \colon  \mcm' \to A'$ 
be augmented $1$-minimal models,  
and let $\varphi \colon A \to A'$ be an augmentation-preserving morphism. 
There is then a unique augmentation-preserving morphism
$\widehat{\varphi} \colon \mcm \to \mcm^\prime$ 
such that $\varphi \circ \rho$ is  augmentation-preserving
homotopic to $\rho^\prime \circ \widehat{\varphi}$.
\end{theorem}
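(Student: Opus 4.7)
The plan is to upgrade Theorem~\ref{thm:1-min-lift}(1) to the augmented category for existence, and then combine Lemma~\ref{lem:1qiso-homotopy} with the rigidity of Lemma~\ref{lem:miraculous} to force the uniqueness to hold on the nose, rather than merely up to homotopy.

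For existence, I would first invoke Theorem~\ref{thm:1-min-lift}(1) to produce a morphism $\widehat{\varphi}\colon \mcm \to \mcm^\prime$ together with a homotopy $\Phi\colon \mcm \to A^\prime \otimes_R C^\ast(I;R)$ from $\varphi\circ\rho$ to $\rho^\prime\circ\widehat{\varphi}$. Since both $\mcm = \T_R(\bX)$ and $\mcm^\prime = \T_R(\bX^\prime)$ are connected, any dga morphism between them automatically preserves the canonical augmentations, so $\widehat{\varphi}$ is already augmentation-preserving. Writing $\Phi(a) = (\varphi\circ\rho)(a) t_0 + (\rho^\prime\circ\widehat{\varphi})(a) t_1 - c(a) u$ on homogeneous elements, with $c(a) \in (A^\prime)^{\abs{a}-1}$, augmentation-preservation reduces to requiring $\varepsilon_{A^\prime}(c(a)) = 0$. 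This is automatic when $\abs{a} \ge 2$, because $\varepsilon_{A^\prime}$ vanishes on $(A^\prime)^{>0}$; the only work is at generators $x \in \bX$, where $c(x) \in (A^\prime)^0$ can be replaced by $c(x) - \varepsilon_{A^\prime}(c(x))\cdot 1_{A^\prime}$. This substitution preserves the defining equation $d_{A^\prime} c(x) = (\varphi\circ\rho)(x) - (\rho^\prime\circ\widehat{\varphi})(x)$ (since $d_{A^\prime}(1) = 0$) and lands in $\ker \varepsilon_{A^\prime}$. Performing this adjustment throughout the inductive construction of Lemma~\ref{lem:homotopy-lift} produces the augmentation-preserving homotopy.

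For uniqueness, given two augmentation-preserving morphisms $\widehat{\varphi}_1, \widehat{\varphi}_2\colon \mcm \to \mcm^\prime$ with $\varphi\circ\rho$ augmentation-preservingly homotopic to $\rho^\prime\circ\widehat{\varphi}_i$ for $i=1,2$, I would glue the two homotopies via the argument of Lemma~\ref{lem:1-min-equiv}, carried out in the augmented category with the same scalar correction as above, to obtain an augmentation-preserving homotopy between $\rho^\prime\circ\widehat{\varphi}_1$ and $\rho^\prime\circ\widehat{\varphi}_2$. Since $\rho^\prime$ is an augmentation-preserving $1$-quasi-isomorphism with $\mcm$ a colimit of Hirsch extensions, Lemma~\ref{lem:1qiso-homotopy} then yields $\widehat{\varphi}_1 \simeq \widehat{\varphi}_2$ as augmentation-preserving maps $\mcm \to \mcm^\prime$. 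Because $\mcm$ and $\mcm^\prime$ are both connected and $\mcm$ is generated as a graded $R$-algebra by $\mcm^1 = \fm_{\bX,R}$, Lemma~\ref{lem:miraculous} applies and collapses this homotopy to the equality $\widehat{\varphi}_1 = \widehat{\varphi}_2$.

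The main obstacle will be handling the augmentation-preservation of the homotopy in the existence step; the key observation collapsing this to a degree-zero problem is that any augmentation of a graded $R$-dga annihilates everything in positive degree. Once that is arranged, the uniqueness is a formal consequence: Lemma~\ref{lem:1qiso-homotopy} transports it to $[\mcm,\mcm^\prime]$, and Lemma~\ref{lem:miraculous}—the distinctive rigidity of the augmented, degree-one-generated setting—then upgrades homotopy to on-the-nose equality.
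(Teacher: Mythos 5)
Your proposal is correct and follows essentially the same route as the paper: existence via Theorem~\ref{thm:1-min-lift}, then uniqueness by concatenating homotopies (Lemma~\ref{lem:1-min-equiv}), cancelling $\rho'$ with Lemma~\ref{lem:1qiso-homotopy}, and collapsing the resulting augmentation-preserving homotopy to an equality with Lemma~\ref{lem:miraculous}. The only difference is that you spell out the degree-zero correction $c(x)\mapsto c(x)-\varepsilon_{A'}(c(x))\cdot 1$ needed to make the homotopies augmentation-preserving, a detail the paper dispatches with the remark that its earlier proofs ``apply as well'' in the augmented setting.
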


\begin{proof}
By Theorem \ref{thm:1-min-lift}, there is an isomorphism 
$\widehat{\varphi} \colon \mcm \to \mcm^\prime$ 
and a homotopy $\Phi\colon \mcm\to A \otimes_R C^*(I;R)$ between 
$\varphi\circ \rho$ and $\rho^\prime \circ \widehat{\varphi}$. 
Let $\widetilde{\varphi} \colon \mcm \to \mcm^\prime$ be 
another such isomorphism.
Since $\rho^\prime \circ \widehat{\varphi}$ and 
$\rho^\prime \circ \widetilde{\varphi}$ are both
homotopic to $\varphi \circ \rho$, it follows 
from Lemma \ref{lem:1-min-equiv} 
that $\rho^\prime \circ \widehat{\varphi}$ and 
$\rho^\prime \circ \widetilde{\varphi}$ are homotopic 
to each other. Then from Lemma \ref{lem:1qiso-homotopy}, 
it follows that $\widehat{\varphi}$ and $\widetilde{\varphi}$ are
homotopic morphisms from $\mcm$ to $\mcm^\prime$.

Since the proofs of Theorem \ref{thm:1-min-lift}
and Lemma \ref{lem:1-min-equiv} apply as well
to augmentation preserving homotopies,
it follows that $\widehat{\varphi}$ and 
$\widetilde{\varphi}$ are homotopic by an 
augmentation-preserving homotopy.
It then follows from Lemma \ref{lem:miraculous} that 
$\widehat{\varphi} = \widetilde{\varphi}$,
and the proof is complete.
\end{proof}

The following uniqueness result follows from Theorem \ref{thm:1-min-cup-aug} 
by talking $A=A^\prime$.

\begin{corollary}
\label{cor:1-mm-unique}
Let $A$ be an augmented binomial cup-one $R$-dga as above, 
and let $(\mcm, \rho)$ and $(\mcm^\prime, \rho^\prime)$ be any two 
augmented $1$-minimal models for $A$.
Then there is a unique augmentation-preserving
isomorphism $f\colon \mcm \to \mcm^\prime$
such that $ \rho' \circ f$ is 
augmentation-preserving homotopic to $\rho$.
\end{corollary}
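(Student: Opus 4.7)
The plan is to deduce Corollary \ref{cor:1-mm-unique} as a direct specialization of Theorem \ref{thm:1-min-cup-aug}. Taking $A^\prime = A$ and $\varphi = \id_A$, and applying the theorem to the two augmented $1$-minimal models $(\mcm, \rho)$ and $(\mcm^\prime, \rho^\prime)$, I would obtain a unique augmentation-preserving morphism $f \colon \mcm \to \mcm^\prime$ with the property that $\rho = \id_A \circ \rho$ is augmentation-preserving homotopic to $\rho^\prime \circ f$.

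What remains is to verify that this morphism $f$ is in fact an isomorphism, since Theorem \ref{thm:1-min-cup-aug} as stated only produces a morphism. To this end, I would note that $\id_A$ is trivially a $1$-quasi-isomorphism, and trace through the proof of Theorem \ref{thm:1-min-cup-aug}: the existence of $f$ is obtained there by invoking Theorem \ref{thm:1-min-lift}, whose part \eqref{1-min-2} guarantees that whenever the underlying dga map is a $1$-quasi-isomorphism, the induced map on $1$-minimal models is an isomorphism. Hence the $f$ produced by this specialization is automatically an isomorphism.

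The uniqueness clause of the corollary then follows at once from the uniqueness clause of Theorem \ref{thm:1-min-cup-aug}: any augmentation-preserving isomorphism $g \colon \mcm \to \mcm^\prime$ with $\rho^\prime \circ g$ augmentation-preserving homotopic to $\rho$ is, in particular, an augmentation-preserving morphism sharing that property, so by the theorem's uniqueness, $g = f$. I anticipate no real obstacle here, since the corollary is essentially a direct translation of the theorem to the case $\varphi = \id_A$; the only point that requires attention is confirming that the morphism produced by the theorem is in fact an isomorphism in this specialized setting, and this is handled by the reference to Theorem \ref{thm:1-min-lift} just described.
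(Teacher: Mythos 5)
Your proposal is correct and takes essentially the same route as the paper, which derives the corollary simply by specializing Theorem \ref{thm:1-min-cup-aug} to $A = A'$ and $\varphi = \id_A$. Your added check that the resulting morphism $f$ is actually an isomorphism --- via Theorem \ref{thm:1-min-lift}, part \eqref{1-min-2}, since $\id_A$ is trivially a $1$-quasi-isomorphism --- addresses a point the paper leaves implicit, and is a welcome bit of extra care.
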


\section{Compatibility of integral and rational $1$-minimal models}
\label{sect:Z-vs-Q}

In this section we define the Postnikov $1$-tower of 
a connected space $Y$ with $H^1(Y;R)$ finitely
generated and show that the steps $\mcm_n$ in the 
$1$-minimal model of $Y$ are quasi-iso\-morphic to
the cochains of the corresponding spaces $Y_n$ in the 
Postnikov system. We then use this result
to show that the integral $1$-minimal model
of $Y$ tensored with $\Q$ is weakly equivalent
as a dga to the $1$-minimal model of $Y$
defined in rational homotopy theory.

\subsection{Postnikov $1$-towers and $1$-minimal models}
\label{subsec:postnikov}

A connected space $Y$ with $H^1(Y;R)$ finitely
generated determines a tower of spaces 
with compatible maps from $Y$ to the tower,
as pictured in display \eqref{eq:postnikov-space}, 
as follows.
\vspace*{-4pt}
\begin{equation}
\label{eq:postnikov-space}
\begin{tikzcd}[row sep=22pt, column sep=60pt]
&\ar[d, dotted] \\
&  Y_3 \ar[d, "\pi_2"]\\
&  Y_2 \ar[d, "\pi_1"]\\
Y \ar[r, "h_1" '] \ar[ur, "h_2" '] \ar[uur, "h_3"]
&  Y_1
\end{tikzcd}
\end{equation}
The spaces $Y_n$ and maps $h_n$ are defined by
induction as follows.
Set $Y_1$ equal to the Eilenberg--MacLane space
$K(H^1(Y;R), 1)$ and let $h_1$ be a map inducing
an isomorphism from $H^1(Y_1;R)$ to $H^1(Y;R)$.

Assume $Y_n$ and $h_n \colon Y \to Y_n$ have been defined.
Let $P_n$ be the kernel of $H^2(h_n)$. Using the argument in the proof 
of property \eqref{prop:3} of Lemma \ref{lem:fgtf}, we can assume by 
induction that $P_n$ is a finitely-generated, free $R$-module.
Set $Q_n = \Hom_R(P_n,R)$.
Then the set of homotopy classes of maps from
$Y_n$ to $K(Q_n;2)$ can be identified with
the set of $R$-linear maps from $P_n$ to $H^2(Y_n;R)$.
Choose a map $f_n\colon Y_n \to K(Q_n;2)$
corresponding to the map of $P_n$ to the kernel
of $H^2(h_n)$. Set $Y_{n+1}$ equal to the 
pullback over $f_n$ of the path space fibration on
$K(Q_n;2)$, and let $h_{n+1}$ be a lifting of $h_n$.
We call the resulting tower the
{\em Postikov $1$-tower}\/ of the space $Y$.
It will be shown in \cite{Porter-Suciu-group-1min} that
the $R$-modules $Q_n$ are successive quotients in a 
suitable descending series of normal subgroups of $\pi_1(Y)$.

\begin{lemma}
\label{lem:setup}
Let $Y$ be a connected space with $H^1(Y;R)$ finitely generated. 
Let $\{Y_n\}_{n\ge 1}$ be a Postnikov $1$-tower for $Y$, as in 
diagram \eqref{eq:postnikov-space}. Then there is a colimit of 
Hirsch extensions $\mcm$ and quasi-isomorphisms 
$\psi_n \colon \mcm_n \to C^\ast(Y_n;R)$ 
such that $\mcm$ with structure maps
$\rho_n \colon \mcm_n \to C^\ast(Y;R)$
given by $\rho_n = h_n^\ast \circ \psi_n$
is a $1$-minimal model for $C^\ast(Y;R)$.
\end{lemma}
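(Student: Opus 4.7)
The plan is to construct the colimit of Hirsch extensions $\mcm = \bigcup_{n\ge 1}\mcm_n$ by induction on $n$, in parallel with the given Postnikov tower $\{Y_n\}$, invoking Theorem~\ref{lem:group-gives-hirsch} at each stage to convert the central group extension classifying $\pi_n\colon Y_{n+1}\to Y_n$ into a Hirsch extension $i_n\colon \mcm_n \inj \mcm_{n+1}$ together with a quasi-isomorphism $\psi_{n+1}$ intertwining $i_n$ and $\pi_n^\ast$.

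For the base case, $Y_1 = B(H^1(Y;R)) \simeq B(R^k)$ with $k=\rank_R H^1(Y;R)$. I would choose a basis $\bX_1$ for $H^1(Y;R)$, set $\mcm_1 = (\T_R(\bX_1), d_\bz)$, and invoke Theorem~\ref{thm:coho-iso-R} to obtain a quasi-isomorphism $\psi_1\colon \mcm_1 \to C^\ast(Y_1;R)$. For the inductive step, suppose $\mcm_n$ is a colimit of Hirsch extensions with $G_{\mcm_n} = G_n$, where $Y_n = B(G_n)$, and $\psi_n\colon \mcm_n \to C^\ast(Y_n;R)$ is a quasi-isomorphism. The fibration $\pi_n$ is classified by a $k$-invariant with fiber $K(F_n,1)$, where $F_n = \ker H^2(h_n) \subseteq H^2(Y_n;R)$. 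Because $\psi_n^\ast$ is an isomorphism, $(\psi_n^\ast)^{-1}(F_n) = \ker H^2(\rho_n)$ is a finitely generated, free $R$-module by iterated application of Lemma~\ref{lem:fgtf}\eqref{prop:3}; hence $F_n$ is finitely generated and free, and $\pi_n$ arises from a central extension $0\to F_n\to G_{n+1}\to G_n\to 1$. Theorem~\ref{lem:group-gives-hirsch} then produces a Hirsch extension $i_n\colon \mcm_n \inj \mcm_{n+1} = (\T_R(\bX^n\cup \bX_{n+1}), \bar d)$ with $G_{\mcm_{n+1}}=G_{n+1}$, with the differentials $\bar d(x)$ ($x\in \bX_{n+1}$) representing a chosen basis of $(\psi_n^\ast)^{-1}(F_n)$, and with a quasi-isomorphism $\psi_{n+1}\colon \mcm_{n+1}\to C^\ast(Y_{n+1};R)$ satisfying $\psi_{n+1}\circ i_n = \pi_n^\ast \circ \psi_n$.

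Setting $\rho_n = h_n^\ast\circ \psi_n$, the identity $\pi_n \circ h_{n+1} = h_n$ yields $\rho_{n+1}\circ i_n = h_{n+1}^\ast\circ \pi_n^\ast\circ \psi_n = h_n^\ast\circ \psi_n = \rho_n$, so the $\rho_n$ assemble into a structural morphism $\rho\colon \mcm\to C^\ast(Y;R)$. To verify Definition~\ref{def:1-min-model}: condition~\eqref{mm4} holds because $h_1$ was chosen to induce an iso on $H^1$ and $\psi_1$ is a quasi-isomorphism; condition~\eqref{mm5} holds because the cohomology classes $[d_{n+1}(x)]$, $x\in \bX_{n+1}$, were arranged to form a basis of $\ker H^2(\rho_n)$ during the application of Theorem~\ref{lem:group-gives-hirsch}. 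The main obstacle I anticipate is the careful book-keeping needed to match the algebraic and topological $k$-invariants: one must verify that the extension $G_{n+1}$ furnished by Theorem~\ref{lem:group-gives-hirsch} from the chosen cocycles agrees with the group whose classifying space is the topological total space $Y_{n+1}$, and that $\psi_{n+1}$ is compatible with a suitable choice of lift $h_{n+1}$ of $h_n$, so that the triangle $\rho_{n+1}\circ i_n = \rho_n$ commutes on the nose rather than merely up to homotopy.
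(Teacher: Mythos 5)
Your proposal is correct and follows essentially the same route as the paper: establish that $\ker H^2(h_n)$ is a finitely generated, free $R$-module via the argument of Lemma~\ref{lem:fgtf}\eqref{prop:3}, handle the base case with Theorem~\ref{thm:coho-iso-R}, and then apply Theorem~\ref{lem:group-gives-hirsch} inductively to convert each central extension in the Postnikov tower into a Hirsch extension with a compatible quasi-isomorphism. The paper's proof is considerably terser and leaves implicit the book-keeping you spell out (the on-the-nose commutativity $\rho_{n+1}\circ i_n=\rho_n$ and the verification of Definition~\ref{def:1-min-model}), but the underlying argument is the same.
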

\begin{proof}
Note that $H^2(K(H^1(Y;R),1); R)$ is a finitely generated
free $R$-module. Then by induction, using the argument in the proof 
of property \eqref{prop:3} of Lemma \ref{lem:fgtf}, it follows
that the kernel of the map
$H^2(h_n) \colon H^2(Y_n;R) \to H^2(Y;R)$ is also a 
finitely generated, free $R$-module. The existence
of the colimit of Hirsch extensions $\mcm$ and 
quasi-isomorphisms 
$\psi_n \colon \mcm_n \to C^\ast(Y_n;R)$ such that
$\mcm$ with structure maps
$\rho_n = h_n^\ast \circ \psi_n$ is a 
$1$-minimal model for $C^\ast(Y;R)$ now follows 
from Theorem \ref{lem:group-gives-hirsch}.
\end{proof}

\subsection{Polynomial differential forms}
\label{subsec:apl}
We now briefly review a construction in rational 
homotopy theory due to Sullivan \cite{Sullivan}. 
For each integer $n \ge 0$, set
\begin{equation}
\label{eq:apl}
(A_{\PL})_n = 
			\frac{\Lambda(t_0, \ldots, t_n, y_0,\ldots,y_n)}
			{(\sum t_i-1, \sum y_j)},	
\end{equation}
where $\Lambda(t_0, \ldots, t_n, y_0,\ldots,y_n)$ 
denotes the free commutative algebra over $\Q$
generated by elements $t_i$ of degree zero and 
elements $y_j$ of degree one, and define a 
differential $d$ on $(A_{\PL})_n$ by setting 
$dt_i  = y_i$ and $dy_j=0$. 
Given a space $Y$,
an element $u \in A^p_{\PL}(Y)$ is a rule
that associates to each singular $n$-simplex $\sigma$
of $Y$ an element $u(\sigma)\in (A_{\PL})_n^p$
compatible with the face and degeneracy maps
$\partial_i\colon (A_{\PL})_{n+1} \to (A_{\PL})_n$ 
and $s_j\colon (A_{\PL})_n \to (A_{\PL})_{n+1}$ 
as given in \cite[p.~122]{FHT}.
Then $A_{\PL}(Y)$ is a commutative differential graded
algebra over the rationals and the assignment $Y\leadsto A_{\PL}(Y)$
is functorial.

In addition to the Sullivan algebra $A_{\PL}(Y)$ and the cochain algebra 
$C^\ast(Y;\Q)$, there is a differential graded algebra over the rationals,  
$CA(Y)$, with the following property.

\begin{theorem}[{\cite[Corollary~10.10]{FHT}}]
\label{thm:fht-ca}
For topological spaces $Y$ there are natural
quasi-isomorphisms
\[
\begin{tikzcd}
C^\ast(Y;\Q) \ar[r] & CA(Y) & A_{\PL}(Y). \ar[l]
\end{tikzcd} 
\]
Consequently, $A_{\PL}(Y)$ is weakly equivalent (as a dga) to  
$C^\ast(Y;\Q)$.
\end{theorem}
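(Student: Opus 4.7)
The plan is to compare the two models stage-by-stage using the integral Postnikov $1$-tower constructed in Lemma~\ref{lem:setup}. First, I would invoke Lemma~\ref{lem:setup} with $R=\Z$ to realize the integer $1$-minimal model as $\mcm = \bigcup_n \mcm_n$ together with \emph{full} quasi-isomorphisms $\psi_n\colon\mcm_n \to C^\ast(Y_n;\Z)$, where $Y_n = B(G_n)$ for a finitely generated, nilpotent, torsion-free group $G_n$ (as produced by Lemma~\ref{lem:h-group}). Since $\Q$ is flat over $\Z$, tensoring yields compatible quasi-isomorphisms $\psi_n \otimes \Q\colon \mcm_n \otimes \Q \to C^\ast(Y_n;\Q)$ of rational dgas, and the directed colimit is $\mcm \otimes \Q$.

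Next I would splice in the natural zig-zag of Theorem~\ref{thm:fht-ca}, applied functorially to each $Y_n$, to obtain a chain of full quasi-isomorphisms
\begin{equation*}
\mcm_n \otimes \Q \; \xrightarrow{\psi_n \otimes \Q} \; C^\ast(Y_n;\Q) \; \longrightarrow \; CA(Y_n) \; \longleftarrow \; A_{\PL}(Y_n),
\end{equation*}
which is compatible with the inclusions $\mcm_n \otimes \Q \hookrightarrow \mcm_{n+1}\otimes \Q$ via the induced Postnikov maps $A_{\PL}(Y_n) \to A_{\PL}(Y_{n+1})$. So for every $n$, $\mcm_n \otimes \Q$ is weakly equivalent as a dga to $A_{\PL}(Y_n)$, naturally in $n$.

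The third step is to introduce the Sullivan $1$-minimal model $\mcm_{\Q,n}^{\mathrm{Sul}}$ of $A_{\PL}(Y_n)$ and observe that, because $Y_n = K(G_n,1)$ with $G_n$ nilpotent, the space $Y_n$ has trivial higher rational homotopy groups; hence the classical Sullivan theory for nilpotent $K(\pi,1)$'s says that the $1$-minimal model coincides with the full minimal model and that the structural morphism $\mcm_{\Q,n}^{\mathrm{Sul}} \to A_{\PL}(Y_n)$ is already a \emph{full} quasi-isomorphism, not merely a $1$-quasi-isomorphism. Appending this to the chain produces, for each $n$, a zig-zag of full quasi-isomorphisms connecting $\mcm_n \otimes \Q$ to $\mcm_{\Q,n}^{\mathrm{Sul}}$, natural in $n$.

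Finally, I would pass to filtered colimits. The Sullivan $1$-minimal model $\mcm_\Q$ of $A_{\PL}(Y)$ is itself built inductively via Hirsch extensions whose $k$-invariants are governed by $\ker H^2(\rho_n^\Q)$, and a comparison with the Postnikov tower identifies $\mcm_\Q$ with $\bigcup_n \mcm_{\Q,n}^{\mathrm{Sul}}$. Since cohomology commutes with filtered colimits of dgas, the stage-wise zig-zag of full quasi-isomorphisms induces a zig-zag of full quasi-isomorphisms between the colimits, yielding the desired weak equivalence $\mcm \otimes \Q \simeq \mcm_\Q$ in the dga category. The hardest part of this program is the last identification: verifying strictly that $\mcm_\Q$ assembles from the Sullivan $1$-minimal models of the Postnikov stages in a way compatible with the zig-zag, and that the naturality of $CA(-)$ and $A_{\PL}(-)$ in $Y$ renders the stagewise zig-zags into a directed system whose colimit is a genuine zig-zag of quasi-isomorphisms — these are standard but delicate naturality facts in rational homotopy theory that must be marshalled carefully.
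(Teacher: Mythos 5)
There is a fundamental mismatch here: your proposal does not address the statement at all. Theorem \ref{thm:fht-ca} is the foundational result of F\'elix--Halperin--Thomas (Corollary 10.10 of \cite{FHT}) asserting the existence of the natural zig-zag of quasi-isomorphisms $C^\ast(Y;\Q) \to CA(Y) \leftarrow A_{\PL}(Y)$; the paper does not prove it but cites it. A proof would have to construct the intermediate dga $CA(Y)$ from the simplicial cochain algebras $C_{\PL}$ and $A_{\PL}$ on the singular simplicial set of $Y$, and deduce the quasi-isomorphism property from the extendability of these simplicial cochain algebras together with the Poincar\'e lemma for polynomial forms on the standard simplex. None of this appears in your argument.

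What you have sketched instead is a proof of Theorem \ref{thm:1-min-mod-zq} (the compatibility of the integral and rational $1$-minimal models), and your sketch explicitly \emph{invokes} Theorem \ref{thm:fht-ca} in its second step (``splice in the natural zig-zag of Theorem \ref{thm:fht-ca}, applied functorially to each $Y_n$''). As a proof of Theorem \ref{thm:fht-ca} this is therefore circular, quite apart from being aimed at the wrong target. Your outline of the Postnikov-tower comparison is broadly consistent with the paper's proof of Theorem \ref{thm:1-min-mod-zq} (which likewise uses Lemma \ref{lem:setup}, the diagram \eqref{eq:fht}, and a stage-by-stage construction of $1$-quasi-isomorphisms $e_n\colon \mcm_{n,\Q}(Y)\to A_{\PL}(Y_n)$ via the spectral-sequence comparison of Hirsch extensions with fibrations), but that is a different theorem; for the statement actually at issue you would need to reproduce or cite the argument of \cite[\S 10]{FHT}.
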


\subsection{Rational $1$-minimal models}
\label{subsec:1min-q}
Let $(\Lambda(\bX),d)$  be the free commutative algebra
over $\Q$ generated by the elements in a set $\bX$, 
equipped with a differential $d$. We say
that a dga $(\Lambda(\bX) \otimes_{\Q} \Lambda(\bY),\bar{d})$
is a {\em Hirsch extension of $(\Lambda(\bX),d)$}\/ if
$\bar{d}x = dx$ for all $x \in \bX$ and 
$\bar{d}y$ is a cocycle in $\Lambda(\bX)$ for all $y \in \bY$.

Now let $Y$ be a topological space. A sequence 
$\mcm_{n,\Q}(Y) = (\Lambda(\bX_1 \cup \cdots \cup \bX_n), d_n)$
of Hirsch extensions with $n \ge 1$ together with maps
\begin{equation}
\label{eq:apl-mm}
\begin{tikzcd}
& \mcm_{n+1,\Q}(Y)\ar[dl,"\rho_{n+1}" ']
\\
A_{\PL}(Y) & \mcm_{n,\Q}(Y) \ar[u, "i_n" ]  \ar[l,"\rho_n" ]  
\end{tikzcd}
\end{equation}
is a {\em rational $1$-minimal model}\/ for $Y$ 
if the following conditions are satisfied:
\begin{enumerate}[itemsep=2pt]
\item \label{rt1}
$d_1(x) = 0$ for all $x \in \bX_1$.
\item  \label{rt2}
The map 
$\rho_1^1 \colon H^1(\mcm_{1,\Q}) \to H^1(A_{\PL}(Y))$
is an isomorphism.
\item  \label{rt3}
Under the assignment
$x \mapsto d_{n+1}(x)$, the set $\bX_{n+1}$ corresponds to
a basis for $\ker(\rho_n^2) \subset H^2(\mcm_{n,\Q})$
given by the cohomology classes of the cocycles
$\{d_{n+1}(x) \mid x \in \bX_{n+1}\} \subset Z^2(\mcm_{n,\Q})$.
\end{enumerate}

As shown by Sullivan \cite{Sullivan} (see also 
\cite{Griffiths-Morgan, FHT, Suciu-Wang-forum, Suciu-2023}), 
every connected space $Y$ admits a rational $1$-minimal model, 
unique up to an isomorphism of cdgas. 

The following gives the definition of $n$-step equivalence 
in rational homotopy theory.

\begin{definition}
\label{def:n-step-Q}
Given commutative dgas $(A,d_A)$ and $(A',d_{A'})$ over $\Q$
with $1$-minimal models $(\mcm_\Q,\rho)$ and 
$(\mcm^\prime_\Q, \rho^\prime)$; respectively, and an
integer $n \ge 1$, we say that $A$ and $A^\prime$ are
\textit{$n$-step equivalent over $\Q$} 
if there are isomorphisms
$f_n\colon \mcm_{n,\Q} \to \mcm_{n,\Q}^\prime$ 
and $e_n\colon H^2(A) \to H^2(A^\prime)$ such that
the diagram below commutes.
\begin{equation}
\label{eq:ladder-n-step-Q}
\begin{tikzcd}[column sep=32pt]
H^2(\mcm_{n,\Q}) \ar[r, "H^2(f_n)"] 
\arrow["H^2(\rho_n)"  ']{d}
& H^2(\mcm_{n,\Q}^\prime)  \phantom{.} \arrow["H^2(\rho'_n)"]{d}
\\
H^2(A) \ar[r, "e_n"] 
& H^2(A^\prime) .
\end{tikzcd}
\end{equation}
\end{definition}

\subsection{Relating the integer and rational $1$-minimal models}
\label{subsec:comp-mod}
We are now in a position to state and prove the main result of this section.

\begin{theorem}
\label{thm:1-min-mod-zq}
Let $Y$ be a connected topological space with
$H^1(Y;\Z)$ finitely generated.
Then the $1$-minimal model for $C^\ast(Y;\Z)$ tensored 
with $\Q$ is weakly equivalent as a differential
graded algebra to the $1$-minimal model in rational
homotopy theory for $A_{\PL}(Y)$.
\end{theorem}

\begin{proof}
Let $\{Y_n\}_{n\ge 1}$ be a Postnikov $1$-tower for $Y$, and let
$\mcm=\{\mcm_{n,\Z}(Y), \rho_{n,\Z}\}_{n\ge 1}$ be an integral 
$1$-minimal model for $Y$, with quasi-isomorphisms 
$\psi_n\colon \mcm_{n,\Z}(Y) \to C^\ast(Y_n;\Z)$
as in Lemma \ref{lem:setup}.
The proof is to show by induction that for the rational
$1$-minimal model 
$\{\mcm_{n,\Q}(Y), \rho_{n,\Q}\}_{n\ge 1}$
for $A_{\PL}(Y)$, there are $1$-quasi-isomorphisms
$e_n \colon \mcm_{n,\Q}(Y) \to A_{\PL}(Y_n)$
with $\rho_{n,\Q} = f^*_n\circ e_n$. The result then
follows from the natural equivalences between
$A_{\PL}(Y_n)$ and $C^\ast(Y_n;\Q)$ and between
$A_{\PL}(Y)$ and $C^\ast(Y;\Q)$.

Assume that for the sets $\bX_i$ ($i \ge 1)$, we have
$\mcm_{n,\Z}(Y) = \T(\bX_1 \cup \cdots \cup \bX_n)$.
In order to prove the base case,
set $\mcm_{1,\Q} = \Lambda(\bX_1)$ and define
$e_1\colon \Lambda(\bX_1) \to A_{\PL}(Y_1)$ by setting
$e_1(x)$ equal to a cocyle in $A_{\PL}(Y_1)$ whose
cohomology class corresponds to $\psi_1(x)$ under the
weak equivalence between $C^\ast (Y_1;\Q)$ and
$A_{\PL}(Y_1)$. Then $e_1$ is a $1$-quasi-isomorphism
and $\rho_{1,\Q}= f_1^* \circ e_1$ induces an isomorphism
from $H^1(\mcm_{1,\Q}(Y))$ to $H^1(A_{\PL}(Y))$.

To prove the inductive step, assume $\mcm_{n,\Q}(Y)$ and 
a $1$-quasi-isomorphism $e_n$ have been constructed. 
Consider the diagram \eqref{eq:fht}. Note that the diagram
of solid arrows commutes and each horizontal arrow 
represents a $1$-quasi-isomorphism.
\begin{figure}
\small{
\begin{equation}
\label{eq:fht}
\begin{tikzcd}[column sep=22pt]
\mcm_{n+1,\Z}(Y) \otimes \Q \ar[r, "\psi_{n+1}"]
			&	C^\ast(Y_{n+1};\Q) \ar[r]
			& CA(Y_{n+1}) 
			& A_{\PL}(Y_{n+1}) \ar[l]
			& \mcm_{n+1,\Q}(Y)		\ar[l, dashed, "\: e_{n+1}" ']\\
\mcm_{n,\Z}(Y) \otimes \Q 
					\ar[r, "\psi_{n}" ] \ar[u] 
					\ar[dr, "\rho_{n,\Z}" '] 
			&	C^\ast(Y_{n};\Q) \ar[r] \ar[u] \ar[d]
			& CA(Y_{n}) \ar[u] \ar[d]
			& A_{\PL}(Y_{n}) \ar[l] \ar[u, "p_{n}"] \ar[d, "f_{n}"]
			& \mcm_{n,\Q}(Y)		\ar[l, "e_{n}" ']
						\ar[u, "i_{n}"] \ar[dl, "\rho_{n,\Q}"]   \\
			& C^\ast(Y;\Q) \ar[r]
			& CA(Y) 
			& A_{\PL}(Y) \ar[l]
\end{tikzcd}
\end{equation}
}
\end{figure}
Define $\mcm_{n+1,\Q}(Y)$ and $e_{n+1}$ as follows.
Set $\mcm_{n+1,\Q}(Y)$ equal to the Hirsch extension
$(\mcm_{n,\Q}(Y) \otimes \Lambda(\bX_{n+1}),d_{n,\Q})$, 
where for each $x \in \bX_{n+1}$, its differential
$d_{n,\Q}(x)$ is equal to a cocycle
in $\mcm_{n,\Q}(Y)$ whose cohomology class
corresponds to the cohomology class of 
$d_{n,\Z} (x) \in \mcm_{n,\Z}(Y) \otimes \Q$
under the $1$-equivalence between
$\mcm_{n,\Z}(Y) \otimes \Q$ and $\mcm_{n,\Q}(Y)$.

Given $x \in \bX_{n+1}$, set $e_{n+1}(x)$ equal to
a cochain in $A_{\PL}(Y_{n+1})$ whose coboundary
equals $e_n(d_{n,\Q}(x))$. 
Then $p_n \circ e_n =e_{n+1}\circ i_n$. 
As in the proof of Theorem \ref{lem:group-gives-hirsch},
the map $e_{n+1}$ gives a map of the spectral 
sequence of the Hirsch extension $i_n$ 
to the spectral sequence
of the fibration $p_n$ inducing an isomorphism of $E_2$ terms.
Thus, $e_{n+1}$ is a quasi-isomorphism 
and the proof is complete.
\end{proof}

\section{Distinguishing homotopy types via $1$-minimal models}
\label{sect:dcs-1min}
In this section we use the $1$-minimal model of a binomial 
$\cup_1$-dga over $R=\Z$ or $\F_p$ to define $n$-step 
equivalence for $n \ge 1$. We show in Theorem \ref{thm:iso-pi1} 
that if $X$ and $X^\prime$ have isomorphic fundamental
groups, then $C^\ast(X;\Z)$ and $C^\ast(X^\prime;\Z)$
are $n$-step equivalent for all $n\ge 1$, and 
in Proposition \ref{prop:borro} we give an example
of a family of spaces that can be distinguished
using $n$-step equivalence with $R=\Z$, where the
corresponding approach in rational homotopy theory
fails to distinguish the spaces.
\subsection{$n$-step equivalence}
\label{subsec:coker-h2}

Let $(A,d_A)$ be a binomial $\cup_1$-dga over the ring $R=\Z$ or 
$R=\F_p$. We will assume throughout  this section that $H^0(A) = R$ 
and $H^1(A)$ is a finitely generated, free $R$-module. 
By Theorem \ref{thm:cupd-minmodel}, there 
is a $1$-minimal model, $(\mcm,d)$, which comes equipped with a 
structure map, $\rho\colon \mcm\to A$, that induces an isomorphism 
on $H^1$ and a monomorphism on $H^2$. 
Furthermore, by Theorem \ref{thm:1-min-lift}, any morphism 
$\varphi\colon A\to A'$ between two such binomial $\cup_1$-dgas
lifts to a morphism $\widehat{\varphi}\colon \mcm \to \mcm'$ which 
is compatible with the respective colimit structures and with the  
structure maps (up to homotopy). The next result extracts further information 
from these data.

\begin{proposition}
\label{prop:nstep-test}
Let $A$ and $A'$ be two binomial $\cup_1$-dgas as above, with 
$1$-minimal models $\rho\colon \mcm\to A$ and $\rho'\colon \mcm'\to A'$. 
Let $\varphi\colon A\to A'$ be a morphism, and let $\widehat{\varphi}\colon 
\mcm \to \mcm'$ be a lift of $\varphi$. Then,
\begin{enumerate}
\item \label{ck1}
The map $H^2(\varphi)$ induces homomorphisms 
$\bar\varphi_n \colon \coker (H^2(\rho_n)) \to  \coker (H^2(\rho'_n))$, 
for all $n\ge 1$. 
\item  \label{ck2}
If $\varphi$ is a $1$-quasi-isomorphism, then all the homomorphisms 
$\bar\varphi_n$ are injective.
\end{enumerate}
\end{proposition}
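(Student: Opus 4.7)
The plan is to treat the two parts as successive applications of Theorem~\ref{thm:1-min-lift}, using Theorem~\ref{thm:cup-coho} to convert the homotopy-commutativity of the lifting diagram into strict commutativity after passing to cohomology.

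For part~(\ref{ck1}), I would first invoke Theorem~\ref{thm:1-min-lift}(\ref{1-min-1}), which produces a lift $\widehat\varphi$ compatible with the colimit structures, together with a homotopy between $\varphi\circ\rho$ and $\rho'\circ\widehat\varphi$ whose restriction to each $\mcm_n$ gives a homotopy between $\varphi\circ\rho_n$ and $\rho'_n\circ\widehat\varphi_n$. Applying Theorem~\ref{thm:cup-coho} then yields a genuinely commutative square in cohomology, so that $H^2(\varphi)$ maps $\im H^2(\rho_n)$ into $\im H^2(\rho'_n)$ and descends, by the universal property of the cokernel, to the desired homomorphism $\bar\varphi_n\colon\coker H^2(\rho_n)\to\coker H^2(\rho'_n)$.

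For part~(\ref{ck2}), assume $\varphi$ is a $1$-quasi-isomorphism. Then $H^2(\varphi)$ is injective by the definition of $1$-quasi-isomorphism, and by Theorem~\ref{thm:1-min-lift}(\ref{1-min-2}) the lift $\widehat\varphi$ is an isomorphism that restricts stage-by-stage to isomorphisms $\widehat\varphi_n\colon\mcm_n\isom\mcm'_n$; in particular, each $H^2(\widehat\varphi_n)$ is an isomorphism. To prove injectivity of $\bar\varphi_n$, I would take a class $\alpha\in H^2(A)$ with $H^2(\varphi)(\alpha)\in\im H^2(\rho'_n)$, write $H^2(\varphi)(\alpha)=H^2(\rho'_n)(\beta')$ for some $\beta'\in H^2(\mcm'_n)$, set $\beta=H^2(\widehat\varphi_n)^{-1}(\beta')$, and use the commutative square to obtain
\[
H^2(\varphi)(\alpha)=H^2(\rho'_n)\circ H^2(\widehat\varphi_n)(\beta)=H^2(\varphi)\circ H^2(\rho_n)(\beta).
\]
Since $H^2(\varphi)$ is injective, it follows that $\alpha=H^2(\rho_n)(\beta)\in\im H^2(\rho_n)$, so the class of $\alpha$ is zero in $\coker H^2(\rho_n)$.

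The only conceptual obstacle is that the lifting $\widehat\varphi$ is furnished only up to homotopy, which could in principle obstruct strict commutativity of the squares used in both parts; this is immediately resolved by Theorem~\ref{thm:cup-coho}. After that, the argument for part~(\ref{ck2}) reduces to a short four-lemma-style diagram chase, and I do not anticipate any substantive technical difficulty.
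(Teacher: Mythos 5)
Your proposal is correct and follows essentially the same route as the paper: both use the level-$n$ compatibility of the lift and its homotopy, together with the fact that homotopic dga maps induce equal maps on cohomology (Theorem \ref{thm:cup-coho}), to obtain the strictly commutative square \eqref{eq:ladder-def}, and then deduce injectivity of $\bar\varphi_n$ from the injectivity of $H^2(\varphi)$ and the bijectivity of $H^2(\widehat{\varphi}_n)$. The explicit chase you write out for part~(\ref{ck2}) is exactly the ``straightforward diagram chase'' the paper leaves to the reader.
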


\begin{proof}
Since the morphism $\widehat{\varphi}\colon \mcm \to \mcm'$  
is compatible with the colimit structures, it restricts to $R$-linear maps 
$\widehat{\varphi}_n \colon \mcm_n \to \mcm'_n$.  Since $\rho'\circ \widehat{\varphi}$ 
is homotopic to $\varphi\circ \rho$ via a homotopy which is also compatible 
with the colimit structures, we have that $H^2(\rho')\circ H^2(\widehat{\varphi}) = 
H^2(\varphi)\circ H^2(\rho)$ and similarly for the maps at level $n\ge 1$. 
We thus obtain for each $n\ge 1$ a commuting diagram in the category 
of $R$-modules,
\begin{equation}
\label{eq:ladder-def}
\begin{tikzcd}[column sep=32pt]
H^2(\mcm_n) \ar[r, "H^2(\widehat{\varphi}_n)"] \arrow["H^2(\rho_n)"  ']{d}
& H^2(\mcm_n^\prime)  \phantom{,} \arrow["H^2(\rho'_n)"]{d}
\\
H^2(A) \ar[r, "H^2(\varphi)"] \ar[d, two heads] 
& H^2(A^\prime) \phantom{,}\ar[d, two heads]
\\
\coker (H^2(\rho_n))  \ar[r, dashed, "\bar\varphi_n"] & \coker (H^2(\rho'_n)) , 
\end{tikzcd}
\end{equation}

where, by definition, $\bar\varphi_n$ is the $R$-linear 
map induced by $H^2(\varphi)$ on cokernels. 

Now suppose $\varphi\colon A\to A'$ is a $1$-quasi-isomorphism.  
Then the map $H^2(\varphi)\colon H^2(A)\to H^2(A')$ is injective; 
moreover, by Theorem \ref{thm:1-min-lift}, the map 
$H^2(\widehat{\varphi})$ is an isomorphism. A straightforward 
diagram chase with \eqref{eq:ladder-def} then shows that 
the map $\bar\varphi_n$ is injective, and the proof is complete.
\end{proof}

Next we define an equivalence relation such that
$\coker H^2(\rho_n)$ is an invariant for each $n\ge 1$.

\begin{definition}
\label{rem:n-step}
Given binomial cup-one dgas $A, A^\prime$ with
$1$-minimal models $(\mcm,\rho)$ and 
$(\mcm^\prime, \rho^\prime)$; respectively, and an
integer $n \ge 1$ we say that $A$ and $A^\prime$ are
\textit{$n$-step equivalent} if there are isomorphisms
$f_n\colon \mcm_n \to \mcm_n^\prime$ and
$e_n\colon H^2(A) \to H^2(A^\prime)$ such that
the diagram below commutes.
\begin{equation}
\label{eq:ladder-n-step}
\begin{tikzcd}[column sep=32pt]
H^2(\mcm_n) \ar[r, "H^2(f_n)"] \arrow["H^2(\rho_n)"  ']{d}
& H^2(\mcm_n^\prime)  \phantom{,} \arrow["H^2(\rho'_n)"]{d}
\\
H^2(A) \ar[r, "e_n"] 
& H^2(A^\prime) \phantom{,}
\end{tikzcd}
\end{equation}
\end{definition}

Note that if $A$ and $A^\prime$ are $q$-equivalent for some
$q \ge 2$, then $A$ and $A^\prime$ are $n$-step equivalent
for all $n\ge 1$.

For the rest of this paper, we will assume $R=\Z$ and that 
$H^2(A)$ and $H^2(A')$ are finitely generated. 
In this case, the cokernels of the maps 
$H^2(\rho_n)$ and $H^2(\rho'_n)$ 
are also finitely generated.  
Given a $1$-minimal model $\rho\colon \mcm\to A$
over $\Z$, we define for each $n\ge 1$ a finite 
abelian group $\kappa_n(A)$ by 
\begin{equation}
\label{eq:kappa-n}
\kappa_n(A)\coloneqq \Tors \Big(\!\coker \big(H^2(\rho_n) \colon 
H^2(\mcm_n) \to H^2(A)\big)\Big).
\end{equation}
That is, $\kappa_n(A)$ is the torsion subgroup of the finitely generated 
abelian group $\coker (H^2(\rho_n))$. 

From Proposition \ref{prop:nstep-test} it follows that
the groups $\coker H^2(\rho_n)$ and hence 
$\kappa_n(A)$ are independent of the choice of 
$1$-minimal model for $A$ used 
in constructing them. The considerations above
yield the following proposition.

\begin{proposition}
\label{cor:1iso-kappa}
Let $A$ and $A'$ be binomial $\cup_1$-dgas over $\Z$, 
with $H^0=\Z$, $H^1$ finitely generated and torsion-free, 
and $H^2$ finitely generated. 
Let $(\mcm,\rho)$ and $(\mcm^\prime,\rho^\prime)$
be $1$-minimal models over $\Z$ for $A$ and 
$A^\prime$, respectively. Then,
if $A$ and $A^\prime$ are $n$-step equivalent, then
$\coker H^2(\rho_n)$ is isomorphic to 
$\coker H^2(\rho_n^\prime)$ and 
$\kappa_n(A)$ is isomorphic to $\kappa_n(A^\prime)$ 
\end{proposition}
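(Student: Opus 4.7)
The plan is to extract the desired isomorphisms directly from the commutative square in the definition of $n$-step equivalence by a standard functoriality-of-cokernel argument, and then observe that torsion subgroups are preserved under isomorphism.

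First, I would unpack the hypothesis: by Definition \ref{rem:n-step}, the assumption that $A$ and $A^\prime$ are $n$-step equivalent gives isomorphisms $f_n \colon \mcm_n \to \mcm_n^\prime$ and $e_n \colon H^2(A) \to H^2(A^\prime)$ making the diagram \eqref{eq:ladder-n-step} commute. In particular, $H^2(f_n)$ is also an isomorphism. I would then extend this square by appending the canonical projections onto cokernels on the right:
\begin{equation*}
\begin{tikzcd}[column sep=26pt]
H^2(\mcm_n) \ar[r, "H^2(f_n)", "\simeq" '] \ar[d, "H^2(\rho_n)" ']
& H^2(\mcm_n^\prime) \ar[d, "H^2(\rho^\prime_n)"]
\\
H^2(A) \ar[r, "e_n", "\simeq" '] \ar[d, two heads]
& H^2(A^\prime) \ar[d, two heads]
\\
\coker(H^2(\rho_n)) \ar[r, dashed, "\bar{e}_n"]
& \coker(H^2(\rho^\prime_n)).
\end{tikzcd}
\end{equation*}
Since $e_n \circ H^2(\rho_n) = H^2(\rho^\prime_n) \circ H^2(f_n)$, the composition of $e_n$ with the right-hand projection vanishes on the image of $H^2(\rho_n)$, so it descends to a unique homomorphism $\bar{e}_n \colon \coker(H^2(\rho_n)) \to \coker(H^2(\rho^\prime_n))$, making the bottom square commute.

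Next, I would produce an inverse for $\bar{e}_n$ by running the same argument with $f_n^{-1}$ and $e_n^{-1}$, which satisfy an analogous commuting relation; by the uniqueness of the induced map on cokernels, the resulting homomorphism $\overline{e_n^{-1}}$ is a two-sided inverse of $\bar{e}_n$. Hence $\bar{e}_n$ is an isomorphism, establishing the first assertion $\coker(H^2(\rho_n)) \cong \coker(H^2(\rho^\prime_n))$.

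Finally, for the second assertion, I would invoke naturality of the torsion subgroup: since $\bar{e}_n$ is an isomorphism of finitely generated abelian groups (both $H^2(A)$ and $H^2(A^\prime)$ are finitely generated by hypothesis, and each $H^2(\mcm_n)$ is finitely generated since $\mcm_n$ is built from finitely many Hirsch extensions starting with $\bX_1$ finite), it restricts to an isomorphism on torsion subgroups. By the definition of $\kappa_n$ in \eqref{eq:kappa-n}, this gives $\kappa_n(A) \cong \kappa_n(A^\prime)$. There is no real obstacle here; the whole argument is essentially a two-step diagram chase, and the only subtlety is confirming that the cokernels are finitely generated so that the torsion subgroup is well behaved, which follows immediately from the standing finiteness hypotheses combined with the finiteness of each $\bX_i$ in the $1$-minimal model.
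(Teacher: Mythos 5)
Your proposal is correct and follows essentially the same route as the paper, which likewise proves the statement by a diagram chase on the $n$-step equivalence square expanded to include the cokernel projections (as in diagram \eqref{eq:ladder-def}). Your write-up merely spells out the details the paper leaves implicit, namely the descent of $e_n$ to the cokernels, the inverse obtained from $e_n^{-1}$ and $f_n^{-1}$, and the (automatic) preservation of torsion subgroups under an isomorphism.
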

\begin{proof}
The result follows from the definition of $n$-step equivalence
by a diagram chase using the diagram 
\eqref{eq:ladder-n-step} expanded, as in diagram
\eqref{eq:ladder-def}, to include the cokernels.
\end{proof}

\subsection{Distinguishing homotopy $2$-types}
\label{subsec:2-type}

Making use of the above setup, we obtain in this section an invariant of 
$2$-type for spaces based on a cohomological comparison 
between their integral cochain algebra and the corresponding 
$1$-minimal model.

In this section, all our spaces are assumed to be connected 
$\Delta$-complexes $X$ with finitely generated cohomology 
groups $H^1(X;\Z)$ and $H^2(X;\Z)$ (for instance, presentation 
$2$-com\-plexes for finitely presented groups). Let $C^*(X;\Z)$ 
be the simplicial cochain algebra 
of such a space, viewed as a $\cup_1$-algebra over $\Z$. 
We consider the sequence of finite abelian groups 
$\kappa_n(X)\coloneqq \kappa_n(C^*(X;\Z))$, for $n\ge 1$,
The next result shows that these groups depend only on the 
homotopy $2$-type of $X$, or, equivalently, on the isomorphism 
type of its fundamental group.

\begin{theorem}
\label{thm:iso-pi1}
Let $X$ and $X'$ be two $\Delta$-complexes as above. 
\begin{enumerate}[itemsep=2pt]
\item \label{kn1}
If $\pi_1(X)\cong \pi_1(X')$, then $\kappa_n(X)\cong \kappa'_n(X)$ 
for all $n\ge 1$.
\item \label{kn2}
If $\kappa_n(X)\not\cong \kappa'_n(X)$ 
for some $n\ge 1$, then the cochain algebras $C^*(X;\Z)$ and 
$C^*(X';\Z)$  are not $n$-step equivalent.
\end{enumerate}
\end{theorem}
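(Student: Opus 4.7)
The plan is as follows. Part~\eqref{kn2} will follow immediately as the contrapositive of Proposition~\ref{cor:1iso-kappa}, so I focus on part~\eqref{kn1}.

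The heart of the argument will be an auxiliary claim: \emph{if $\varphi\colon A\to A'$ is a $1$-quasi-isomorphism of binomial $\cup_1$-dgas over $\Z$ (satisfying the running hypotheses) and $\coker(H^2(\varphi))$ is torsion-free, then $\kappa_n(A)\cong\kappa_n(A')$ for every $n\ge 1$.} To prove this, I will invoke Theorem~\ref{thm:1-min-lift} to produce a lift $\widehat\varphi\colon\mcm\to\mcm'$ which is an isomorphism of $1$-minimal models. Since this lift is compatible with the structure maps only up to homotopy, Theorem~\ref{thm:cup-coho} will ensure that diagram~\eqref{eq:ladder-def} commutes in cohomology, and a short diagram chase will produce the exact sequence
\begin{equation*}
0 \longrightarrow \coker \bigl(H^2(\rho_n)\bigr) \xrightarrow{\;\bar\varphi_n\;} \coker \bigl(H^2(\rho'_n)\bigr) \longrightarrow \coker \bigl(H^2(\varphi)\bigr) \longrightarrow 0.
\end{equation*}
Torsion-freeness of the rightmost term will then force every torsion element of the middle term into the image of $\bar\varphi_n$, and the injectivity of $\bar\varphi_n$ will in turn force its preimage to be torsion; this yields the desired isomorphism on torsion subgroups.

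With this lemma in hand, I will prove part~\eqref{kn1} by linking $C^\ast(X;\Z)$ and $C^\ast(X';\Z)$ by a zig-zag of $1$-quasi-isomorphisms each satisfying the hypothesis of the lemma. The first step is the pullback $\iota^\sharp\colon C^\ast(X;\Z)\to C^\ast(X^{(2)};\Z)$ along the inclusion $X^{(2)}\hookrightarrow X$; this is a $\cup_1$-dga $1$-quasi-isomorphism whose cokernel on $H^2$ embeds into the torsion-free group $C^3(X;\Z)$, and the same argument applies to $X'$. Next, since $\pi_1(X)\cong\pi_1(X')$, Theorem~\ref{thm:quasi-iso} in its explicit form~\eqref{eq:wh} (with $n=2$) supplies a zig-zag
\begin{equation*}
C^\ast(X^{(2)};\Z)\xrightarrow{\,q_X^{\sharp}\,} C^\ast(\overline{X}^{(2)};\Z)\xleftarrow{\;f^\sharp\;}C^\ast(\overline{X'}^{(2)};\Z)\xleftarrow{\,q_{X'}^{\sharp}\,} C^\ast(X'^{(2)};\Z),
\end{equation*}
in which the middle arrow is a genuine quasi-isomorphism (cokernel zero) and the outer arrows pull back along collapses of wedges of $2$-spheres, so their $H^2$-cokernels are torsion-free. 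Applying the auxiliary lemma four times and concatenating will yield $\kappa_n(X^{(2)})\cong\kappa_n(X'^{(2)})$, and hence $\kappa_n(X)\cong\kappa_n(X')$.

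The main point I expect to require careful verification is that all four arrows in~\eqref{eq:wh}, together with $\iota^\sharp$, are genuine morphisms of binomial $\cup_1$-dgas; this will require the underlying maps of spaces to be realized as order-preserving $\Delta$-maps, per the naturality statement of Section~\ref{subsec:cup-i}. The authors' explicit description of the $1$-equivalence immediately after Theorem~\ref{thm:quasi-iso} is arranged precisely for this purpose, so while this is the technical step that must be checked, it should not turn out to be a serious obstacle.
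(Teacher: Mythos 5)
Your proposal is correct and follows essentially the same route as the paper: reduce to the $2$-skeleta (whose $H^2$-cokernels land in the free group $\im(\delta^2)$), connect them by the Whitehead zig-zag \eqref{eq:wh} through wedges of $2$-spheres, and observe that every arrow is a $1$-quasi-isomorphism whose cokernel on $H^2$ is torsion-free. Your packaging of the key step as a short exact sequence $0 \to \coker H^2(\rho_n) \to \coker H^2(\rho'_n) \to \coker H^2(\varphi) \to 0$ (valid because $\im H^2(\rho'_n) \subseteq \im H^2(\varphi)$ when the lift $\widehat{\varphi}_n$ is an isomorphism) is a slightly cleaner organization of the diagram chase the paper performs in \eqref{eq:wh-h2}, but it is the same argument.
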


\begin{proof}
Let $\rho\colon \mcm\to  C^*(X;\Z)$ and 
$\rho'\colon \mcm'\to  C^*(X';\Z)$  be $1$-minimal models for 
the respective cochain algebras. 
We begin by proving property \eqref{kn1} in the case where
$X$ and $X^\prime$ are $2$-dimensional. 
By assumption, $\pi_1(X)\cong \pi_1(X')$, that is, 
$X$ and $X'$ have the same $2$-type.
As in the proof of Theorem \ref{thm:quasi-iso}, it follows that  
there is a homotopy equivalence, $f\colon \overline{X}\to \overline{X'}$,  
where $\overline{X} = X \vee \bigvee_{i\in I} S^2_i$ 
and $\overline{X'} =X'\vee \bigvee_{j\in J} S^2_j$, 
for some indexing sets $I$ and $J$. 
We let $q\colon \overline{X} \to X$ and 
$q'\colon \overline{X'} \to X'$ be the maps that collapse the 
wedges of $2$-spheres to the basepoint, and we 
let $\bar\rho\colon \overline{\mcm}\to  C^*(\overline{X};\Z)$ and 
$\bar\rho'\colon \overline{\mcm'}\to  C^*(\overline{X'};\Z)$ be 
$1$-minimal models for the respective cochain algebras. 

By Theorem \ref{thm:1-min-lift}, the induced maps on cochain 
algebras, $f^{\sharp}$, $q^{\sharp}$, and ${q'}^{\sharp}$, lift to 
maps between $1$-minimal models. For each $n\ge 1$, this leads 
to  the following diagram, which commutes up to homotopy.
\begin{equation}
\label{eq:wh-1min}
\begin{tikzcd}[row sep=22pt, column sep=22pt]
\mcm_n \ar[d, "\rho"] \ar[r, "\widehat{q^{\sharp}}", "\simeq" '] 
& \overline{\mcm}_n \ar[d, "\bar\rho_n "] 
& \overline{\mcm}'_n \ar[d, "\bar\rho'_n"] \ar[l, "\widehat{f^{\sharp}}" ', "\simeq"]  
&  \mcm'_n \ar[d, "\rho'_n "] \ar[l, "\widehat{{q'}^{\sharp}}" ', "\simeq"] 
\\
C^\ast(X;\Z) \ar[r, hook, "q^{\sharp}"] 
&C^\ast(\overline{X};\Z)
& C^\ast(\overline{X'};\Z)
\arrow{l}{\simeq}[swap, pos=0.4]{f^{\sharp}} 
&  C^\ast(X';\Z) . \ar[l, hook', pos=0.4, "{q'}^{\sharp}"']
\end{tikzcd}
\end{equation}
Since all the maps on the bottom row are $1$-quasi-isomorphisms, 
the maps on the top row are isomorphisms, again by Theorem \ref{thm:1-min-lift}.
Applying now the $H^2(-)$ functor to diagram \eqref{eq:wh-1min}, we obtain 
the following commuting diagram.
\begin{equation}
\label{eq:wh-h2}
\begin{tikzcd}[row sep=22pt, column sep=22pt]
H^2(\mcm_n) \ar[d, "H^2(\rho_n)"] \ar[r, "H^2(\widehat{q^{\sharp}})", "\simeq" '] 
& H^2(\overline{\mcm}_n) \ar[d, "H^2(\bar\rho_n)"] 
& H^2(\overline{\mcm}'_n) \ar[d, "H^2(\bar\rho'_n)"] \ar[l, "H^2(\widehat{f^{\sharp}})" ', "\simeq"]  
& H^2(\mcm'_n) \ar[d, "H^2(\rho'_n) "] \ar[l, "H^2(\widehat{{q'}^{\sharp}})" ', "\simeq"] 
\\
H^2(X;\Z) \ar[r, hook, "H^2(q)"]  \ar[d, two heads]
& H^2(\overline{X};\Z) \ar[d, two heads]
& H^2(\overline{X'};\Z) \ar[d, two heads]
\arrow{l}{\simeq}[swap, pos=0.4]{H^2(f)} 
&  H^2(X';\Z) \ar[l, hook', pos=0.4, "H^2(q')"'] \ar[d, two heads]
\\
\coker(H^2(\rho_n)) \ar[r, hook]
& \coker(H^2(\bar\rho_n)) 
& \coker(H^2(\bar\rho'_n)) \ar[l, pos=0.4, "\simeq" ']
& \coker(H^2(\rho'_n)) .\ar[l, hook']
\end{tikzcd}
\end{equation}

From the way the collapse map $q$ is defined, the induced homomorphism 
$H^2(q)$ may be identified with the natural inclusion $H^2(X,\Z)\to H^2(X,\Z)\oplus \Z^{I}$; 
thus the induced map on cokernels restricts to an isomorphism on torsion subgroups. 
Similar considerations apply to the map $H^2(q')$, while of course $H^2(f)$ is an 
isomorphism. It follows that $\Tors(\coker(H^2(\bar\rho_n)))\cong \Tors(\coker(H^2(\bar\rho'_n)))$.
This completes the proof of property \eqref{kn1} in the
case where $X$ and $X^\prime$ are $2$-dimensional.

Now let $X$ be an arbitrary $\Delta$-complex satisfying our assumptions. 
We let  $(C^*(X;\Z),\delta^*)$ be its simplicial cochain complex, 
$i\colon X^{(2)} \inj X$ the inclusion of the $2$-skeleton into $X$, 
and $\rho^{(2)}\colon \mcm^{(2)} \to C^*(X^{(2)};\Z)$ a  
$1$-minimal model for $X^{(2)}$. 
The induced homomorphism 
$H^2(i)\colon H^2(X;\Z)\to H^2(X^{(2)};\Z)$ may be 
identified with the natural inclusion
$H^2(X;\Z) \inj  H^2(X;\Z) \oplus\im(\delta^2)$. 
Since $\im(\delta^2)\subset C^3(X;\Z)$ is a free abelian group, 
the map induced by $H^2(i)$ on cokernels, 
$\coker(H^2(\rho_n)) \to  \coker(H^2(\rho^{(2)}_n))$, 
restricts to an isomorphism on torsion subgroups, and the proof 
of part \eqref{kn1} is complete.

Part \eqref{kn2} follows at once from Corollary \ref{cor:1iso-kappa}.
\end{proof}

\subsection{Integral versus rational $1$-minimal models}
\label{subsec:zq}

In this final section we give a family of links in the $3$-sphere
that can be distinguished from each other using the integral  
$1$-minimal model, but are not distinguished from
each other using the corresponding approach
in rational homotopy theory. We begin with a quick 
review of triple Massey products and the cohomology 
of link complements. 

We refer to \cite{May-1969} for the 
general definition and properties of Massey products. Here 
we restrict our attention to triple Massey products of elements 
in the first cohomology. Given a dga $(A,d)$ and elements 
$u_1, u_2, u_3$ in $H^1(A)$ with $u_1 u_2 = u_2 u_3=0$, let
$c_i$ be cocycles with cohomology classes
$[c_i] = u_i$, and let $c_{i,j}$ for $i<j$
be elements in $A^1$ with $dc_{i,j} = c_i c_j$.
Then $c_1 c_{2,3} + c_{1,2}c_3$ is a $2$-cocyle;
let $\la u_1, u_2, u_3 \ra$ denote the subset
of $H^2(A)$ consisting of the set of cohomology
classes $[c_1 c_{2,3} + c_{1,2}c_3]$ of this type.
It follows that the difference between any two elements in
$\la u_1, u_2, u_3 \ra$ is an element in
$u_1 \cup H^1(A) + H^1(A) \cup u_3$.

Now let $L = \bigcup_{i=1}^n L_i$ be an $n$-component link 
in the $3$-sphere, with complement $X_L = S^3\setminus L$. 
This space has the homotopy type of a finite, $2$-dimensional CW-complex;  
moreover, $H^1(X_L;\Z) = \Z^n$, with basis elements corresponding 
by Alexander duality to generators of $H_1(L_i;\Z)$, and with 
$H^2(X_L;\Z) = \Z^{n-1}$ generated by the Lefschetz duals 
$\gamma_{ij}$ to paths from $L_i$ to $L_j$.
A formula for the Massey products of elements in the
first cohomology of a CW-complex in terms
of the Magnus coefficients of the attaching maps
of the $2$-cells is given in \cite{Porter}, along 
with a proof that Massey products in the complement 
of a link and Milnor's $\bar{\mu}$-invariants of the link 
determine each other.
 
\begin{figure}
\[
\definecolor{linkcolor0}{rgb}{0.85, 0.15, 0.15}
\definecolor{linkcolor1}{rgb}{0.15, 0.15, 0.85}
\definecolor{linkcolor2}{rgb}{0.15, 0.85, 0.15}
\begin{tikzpicture}[line width=1.4, line cap=round, line join=round, scale=0.65]
  \begin{scope}[color=linkcolor0]
    \draw (2.58, 0.80) .. controls (2.57, 0.27) and (1.94, 0.07) .. 
          (1.32, 0.07) .. controls (0.07, 0.08) and (0.07, 2.39) .. 
          (0.07, 4.27) .. controls (0.07, 6.16) and (0.07, 8.46) .. 
          (1.34, 8.46) .. controls (1.92, 8.46) and (2.61, 8.45) .. (2.61, 8.04);
    \draw (2.61, 7.83) .. controls (2.61, 7.66) and (2.61, 7.48) .. (2.60, 7.31);
    \draw (2.60, 7.31) .. controls (2.60, 7.10) and (2.60, 6.89) .. (2.60, 6.68);
    \draw (2.60, 6.68) .. controls (2.60, 6.56) and (2.60, 6.45) .. (2.60, 6.33);
    \draw (2.60, 6.12) .. controls (2.60, 6.02) and (2.60, 5.92) .. (2.60, 5.82);
    \draw (2.60, 5.61) .. controls (2.60, 5.53) and (2.60, 5.44) .. (2.60, 5.36);
    \draw (2.60, 5.36) .. controls (2.59, 4.76) and (2.59, 4.15) .. (2.59, 3.55);
    \draw (2.59, 3.55) .. controls (2.59, 3.44) and (2.59, 3.33) .. (2.59, 3.23);
    \draw (2.59, 3.01) .. controls (2.59, 2.96) and (2.59, 2.92) .. (2.59, 2.87);
    \draw (2.58, 2.66) .. controls (2.58, 2.55) and (2.58, 2.45) .. (2.58, 2.35);
    \draw (2.58, 2.35) .. controls (2.58, 2.16) and (2.58, 1.97) .. (2.58, 1.78);
    \draw (2.58, 1.78) .. controls (2.58, 1.53) and (2.58, 1.27) .. (2.58, 1.02);
  \end{scope}
  \begin{scope}[color=linkcolor1]
    \draw (7.70, 0.68) .. controls (7.71, 0.25) and (8.28, 0.22) .. 
          (8.78, 0.22) .. controls (9.85, 0.23) and (9.86, 2.48) .. 
          (9.86, 4.29) .. controls (9.87, 6.10) and (9.88, 8.34) .. 
          (8.73, 8.36) .. controls (8.20, 8.36) and (7.59, 8.36) .. (7.60, 7.90);
    \draw (7.60, 7.90) .. controls (7.60, 7.71) and (7.60, 7.51) .. (7.61, 7.31);
    \draw (7.61, 7.31) .. controls (7.61, 7.13) and (7.61, 6.94) .. (7.61, 6.75);
    \draw (7.62, 6.54) .. controls (7.62, 6.44) and (7.62, 6.35) .. (7.62, 6.25);
    \draw (7.62, 6.04) .. controls (7.63, 5.91) and (7.63, 5.78) .. (7.63, 5.65);
    \draw (7.63, 5.65) .. controls (7.63, 5.52) and (7.63, 5.39) .. (7.64, 5.26);
    \draw (7.64, 5.26) .. controls (7.64, 5.16) and (7.64, 5.06) .. (7.64, 4.96);
    \draw (7.64, 4.74) .. controls (7.65, 4.25) and (7.66, 3.77) .. (7.66, 3.28);
    \draw (7.67, 3.06) .. controls (7.67, 2.96) and (7.67, 2.86) .. (7.67, 2.75);
    \draw (7.67, 2.75) .. controls (7.67, 2.59) and (7.68, 2.42) .. (7.68, 2.25);
    \draw (7.68, 2.25) .. controls (7.68, 2.13) and (7.68, 2.00) .. (7.69, 1.87);
    \draw (7.69, 1.65) .. controls (7.69, 1.40) and (7.70, 1.15) .. (7.70, 0.90);
  \end{scope}
  \begin{scope}[color=linkcolor2]
    \draw (7.70, 0.79) .. controls (8.31, 0.78) and (8.29, 2.81) .. 
          (8.27, 4.34) .. controls (8.26, 5.85) and (8.24, 7.90) .. (7.70, 7.90);
    \draw(7.49, 7.91) .. controls (5.86, 7.91) and (4.23, 7.92) .. (2.61, 7.93);
    \draw (2.61, 7.93) .. controls (2.28, 7.94) and (1.90, 7.91) .. 
          (1.90, 7.63) .. controls (1.89, 7.38) and (2.21, 7.31) .. (2.50, 7.31);
    \draw (2.71, 7.31) .. controls (4.31, 7.31) and (5.90, 7.31) .. (7.50, 7.31);
    \draw (7.71, 7.31) .. controls (7.84, 7.31) and (7.84, 7.13) .. 
          (7.84, 6.98) .. controls (7.83, 6.81) and (7.76, 6.64) .. (7.62, 6.65);
    \draw (7.62, 6.65) .. controls (5.98, 6.66) and (4.35, 6.67) .. (2.71, 6.68);
    \draw (2.49, 6.68) .. controls (2.23, 6.68) and (1.93, 6.68) .. 
          (1.93, 6.46) .. controls (1.93, 6.24) and (2.29, 6.23) .. (2.60, 6.23);
    \draw (2.60, 6.23) .. controls (4.27, 6.20) and (5.95, 6.17) .. (7.62, 6.15);
    \draw (7.62, 6.15) .. controls (7.75, 6.14) and (7.85, 6.03) .. 
          (7.84, 5.90) .. controls (7.83, 5.79) and (7.82, 5.65) .. (7.74, 5.65);
    \draw (7.52, 5.65) .. controls (5.88, 5.67) and (4.24, 5.69) .. (2.60, 5.71);
    \draw (2.60, 5.71) .. controls (2.32, 5.72) and (1.98, 5.72) .. 
          (1.98, 5.55) .. controls (1.98, 5.38) and (2.25, 5.37) .. (2.49, 5.36);
    \draw (2.70, 5.36) .. controls (4.31, 5.33) and (5.92, 5.30) .. (7.53, 5.26);
    \draw (7.74, 5.26) .. controls (7.82, 5.26) and (7.87, 5.17) .. 
          (7.87, 5.08) .. controls (7.87, 4.96) and (7.76, 4.88) .. (7.64, 4.85);
    \draw[dashed] (7.64, 4.85) .. controls (5.99, 4.43) and (4.34, 4.00) .. (2.69, 3.58);
    \draw(2.48, 3.52) .. controls (2.29, 3.47) and (2.05, 3.41) .. 
          (2.05, 3.26) .. controls (2.06, 3.11) and (2.35, 3.12) .. (2.59, 3.12);
    \draw (2.59, 3.12) .. controls (4.28, 3.14) and (5.97, 3.15) .. (7.67, 3.17);
    \draw(7.67, 3.17) .. controls (7.79, 3.17) and (7.89, 3.08) .. 
          (7.88, 2.96) .. controls (7.87, 2.87) and (7.87, 2.75) .. (7.78, 2.75);
    \draw (7.56, 2.75) .. controls (5.90, 2.76) and (4.25, 2.76) .. (2.59, 2.76);
    \draw (2.59, 2.76) .. controls (2.33, 2.76) and (2.04, 2.77) .. 
          (2.02, 2.56) .. controls (2.01, 2.38) and (2.26, 2.35) .. (2.48, 2.35);
    \draw (2.69, 2.35) .. controls (4.32, 2.32) and (5.95, 2.29) .. (7.57, 2.26);
    \draw (7.79, 2.25) .. controls (7.89, 2.25) and (7.90, 2.12) .. 
          (7.90, 2.01) .. controls (7.89, 1.88) and (7.81, 1.76) .. (7.69, 1.76);
    \draw (7.69, 1.76) .. controls (6.02, 1.77) and (4.35, 1.77) .. (2.69, 1.78);
    \draw (2.47, 1.78) .. controls (2.26, 1.78) and (2.11, 1.58) .. 
          (2.10, 1.35) .. controls (2.08, 1.10) and (2.31, 0.92) .. (2.58, 0.91);
    \draw (2.58, 0.91) .. controls (4.29, 0.87) and (5.99, 0.83) .. (7.70, 0.79);
  \end{scope}
  \node at (1.7,7.6) {\footnotesize$1$};
  \node at (1.7,5.6) {\footnotesize$2$};
  \node at (1.9,4.8) {\footnotesize$\vdots$};
  \node at (1.9,4.2) {\footnotesize$\vdots$};
  \node at (1.7,2.6) {\footnotesize$n$};
  \draw[color=linkcolor0, ->] (0.08,4.6)--(0.08,4.7);
  \node at (0.65, 4.6) {\footnotesize$L_1$};
  \draw[color=linkcolor1, ->] (9.87,4.6)--(9.87,4.7);
  \node at (10.44, 4.6) {\footnotesize$L_2$};
  \draw[color=linkcolor2, ->] (8.27,4.6)--(8.27,4.7);
  \node at (8.84, 4.6) {\footnotesize$L_3$};
\end{tikzpicture}
\]
\caption{Generalized Borromean Rings}
\label{borromean}
\end{figure}
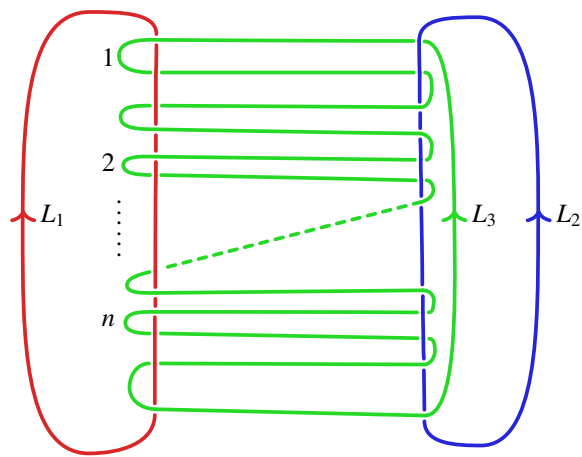

Consider now the following family of links in the $3$-sphere. 
For each $n\ge 1$, let $L(n)$ be the link pictured in Figure \ref{borromean}, 
where the pattern in the middle repeats $n$ times. 
Note that $L(1)$ is the well-known Borromean rings, and that the three 
components of $L(n)$ have pairwise linking numbers equal to $0$, 
for all $n\ge 1$. 
We denote by $X(n)=X_{L(n)}$ the complement of $L(n)$ in $S^3$, 
and we let $\rho(n) \colon \mcm(n)\to A(n)$ be a $1$-minimal model 
for the cochain algebra $A(n)\coloneqq C^*(X(n); \Z)$. 

\begin{proposition}
\label{prop:borro}
For the links $L(n)$ with complement $X(n)$ described above, we have:
\begin{enumerate}[itemsep=2pt]
\item \label{borro1}
The cokernel of $H^2(\rho_2(n))$ is isomorphic to $\Z_n \oplus \Z_n$, 
and so $\kappa_2(X(n))=\Z_n \oplus \Z_n$.
\item \label{borro2}
The cochain algebras $C^*(X(m); \Z)$ and $C^*(X(n); \Z)$
are not $2$-step equivalent for $m\ne n$.
\item \label{borro3}
The Sullivan algebras $A_{\PL}(X(m))$ and 
$A_{\PL}(X(n))$ are $2$-step equivalent for all 
positive integers $m$ and $n$.
\end{enumerate}
\end{proposition}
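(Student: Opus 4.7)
The plan is to build the $1$-minimal model for $A(n) = C^\ast(X(n);\Z)$ up through step $2$, identify the cokernel of $H^2(\rho_2(n))$ in terms of the triple Massey products of the link $L(n)$, and then use the relationship between those Massey products and Milnor's $\bar\mu$-invariants to pin down the cokernel exactly. Since the pairwise linking numbers of $L(n)$ all vanish, $H^\ast(X(n);\Z)=\Z\oplus\Z^3\oplus\Z^2$ is the trivial graded ring in which all positive-degree products are zero. Pick cocycles $c_i\in C^1(X(n);\Z)$ whose classes $u_i$ form the basis of $H^1$ dual to the meridians, and let $\gamma_1,\gamma_2$ be the Alexander-dual basis of $H^2(X(n);\Z)$. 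Set $\mcm_1=(\T(\{x_1,x_2,x_3\}),d_{\bz})$ with $\rho_1(x_i)=c_i$. By Proposition \ref{prop:coho-ext}, $H^2(\mcm_1)\cong \bigwedge^2\Z^3\cong \Z^3$, and since every $u_iu_j=0$ in $H^2(X(n);\Z)$, this whole group lies in $\ker H^2(\rho_1)$. Consequently $\bX_2=\{y_{12},y_{13},y_{23}\}$ with $dy_{ij}=x_i\otimes x_j$, and we extend $\rho_2$ by choosing $1$-cochains $h_{ij}\in A(n)^1$ with $dh_{ij}=c_ic_j$ and setting $\rho_2(y_{ij})=h_{ij}$.

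Applying the spectral sequence of the Hirsch extension (Lemma \ref{lem:ss}) to each of the three elementary extensions that together constitute $\mcm_1 \inj \mcm_2$, one finds that cohomology classes in $H^2(\mcm_2)$ not pulled back from $H^2(\mcm_1)$ are represented by cocycles of the form $z_{ijk}=x_i\otimes y_{jk}+y_{ij}\otimes x_k$ for $\{i,j,k\}=\{1,2,3\}$ (with the obvious variants using $\cup_1$-products when the $y$-subscripts are out of order). A short Leibniz-rule check confirms that each $z_{ijk}$ is a cocycle, and $\rho_2(z_{ijk})=c_ih_{jk}+h_{ij}c_k$ is the defining representative of the triple Massey product $\la u_i,u_j,u_k\ra\in H^2(X(n);\Z)$. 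The classes pulled back from $H^2(\mcm_1)$ map to zero under $H^2(\rho_2)$, so $\im H^2(\rho_2(n))$ is generated by the Massey products alone. Porter's Magnus-expansion formula \cite{Porter} applied to the Wirtinger presentation of $L(n)$ — equivalently, the identification of the first non-vanishing triple Massey products with Milnor's $\bar\mu(ijk)$-invariants — then gives $\la u_1,u_2,u_3\ra=\pm n\gamma_1$ and $\la u_2,u_3,u_1\ra=\pm n\gamma_2$, with all remaining orderings lying in the $\Z$-span of these two. Hence $\im H^2(\rho_2(n))=n\Z\gamma_1\oplus n\Z\gamma_2$ and $\coker H^2(\rho_2(n))\cong \Z_n\oplus\Z_n$, proving \eqref{borro1}.

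Part \eqref{borro2} is then immediate: by Corollary \ref{cor:1iso-kappa}, $\kappa_2$ is an invariant of $2$-step equivalence, and $\Z_n\oplus\Z_n\not\cong\Z_m\oplus\Z_m$ for $n\ne m$, so $C^\ast(X(n);\Z)$ and $C^\ast(X(m);\Z)$ are not $2$-step equivalent. For part \eqref{borro3}, the rational cohomology ring $H^\ast(X(n);\Q)=\Q\oplus\Q^3\oplus\Q^2$ with all positive-degree products zero is the same for every $n$, so the $2$-step rational minimal models $\mcm_{2,\Q}$ are all canonically the same cdga: generators $x_i$ in degree $1$ with $dx_i=0$ and generators $y_{ij}$ in degree $1$ with $dy_{ij}=x_ix_j$. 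Take $f_2=\id_{\mcm_{2,\Q}}$. Reading the preceding Massey calculation over $\Q$, the cocycles $z_{ijk}$ map under $\rho_{2,\Q}$ to $\pm m\gamma_i^{(m)}$ for $X(m)$ and to $\pm n\gamma_i^{(n)}$ for $X(n)$; the $\Q$-linear isomorphism $e_2\colon H^2(A_{\PL}(X(m)))\to H^2(A_{\PL}(X(n)))$ defined by $\gamma_i^{(m)}\mapsto (n/m)\gamma_i^{(n)}$ (well-defined because $m\in\Q^\times$) then makes the diagram \eqref{eq:ladder-n-step-Q} commute, which is the required $2$-step equivalence over $\Q$.

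The sole substantive obstacle is the Massey product calculation in the second paragraph: one must verify that $\la u_1,u_2,u_3\ra$ and $\la u_2,u_3,u_1\ra$ yield exactly $\pm n$ times independent generators of $H^2(X(n);\Z)$. This can be carried out explicitly using Porter's formula on a presentation $2$-complex for $\pi_1(X(n))$ derived from the Wirtinger presentation of $L(n)$, where the $n$-fold braiding of $L_3$ around $L_1$ and $L_2$ is precisely what produces the factor of $n$ in the Magnus expansions of the relators. Everything else — the identification of $\mcm_1$ and $\mcm_2$, the diagram chase for inequivalence, and the rational rescaling — is routine from the theory developed earlier in the paper.
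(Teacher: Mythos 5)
Your overall strategy coincides with the paper's: build $\mcm_1$ and $\mcm_2$ explicitly, compute $H^2(\mcm_2(n))$ via the Hirsch-extension spectral sequence, identify the image of $H^2(\rho_2(n))$ with triple Massey products, evaluate those via Porter's Magnus-coefficient formula to get the factor of $n$, and finish parts \eqref{borro2} and \eqref{borro3} by the $\kappa_2$-invariance and the rational rescaling $e_2$. Parts \eqref{borro2} and \eqref{borro3} are essentially identical to the paper's argument and are fine.

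There is, however, a genuine gap in your computation of $H^2(\mcm_2(n))$, and hence in the claim that $\im H^2(\rho_2(n)) = n\Z\gamma_1 \oplus n\Z\gamma_2$. You assert that the classes in $H^2(\mcm_2)$ not pulled back from $H^2(\mcm_1)$ are represented by the cocycles $z_{ijk}=x_i\otimes y_{jk}+y_{ij}\otimes x_k$ with $\{i,j,k\}=\{1,2,3\}$, i.e.\ with \emph{distinct} indices. This is false: the spectral sequence of the extension gives $E_\infty^{2,0}=E_\infty^{0,2}=0$ and $E_\infty^{1,1}\cong\Z^8$, so $H^2(\mcm_2(n))\cong\Z^8$, and only two of the eight basis classes have distinct indices. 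The remaining six are the repeated-index Massey products $\la [x_i],[x_i],[x_j]\ra$ and $\la [x_i],[x_j],[x_j]\ra$, represented by cocycles such as $x_1\otimes x_{1,2}-\z_2(x_1)\otimes x_2$ — note these involve the binomial elements $\z_2(x_i)$ and are \emph{not} of your form $z_{ijk}$, so no reindexing recovers them. Since your argument computes the image of $H^2(\rho_2(n))$ only on the two distinct-index classes, it leaves open the possibility that one of the other six generators has image outside $n\Z\gamma_1\oplus n\Z\gamma_2$, which would shrink the torsion of the cokernel and invalidate part \eqref{borro1} (and hence \eqref{borro2}). The repair is exactly the observation the paper makes: every $2$-component sublink of $L(n)$ is a trivial link, so all six repeated-index Massey products vanish in $H^2(X(n);\Z)$, and the image is indeed generated by $\la u_1,u_2,u_3\ra$ and $\la u_1,u_3,u_2\ra$ alone. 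With that one sentence added (and the corrected description of the $\Z^8$), your proof goes through and matches the paper's.
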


\begin{proof}
\eqref{borro1}
To compute the cokernel of $H^2(\rho_2(n))$, we proceed as follows.
First note that $\mcm_1(n) = (\T(\{x_1, x_2, x_3\}, d_{\bz})$, with
$\rho_1(n)(x_i)$ equal to a cocycle whose cohomology class
$u_i$ is Alexander dual to the generator of $H_1(L_i;\Z)$ 
given by the orientation of $L_i$. 

Since all linking numbers for $L(n)$ are zero, we have that all
cup products of elements in $H^1(X(n);\Z)$ 
are zero, and it follows that $\mcm_2(n) 
= (\T(\{x_1, x_2, x_3, x_{1,2}, x_{1,3}, x_{2,3}\}, d_2(n))$
where $d_2(n)(x_{i,j}) = x_i \otimes x_j$ 
and $\rho_2(n)$ is any extension of $\rho_1(n)$. 
In the spectral sequence of the Hirsch extension
$\mcm_1(n)\inj \mcm_2(n)$,
it follows from Theorem \ref{thm:coho-iso-R} that
the $E_2$ term is the tensor product of two exterior algebras,
\begin{equation}
\label{eq:e2-borro}
E_2=\bwedge ([x_1], [x_2], [x_3])\otimes_\Z 
  \bwedge ([x_{1,2}], [x_{2,3}], [ x_{1,3}]),
\end{equation}
where $[x]$ denotes the image of $x$ in $E_2$.
The differential $d_2\colon E_2\to E_2$ is given by 
$d_2[x_i]=0$ and $d_2[x_{i,j}] = [x_i x_j]$.
Then by direct computation of the $E_\infty^{p,q}$
terms with $p+q=2$, it follows that
$H^2(\mcm_2(n)) = \Z^8$, with basis given by 
the triple Massey products 
\begin{equation}
\label{eq:massey-borro}
\begin{gathered}
\la [x_{1}], [x_{1}], [x_{2}]\ra,
\la [x_{1}], [x_{2}], [x_{2}]\ra,
\la [x_{1}], [x_{1}], [x_{3}]\ra, 
\la [x_{1}], [x_{3}], [x_{3}]\ra,\\
\hspace*{3pc} \la [x_{2}], [x_{2}], [x_{3}]\ra,
\la [x_{2}], [x_{3}], [x_{3}]\ra,
\la [x_{1}], [x_{2}], [x_{3}]\ra, 
\la [x_{1}], [x_{3}], [x_{2}]\ra.
\end{gathered}
\end{equation}
The correspondence between Massey products and 
elements in $E_\infty$ is indicated as follows.
An element in $E_\infty^{1,1}$ such as 
$[x_1] \otimes [x_{1,2}]$ is represented by the 
element in the set $\la [x_{1}], [x_{1}], [x_{2}]\ra$ given
by the cohomology class of the cocycle
$x_1 \otimes x_{1,2} - \z_2(x_1)\otimes x_2$ in
$\mcm_2(n)$ and  $\la [x_{1}], [x_{2}], [x_{3}]\ra$ is
taken to be the cohomology class of the cocycle
$x_1 \otimes x_{2,3} + x_{1,2} \otimes x_3$, which
corresponds to the element 
$[x_1] \otimes [x_{2,3}] - [x_3] \otimes [x_{1,2}]$ in
$E_\infty^{1,1}$.

We now determine the cokernel of $H^2(\rho_2(n))$.
By the naturality of Massey products, 
$H^2(\rho_2(n))$ sends a Massey product
$\la [x_i], [x_j], [x_k] \ra$ to $\la u_i, u_j, u_k \ra$.
All cup products of elements in 
$H^1(X(n);\Z)$ are zero, so each triple
Massey product contains only one element.  
Since each $2$-component sublink of $L$ is isotopic 
to the unlink, it follows that each of the first six Massey
products from \eqref{eq:massey-borro}  maps to zero. 
From the computations in \cite[Example 3, p.~46]{Porter},
we have that
$\la u_1, u_2, u_3 \ra = - n\gamma_{1,3}$ and
$\la u_1, u_3, u_2 \ra = n \gamma_{1,2}$.
Since $\{ \gamma_{1,3}, \gamma_{1,2}\}$ is a basis
for $H^2(X(n);\Z)$, the argument that 
$\coker H^2(\rho_2(n)) = \Z_n \oplus \Z_n$ is complete.

Part \eqref{borro2} follows at once from Part \eqref{borro1} 
and Theorem \ref{thm:iso-pi1}.

\eqref{borro3}
We now show that if the binomial cup-one dga 
$C^\ast(X(n);\Z)$ is replaced by the cdga $B(n) = A_{\PL}(X(n))$ 
and the integral $1$-minimal model $\mcm$ is replaced with the 
Sullivan rational $1$-minimal model $\mcm_{\Q}\simeq \mcm\otimes \Q$ 
from \cite{Sullivan}, then $B(m)$ and $B(n)$ are $2$-step
equivalent for all positive integers $m$ and $n$.

Let $\rho_{\Q}(n)\colon (\mcm_{\Q},d) \to  B(n)$ be the rational 
$1$-minimal model for $B(n)$.  Then $\mcm_{2,\Q}$ is the 
exterior algebra over $\Q$ given by 
$\bigwedge (y_1, y_2, y_3, y_{1,2}, y_{1,3},y_{2,3})$, 
with differential given by $dy_i=0$ and $dy_{i,j} = y_i \wedge y_j$.
Let $v_i \in H^1(B(n))$ be the elements that correspond to
the elements $u_i \in H^1(X(n);\Q)$,
and let $\omega_{i,j} \in H^2(B(n))$ be the elements
that correspond to the elements 
$\gamma_{i,j} \in H^2(X(n);\Q)$
via the zig-zag of quasi-isomorphisms
\begin{equation}
\label{Massey-products-equal}
\begin{tikzcd}[column sep=20pt]
C^\ast(X(n);\Q) \ar[r] & CA(X(n))
& A_{\PL}(X(n)) = B(n) \, . \ar[l]
\end{tikzcd}
\end{equation}
We can assume that $\rho_{1,\Q}(y_i) = v_i$ for $i \in\{ 1,2, 3\}$.
Since the maps in equation \eqref{Massey-products-equal}
are dga maps inducing isomorphisms on cohomology,
it follows from the computation of Massey products
in $C^\ast(X(n);\Q)$ that in $H^2(B(n))$
we have 
$\la v_1, v_2, v_3 \ra = -n\omega_{1,3}$ and 
$\la v_1, v_3, v_2 \ra = n \omega_{1,2}$, while 
all triple products of the form $\la v_i, v_j , v_k\ra$
with $\{ i,j,k\}$ a proper subset of $\{1,2,3\}$
are zero. It follows that 
$H^2(\rho_2) \la [y_1], [y_2], [y_3]\ra = -n\omega_{1,3}$ and 
$H^2(\rho_2) \la [y_1], [y_3], [y_2]\ra = n\omega_{1,2}$,
while 
$H^2(\rho_2) \la [y_i], [y_j], [y_k]\ra = 0$ if
 $\{ i,j,k\}\subsetneqq \{1,2,3\}$.
 
Now given positive integers $n$ and $m$, we define a homomorphism 
$e_{n,m} \colon H^{2}(B(n)) \to H^{2}(B(m))$ by
$\omega_{i,j} \mapsto  \frac{m}{n} \cdot \omega_{i,j}$
for $i = 1$ and $j\in \{2,3\}$.
Then $e_{n,m}$ is an isomorphism and the 
following diagram commutes, 
\begin{equation}
\label{eq:triangle-diagram}
\begin{tikzcd}[row sep=2.8 pc, column sep=0.7 pc]
       & H^{2}(\mcm_{2,\Q})
       \ar[dl, "H^{2}(\rho_{2,\Q}(n))"  '] 
       \ar[dr, "H^{2}(\rho_{2,\Q}(m))"  ] \\
H^{2}(B(n)) 
       \ar[rr, "e_{n,m}"  '] 
       &&
       H^{2}(B(m)) \, .
\end{tikzcd}
\end{equation}
The argument that over the rationals the link complements 
$X(m)$ and $X(n)$ are $2$-step equivalent for all $m, n\ge 1$ is complete. 
(We do not know whether the two link complements have the same 
rational homotopy type.)
\end{proof}

\section{Torsion-free nilpotent groups and integral $1$-minimal models}
\label{sect:nilpotent}

\subsection{Nilmanifolds, torsion-free nilpotent groups, and Hall polynomials}
\label{subsec:Hall poly}

We start by reviewing some well-known facts about nilmanifolds and 
finitely generated, torsion-free nilpotent groups, for short, $\TT$-groups. 
We refer to \cite{Cenkl-Porter-2000, Hall-1976, Lambe-Priddy-1982, 
Malcev-1951, Suciu-Wang-forum} for more details.

Given a $\TT$-group $G$, there is a refinement of the upper central series,  
$G=G_1 >  \cdots > G_n > G_{n+1}=1$, 
with each subgroup $G_i$ a normal subgroup of $G_{i+1}$, and 
each quotient $G_i/G_{i+1}$ an infinite cyclic group.
The integer $n$ is an invariant of the group, called the {\em length of $G$}.
From the well-known correspondence
\cite{Brown} between central extensions of a group $G$ by an
abelian group $B$ and elements in $H^2(G;B)$,
it follows that every $\TT$-group $G$ 
can be viewed as a multiplication defined on
the set $\Z^{n}$, where $n$ is the length of $G$, 
with the multiplication given by the cocycles representing 
the iterated central extensions.

More precisely, we can choose a \textit{Malcev basis}\/ $\{u_1,\dots, u_n\}$ 
for $G$, which satisfies $G_i=\langle G_{i+1}, u_i\rangle$. Consequently, 
each element $u\in G$ can be  written  uniquely as 
$u_1^{a_1}u_2^{a_2}\cdots u_n^{a_n}$ for some $a_i\in \Z$.
Using this basis, the group $G$, as a set, can be identified with $\Z^n$ 
via the map sending $u_1^{a_1}\cdots u_n^{a_n}$ to $a=(a_1,\dots, a_n)$. 
The multiplication in $G$ then takes the form
\begin{equation}
\label{eq:malcev-basis}
u_1^{a_1}\cdots u_n^{a_n} \cdot u_1^{b_1}\cdots u_n^{b_n}=
u_1^{\rho_1(a,b)}\cdots u_n^{\rho_n(a,b)},
\end{equation}
where each map $\rho_i\colon \Z^n\times \Z^n \to \Z$ 
is given by an integrally-valued rational polynomial. 
This procedure identifies 
the group $G$ with  the group $(\Z^n,\rho)$, with multiplication the map 
$\rho=(\rho_1,\dots,\rho_n)\colon \Z^n\times \Z^n \to \Z^n$ 
given by 
\begin{equation}
\label{eq:hall-poly}
\rho_i(x,y)= x_i + y_i 
+ \tau_i(x_1, \ldots x_{i-1},y_1, \ldots , y_{i-1}),
\end{equation}
where each $\tau_i$ is a linear combination of integrally valued rational 
polynomials of the form 
$\z_I(x_1, \ldots , x_{i-1})\z_J(y_1, \ldots , y_{i-1})$
with both $I$ and $J$ nonzero. 
The polynomials $\tau_i$ are called the 
\emph{Hall polynomials}\/ of the group $G$, see \cite{Hall-1976}.

The group $G=(\Z^n,\rho)$ is a discrete subgroup of the real Lie group $G\otimes \R$. 
The quotient space, $M=(G\otimes \R)/G$, is a compact, connected, orientable 
manifold, called a {\em nilmanifold}.  By construction, the fundamental group
$\pi_1(M)$ is isomorphic to $G$.  Moreover, the nilmanifold $M$ is an 
Eilenberg--MacLane space of type $K(G,1)$.  

Let $R$ be the subring of $\Q$ generated by the coefficients of $\rho$. 
As shown by Lambe and Priddy in \cite{Lambe-Priddy-1982}, one 
can associate with the group $G$ a Lie algebra over $R$, 
by setting $L(G)=(R^n,[\:,\:])$, with Lie bracket given by 
$[x,y]= \sigma(x,y)-\sigma(y,x)$, where $\sigma=(\sigma_1,\dots,\sigma_n)$ 
is the quadratic part of $\rho$. The real Lie algebra 
$\mathcal{L}(G)=L(G)\otimes_{R} \R$, then, coincides 
with the classical Lie algebra of $G\otimes \R$.
Moreover, as shown by Nomizu in \cite{Nomizu}, 
the Chevalley--Eilenberg complex $\bwedge^{*} \mathcal{L}(G)^{\vee}$ 
is a (commutative) minimal model over $\R$ for the nilmanifold $M$, and thus 
$H^{*}(M;\R)\cong H^*(\mathcal{L}(G);\R)$. 

In \cite{Lambe-Priddy-1982}, Lambe and Priddy strengthened Nomizu's result, 
by showing that $H^*(G;S)\cong H^*(L(G);S)$, where $S$ is a subring 
of $\Q$ obtained from $\Z$ by adjoining fractions of the 
form $1/n$, for $n$ large enough (depending on $G$). 
One can ask whether this isomorphism holds over the 
(possibly smaller) ring $R$. In \cite{Kuzmin-Semenov}, Kuzmin 
and Semenov showed that $H^*(G;\Z)\cong H^*(L(G);\Z)$ 
when $G$ is a free nilpotent group of class $2$. 
It still appears to be an open question when this 
is the case in general. 

\subsection{An equivalence between nilmanifolds and 
finitely-generated $1$-minimal models}
\label{subsec:nil-min}

In this section we show that there is a one-to-one correspondence 
between nilmanifolds (or, equivalently, $\TT$-groups) and finite 
colimits of Hirsch extensions that leads to an
equivalence between the category of isomorphism
classes of $\TT$-groups and isomorphism classes of 
finitely-generated $1$-minimal models.
We start with a correspondence between Hall polynomials
and differentials of generators in a colimit of Hirsch
extensions over the ring $R=\Z$.

Given a finite, ordered set $\bX = \{x_1, \ldots , x_n\}$, write 
$\bX^i = \{ x_1, \ldots , x_i\}$ and 
$\bY^i = \{y_1, \ldots , y_i\}$.
Let $\T(\bX^{i-1},\bY^{i-1})$ denote the submodule of 
$\T^1(\bX \cup \bY)$ consisting of linear combinations
of polynomials of the form
$\z_I(x_1, \ldots , x_{i-1})\z_J(y_1, \ldots , y_{i-1})$
with both $I$ and $J$ nonzero.
There is then a bijection between maps
$\rho\colon \Z^n \times \Z^n \to \Z^n$
satisfying \eqref{eq:hall-poly} where each $\tau_i$ is 
a linear combination of polynomials 
$\z_I(x_1, \ldots , x_{i-1})\z_J(y_1, \ldots , y_{i-1})$
with both $I$ and $J$ nonzero and 
elements $\tau_i$ with $\tau_1 = \tau_2 = 0$
and $\tau_i \in \T(\bX^{i-1},\bY^{i-1})$ 
for $3 \le i \le n$.

From the basis theorem for $\T^1(X)$ and the fact 
that $\T^2(\bX) = \T^1(\bX) \otimes \T^1(\bX)$, 
it follows that the map 
$e\colon \T(\bX^{i-1},\bY^{i-1}) \to  
\T^1(\bX^{i-1}) \otimes \T^1(\bX^{i-1})$ 
given by 
\begin{equation}
\label{eq:e-map}
e\left( \z_I(x_1, \ldots , x_n) \z_J(y_1, \ldots ,y_n)
\right) 
= - \z_I(x_1, \ldots , x_n)\otimes \z_J(x_1, \ldots ,x_n) 
\end{equation}
is a bijection. Given $\tau=(\tau_1,\dots, \tau_n)$ with 
$\tau_i \in \T(\bX^{i-1}, \bY^{i-1})$,
define a map $\rho_\tau \colon \Z^n \times \Z^n \to \Z^n$ by
\begin{equation}
\label{eq:rho-tau}
(\rho_\tau)_i(x,y) 
= x_i + y_i 
		+\tau_i(x_1, \ldots , x_{i-1}, y_1, \ldots y_{i-1}) 
\end{equation}
and set $d_\tau \colon \T(\bX) \to \T(\bX)$ equal to the 
unique degree-one map that satisfies the $\cupd$ formula 
and the graded Leibniz rule with $d_\tau(x_i) = - e\circ \tau_i$.

\begin{theorem}
\label{thm:equivalence}
Given $\tau=(\tau_1,\dots, \tau_n)$ as above, 
$G=(\Z^n,\rho_\tau)$ is a $\TT$-group if 
and only if $\T=(\T(\bX), d_{\tau})$ is a colimit of Hirsch 
extensions, in which case, $H^*(G;\Z)\cong H^*(\T)$. 

Moreover, this bijection between $\TT$-groups 
and finite colimits of Hirsch extensions 
induces an equivalence of 
categories between isomorphism 
classes of $\TT$-groups and aug\-mentation-preserving 
isomorphism classes of finitely-generated 
$1$-minimal models.
\end{theorem}

\begin{proof}
The first statement follows from the discussion above and
Lemma \ref{lem:h-group}.

The second statement follows from Corollaries \ref{cor:equiv-cat} 
and \ref{cor:1-mm-unique}, using the fact that the classifying space 
$BG$ is connected, and thus $C^*(BG;\Z)$ has a canonical augmentation.
\end{proof}

\subsection{On the integral cohomology of nilmanifolds}
\label{subsec:nil-coho}

We conclude this section with two examples (extracted from  \cite{Lambe-Priddy-1982}) 
which illustrate the computation of the integral $1$-minimal model of a $\TT$-group, 
and its use in the determination of the cohomology ring of the corresponding nilmanifold. 

\begin{example}
\label{ex:heisenberg} 
For a fixed integer $k>0$, let $G(k)$ be the group of 
upper triangular matrices
\begin{equation}
\label{eq:heis-group}
\left(\begin{smallmatrix}
1 & a_1 & a_{1,2}/k\\
0 & 1    & a_2\\
0 & 0    & 1 
\end{smallmatrix}\right), 
\end{equation}
with $a_1,a_2, a_{1,2}\in \Z$. The multiplication in 
$G(k)$ has the form
$(a_1, a_2, a_{1,2})\cdot 
(a_1^\prime, a_2^\prime, a_{1,2}^\prime)
=
(a_1 + a_1^\prime, a_2 + a_2^\prime, 
a_{1,2} + a_{1,2}^\prime + k a_1 a_2)$.
Then $G(k)$ is a $\TT$-group and hence
from Theorem \ref{thm:equivalence} it follows that
$(\T(\{x_1, x_2, x_{12}\}), d)$ with 
$d(x_1) = d(x_2) = 0$, and 
$d(x_{12}) = - k x_1 \ot x_2$
is a colimit of Hirsch extensions. Moreover, 
\[
H^*(G(k);\Z)\cong H^\ast(\T\{x_1, x_2, x_{12}\})=
\bwedge(x_1, x_2, x_{12}, x_1 x_{12}, x_2 x_{12})/(k x_1 x_2).
\]

Now take $\bX_1 = \{x_1,x_2\}$ and $\bX_2=\{x_{12}\}$
and let $i_1$ be the inclusion of $\T(\bX_1)$ in $\T(\bX)$.
Since $\ker (H^2(i_1))$ is the free $\Z$-module generated
by the cohomology class of the cocycle
$ -k x_1 \ot x_2$, it follows that 
$(\T(\bX), d)$ is a $1$-minimal model for $G(k)$.  
Furthermore, the Lie algebra $L(G)$ is defined over $\Z$, 
and, as noted in \cite{Lambe-Priddy-1982}, $H^*(G(k);\Z)\cong H^*(L(G(k));\Z)$. 
\end{example}

\begin{example}
\label{ex:gkell} 
Let $k$ and $\ell$ be positive integers, and let
$G(k,\ell)$ be the group of upper triangular matrices
\vspace*{-2pt}
\begin{equation}
\label{eq:LP-group}
\left(\begin{smallmatrix}
1 & a_1 & a_{1,2}/k & a_{13}/k\ell\\
0 & 1    & a_2          & a_{2,3}/\ell\\
0 & 0    & 1             & a_3\\
0 & 0   & 0             & 1
\end{smallmatrix}\right), 
\end{equation}
with $a_1,a_2, a_3, a_{1,2}, a_{2,3}, a_{13}\in \Z$. 
The group $G(k,\ell)$ is a $\TT$-group, with multiplication 
map $\rho$ given by $\rho_i(a,a^\prime) = a_i + a_i^\prime$ 
for $i=1,2,3$; $\rho_{12}(a,a^\prime) 
= a_{12} + a^\prime_{12} + k a_1 a^\prime_2$; 
$\rho_{23}(a, a^\prime) = a_{23} + a^\prime_{23} + \ell a_2 a_3^\prime$; 
and $\rho_{13}(a,a^\prime) = a_{13} + a^\prime_{13} 
+ k a_1a^\prime_{23} + \ell a_{12} a^\prime_3$. 
Applying Theorem \ref{thm:equivalence}, we find that 
$\T=(\T(\{x_1, x_2, x_3, x_{12}, x_{23}, x_{13}\}), d)$ with 
$d(x_1) = d(x_2) = d(x_3) = 0$, 
$d(x_{12}) = - k x_1 \ot x_2$,
$d(x_{23}) = -\ell x_2 \ot x_3$, and
$d(x_{13}) = -k x_1 \ot x_{23} - \ell x_{12} \ot x_3$
is a colimit of Hirsch extensions, whose cohomology 
is isomorphic to that of $G(k,\ell)$.

Take $\bX_1 = \{x_1,x_2, x_3\}$,
$\bX_2=\{x_{12}, x_{23}\}$, and
$\bX_3 =\{x_{13}\}$.
Let $i_1$ be the inclusion of $\T(\bX_1)$ in $\T(\bX)$,
and let $i_2$ be the inclusion of 
$\T(\bX_1 \cup \bX_2)$ in $\T(\bX)$.
Since $\ker (H^2(i_1))$ is the free $\Z$-module generated
by the cohomology classes of the cocycles
$-k x_1 \ot x_2$ and $-\ell x_2 \ot x_3$ and 
$\ker (H^2(i_2))$ is the free $\Z$-module generated
by the cohomology class of the cocycle
$-k x_1 \ot x_{23} - \ell x_{12} \ot x_3$, it follows 
that $(\T(\bX), d)$ is a $1$-minimal model for $G(k,\ell)$.
It is shown in \cite{Lambe-Priddy-1982} that $H^*(G(k,\ell);S)\cong 
H^*(L(G(k,\ell));S)$, provided the ring $S$ contains the set $\{1/n : 
2 \le n \le 10\}$. Using our integral $1$-minimal model, we can sharpen
their result, and show that the two cohomology groups
are in fact isomorphic over $\Z$. 
\end{example}

\begin{ack}
We wish to thank the referee for a careful reading of the manuscript and for 
many valuable comments and suggestions that helped us to improve both 
the substance and the exposition of the paper.
\end{ack}



\begin{thebibliography}{12}
\newcommand{\doi}[1]
{\texttt{\href{http://dx.doi.org/#1}{doi:#1}}}
\renewcommand{\MR}[1]
{\href{http://www.ams.org/mathscinet-getitem?mr=#1}{MR#1}}

\bibitem{BK71} A.K.~Bousfield, D.M.~Kan, 
{\em Localization and completion in homotopy theory}, 
Bull. Amer. Math. Soc. \textbf{77} (1971), 1006--1010.

\bibitem{BK} A.K.~Bousfield, D.M.~Kan, 
{\em Homotopy limits, completions and localizations}, Lecture Notes in Math.,
vol. 304, Springer, Berlin, 1972.

\bibitem{Brown}
K.S.~Brown,
\emph{Cohomology of groups},
Graduate texts in Mathematics, vol. 87, Springer, New York, Berlin, 1982.

\bibitem{CC97}
P.-J. Cahen and J.-L. Chabert, 
{\em Integer-valued polynomials}, Math. Surveys Monogr., 
vol. 48, Amer. Math. Soc., Providence, RI, 1997.

\bibitem{Cartan-Eilenberg}
H. Cartan and S. Eilenberg,
\emph{Homological algebra}
Princeton Univ. Press, Princeton, NJ, 1956. 

\bibitem{Cenkl-Porter-2000}
B.~Cenkl and R.~Porter,
{\emph{Nilmanifolds and associated {L}ie algebras over the integers}}, 
Pacific J. Math. \textbf{193} (2000), no.~1, 5--29. 

\bibitem{Eilenberg-MacLane}
S.~Eilenberg, S.~Mac Lane,
{\em On the groups $H(\Pi,n)$. \textup{I}.}, 
Ann. of Math. (2) \textbf{58} (1953), no.~1, 55--106. 

\bibitem{Ekedahl-2002} 
T.~Ekedahl,
{\em On minimal models in integral homotopy theory}, 
Homology Homotopy Appl. \textbf{4} (2002), no.~2, part 1, 191--218.

\bibitem{Elliott-2006}
J.~Elliott,
{\em Binomial rings, integer-valued polynomials, and $\lambda$-rings}, 
J. Pure Appl. Algebra \textbf{207} (2006), no.~1, 165--185. 

\bibitem{FHT}
Y.~F{\'e}lix, S.~Halperin, and J.-C.~Thomas,
 {\emph{Rational homotopy theory}}, Graduate Texts in Mathematics, 
 vol.~205, Springer-Verlag, New York, 2001. 
  
\bibitem{FHT2}
Y.~F{\'e}lix, S.~Halperin, and J.-C.~Thomas,
{\emph{Rational homotopy theory \textup{II}}},  
World Scientific Publishing, Hackensack, NJ, 2015.

\bibitem{Flynn} O.~Flynn-Connolly,
{\em An obstruction theory for strictly commutative algebras in positive 
characteristic}, \arxiv{2404.16681}.

\bibitem{Fox} R.H.~Fox, 
{\em On the Lusternik--Schnirelmann category}, 
Ann. of Math. (2) \textbf{42} (1941), no.~2, 333--370.

\bibitem{Friedman}
G.~Friedman,
{\em An elementary illustrated introduction to simplicial sets}, 
Rocky Mountain J. Math. \textbf{42} (2012), no.~2, 353--423. 

\bibitem{Gadish} 
N.~Gadish, 
{\em Letter-linking: bridging combinatorial group theory and topology}, 
\arxiv{2308.13635}.

\bibitem{Griffiths-Morgan} 
P.~Griffiths, J.~Morgan,
 {\emph{Rational homotopy theory and differential forms}}, 
 Second ed., Progr. Math., vol.~16, Springer, New York, 2013. 

\bibitem{Hall-1976} 
P.~Hall, 
\emph{The Edmonton notes on nilpotent groups}, 
Queen Mary College Mathematics Notes, Queen Mary College, 1976. 

\bibitem{Halperin}
S. Halperin,
{\em Lectures on minimal models}, M\'{e}moires de la Soc. Math. France 
\textbf{9-10} (1983), 1--261.

\bibitem{Hatcher}
A.~Hatcher,
\emph{Algebraic topology}, Cambridge University Press, Cambridge, 2002.

\bibitem{Hirsch}
G.~Hirsch,
{\em Quelques propri\'et\'es des produits de Steenrod}, 
C. R. Acad. Sci. Paris \textbf{241} (1955), 923--925. 

\bibitem{Horel}
G.~Horel,
{\em Binomial rings and homotopy theory}, J. Reine Angew. Math. 
(Crelle's Journal) \textbf{813} (2024), 283--305.

\bibitem{Kadeishvili-2003}
T.~Kadeishvili, 
{\em Cochain operations defining Steenrod $\smile_i$-products in the
bar construction}, Georgian Math. J., \textbf{10} (2003), no.~1, 115--125.

\bibitem{Kriz}
I.~Kriz,  
{\em $p$-adic homotopy theory}, Topology Appl. \textbf{52} (1993), 
no.~3, 279--308.

\bibitem{Kuzmin-Semenov}
Yu. V. Kuz'min and Yu. S. Semenov,
\emph{On the homology of a free nilpotent group of class~$2$},
Mat. Sb. \textbf{189} (1998), 49--82.

\bibitem{Lambe-Priddy-1982}
L.~Lambe and S.~Priddy, 
{\emph{Cohomology of nilmanifolds and torsion-free, nilpotent groups}},
Trans. Amer. Math. Soc. \textbf{273} (1982), no.~1, 39--55. 

\bibitem{MacLane-1963}
S. MacLane,
\emph{Homology}, Die Grundlehren der mathematischen Wissenschaften,
Vol.~114, Springer-Verlag, Berlin--G\"{o}ttingen--Heidelberg, 1963.

\bibitem{Malcev-1951}
A.I.~Malcev, 
{\emph{On a class of homogeneous spaces}}, Amer. Math. Soc.
Translation \textbf{1951} (1951), no.~39, 33pp. 

\bibitem{Mandell} M.~Mandell, 
{\em Cochains and homotopy type}, 
Publ. Math. Inst. Hautes \'Etudes Sci. \textbf{103} (2006), 213--246.

\bibitem{May-1969}
J.P. May,
\emph{Matric Massey products},
J. Algebra, \textbf{12} (1969), 533--568.

\bibitem{Nomizu}
K.~Nomizu, 
{\em On the cohomology of compact homogeneous spaces of nilpotent 
Lie groups}, Ann. of Math. (2) \textbf{59} (1954), 531--538.

\bibitem{Petersen} 
D.~Petersen, 
{\em Cohomology of generalized configuration spaces}, 
Compos. Math. \textbf{156} (2020), no.~2, 251--298.

\bibitem{Porter}
R.~Porter,
\emph{Milnor's $\overline{\mu}$-invariants and Massey products},
Trans. Amer. Math. Soc. \textbf{257} (1980), no.~1, 39--71.

\bibitem{Porter-Suciu-2020}
R.D.~Porter and A.I.~Suciu, 
{\em Homology, lower central series, and hyperplane arrangements},
Eur. J. Math. \textbf{6} (2020), nr.~3, 1039--1072.

\bibitem{Porter-Suciu-2021}
R.D.~Porter and A.I.~Suciu,
{\em Differential graded algebras, Steenrod cup-one 
products, binomial operations, and Massey products}, 
Topology Appl. \textbf{313} (2022),  Paper No.~107987, 37~pp. 

\bibitem{Porter-Suciu-group-1min}
R.D.~Porter and A.I.~Suciu, 
{\em Groups associated to $1$-minimal models for binomial 
cup-one algebras}, work in progress.

\bibitem{Porter-Suciu-massey}
R.D.~Porter and A.I.~Suciu, 
{\em Generalized Massey products}, work in progress.

\bibitem{Richter-Sagave}
B.~Richter and S.~Sagave,
{\em A strictly commutative model for the cochain algebra of a space}, 
Compos. Math. \textbf{156} (2020), no.~8, 1718--1743.

\bibitem{Rourke-Sanderson}
C.P.~Rourke, B.J.~Sanderson, 
{\em $\Delta$-sets. \textrm{I}. Homotopy theory}, 
Quart. J. Math. Oxford Ser. (2) \textbf{22} (1971), 321--338.

\bibitem{Steenrod}
N.E.~Steenrod,
\emph{Products of cocycles and extensions of mappings},
Ann. of Math. \textbf{48} (1947), 290--320.

\bibitem{Suciu-2023} A.I.~Suciu,
{\em Formality and finiteness in rational homotopy theory}, 
EMS Surv. Math. Sci. \textbf{10} (2023), no.~2, 321--403.

\bibitem{Suciu-Wang-forum}
A.I.~Suciu and H. Wang,
\emph{Formality properties of finitely generated groups and Lie algebras}, 
Forum Math. \textbf{31} (2019), no.~4, 867--905. 

\bibitem{Sullivan}
D.~Sullivan,
{\emph{Infinitesimal computations in topology}}, Inst. Hautes \'Etudes Sci.
Publ. Math. (1977), no.~47, 269--331. 

\bibitem{Wh48} 
J.H.C.~Whitehead, 
{\em On the homotopy type of ANR's}, 
Bull. Amer. Math. Soc. \textbf{54} (1948), 1133--1145.

\bibitem{Wh} 
J.H.C.~Whitehead, 
{\em Combinatorial homotopy. \textrm{I.}},  
Bull. Amer. Math. Soc. \textbf{55} (1949), 213--245. 

\end{thebibliography}
\end{document}